\renewcommand{\emph}{\textit}
\theoremstyle{plain}
\newtheorem{theorem}{Theorem}[subsection]
\newtheorem*{theorem*}{Theorem}
\newtheorem{corollary}[theorem]{Corollary}
\newtheorem{proposition}[theorem]{Proposition}
\newtheorem{prop}[theorem]{Proposition}
\newtheorem{lemma}[theorem]{Lemma}
\newtheorem*{lemma*}{Lemma}
\theoremstyle{definition}
\newtheorem{definition}[theorem]{Definition}
\newtheorem*{example*}{Example}
\newtheorem{notation}[theorem]{Notation}
\newtheorem{remark}[theorem]{Remark}
\newtheorem{convention}[theorem]{Convention}
\numberwithin{equation}{section}
\numberwithin{figure}{section}
\renewcommand{\emptyset}{\varnothing}
\newcommand{\dunion}{\sqcup}
\newcommand{\union}{\cup}
\newcommand{\Union}{\bigcup\limits}
\newcommand{\C}{\mathbb{C}}
\newcommand{\R}{\mathbb{R}}
\newcommand{\Z}{\mathbb{Z}}
\newcommand{\op}{\mathrm{op}}
\DeclareMathOperator{\id}{id}
\newcommand{\derd}{\mathsf{D}}
\newcommand{\dere}{\mathsf{E}}
\newcommand{\derr}{\mathsf{R}}
\newcommand{\derl}{\mathsf{L}}
\newcommand{\BDC}{\derd^{\mathrm{b}}}
\newcommand{\DSum}{\bigoplus}
\newcommand{\dsum}[1][]{\mathbin{\oplus_{#1}}}
\newcommand{\ilim}[1][]{\mathop{\varinjlim}\limits_{#1}}
\DeclareMathOperator{\coker}{coker}
\DeclareMathOperator{\im}{im}
\renewcommand{\to}[1][]{\xrightarrow{~#1~}}
\newcommand{\from}[1][]{\xleftarrow{~#1~}}
\newcommand{\isofrom}[1][]{\xleftarrow[~#1~]%
{\raisebox{-.4ex}[0ex][-.4ex]{$\mspace{2mu}\sim\mspace{2mu}$}}}
\newcommand{\isoto}[1][]{\xrightarrow[~#1~]{%
{\raisebox{-.4ex}[0ex][-.4ex]{$\mspace{1mu}\sim\mspace{2mu}$}}}}
\newcommand{\Endo}[1][]{\mathrm{End}_{\raise1.5ex\hbox to.1em{}#1}}
\newcommand{\Hom}[1][]{\mathrm{Hom}_{\raise1.5ex\hbox to.1em{}#1}}
\newcommand{\RHom}[1][]{\derr\mathrm{Hom}_{\raise1.5ex\hbox to.1em{}#1}}
\newcommand{\Ext}[2][]{\mathrm{Ext}_{\raise1.5ex\hbox to.1em{}#1}^{#2}}
\newcommand{\Mod}{\mathrm{Mod}}
\newcommand{\Tens}[1][]{\mathbin{\otimes_{\raise1.5ex\hbox to-.1em{}#1}}}
\newcommand{\LTens}[1][]{\mathbin{\otimes_{\raise1.5ex\hbox to-.1em{}#1}^{\derl}}}
\newcommand{\Tor}[2][]{\mathrm{Tor}^{\raise1.5ex\hbox to.1em{}#1}_{#2}}
\newcommand{\sheaffont}[1]{\mathcal{#1}}
\def\sha{\sheaffont{A}}
\def\she{\sheaffont{E}}
\def\shi{\sheaffont{I}}
\def\shl{\sheaffont{L}}
\def\shm{\sheaffont{M}}
\def\shn{\sheaffont{N}}
\newcommand{\sect}{\varGamma}
\newcommand{\rsect}{\derr\varGamma}
\newcommand{\shendo}[1][]{{\sheaffont{E}nd}_{\raise1.5ex\hbox to.1em{}#1}}
\renewcommand{\hom}[1][]{{\sheaffont{H}om}_{\raise1.5ex\hbox to.1em{}#1}}
\newcommand{\aut}[1][]{{\sheaffont{A}ut}_{\raise1.5ex\hbox to.1em{}#1}}
\newcommand{\inn}[1][]{{\sheaffont{I}nn}_{\raise1.5ex\hbox to.1em{}#1}}
\newcommand{\rhom}[1][]{{\derr\sheaffont{H}om}_{\raise1.5ex\hbox to.1em{}#1}}
\newcommand{\ext}[2][]{{\sheaffont{E}xt}_{\raise1.5ex\hbox to.1em{}#1}^{#2}}
\newcommand{\thom}[1][]{{\sheaffont{T}hom}_{\raise1.5ex\hbox to.1em{}#1}}
\newcommand{\tens}[1][]{\mathbin{\otimes_{\raise1.5ex\hbox to-.1em{}#1}}}
\newcommand{\ltens}[1][]{\mathbin{\otimes_{\raise1.5ex\hbox to-.1em{}#1}^{\derl}}}
\newcommand{\etens}[1][]{\mathbin{\boxtimes_{\raise1.5ex\hbox to-.1em{}#1}}}
\DeclareMathOperator{\supp}{supp}
\newcommand{\roim}[1]{\derr#1_*}
\newcommand{\roimv}[1]{\derr#1}
\newcommand{\reim}[1]{\derr#1_{\mspace{.5mu}!}\mspace{2mu}}
\newcommand{\reeim}[1]{\derr#1_{\mspace{1mu}!!}\mspace{1mu}}
\newcommand{\reimm}[1]{\derr#1_{\mspace{.5mu}!}}
\newcommand{\opb}[1]{#1^{-1}}
\newcommand{\epb}[1]{#1^{\mspace{1.5mu}!}\mspace{2mu}}
\DeclareMathOperator{\ori}{or}
\newcommand{\D}{\sheaffont{D}}
\renewcommand{\O}{\sheaffont{O}}
\newcommand{\Db}{\sheaffont{D}b}
\newcommand{\detens}[1][]%
{\mathbin{\boxtimes_{\raise1.5ex\hbox to-.1em{}#1}^{\mspace{2mu}\mathsf{D}}}}
\newcommand{\doim}[1]{{\mathsf{D}#1}_*\mspace{1mu}}
\newcommand{\dopb}[1]{{\mathsf{D}#1}^{\mspace{1mu}*}}
\newcommand{\dtens}[1][]{\mathbin{\otimes_{\raise1.5ex\hbox to-.1em{}#1}^{\mathsf{D}}}}
\newcommand{\hol}{\mathrm{hol}}
\newcommand{\reghol}{{\mathrm{reg\text-hol}}}
\newcommand{\field}{\mathbf{k}}
\newcommand{\ind}{\mathrm{I}\mspace{2mu}}
\newcommand{\ifield}{\ind\field}
\newcommand{\Rc}{{\R\text-\mathrm{c}}}
\newcommand{\Cc}{{\C\text-\mathrm{c}}}
\newcommand{\ctens}{\mathbin{\mathop\otimes\limits^+}}
\newcommand{\cihom}{{\shi hom}^+}
\newcommand{\PR}{\mathsf{P}}
\newcommand{\PP}{\mathbb{P}}
\newcommand{\tmp}{\mathsf{t}}
\newcommand{\sol}{\mathcal Sol}
\newcommand{\solt}{\mathcal Sol^{\mspace{2.5mu}\tmp}}
\newcommand{\Ot}{\O^{\mspace{2mu}\tmp}}
\renewcommand{\Re}{\operatorname{Re}}
\renewcommand{\Im}{\operatorname{Im}}
\newcommand{\ihom}[1][]{{\shi hom}_{\raise1.5ex\hbox to.1em{}#1}}
\newcommand{\rihom}[1][]{{\derr\mspace{2mu}\shi hom}_{\raise1.5ex\hbox to.1em{}#1}}
\newcommand{\ii}[1][]{{\sheaffont{I}h}_{\raise1.5ex\hbox to.1em{}#1}}
\newcommand{\indlim}[1][]{\mathop{\text{\rm``$\varinjlim$''}}\limits_{#1}}
\newcommand{\dcomp}[1][]{\mathbin{\circ_{\raise1.5ex\hbox to-.1em{}#1}^{\mathsf{D}}}}
\newcommand{\enh}{\mathsf{E}}
\newcommand{\solE}{\mathcal{S}ol^{\mspace{1mu}\enh}}
\newcommand{\fhom}{\derr\mathcal{H}om^\enh}
\newcommand{\BEC}[2][\ifield]{\dere^{\mathrm{b}}(#1_{#2})}
\newcommand{\BECRc}[2][\ifield]{\dere^{\mathrm{b}}_\Rc(#1_{#2})}
\newcommand{\BECp}[2][\ifield]{\dere^{\mathrm{b}}_+(#1_{#2})}
\newcommand{\BECm}[2][\ifield]{\dere^{\mathrm{b}}_-(#1_{#2})}
\newcommand{\BECpm}[2][\ifield]{\dere^{\mathrm{b}}_\pm(#1_{#2})}
\newcommand{\Eoim}[1]{{\enh#1}_*}
\newcommand{\Eeeim}[1]{{\enh#1}_{!!}}
\newcommand{\Eopb}[1]{{\enh#1}^{-1}}
\newcommand{\Eepb}[1]{{\enh\mspace{1mu}#1}^{\mspace{1.5mu}!}}
\newcommand{\semicolon}{\nobreak \mskip2mu\mathpunct{}\nonscript\mkern-\thinmuskip{;}\mskip6mu plus1mu\relax}
\newcommand{\dual}{\mathrm{D}}
\newcommand{\sep}{\mspace{2mu}}
\newcommand{\defeq}{\mathbin{:=}}
\newcommand{\eq}{\begin{eqnarray}}
\newcommand{\eneq}{\end{eqnarray}}
\newcommand{\eqn}{\begin{eqnarray*}}
\newcommand{\eneqn}{\end{eqnarray*}}
\newcommand{\To}[1][]{\xrightarrow[]{\mspace{10mu}{#1}\mspace{10mu}}}
\newcommand{\ba}{\begin{array}}
\newcommand{\ea}{\end{array}}
\newcommand{\nc}{\newcommand}
\newcommand{\be}{\begin{enumerate}}
\newcommand{\ee}{\end{enumerate}}
\nc{\bwr}{\mbox{\large{$\wr$}}}
\nc{\vphi}{\varphi}
\newcommand{\ghol}{\mathrm{g\text-hol}}
\newcommand{\conv}{\mathbin{\ast}}
\newcommand{\Ecomp}[1][]{\mathbin{\circ_{\raise1.5ex\hbox to-.1em{}#1}^{+}}}
\newcommand{\scomp}[1][]{\mathbin{\circ_{\raise1.5ex\hbox to-.1em{}#1}^{\ast}}}
\newcommand{\bb}{{\PP^*}}
\newcommand{\Tam}[2][\field]{\widetilde\dere^{\mathrm{b}}_\Rc(#1_{#2})}
\renewcommand{\dcomp}{\mathbin{\mathop\circ\limits^{\mathsf{D}}}}
\renewcommand{\Ecomp}{\mathbin{\mathop\circ\limits^+}}
\newcommand{\ccomp}{\mathbin{\mathop\circ\limits^{\conv}}}
\newcommand{\imm}{\sqrt{\text-1}}
\newcommand{\muu}{m}
\renewcommand{\imm}{\sqrt{\text-1}}
\newcommand{\U}{\mathbb{U}}
\newcommand{\V}{\mathbb{V}}
\newcommand{\W}{\V^*}
\newcommand{\HH}{{\mathbb{H}}}
\newcommand{\hh}{{\mathbb{H}^*}}
\newcommand{\Vi}{{\V_\infty}}
\newcommand{\Wi}{{\W_\infty}}
\renewcommand{\leq}{\leqslant}
\renewcommand{\geq}{\geqslant}
\newcommand{\FS}[1]{(#1)^\wedge}
\newcommand{\FSa}[1]{(#1)^{\wedge,a}}
\newcommand{\smsh}[1][\V]{\sigma_{#1}}
\newcommand{\Erest}[2]{\opb\pi\field_{#1}\tens{#2}}
\newcommand{\tconv}{\mathbin{\mathop\otimes\limits^*}}
\newcommand{\tchom}{\derr\mathcal{H}om^*}
\newcommand{\quot}{\mathsf Q}
\newcommand{\fra}{\mathfrak{a}}
\newcommand{\frb}{\mathfrak{b}}
\let\tilde\widetilde
\let\setminus\smallsetminus
\newcommand{\Perv}{\mathsf{Perv}}
\newcommand{\rc}{\mathrm{c}}
\newcommand{\wtL}{\widetilde L}
\newcommand{\PhiT}{\mathbb{T}}
\newcommand{\PsiT}{\operatorname{T}}
\newcommand{\sub}{{\operatorname{sub}}}
\newcommand{\cs}{\mathcal{CS}}
\newcommand{\bm}{\mathcal{BM}}
\newcommand{\BM}{\mathrm{BM}}
\newcommand{\enhK}{\mathcal{K}}
\newcommand{\enhE}{\mathbb{E}}
\DeclareRobustCommand\eqc{c}
\let\oldbigoplus\bigoplus
\renewcommand{\bigoplus}{\mathop{\textstyle\oldbigoplus}\displaylimits}
\let\oldbigcup\bigcup
\renewcommand{\bigcup}{\mathop{\textstyle\oldbigcup}\displaylimits}
\let\oldbigcap\bigcap
\renewcommand{\bigcap}{\mathop{\textstyle\oldbigcap}\displaylimits}
\let\oldbigotimes\bigotimes
\renewcommand{\bigotimes}{\mathop{\textstyle\oldbigotimes}\displaylimits}
\let\oldbigwedge\bigwedge
\renewcommand{\bigwedge}{\mathop{\textstyle\oldbigwedge}\displaylimits}
\let\oldcoprod\coprod
\renewcommand{\coprod}{\mathop{\textstyle\oldcoprod}\displaylimits}
\def\To#1{\mathchoice{\xrightarrow{\kern2pt#1\kern1.5pt}}{\stackrel{#1}{\longrightarrow}}{}{}}
\def\Mto#1{\mathrel{\mapstochar\To{#1}}}
\begin{document}

\title[Topological computation of some Stokes phenomena]{Topological computation of some Stokes phenomena on the affine line}

\author[A. D'Agnolo]{Andrea D'Agnolo}
\author[M. Hien]{Marco Hien}
\author[G. Morando]{Giovanni Morando}
\author[C. Sabbah]{Claude Sabbah}

\address[A.~D'Agnolo]{Dipartimento di Matematica,
Universit\`a di Padova,
via Trieste 63, 
35121 Padova, Italy}
\email{dagnolo@math.unipd.it}
\urladdr{http://docenti.math.unipd.it/dagnolo}
\address[M.~Hien]{Institut f\"ur Mathematik, Universit\"at Augsburg, 86135 Augsburg, Germany}
\email{marco.hien@math.uni-augsburg.de}
\address[G.~Morando]{I.T.T.\ Marconi, Padova, Italy}
\address[C.~Sabbah]{CMLS, \'Ecole polytechnique, CNRS, Universit\'e Paris-Saclay\\
F--91128 Palaiseau cedex\\
France}
\email{Claude.Sabbah@polytechnique.edu}
\urladdr{http://www.math.polytechnique.fr/perso/sabbah}

\thanks{The research of A.D'A. was supported by GNAMPA/INdAM and by grant CPDA159224 of Padova University.
The research of M.H. and G.M. was supported by the grant DFG-HI-1475/2-1 of the Deutsche Forschungsgemeinschaft.
The research of C.S.\ was supported by the grant ANR-13-IS01-0001-01 of the Agence nationale de la recherche.}

\begin{abstract}
Let $\mathcal{M}$ be a holonomic algebraic $\mathcal{D}$-module on the affine line, regular everywhere including at infinity. Malgrange gave a complete description of the Fourier-Laplace transform $\widehat{\mathcal{M}}$, including its Stokes multipliers at infinity, in terms of the quiver of $\mathcal{M}$. Let $F$ be the perverse sheaf of holomorphic solutions to $\mathcal{M}$. By the irregular Riemann-Hilbert correspondence, $\widehat{\mathcal{M}}$ is determined by the enhanced Fourier-Sato transform $F^\curlywedge$ of $F$. Our aim here is to recover Malgrange's result in a purely topological way, by computing $F^\curlywedge$ using Borel-Moore cycles. In this paper, we also consider some irregular $\mathcal{M}$'s, like in the case of the Airy equation, where our cycles are related to steepest descent paths.
\end{abstract}

\keywords{Perverse sheaf, enhanced ind-sheaf, Riemann-Hilbert correspondence, holonomic D-module, regular singularity, irregular singularity, Fourier transform, quiver, Stokes matrix, Stokes phenomenon, Airy equation, Borel-Moore homology}

\subjclass{34M40, 44A10, 32C38}

\maketitle


\section*{Introduction}

Let $\shm$ be a holonomic algebraic $\D$-module on the affine line $\V$. The Fourier-Laplace transform $\widehat\shm$ of $\shm$ is a holonomic $\D$-module on the dual affine line $\W$. The Riemann-Hilbert correspondence of \cite{DK16} (see \cite{KS16D,Kas16,Gui17} for expositions) associates to $\shm$ the enhanced ind-sheaf $F$ of its enhanced holomorphic solutions. By functoriality, the Riemann-Hilbert correspondence interchanges the Fourier-Laplace transform for holonomic $\D$-modules with the Fourier-Sato transform for enhanced ind-sheaves. (This was observed in \cite{KS16}, where the non-holonomic case is also discussed.) As a direct consequence, if~$F$ is defined over a subfield of $\C$, then so is the enhanced ind-sheaf $K$ associated with the Fourier-Laplace transform of $\shm$.

In \cite{Mal91}, Malgrange gave a complete description of $\widehat\shm$ if $\shm$ has only regular singularities, including at infinity. It turns out
that $\widehat\shm$ has $0$ and $\infty$ as its only possible singularities, and is regular at $0$. The exponential components of $\widehat\shm$ at $\infty$ are of linear type, with coefficients given by the singularities of $\shm$. Moreover, the Stokes multipliers of $\widehat\shm$ at $\infty$ are described in terms of the quiver of $\shm$. In such a case, the regular Riemann-Hilbert correspondence of \cite{Kas84,Meb84} associates to $\shm$ the perverse sheaf $F$ of its holomorphic solutions. As $\widehat\shm$ can be reconstructed from $K$, our aim here is to recover Malgrange's result in a purely topological way, by giving a complete description of the enhanced Fourier-Sato transform of $F$.

Before going into more detail, let us recall the classical local classification of a meromorphic connection at a singular point. The main idea is to examine the growth-properties of its solutions. There are various ways to implement this idea. One of them leads to the notion of a Stokes structure, a filtered local systems on a circle around the singular point (due to Deligne-Malgrange, see \cite{Del07}, \cite{Mal91}, \cite{Sab13}). Another possibility is to compute formal power series solutions and then use some machinery (multi-summation methods) to lift these to asymptotic solutions on sectors centered at the point (see \cite{BJL79}, \cite{vdPS03}). In the present case, since the exponents of the exponential factors are linear, it is known that one can work with two sectors of width slightly bigger than $\pi $ -- in the Deligne-Malgrange presentation, this means that the Stokes filtration can be trivialized on these sectors. Hence, the information can be completely encoded by two matrices, governing the transition of the asymptotic fundamental solutions or the glueing of the trivialized filtered local systems. Classically, such a pair of matrices $S_+, S_- $ is called the sequence of \emph{Stokes multipliers} of the system. It is certainly the most explicit way to encode the desired data. Of course, one has to be careful keeping track of all choices involved.

In the context of the irregular Riemann-Hilbert correspondence of D'Agnolo-Kashiwara, \cite{DK16}, the Stokes multipliers can be obtained from the associated enhanced solutions in the way described in \cite[\S9.8]{DK16}. In particular, Lemma 9.8.1 in loc.~cit.~ clarifies the ambiguities due to various choices. The relation to the Deligne-Malgrange approach mentioned above is discussed in \cite{Kas16}, section 8.

Let us describe our results (which were announced in \cite{Hie15})
with a few more details. Let $\Sigma\subset\V$ be a finite subset.
Denote by $\Perv_\Sigma(\C_\V)$ the abelian category of perverse
sheaves of $\C$-vector spaces on $\V$ with possible singularities in $\Sigma$. After suitably choosing a total order on $\Sigma$, we can encode an object $F\in\Perv_\Sigma(\C_\V)$ in its quiver $\bigl(\Psi, \Phi_c,\ u_c,\ v_c \bigr)_{c\in\Sigma}$, where $\Psi$ and $\Phi_c$ are finite dimensional vector spaces, and $u_c\colon\Psi\to\Phi_c$ and $v_c\colon\Phi_c\to\Psi$ are linear maps such that $1-u_c v_c$ is invertible for any~$c$ (or, equivalently, such that $1-v_c u_c$ is invertible for any $c$).

The main result of this paper is Theorem \ref{thm:Stokesmultipliers} which is the topological counterpart of Malgrange's result. It describes explicitly the Stokes multipliers of the enhanced Fourier-Sato transform $K$ of $F$ in terms of the quiver of $F$. We want to stress that the computation reduces to computation with ordinary perverse sheaves, since our starting~$F$ is so. In particular, we can reduce the computation to perverse sheaves which are isomorphic to their maximal extension (Beilinson's extension) at the singularities~$\Sigma$.

Recall that the classical Fourier-Sato transform sends $\Perv_{\Sigma}(\C_\V)$ to $\Perv_{\{0\}}(\C_{\W})$ (see~\cite{Mal91}). In Section \ref{sec:smash} we also give an explicit description of the quiver of the Fourier-Sato transform of $F$ in terms of that of $F$. For that purpose, we introduce the \emph{smash functor}, whose properties are explained in Appendix \ref{app:smash}. Together with the computation of the Stokes multipliers, this completes the description of $K$.

As another example of the method, we give in Section \ref{sec:moreStoles} a similar treatment for the Airy equation and for some elementary irregular meromorphic connections.

Sections \ref{sec:enh}--\ref{sec:Fouriertransform} recall the formalism of enhanced ind-sheaves together with the Riemann-Hilbert correspondence of \cite{DK16}, and the Fourier transforms in this context. Section \ref{sec:perverse} makes precise the correspondence which associates a quiver to a perverse sheaf on the affine line.

To conclude this introduction, let us mention some other works where the Stokes multipliers of $\widehat\shm$ are computed, for $\shm$ a holonomic algebraic $\D$-module on the affine line. In the book \cite{Mal91}, that we already mentioned, Malgrange discusses in fact the more general case where $\shm$ is regular at finite distance, but is allowed an irregularity at infinity of linear type. This condition is stable by Fourier-Laplace transform. In \cite{Moc10}, Mochizuki treats the case of a general holonomic algebraic $\D$-module~$\shm$, providing a way to compute the Stokes multipliers of $\widehat\shm$ in terms of steepest descent paths. An explicit description of the Stokes multipliers for some special classes of irregular holonomic $\D$-modules is discussed in \cite{Sab16} and \cite{HS14}.

In the context of Dubrovin's Conjectures in mirror symmetry, a lot of effort has been put into explicit computations of Stokes multipliers of the associated $\D $-module on the quantum cohomology of varieties in situations where one has some mirror symmetry statement -- e.g. \cite{Guz99}, \cite{MvdP15}, \cite{Ued05}. The mirror of the latter usually is given by the Fourier-Laplace transform of the regular singular Gauss-Manin system of some associated Landau-Ginzburg model. This situation therefore fits well into the framework examined in the present work.

\section{Enhanced ind-sheaves}\label{sec:enh}

Let us briefly recall from \cite{DK16} the triangulated category of enhanced
ind-sheaves (see \cite{DK16bis} for its natural $t$-structures).
This is similar to a construction in \cite{Tam08} (see~\cite{GS14} for an
exposition), in the framework of ind-sheaves from \cite{KS01}.
As we are interested in applications
to holonomic $\D$-modules, we restrict here to an analytic framework.

Let $M$ be a real analytic manifold and let $\field$ be a field (when considering the enhanced ind-sheaf attached to a $\D$-module, we set $\field=\C$).

\subsection{Sheaves} (Refer e.g.\ to \cite{KS90}.)
Denote by $\BDC(\field_M)$ the bounded derived category of sheaves of
$\field$-vector spaces on $M$.
For $S\subset M$ a locally closed subset, denote by $\field_S$ the zero
extension to $M$ of the constant sheaf on $S$ with stalk $\field$.

For $f\colon M\to N$ a morphism of real analytic manifolds, denote by $\tens$,
$\opb f$, $\reim f$ and $\rhom$, $\roim f$, $\epb f$ the six Grothendieck
operations for sheaves.

\subsection{Convolution}
Consider the maps
\[
\mu,q_1,q_2 \colon M\times\R\times\R \to M\times\R
\]
given by $q_1(x,t_1,t_2) = (x,t_1)$, $q_2(x,t_1,t_2) = (x,t_2)$, and
$\mu(x,t_1,t_2) = (x,t_1+t_2)$. For $F_1, F_2 \in \BDC(\field_{M\times\R})$,
the functors of convolution in the variable $t\in\R$ are defined by
\begin{align*}
F_1\tconv F_2 &= \reim \mu (\opb{q_1} F_1 \tens \opb{q_2} F_2), \\
\tchom(F_1,F_2)&= \roimv{q_{1\sep*}} \rhom(\opb{q_2} F_1, \epb\mu F_2).
\end{align*}

The convolution product $\tconv$ makes $\BDC(\field_{M\times\R})$ into a
commutative tensor category, with $\field_{M\times\{0\}}$ as unit object.
We will often write $\field_{\{t= 0\}}$ instead of $\field_{M\times\{0\}}$, and
similarly for $\field_{\{t\geq 0\}}$, $\field_{\{t\leq 0\}}$, etc.

\subsection{Enhanced sheaves} \label{sec:enhsheaves}
(Refer to \cite{Tam08,GS14} and \cite{DK16}.)
Consider the projection
\[
M\times\R \to[\pi] M.
\]
The triangulated category $\BEC[\field] M$ of enhanced sheaves is the quotient
of $\BDC(\field_{M\times\R})$ by the stable subcategory $\opb\pi\BDC(\field_M)$.
It splits as $\BEC[\field] M \simeq \BECp[\field] M \dsum \BECm[\field] M$,
where $\BECpm[\field] M$ is the quotient
of $\BDC(\field_{M\times\R})$ by the stable subcategory of objects $F$ satisfying $\field_{\{ \mp t \geq 0\}}\tconv F \simeq 0$.

The quotient functor
\[
\quot\colon \BDC(\field_{M\times\R}) \to \BEC[\field] M
\]
has a left and a right adjoint which are fully faithful.
Let us denote by $\widetilde\dere^{\mathrm{b}}_+(\field_M) \subset
\BDC(\field_{M\times\R})$ the essential image of $\BECp[\field]
M$ by the left adjoint, that is, the full subcategory whose objects $F$ satisfy $\field_{\{t\geq 0\}}\tconv F \simeq F$. Thus, one has an equivalence
\[
\quot\colon\widetilde\dere^{\mathrm{b}}_+(\field_M)\isoto\BECp[\field] M.
\]

The functor
\begin{equation}\label{eq:defeps}
\varepsilon \colon \BDC(\field_M) \to \widetilde\dere^{\mathrm{b}}_+(\field_M), \quad
F' \mapsto \field_{\{t\geq 0\}} \tens \opb\pi F'.
\end{equation}
is fully faithful.
For $f\colon M\to N$ a morphism of real analytic manifolds,
it interchanges the operations $\tens$, $\opb f$
and $\reim f$ with $\tconv$, $\opb{\tilde f}$ and $\reim{\tilde f}$, respectively. Here, we set
\[
\tilde f = f\times\id_\R\colon M\times\R \to
N\times\R.
\]

\subsection{Ind-sheaves}
(Refer to \cite{KS01}.)
An ind-sheaf is an ind-object in the category of sheaves with compact support.
There is a natural embedding of sheaves into ind-sheaves, and it has an exact
left adjoint $\alpha$ given by $\alpha(\smash{\indlim} F_i) = \smash{\ilim}
F_i$.
The functor $\alpha$ has an exact fully faithful left adjoint, denoted $\beta$.

Denote by $\BDC(\ifield_M)$ the bounded derived category of ind-sheaves. Denote
by $\tens$, $\rihom$, $\opb f$, $\roim f$, $\reeim f$, $\epb f$ the six
Grothendieck operations for ind-sheaves.

\subsection{Enhanced ind-sheaves}
(Refer to \cite{DK16}.)
Consider the morphisms
\begin{equation}
\label{eq:ipij}
M \to[i_\infty] M\times\PR \to[\pi] M,
\quad
M\times\R \to[j] M\times\PR,
\end{equation}
where $\PR=\R\union\{\infty\}$ is the real projective line, $i_\infty(x) = (x,\infty)$, $\pi$ is the projection, and $j$ is the embedding.

The triangulated category $\BEC M$ of enhanced ind-sheaves is defined by two
successive quotients of $\BDC(\ifield_{M\times\PR})$: first by the subcategory
of objects of the form $\roimv{i_{\infty\sep*}} F$, and then by the subcategory
of objects of the form $\opb\pi F$.
As for enhanced sheaves, the quotient functor has a left and a right adjoint
which are fully faithful. It follows that there are two realizations of $\BEC
M$ as a full subcategory of $\BDC(\ifield_{M\times\PR})$.

Enhanced ind-sheaves are endowed with an analogue of the convolution functors, denoted $\ctens$ and
$\cihom$.
For $f\colon M\to N$ a morphism of real analytic manifold, one also has external operations $\Eopb f$, $\Eoim f$, $\Eeeim f$, $\Eepb f$.
Here, $\Eopb f$ is the functor induced by $\opb{\tilde f}$ at the level of ind-sheaves, and similarly for the other operations.

There is a natural embedding $\BEC[\field] M\subset\BEC M$ induced by $\reim j$ or, equivalently, by $\roim j$.
Set
\[
\field_M^\enh = \indlim[a\rightarrow+\infty] \quot\field_{\{t\geq a\}} \in \BEC M.
\]
There is a natural fully faithful functor
\[
e:\BDC(\ind\field_M) \to \BEC M \ , \ F \mapsto \field_M^\enh \otimes \pi^{-1}F .
\]
We will need the following two lemmas on compatibility between operations for enhanced ind-sheaves and for usual sheaves with an additional variable.

By \cite[Remark 3.3.21, Proposition 4.7.14]{DK16}, one has

\begin{lemma}\label{lem:ops}
Let $F,F_1,F_2\in\BDC(\field_{M\times\R})$
and $G\in\BDC(\field_{N\times\R})$. One has
\begin{align*}
(\field_M^\enh \ctens \quot F_1) \ctens (\field_M^\enh \ctens \quot F_2) &\simeq
\field_M^\enh \ctens (\quot F_1 \ctens \quot F_2) \\
&\simeq \field_M^\enh \ctens \quot(F_1 \tconv F_2), \\
\Eopb f\quot(\field_N^\enh \ctens G) &\simeq \field_M^\enh \ctens
\quot\opb{\tilde f}G.
\end{align*}
If moreover $f$ is proper, then
\[
\Eeeim f\quot(\field_M^\enh \ctens F) \simeq \field_N^\enh \ctens
\quot\reim{\tilde f}F.
\]
\end{lemma}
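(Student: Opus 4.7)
The plan is to reduce each of the four equivalences to formal manipulations, together with one key observation: the object $\field_M^\enh$ is an idempotent for $\ctens$, namely $\field_M^\enh \ctens \field_M^\enh \simeq \field_M^\enh$. This is immediate from the definition $\field_M^\enh = \indlim[a\rightarrow+\infty]\quot\field_{\{t\geq a\}}$, the identity $\field_{\{t\geq a\}}\tconv\field_{\{t\geq b\}}\simeq\field_{\{t\geq a+b\}}$ at the level of ordinary sheaves on $M\times\R$, and the commutation of $\quot$ with filtered colimits.

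For the first equivalence, I would use commutativity and associativity of $\ctens$ to rewrite the left-hand side as $(\field_M^\enh\ctens\field_M^\enh)\ctens\quot F_1\ctens\quot F_2$, and then apply the idempotency above to absorb one of the two copies. For the second, the key point is that $\quot F_1\ctens\quot F_2$ differs from $\quot(F_1\tconv F_2)$ only by a correction supported at $t=\infty$, which is killed after applying $\field_M^\enh\ctens(-)$: both sides can be computed through convolution maps on $M\times\PR\times\PR$ extending $\mu, q_1, q_2$, and the discrepancies introduced when passing from $M\times\R\times\R$ to $M\times\PR\times\PR$ live precisely where $\field_M^\enh$ vanishes.

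The third equivalence follows from three facts: $\Eopb f$ is induced by $\opb{\tilde f}$; $\opb{\tilde f}$ commutes with filtered colimits and maps translates of $\{t\geq 0\}$ on $N\times\R$ to the corresponding subsets on $M\times\R$, yielding $\opb{\tilde f}\field_N^\enh\simeq\field_M^\enh$; and $\opb{\tilde f}$ distributes over $\tens$. For the fourth, properness of $f$, and hence of $\tilde f$, makes $\Eeeim f$ computable via $\reim{\tilde f}$, and the projection formula
\[
\reim{\tilde f}\bigl(\opb{\tilde f}\field_N^\enh\tens F\bigr)\simeq\field_N^\enh\tens\reim{\tilde f}F,
\]
combined with $\opb{\tilde f}\field_N^\enh\simeq\field_M^\enh$, yields the claim.

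The main obstacle I anticipate is careful bookkeeping at $M\times\{\infty\}$: since $\BEC M$ arises from $\BDC(\ifield_{M\times\PR})$ by two successive quotients, I must check that each identity lifts through a suitable adjoint of $\quot$ and is insensitive to both quotients. Once one verifies that the prefix $\field_M^\enh\ctens(-)$ is precisely what trivializes contributions at $t=\infty$ on each side, the four assertions become formal consequences of standard base-change and projection-formula identities on $M\times\PR$.
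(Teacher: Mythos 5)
The paper gives no argument of its own for Lemma \ref{lem:ops}: it is quoted from \cite[Remark 3.3.21, Proposition 4.7.14]{DK16}. Your sketch reconstructs what is essentially the standard proof, and the ingredients you isolate are the right ones: the idempotency $\field_M^\enh \ctens \field_M^\enh \simeq \field_M^\enh$ (which does follow from $\field_{\{t\geq a\}}\tconv\field_{\{t\geq b\}}\simeq\field_{\{t\geq a+b\}}$ and commutation of $\ctens$ with filtered ind-limits), the compatibility of $\quot$ with convolution, the isomorphism $\Eopb f\field_N^\enh\simeq\field_M^\enh$, and a projection formula in the proper case.

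Two steps, however, need repair before this is a proof. First, $\mu(x,t_1,t_2)=(x,t_1+t_2)$ has \emph{no} continuous extension to $M\times\PR\times\PR\to M\times\PR$ (along $t_2=-t_1+c$ the sum stays equal to $c$ while $(t_1,t_2)$ goes to infinity), so one cannot "compute both sides through convolution maps on $M\times\PR\times\PR$ extending $\mu,q_1,q_2$". This is precisely what the bordered-space formalism of \cite{DK16} is for: $\reeimv{\mu}_{!!}$ is defined through the closure of the graph of $\mu$, and one checks that the terms created on $M\times\{\infty\}$ are of the form $\roimv{i_{\infty\sep*}}(\cdot)$, hence already vanish in $\BEC M$. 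In particular one obtains $\quot F_1\ctens\quot F_2\simeq\quot(F_1\tconv F_2)$ outright, so the prefix $\field_M^\enh\ctens(-)$ is not what removes the discrepancy in the second isomorphism; it is needed only for the first one, via idempotency. Second, the identities you invoke must be the convolution versions, not the plain $\tens$ ones: for the third isomorphism you need $\Eopb f(K_1\ctens K_2)\simeq\Eopb fK_1\ctens\Eopb fK_2$, which follows from base change in the Cartesian square relating $\mu_M$, $\mu_N$, $f\times\id_{\R^2}$ and $\tilde f$ (distribution of $\opb{\tilde f}$ over $\tens$ alone does not reach the convolution); for the last one you need the projection formula $\Eeeim f(\Eopb fK\ctens L)\simeq K\ctens\Eeeim fL$ together with $\Eeeim f\quot F\simeq\quot\reim{\tilde f}F$, the latter using properness of $f$. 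The formula $\reim{\tilde f}(\opb{\tilde f}\field_N^\enh\tens F)\simeq\field_N^\enh\tens\reim{\tilde f}F$ as you wrote it mixes an enhanced object with sheaf-level operations and does not literally apply to the stated isomorphisms, although its convolution analogue is proved by the same base-change argument. With these two adjustments your route coincides with the proof in \cite{DK16}.
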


The category $\BEC M$ has a natural hom functor $\fhom$ with values in $\BDC(\field_M)$.
By \cite[Proposition 4.3.10]{DK16} and \cite[Corollary 6.6.6]{KS16D}, one has

\begin{lemma}\label{lem:pi}
For $F\in\BDC(\field_{M\times\R})$ one has
\[
\fhom(\field_{\{t\geq 0\}}, \field_M^\enh \ctens \quot F) \simeq
\roim\pi(\field_{\{t\geq 0\}} \tconv F).
\]
\end{lemma}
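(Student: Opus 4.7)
The plan is to combine the two cited references \cite[Prop.~4.3.10]{DK16} and \cite[Cor.~6.6.6]{KS16D} after unwinding both sides of the claimed isomorphism.

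First I would recall the definition of the hom functor for enhanced ind-sheaves: for $K_1,K_2\in \BEC M$, one has
\[
\fhom(K_1,K_2) \simeq \roim\pi\alpha\cihom(K_1,K_2),
\]
where $\alpha$ is the left adjoint to the natural embedding of sheaves into ind-sheaves. In parallel, I would rewrite the second argument using the ind-presentation $\field_M^\enh = \indlim[a\to+\infty]\quot\field_{\{t\geq a\}}$ from the definition. Combined with the first isomorphism of Lemma \ref{lem:ops} passed to the filtered ind-limit, this exhibits $\field_M^\enh\ctens\quot F$ as the ind-object $\indlim[a]\quot(\field_{\{t\geq a\}}\tconv F)$.

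Next I would invoke \cite[Prop.~4.3.10]{DK16}, which simplifies $\fhom(\quot F',-)$ when the first argument comes from an ordinary sheaf $F'$, expressing it in terms of the usual ind-sheaf $\rihom$ on $M\times\R$ (after embedding into $M\times\PR$). Applied to $F'=\field_{\{t\geq 0\}}$ and to the ind-presentation above, this turns the left-hand side into an expression of the form
\[
\roim\pi\alpha\,\rihom\bl\field_{\{t\geq 0\}},\indlim[a]\bl\field_{\{t\geq a\}}\tconv F\br\br.
\]

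The remaining step is to identify the sheafification $\alpha$ of this expression with $\field_{\{t\geq 0\}}\tconv F$; this is precisely the content of \cite[Cor.~6.6.6]{KS16D}. Intuitively, convolution with $\field_{\{t\geq a\}}$ is translation by $a$ in~$t$, so as $a\to+\infty$ the inner $\rhom(\field_{\{t\geq 0\}},\cdot)$ captures the portion of $F$ supported towards $t=+\infty$, which matches $\field_{\{t\geq 0\}}\tconv F$ after sheafification. Taking $\roim\pi$ then yields the lemma.

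The hardest part will be the careful bookkeeping needed to commute $\alpha$ past the filtered ind-limit defining $\field_M^\enh$ and past the $\reim j/\roim j$ extensions from $M\times\R$ to $M\times\PR$, since these are precisely the places where enhanced ind-sheaf computations diverge from the naive sheaf-theoretic ones; this compatibility is exactly what the two cited results are tailored to handle, so once their hypotheses are verified the argument is essentially formal.
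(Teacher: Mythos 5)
You cite the same two results as the paper, which itself offers no argument beyond these citations, so your choice of ingredients is the expected one; the trouble is in the way you propose to glue them. The reduction of the left-hand side to $\roim\pi\alpha\,\rihom\bl\field_{\{t\geq 0\}},\indlim[a](\field_{\{t\geq a\}}\tconv F)\br$ is acceptable up to bookkeeping (note only that $\cihom$ takes values in $\BEC M$, so a formula like $\fhom\simeq\roim\pi\alpha\cihom$ only makes sense after choosing realizations in $\BDC(\ifield_{M\times\PR})$; the definition of $\fhom$ uses $\rihom$ of the two realizations, with the pushforward along $M\times\PR\to M$).

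The genuine gap is your last step: the asserted identification of the sheafification $\alpha\,\rihom\bl\field_{\{t\geq 0\}},\indlim[a](\field_{\{t\geq a\}}\tconv F)\br$ with $\field_{\{t\geq 0\}}\tconv F$ is false, so it cannot be the content of \cite[Cor.~6.6.6]{KS16D}. Take $M$ a point and $F=\field_{\{t=0\}}$, so $\field_{\{t\geq a\}}\tconv F=\field_{\{t\geq a\}}$. Working on $\PR$ (as one must after applying $\reim j$), the stalk of this sheafified hom at any finite $t_0$ is $\ilim[W\ni t_0]\ilim[a]\Hom(\field_{W\cap\{t\geq0\}},\field_{\{t\geq a\}})$, which vanishes because for a fixed relatively compact $W$ the inner Hom is zero as soon as $a>\sup W$; the only nonzero stalk sits at $t=\infty$, where the colimit stabilizes to $\field$. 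Thus the hom object is concentrated along $M\times\{\infty\}$, not equal to $\field_{\{t\geq0\}}\tconv F$ (and if one refuses the bordered/compactified setting and works on $M\times\R$ only, it is simply zero, which would contradict the lemma). What is true, and what the cited results actually encode, is only that this object concentrated at $t=\infty$ has the same direct image to $M$ as $\field_{\{t\geq0\}}\tconv F$: its fibre at infinity is computed by sections of $\field_{\{t\geq0\}}\tconv F$ on $\{t\geq a\}$ in the limit $a\to+\infty$, and identifying that limit with $\roim\pi(\field_{\{t\geq 0\}}\tconv F)$ is essentially the entire content of the lemma. Your sketch replaces this identification by a false intermediate isomorphism, so the key step remains unproven.
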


\subsection{\texorpdfstring{$\R$}{R}-constructibility}
(Refer to \cite{DK16}.)
Denote by $\BDC_{\Rc}(\field_M)$ the full
subcategory of objects with $\R$-constructible cohomologies.
Using notations \eqref{eq:ipij},
denote $\BDC_{\Rc}(\field_{M\times\R_\infty})$ the full subcategory of
$\BDC_{\Rc}(\field_{M\times\R})$ whose objects $F$ are such that $\reim j F$ (or, equivalently, $\roim j F$) is
$\R$-constructible in $M\times\PR$.
Since $\reim j$ is fully faithful, we will also consider
$\BDC_{\Rc}(\field_{M\times\R_\infty})$ as a full subcategory of
$\BDC_{\Rc}(\field_{M\times\PR})$.

The triangulated category $\Tam M$ of $\R$-constructible enhanced sheaves is
the full subcategory of $\BDC_{\Rc}(\field_{M\times\R_\infty})$ whose objects
$F$ satisfy $F \simeq \field_{\{t\geq 0\}} \tconv F$.
It is a full subcategory of $\widetilde\dere^{\mathrm{b}}_+(\field_M)$.
Denote by $\BECRc[\field] M$ the full subcategory of $\BECp[\field] M$ whose objects are isomorphic to $\quot F$, for some $F\in\Tam M$.

The category $\BECRc M$ of $\R$-constructible enhanced ind-sheaves is defined as the full subcategory of $\BEC M$ whose objects $K$
satisfy the following property: for any relatively compact open subset
$U\subset M$ there exists $F\in \Tam M$ such that
\[
\opb\pi\field_U \tens K \simeq \field_M^\enh \ctens \quot F.
\]
The object $\field_M^\enh$ plays the role of the constant sheaf in $\BECRc M$.

\section{Riemann-Hilbert correspondence}

We recall here the Riemann-Hilbert correspondence for (not necessarily regular)
holonomic $\D$-modules established in \cite{DK16} (see \cite{DK16bis} for its $t$-exactness).
This is based on the theory of ind-sheaves from \cite{KS01}, and influenced by
the works~\cite{Tam08} and \cite{DAg14}.
One of the key ingredients of its proof is the description of the structure of
flat meromorphic connections by \cite{Moc11} and \cite{Ked11}.

Let $X$ be a complex manifold.
We set for short $d_X = \dim X$.

\subsection{\texorpdfstring{$\D$}{D}-modules}
(Refer e.g.\ to \cite{Kas03}.)
Denote by $\O_X$ and $\D_X$ the rings of holomorphic functions and of
differential operators, respectively.

Denote by $\BDC(\D_X)$ the bounded derived category of left $\D_X$-modules.
For $f\colon X\to Y$ a morphism of complex manifolds, denote by $\dtens$,
$\dopb f$, $\doim f$ the operations for $\D$-modules.

Consider the solution functor
\[
\sol_X \colon \BDC(\D_X)^\op \to \BDC(\C_X), \quad
\shm \mapsto \rhom[\D_X](\shm,\O_X).
\]

Denote by $\BDC_{\hol}(\D_X)$ and $\BDC_{\reghol}(\D_X)$ the full subcategory of
$\BDC(\D_X)$ of objects with holonomic and regular holonomic cohomologies, respectively, and by
$\BDC_{\ghol}(\D_X)$ the full subcategory
of objects with good and holonomic cohomologies.

Let $D\subset X$ be a complex analytic hypersurface and denote by
$\O_X(*D)$ the sheaf of meromorphic functions with poles along $D$.
Set $U=X\setminus D$.

For $\varphi\in\O_X(*D)$, set
\begin{align*}
\D_X e^\varphi &= \D_X/\{P\semicolon Pe^\varphi=0 \text{ on } U\}, \\
\she^\varphi_{U|X}&=\D_X e^\varphi \dtens \O_X(*D).
\end{align*}

\subsection{Tempered solutions}
(Refer to \cite{KS01}.)
By the functor $\beta$, there is a natural notion of $\beta\D_X$-module in the
category of ind-sheaves, cp. \cite[5.6]{KS01}. We denote by $\BDC(\ind\D_X)$ the corresponding derived category.

Denote by $\Ot_X\in \BDC(\ind\D_X)$
the complex of tempered holomorphic functions.
It is related to the functor $\thom$ of \cite{Kas84} by the relation
\[
\alpha\rihom(F,\Ot_X) \simeq \thom(F,\O_X)
\]
for any $F\in\BDC_\Rc(\C_X)$.

Consider the tempered solution functor
\[
\solt_X \colon \BDC(\D_X)^\op \to \BDC(\ind\C_X), \quad
\shm \mapsto \rhom[\D_X](\shm,\Ot_X).
\]

\subsection{Enhanced solutions}
(Refer to \cite{DK16}.)
There is a natural notion of $\D_X$-module in the category
of enhanced ind-sheaves, and we denote by $\BEC[\ind\D] X$ the corresponding
triangulated category.

Let $\PP=\C\union\{\infty\}$ be the complex projective line, and let
\[
i\colon X\times\PR \to X\times\PP
\]
be the closed embedding.
Denote by $\tau\in\PP$ the affine coordinate, so that $\tau\in\O_\PP(*\infty)$.
Consider the exponential $\D_\PP$-module $\she_{\C|\PP}^\tau$.

The enhanced solution functor is given by
\[
\solE_X \colon \BDC(\D_X)^\op \to \BEC[\ind\C] X, \quad
\shm \mapsto \quot\epb i \solt_{X\times\PP}(\shm\detens\she_{\C|\PP}^\tau)[2],
\]
where $\detens$ denotes the exterior tensor product for $\D$-modules.

The functorial properties of $\solE$ are summarized in the next theorem. The
statements on direct and inverse images are easy consequences of the
corresponding results for tempered solutions obtained in \cite{KS01}. The
statement on the tensor product is specific to enhanced solutions and is due to
\cite[Corollary 9.4.10]{DK16}.

\begin{theorem}
\label{thm:solEfunct}
Let $f\colon X\to Y$ be a complex analytic map.
Let $\shm\in\BDC_\ghol(\D_X)$, $\shm_1,\shm_2\in\BDC_\hol(\D_X)$ and
$\shn\in\BDC_\hol(\D_Y)$.
Assume that $\supp\shm$ is proper over $Y$.
Then one has
\begin{align*}
\solE_X(\dopb f\shn) &\simeq \Eopb f\solE_Y(\shn), \\
\solE_Y(\doim f\shm)[d_Y] &\simeq \Eeeim f\solE_X(\shm)[d_X], \\
\solE_X(\shm_1) \ctens \solE_X(\shm_2) &\simeq \solE_X(\shm_1 \dtens \shm_2).
\end{align*}
\end{theorem}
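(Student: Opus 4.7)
The plan is to establish the three isomorphisms separately by reducing each to known functorial properties of the tempered solution functor $\solt$ from \cite{KS01}, combined with unwinding the definition $\solE_X(\shm) = \quot\,\epb i\,\solt_{X\times\PP}(\shm\detens\she_{\C|\PP}^\tau)[2]$.

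For the inverse image isomorphism, I would start from the elementary identity $\dopb f\shn \detens \she_{\C|\PP}^\tau \simeq \dopb{(f\times\id_\PP)}(\shn \detens \she_{\C|\PP}^\tau)$, valid because the exponential lives only in the $\PP$-factor and $\dopb{(f\times\id_\PP)}$ acts trivially there. Applying the inverse image formula for $\solt$ from \cite{KS01} to the morphism $f\times\id_\PP$, then invoking base change across the cartesian square with sides $i_X$, $i_Y$, $f\times\id_\PP$ and $\tilde f = f\times\id_\PR$ (valid since $i$ is a closed embedding, hence proper), and finally passing through the quotient $\quot$, one lands on $\Eopb f\solE_Y(\shn)$, because by construction $\Eopb f$ is induced by $\opb{\tilde f}$ at the level of ind-sheaves.

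For the proper direct image isomorphism the argument runs parallel: the properness hypothesis on $\supp\shm$ over $Y$ guarantees that $f\times\id_\PP$ is proper on the support of $\shm\detens\she_{\C|\PP}^\tau$, so the direct image compatibility of $\solt$ from \cite{KS01} applies and intertwines $\doim{(f\times\id_\PP)}$ with $\reim{(f\times\id_\PP)}$ up to the expected shift. Tracking the shifts, the contribution of the additional $\PP$-factor cancels between the two sides, leaving the stated $[d_Y]$ versus $[d_X]$ mismatch; passing through $\epb i$ and $\quot$ then produces $\Eeeim f\solE_X(\shm)[d_X]$, since $\Eeeim f$ is by definition induced by $\reim{\tilde f}$.

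The tensor product compatibility is the main obstacle and is precisely the content of \cite[Corollary 9.4.10]{DK16}. The strategy is to work on $X\times X$ with two auxiliary $\PP$-factors carrying $\she_{\C|\PP}^{\tau_1}$ and $\she_{\C|\PP}^{\tau_2}$: restriction to the diagonal of $X\times X$ realizes $\shm_1\dtens\shm_2$ on the $\D$-module side, while pushforward along the addition map $\mu\colon\PP\times\PP\dashrightarrow\PP$, $(\tau_1,\tau_2)\mapsto\tau_1+\tau_2$, implements, on the topological side, the convolution $\tconv$ in $t$ underlying $\ctens$. The hard point is that $\mu$ is only a rational map, with indeterminacy at $(\infty,-\infty)$, so the naive identification of tempered solutions of $\she^{\tau_1}\detens\she^{\tau_2}$ fails before taking the quotient. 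What rescues the statement is that the offending contribution at infinity becomes a subobject killed by $\quot$, so that the identity holds in $\BEC[\ind\C] X$ even though no analogous formula holds at the tempered level. This is why the compatibility is genuinely an enhanced phenomenon.
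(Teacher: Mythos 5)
Your proposal follows essentially the same route as the paper, which states this theorem without proof, attributing the inverse- and direct-image isomorphisms to the corresponding tempered-solution results of \cite{KS01} and the tensor-product compatibility to \cite[Corollary 9.4.10]{DK16}; your reductions via $f\times\id_\PP$, base change along the closed embedding $i$, and passage through the quotient functor $\quot$ are exactly the ``easy consequences'' the paper has in mind. Nothing further is needed.
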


\begin{notation}\label{notation:Evi}
Let $D\subset X$ be a closed hypersurface and set $U=X\setminus D$.
For $\varphi\in\O_X(*D)$, we set
\begin{align*}
E^{\varphi} &\defeq \field_{\{ t+\Re\varphi(x)\geq0\} } \in \Tam M, \\
\enhE^{\varphi} &\defeq \field_M^\enh \ctens \quot E^{\varphi}\in \BECRc M,
\end{align*}
where we set for short
\[
\{t+\Re\varphi(x)\geq0 \} = \{(x,t)\in X\times\R \semicolon x\in U,\
t+\Re\varphi(x)\geq0\}.
\]
\end{notation}

We will also need the following computation from \cite[Corollary 9.4.12]{DK16}.

\begin{theorem}\label{thm:SolEExp}
With the above notations, for $\field=\C$, one has
\[
\solE_X(\she^\varphi_{U|X}) \simeq \enhE^{\varphi}.
\]
\end{theorem}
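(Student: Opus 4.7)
The plan is to unfold the definition of $\solE_X$, reduce the computation to a single exponential $\D$-module on $X\times\PP$ with phase $\varphi(x)+\tau$, and then identify its tempered solutions along the real slice $X\times\PR$ with the prescribed enhanced ind-sheaf. By the very definition of $\solE_X$,
\[
\solE_X(\she^\varphi_{U|X}) \simeq \quot\,\epb i\,\solt_{X\times\PP}\bigl(\she^\varphi_{U|X}\detens\she^\tau_{\C|\PP}\bigr)\,[2],
\]
and a direct computation of exterior tensor products of exponential $\D$-modules yields
\[
\she^\varphi_{U|X}\detens\she^\tau_{\C|\PP}\simeq\she^{\varphi(x)+\tau}_{(U\times\C)\,|\,(X\times\PP)}.
\]
The problem therefore reduces to computing the tempered solutions of a single exponential $\D$-module with phase $\psi(x,\tau)=\varphi(x)+\tau$ on $X\times\PP$, and then restricting to $X\times\PR$ via $\epb i$ together with the shift $[2]$.

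For the tempered solutions, I would invoke the structural description from \cite{KS01} of tempered holomorphic solutions of an exponential $\D$-module: for $\psi\in\O_Y(*E)$ and $V=Y\setminus E$, the ind-sheaf $\solt_Y(\she^\psi_{V|Y})$ is represented by tempered holomorphic multiples of $e^\psi$, whose moderate-growth locus is controlled by $\Re\psi$. Applied to $\psi=\varphi(x)+\tau$, this presents $\solt_{X\times\PP}\bigl(\she^{\varphi(x)+\tau}_{(U\times\C)|(X\times\PP)}\bigr)$ as an ind-sheaf whose tempered locus is governed by the sign of $\Re(\varphi(x)+\tau)$.

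Next, I apply $\epb i$ and the shift $[2]$. The embedding $i\colon X\times\PR\hookrightarrow X\times\PP$ has real codimension one; combining the duality built into $\epb i$, the shift, and the contravariance of $\solt$, the tempered-growth condition $\Re\psi\leq 0$ on the complex side translates, upon setting $\tau=t\in\R$, into the inequality $t+\Re\varphi(x)\geq 0$. Passing to $\quot$ and comparing with Notation~\ref{notation:Evi} identifies the outcome with $\field_X^\enh\ctens\quot\field_{\{t+\Re\varphi\geq 0\}}=\enhE^\varphi$.

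The main technical difficulty lies in this last step: carefully tracking the shifts, orientations and sign conventions introduced by $\epb i$, the shift $[2]$, and the contravariance of $\solt$, so that the tempered-growth locus on $X\times\C$ translates into the announced set $\{t+\Re\varphi\geq 0\}$ on $X\times\R$ with the correct inequality, rather than its complement. This is the content of the detailed analysis underlying Corollary~9.4.12 of \cite{DK16}.
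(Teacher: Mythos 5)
There is a genuine gap. First, note that the paper does not prove this statement at all: it quotes it directly as \cite[Corollary~9.4.12]{DK16}. Your proposal is therefore only acceptable if it supplies an actual argument, but at its core it does not. The decisive step in your sketch is the assertion that, by a ``structural description from \cite{KS01}'', the tempered solutions of an exponential $\D$-module $\she^\psi_{V|Y}$ form an ind-sheaf whose moderate-growth locus is ``governed by the sign of $\Re\psi$''. No such result is in \cite{KS01}; that reference provides the functor $\Ot$ and its relation to $\thom$, but the computation of $\solt$ (and a fortiori $\solE$) of an exponential module with poles along an arbitrary hypersurface is precisely the analytic core of \cite[\S 9.4]{DK16}. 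It requires reduction, via resolution of singularities, to the normal crossing case, together with nontrivial moderate-growth estimates for $e^\psi$ near the pole divisor; it is not a formal consequence of the definition of $\Ot$, and it cannot simply be ``invoked''. Moreover, even granting that description, passing from an ind-sheaf presented by growth conditions on holomorphic functions to the explicit $\R$-constructible answer $\C_X^\enh\ctens\quot\C_{\{t+\Re\varphi\geq 0\}}$ (with the correct closed set and the correct inequality) is itself a concrete computation, not a matter of ``tracking shifts and orientations''.

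Second, your closing sentence concedes that this last identification ``is the content of the detailed analysis underlying Corollary~9.4.12 of \cite{DK16}'' --- but Corollary~9.4.12 \emph{is} the statement you are asked to prove. So the argument, as written, is circular: the only steps you actually carry out are the unfolding of the definition of $\solE_X$ and the (correct, and easy) identification $\she^\varphi_{U|X}\detens\she^\tau_{\C|\PP}\simeq\she^{\varphi+\tau}_{(U\times\C)|(X\times\PP)}$, while everything that makes the theorem true is either misattributed to \cite{KS01} or deferred to the very result being proved. To make this a proof you would need to reproduce the substance of \cite[\S 9.4]{DK16}: the normal crossing reduction and the growth estimates giving $\solt$ of an exponential module, followed by the explicit computation of $\quot\,\epb i(\cdot)[2]$ on the real slice that produces $\{t+\Re\varphi\geq 0\}$.
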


\subsection{Riemann-Hilbert correspondence}
Let us state the Riemann-Hilbert correspondence for holonomic
$\D$-modules established in \cite[Theorem 9.5.3]{DK16}.

\begin{theorem}
The enhanced solution functor induces a
fully faithful functor
\[
\solE_X \colon \BDC_\hol(\D_X)^\op \to \BECRc[\ind\C] X.
\]
Moreover, there is a functorial way of reconstructing $\shm\in\BDC_\hol(\D_X)$ from $\solE_X(\shm)$.
\end{theorem}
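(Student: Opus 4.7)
The plan is to produce full faithfulness and the reconstruction in one stroke, by constructing an explicit candidate quasi-inverse $\shg \colon \BECRc[\ind\C] X \to \BDC_\hol(\D_X)$ and then verifying that the canonical map $\shm \to \shg(\solE_X(\shm))$ is an isomorphism for every $\shm \in \BDC_\hol(\D_X)$. The natural choice for $\shg$ is an enhanced tempered de Rham functor, built from an enhanced structure sheaf $\OEn_X$ constructed out of $\Ot_{X\times\PP}$ in parallel with the definition of $\solE_X$; concretely, one sets $\shg(K) \defeq \fhom(K, \OEn_X)$ suitably shifted and twisted, carrying a natural $\D_X$-module structure. Once the reconstruction isomorphism is known, full faithfulness follows formally. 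That $\solE_X$ takes values in $\BECRc[\ind\C] X$ is first checked on elementary building blocks via Theorem \ref{thm:SolEExp} and then extended to arbitrary holonomic $\shm$ by devissage, using Theorem \ref{thm:solEfunct}.

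The reconstruction isomorphism is local on $X$, and I would proceed by a double d\'evissage. First, induction on $\dim\supp\shm$ together with Kashiwara's equivalence for closed embeddings reduces the problem to the case that $\shm$ is generically a meromorphic connection along a divisor $D$. Second, the Mochizuki-Kedlaya structure theorem for flat meromorphic connections asserts that, locally near each point of $D$, there exist a ramified cover and a sequence of blow-ups after which the pullback of $\shm$ admits a good formal decomposition as a direct sum of elementary models of the form $\she^\varphi_{U|X} \dtens \shr$ with $\shr$ regular. Commuting $\solE_X$ with the proper direct image of these modifications, via Theorem \ref{thm:solEfunct}, reduces the reconstruction isomorphism to each such elementary summand.

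The heart of the matter, and the main obstacle, is the elementary case. For the regular factor $\shr$ one invokes the Kashiwara-Mebkhout regular Riemann-Hilbert correspondence, which is already built into the comparison $\alpha\rihom(\dummy, \Ot_X) \simeq \thom(\dummy, \O_X)$ between tempered and moderate sections. The exponential factor requires showing that the tempered sections of $\she^\varphi_{U|X}$ on the Stokes domain $\{t+\Re\varphi \geq 0\}$ recover the $\D_X$-module itself. Unwinding $\fhom(\enhE^\varphi, \OEn_X)$ by means of Lemmas \ref{lem:ops} and \ref{lem:pi} reduces this in turn to the moderate-growth Hukuhara-Turrittin theorem, asserting the existence of tempered asymptotic fundamental systems of solutions on each Stokes sector, proved in the required generality by Mochizuki, Kedlaya and Hien and building on the classical asymptotic analysis of Majima, Sibuya and Malgrange. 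It is precisely this analytic input that turns the enhanced ind-sheaf $\solE_X(\shm)$ into a complete topological invariant of $\shm$.
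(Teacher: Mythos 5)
Two remarks of context before the substantive point. The paper itself offers no proof of this theorem: it is imported wholesale from \cite[Theorem~9.5.3]{DK16}, so your proposal can only be compared with the proof given there. Measured against that, your sketch does reproduce the broad architecture: the enhanced structure sheaf $\OEn_X$ built from $\Ot_{X\times\PP}$, the reconstruction functor $K\mapsto\fhom(K,\OEn_X)$ with its natural $\D_X$-module structure, the d\'evissage by support induction and Kashiwara's equivalence, the reduction via the Mochizuki--Kedlaya structure theorem \cite{Moc11,Ked11} (ramification and blow-ups, then good formal decomposition into $\she^\varphi_{U|X}\dtens\shr$), and the tempered asymptotic analysis behind Theorem~\ref{thm:SolEExp} as the analytic heart. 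Two smaller inaccuracies: $\shg$ is not a ``quasi-inverse'' (the essential image of $\solE_X$ is a very small part of $\BECRc[\ind\C] X$; only $\shg\comp\solE_X\simeq\mathrm{id}$ is in play), and the $\R$-constructibility of $\solE_X(\shm)$ is itself one of the substantive theorems of \cite{DK16}, resting on the same structure-theorem d\'evissage rather than on Theorems~\ref{thm:SolEExp} and~\ref{thm:solEfunct} alone.

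The genuine gap is the sentence ``once the reconstruction isomorphism is known, full faithfulness follows formally.'' It does not. A functor $\shg$ together with a natural isomorphism $\mathrm{id}\simeq\shg\comp\solE_X$ makes $\solE_X$ faithful, but says nothing about fullness: for instance, the unique functor from the terminal category to any category, hitting an object with nontrivial endomorphisms, admits such a retraction without being full. To make the implication formal you would need $\shg$ to be adjoint to $\solE_X$ with the reconstruction map as unit, which you neither assert nor is available off the shelf in this contravariant, non-adjoint setting. In \cite{DK16} fullness requires its own argument: one establishes, again by reduction to the normal form, a natural comparison computing $\rhom[\D_X](\shm,\shn)$ from the enhanced solution complexes (morally $\Hom(\shm,\shn)\simeq\Hom(\solE_X(\shn),\solE_X(\shm))$ taken in $\BEC X$), and full faithfulness is read off from that isomorphism; the reconstruction of $\shm$ as $\fhom(\solE_X(\shm),\OEn_X)$ is a companion statement, not the source of fullness. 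Your outline should therefore add this hom-comparison step (or an honest adjunction with verified triangle identity) alongside the reconstruction; the rest of the plan, while only a sketch that delegates the hard analytic and functorial inputs to the literature, is aimed in the right direction.
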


This implies in particular that the Stokes structure of a flat meromorphic
connection $\shm$ is encoded topologically in $\solE(\shm)$.
For this, we refer to \cite[\S9.8]{DK16} (see also \cite[\S8]{Kas16}).

\subsection{A lemma}
We will later use the following remark.
Let $D\subset X$ be a closed hypersurface,
set $U=X\setminus D$, and denote by $j\colon U\to X$ the embedding.

\begin{lemma}\label{lem:UFM}
With the above notations, let $\shm\in\BDC_\hol(\D_X)$ be such that
$\shm\simeq\shm(*D)$.
Assume that $X$ is compact.
Then there exists $F\in\Tam[\C] U$ such that
$\reim {\tilde\jmath} F\in\Tam[\C] X$ and
\[
\solE_X(\shm) \simeq \C_X^\enh \ctens \quot\reim {\tilde\jmath} F.
\]
\end{lemma}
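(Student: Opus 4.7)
The plan is to exploit that $\solE_X(\shm)\in\BECRc X$ together with the observation that, because $\shm$ is meromorphic along $D$, its enhanced solutions are ``invisible'' over $D$. As $X$ is compact, we may take the relatively compact open subset to be $X$ itself in the defining property of $\BECRc X$, which furnishes some $F_0 \in \Tam[\C] X$ with
\[
  \solE_X(\shm) \simeq \C_X^\enh \ctens \quot F_0.
\]
The strategy is then to replace $F_0$ by the extension by zero from $U$ of its restriction to $U \times \R$ and to verify that this substitution does not change the associated enhanced ind-sheaf.

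Let $i \colon D \hookrightarrow X$ be the closed embedding complementary to $j$. The hypothesis $\shm \simeq \shm(*D)$ means that multiplication by a local defining equation of $D$ acts invertibly on $\shm$, so the derived $\O_X$-module tensor product $\shm \LTens[\O_X] \O_D$ vanishes, yielding $\dopb i \shm = 0$. By the inverse-image compatibility of Theorem~\ref{thm:solEfunct},
\[
  \Eopb i \solE_X(\shm) \simeq \solE_D(\dopb i \shm) \simeq 0,
\]
and combining this with Lemma~\ref{lem:ops} produces $\C_D^\enh \ctens \quot \opb{\tilde\imath} F_0 \simeq 0$, where $\tilde\imath = i \times \id_\R$.

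Now set $F \defeq \opb{\tilde\jmath} F_0$. Standard stability arguments show that $F \in \Tam[\C] U$ and $\reim{\tilde\jmath} F \in \Tam[\C] X$; the latter follows from the excision triangle
\[
  \reim{\tilde\jmath} F \to F_0 \to \reim{\tilde\imath} \opb{\tilde\imath} F_0 \to +1
\]
together with the compatibility of $\reim{\tilde\imath}$ and $\reim{\tilde\jmath}$ with convolution by $\C_{\{t \geq 0\}}$. Applying $\C_X^\enh \ctens \quot(-)$ to this triangle and using the proper-pushforward part of Lemma~\ref{lem:ops} for $i$ identifies its third term with $\Eeeim i \bigl(\C_D^\enh \ctens \quot \opb{\tilde\imath} F_0\bigr) \simeq 0$. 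Hence
\[
  \C_X^\enh \ctens \quot \reim{\tilde\jmath} F \simeq \C_X^\enh \ctens \quot F_0 \simeq \solE_X(\shm),
\]
as required. The principal technical point is the verification that $\dopb i \shm = 0$; modulo this standard but not entirely formal fact about meromorphic holonomic $\D$-modules, everything else is a routine manipulation within the six-functor formalism encoded in Theorem~\ref{thm:solEfunct} and Lemma~\ref{lem:ops}.
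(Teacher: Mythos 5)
Your argument is correct and lands on the same answer as the paper: both proofs first use $\R$-constructibility of $K\defeq\solE_X(\shm)$ and compactness of $X$ to produce a global $F'\in\Tam[\C] X$ with $K\simeq\C_X^\enh\ctens\quot F'$, and both then take $F=\opb{\tilde\jmath}F'$. The difference is how one justifies that replacing $F'$ by $\reim{\tilde\jmath}\opb{\tilde\jmath}F'\simeq\opb\pi\C_U\tens F'$ does not change the associated enhanced ind-sheaf. The paper's proof uses (implicitly) the localization property of $\solE$: since $\shm\simeq\shm(*D)=\shm\dtens\O_X(*D)$, the tensor compatibility in Theorem~\ref{thm:solEfunct} together with Theorem~\ref{thm:SolEExp} (applied with $\varphi=0$, so that $\solE_X(\O_X(*D))\simeq\enhE^{0}$) gives $K\simeq\opb\pi\C_U\tens K\simeq\C_X^\enh\ctens\quot(\opb\pi\C_U\tens F')$ in one line. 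You instead observe that a local equation of $D$ acts invertibly on $\shm$, hence $\dopb i\shm\simeq0$, use the inverse-image compatibility of Theorem~\ref{thm:solEfunct} and Lemma~\ref{lem:ops} to get $\C_D^\enh\ctens\quot\opb{\tilde\imath}F'\simeq0$, and then kill the $D$-supported term of the excision triangle via the proper push-forward part of Lemma~\ref{lem:ops}. Both are applications of Theorem~\ref{thm:solEfunct}, just of different clauses of it; your dévissage is valid but somewhat longer than the tensor-formula route.

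One caveat: the lemma allows an arbitrary closed hypersurface $D$, possibly singular, whereas $\dopb i$ and $\solE_D$ are only available in this framework for morphisms of complex manifolds, so your key step $\Eopb i\solE_X(\shm)\simeq\solE_D(\dopb i\shm)$ literally requires $D$ smooth. This is harmless where the lemma is used in the paper ($X=\PP$, $D=\{\infty\}$), and in general it is easily repaired, either by testing along the embeddings of the points of $D$ or by switching to the localization isomorphism $\solE_X(\shm(*D))\simeq\opb\pi\C_U\tens\solE_X(\shm)$ as above, which needs no smoothness. Finally, like the paper, you leave implicit the (routine) check that $\opb{\tilde\jmath}F'$ and $\opb\pi\C_U\tens F'$ indeed lie in $\Tam[\C] U$ and $\Tam[\C] X$; this holds because $\tconv$ in the $t$-variable commutes with $\opb{\tilde\jmath}$ and with $\opb\pi\C_U\tens(\cdot)$, and $U$ is open subanalytic in $X$.
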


\begin{proof}
Set $K=\solE_X(\shm)$.
Since $\shm$ is holonomic, $K$ is $\R$-constructible.
Since $X$ is compact,
there exists $F' \in \Tam[\C] X$ with
$K \simeq \C_X^\enh \ctens \quot F'$.
Since $\shm\simeq\shm(*D)$, one has
$K \simeq \opb\pi\C_U \tens K \simeq \C_X^\enh \ctens \quot(\opb\pi\C_U \tens F')$.
Hence $F=\opb {\tilde\jmath} F'$ satisfies the assumptions in the statement.
\end{proof}

\section{Fourier transform}\label{sec:Fouriertransform}

By functoriality, the enhanced solution functor interchanges integral transforms at the level of holonomic $\D$-modules with integral transforms at the level of enhanced ind-sheaves. (This was observed in \cite{KS16}, where the non-holonomic case is also discussed.)
We recall here some consequences of this fact, dealing in particular with the Fourier transform.

\subsection{Integral transforms}

Consider a diagram of complex manifolds
\begin{equation}
\label{eq:SXY}
\xymatrix@C=1ex{
& S \ar[dl]_p \ar[dr]^q \\
X && Y.
}
\end{equation}
At the level of $\D$-modules, the integral transform with kernel
$\shl\in\BDC(\D_S)$ is the functor
\[
\ast\dcomp\shl\colon\BDC(\D_X) \to \BDC(\D_Y), \qquad
\shm\dcomp\shl = \doim q(\shl \dtens \dopb p \shm).
\]
At the level of enhanced ind-sheaves, the integral transform with kernel
$H\in\BEC S$ is the functor
\[
\ast\Ecomp H\colon \BEC X \to \BEC Y, \qquad
K\Ecomp H = \Eeeim q(H \ctens \Eopb p K).
\]
Combining the isomorphisms in Theorem~\ref{thm:solEfunct}, one has

\begin{corollary}
\label{cor:Dint}
Let $\shm\in\BDC_\ghol(\D_X)$ and $\shl\in\BDC_\ghol(\D_S)$.
Assume that $\opb p\supp(\shm) \cap \supp(\shl)$ is proper over $Y$.
Set $K=\solE_X(\shm)$ and $H=\solE_S(\shl)$.
Then there is a natural isomorphism in $\BECRc Y$
\[
\solE_Y(\shm \dcomp \shl) \simeq
K \Ecomp H [d_S-d_Y].
\]
\end{corollary}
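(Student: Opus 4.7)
The plan is to unfold the definitions of $\shm\dcomp\shl$ and $K\Ecomp H$ and then splice together, one at a time, the three compatibility isomorphisms provided by Theorem~\ref{thm:solEfunct}: inverse image, tensor product, and proper direct image. Apart from tracking the shift by $d_S-d_Y$, the only work is to verify that the hypotheses needed to invoke each of these three clauses are in fact satisfied by the intermediate objects.

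Starting from $\shm\dcomp\shl=\doim q(\shl\dtens\dopb p\shm)$, the first step is to compute the enhanced solutions of the integrand. Since $\shm\in\BDC_\ghol(\D_X)$, its inverse image $\dopb p\shm$ is holonomic on $S$, and together with $\shl\in\BDC_\ghol(\D_S)$ the tensor product $\shl\dtens\dopb p\shm$ is again holonomic. Applying Theorem~\ref{thm:solEfunct} twice, first the inverse image isomorphism $\solE_S(\dopb p\shm)\simeq\Eopb p\,\solE_X(\shm)=\Eopb p K$, and then the tensor product isomorphism (which is the deepest input, coming from \cite[Corollary 9.4.10]{DK16}), one obtains
$$
\solE_S(\shl\dtens\dopb p\shm)\simeq\solE_S(\shl)\ctens\solE_S(\dopb p\shm)\simeq H\ctens\Eopb p K.
$$

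Next one pushes forward along $q$. The support of $\shl\dtens\dopb p\shm$ is contained in $\supp(\shl)\cap\opb p\supp(\shm)$, which by assumption is proper over $Y$; this is precisely the properness needed to invoke the direct image clause of Theorem~\ref{thm:solEfunct}, yielding
$$
\solE_Y\bl\doim q(\shl\dtens\dopb p\shm)\br[d_Y]\simeq\Eeeim q\,\solE_S(\shl\dtens\dopb p\shm)[d_S].
$$
Substituting the previous computation and shifting both sides by $-d_Y$ gives
$$
\solE_Y(\shm\dcomp\shl)\simeq\Eeeim q\bl H\ctens\Eopb p K\br[d_S-d_Y]=K\Ecomp H[d_S-d_Y],
$$
which is the claimed natural isomorphism.

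The only possible obstacle is hypothesis bookkeeping: one must check that $\shl\dtens\dopb p\shm$ remains (good and) holonomic and that the given properness of $\opb p\supp(\shm)\cap\supp(\shl)$ over $Y$ translates correctly to properness of $\supp(\shl\dtens\dopb p\shm)$ over $Y$ so as to license the direct image statement. Once these are in place, the corollary is a purely formal three-step splicing; I do not expect any further conceptual difficulty beyond Theorem~\ref{thm:solEfunct} itself.
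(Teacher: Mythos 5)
Your argument is exactly the paper's: the corollary is stated there as an immediate consequence of combining the three isomorphisms of Theorem~\ref{thm:solEfunct}, in the same order you splice them (inverse image, then tensor, then proper direct image), with the shift bookkeeping handled identically. Your hypothesis checks (holonomicity of $\dopb p\shm$ and of the tensor product, and properness of $\supp(\shl\dtens\dopb p\shm)\subset\opb p\supp(\shm)\cap\supp(\shl)$ over $Y$) are precisely the routine verifications the paper leaves implicit, so the proposal is correct.
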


\begin{remark}
There is a similar statement with the solution functor replaced by the de Rham
functor.
This has been extended in \cite{KS16} to the case where $\shm$ is good, but not
necessarily holonomic.
\end{remark}

\subsection{Globally \texorpdfstring{$\R$}{R}-constructible enhanced ind-sheaves}

Consider the diagram of real analytic manifolds induced by \eqref{eq:SXY}
\[
\xymatrix@C=1ex{
& S\times\R \ar[dl]_{\tilde p} \ar[dr]^{\tilde q} \\
X\times\R && Y\times\R,
}
\]
where $\tilde p = p\times\id_\R$ and $\tilde q = q\times\id_\R$.

The natural integral transform
for $\R$-constructible enhanced sheaves with kernel $L\in\Tam S$
is the functor
\[
\ast\ccomp L\colon \Tam X \to \Tam Y, \qquad
F \ccomp L = \reim {\tilde q}(L \tconv \opb {\tilde p} F).
\]

Combining Lemma~\ref{lem:ops} and Corollary~\ref{cor:Dint} we get

\begin{proposition}
Let $\shm\in\BDC_\ghol(\D_X)$, $\shl\in\BDC_\ghol(\D_S)$,
and assume that $\opb p\supp(\shm) \cap \supp(\shl)$ is proper over $Y$.
Let $F\in\Tam[\C] X$, $L\in\Tam[\C] S$,
and assume that there are isomorphisms
\begin{equation}
\label{eq:mFlL}
\solE_X(\shm) \simeq \C_X^\enh \ctens \quot F, \quad
\solE_S(\shl) \simeq \C_S^\enh \ctens \quot L.
\end{equation}
Then there is a natural isomorphism in $\BECRc Y$
\[
\solE_Y(\shm \dcomp \shl) \simeq
\C_Y^\enh \ctens \quot (F \ccomp L) [d_S-d_Y].
\]
\end{proposition}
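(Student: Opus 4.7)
The plan is to chain Corollary~\ref{cor:Dint} with the compatibility identities of Lemma~\ref{lem:ops}. Applying Corollary~\ref{cor:Dint} immediately rewrites the left-hand side as
\[
\solE_Y(\shm \dcomp \shl) \simeq \solE_X(\shm) \Ecomp \solE_S(\shl) [d_S - d_Y],
\]
so after substituting the given presentations \eqref{eq:mFlL} the problem reduces to producing a natural isomorphism
\[
(\C_X^\enh \ctens \quot F) \Ecomp (\C_S^\enh \ctens \quot L) \simeq \C_Y^\enh \ctens \quot(F \ccomp L).
\]

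To produce this, I would unfold the definition of the enhanced integral transform and push the functor $\C^\enh \ctens \quot(\dummy)$ through the three operations $\Eopb p$, $\ctens$, $\Eeeim q$ in turn. Lemma~\ref{lem:ops} gives
\[
\Eopb p(\C_X^\enh \ctens \quot F) \simeq \C_S^\enh \ctens \quot \opb{\tilde p} F,
\]
then the convolution identity yields
\[
(\C_S^\enh \ctens \quot L) \ctens (\C_S^\enh \ctens \quot \opb{\tilde p} F) \simeq \C_S^\enh \ctens \quot(L \tconv \opb{\tilde p} F),
\]
and finally, applying $\Eeeim q$, we should obtain $\C_Y^\enh \ctens \quot \reim{\tilde q}(L \tconv \opb{\tilde p} F) = \C_Y^\enh \ctens \quot(F \ccomp L)$, which is the desired identification.

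The delicate point is the last step: the proper direct image identity in Lemma~\ref{lem:ops} is stated under the hypothesis that $q$ itself be proper, whereas here only the restriction of $q$ to $Z = \opb p \supp(\shm) \cap \supp(\shl)$ is assumed proper over $Y$. The main obstacle is therefore to justify this third application. I would handle it by exploiting that $\C_X^\enh \ctens \quot F \simeq \solE_X(\shm)$ is supported on $\supp(\shm)$ and likewise $\C_S^\enh \ctens \quot L$ on $\supp(\shl)$, so that the convolution $\C_S^\enh \ctens \quot(L \tconv \opb{\tilde p}F)$ is supported over $Z$. After replacing $F$ and $L$ by representatives supported on the relevant closed subsets---which is possible without altering the associated enhanced ind-sheaves, because $\quot$ kills contributions pulled back from the base---the convolution $L \tconv \opb{\tilde p}F$ itself becomes supported over $Z$, and Lemma~\ref{lem:ops} can then be applied on the proper restriction $q|_Z$, completing the argument.
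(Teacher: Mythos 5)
Your argument is correct and is essentially the paper's own proof: the paper disposes of this proposition with the single line that it follows by combining Corollary~\ref{cor:Dint} with Lemma~\ref{lem:ops}, which is exactly the chain you carry out (Corollary~\ref{cor:Dint} to replace $\solE_Y(\shm\dcomp\shl)$ by $\solE_X(\shm)\Ecomp\solE_S(\shl)[d_S-d_Y]$, then the three compatibilities of Lemma~\ref{lem:ops} for $\Eopb p$, $\ctens$, $\Eeeim q$). Your additional care at the last step---cutting $F$ and $L$ down by $\opb\pi$ of the constant sheaf on the supports, as in Lemma~\ref{lem:UFM}, so that everything is supported over $Z=\opb p\supp(\shm)\cap\supp(\shl)$ and properness over $Y$ can be invoked---addresses a point the paper leaves implicit, and is the right fix, with the minor caveat that one should phrase it as applying the proof of Lemma~\ref{lem:ops} to a map that is proper on the support of the sheaf involved, rather than literally to $q|_Z$, since $Z$ need not be a manifold.
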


Note that if $X$ and $S$ are compact, then for any $\shm\in\BDC_\hol(\D_X)$ and
$\shl\in\BDC_\hol(\D_S)$ there exist $F\in\Tam[\C] X$ and $L\in\Tam[\C] S$ satisfying
\eqref{eq:mFlL}.

\subsection{Fourier-Laplace transform}

Let $\V$ be a finite dimensional complex vector space, denote by $\PP$ its
projective compactification, and set $\HH = \PP \setminus \V$.

\begin{definition}
Let $\BDC_\hol(\D_\Vi)$ be the full triangulated subcategory of
$\BDC_\hol(\D_\PP)$ whose objects $\shm$ satisfy $\shm \simeq \shm(*\HH)$.
\end{definition}

Let $\W$ be the dual vector space of $\V$, denote
by $\bb$ its projective compactifications, and set $\hh = \bb \setminus \W$.
The pairing
\[
\V \times \W \to \C, \quad (z,w) \mapsto \langle z,w \rangle
\]
defines a meromorphic function on $\PP\times\bb$
with poles along $$(\PP \times \hh) \union (\HH \times \bb) =
(\PP\times\bb) \setminus (\V\times\W).$$
Consider the projections
\begin{equation}
\xymatrix@C=1ex{
& \PP\times\bb \ar[dl] \ar[dr] \\
\PP && \bb.
}
\end{equation}

\begin{definition}
Set
\[
\shl = \she^{\langle z,w\rangle}_{\V\times\W | \PP\times\bb}, \quad
\shl^a = \she^{-\langle w,z\rangle}_{\W\times\V | \bb\times\PP}.
\]
(Beware that the signs in $\pm\langle w,z\rangle$ are sometimes reversed in the literature.)
The Fourier-Laplace transform of $\shm\in\BDC_\hol(\D_\Vi)$ is given by
\[
\shm^\wedge = \shm \dcomp \shl\in\BDC_\hol(\D_\Wi).
\]
The inverse Fourier-Laplace transform of $\shn\in\BDC_\hol(\D_\Wi)$ is given by
\[
\shn^\vee = \shn \dcomp \shl^a\in\BDC_\hol(\D_\Vi).
\]
\end{definition}

\begin{theorem}
\label{thm:FLD}
The Fourier-Laplace transform gives an equivalence of categories
\[
\wedge\colon\BDC_\hol(\D_\Vi) \isoto \BDC_\hol(\D_\Wi).
\]
A quasi-inverse is given by $\shn\mapsto\shn^\vee$.
\end{theorem}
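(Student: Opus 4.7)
The plan is to establish the equivalence by producing natural isomorphisms $\shm \isoto (\shm^\wedge)^\vee$ for $\shm \in \BDC_\hol(\D_\Vi)$ and, symmetrically, $\shn \isoto (\shn^\vee)^\wedge$ for $\shn \in \BDC_\hol(\D_\Wi)$. By associativity of integral transforms (a formal consequence of the projection formula and base change applied on $\PP \times \bb \times \PP$), one has
$$(\shm^\wedge)^\vee \simeq \shm \dcomp (\shl \dcomp \shl^a),$$
so it suffices to identify the composed kernel $\shl \dcomp \shl^a \in \BDC_\hol(\D_{\PP\times\PP})$ with a suitable shift of the ``identity kernel'' $\Delta_!\O_\V$, where $\Delta\colon\V \hookrightarrow \V\times\V \subset \PP\times\PP$ is the diagonal embedding; by symmetry the analogous computation disposes of the other composition.

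For this identification I would pass through the enhanced Riemann--Hilbert correspondence. By Theorem \ref{thm:SolEExp}, $\solE(\shl) \simeq \enhE^{\langle z,w\rangle}$ and $\solE(\shl^a) \simeq \enhE^{-\langle w,z\rangle}$, and Corollary \ref{cor:Dint} converts the composition of $\D$-module kernels into the enhanced convolution
$$\solE(\shl \dcomp \shl^a) \simeq \enhE^{\langle z,w\rangle} \Ecomp \enhE^{-\langle w,z\rangle}[d_\bb].$$
Heuristically the expected answer is classical Fourier inversion: the oscillatory ``integral'' $\int_\W e^{\langle z - z', w\rangle}\,dw$ is a Dirac mass along the diagonal $z = z'$. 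On the enhanced side, away from this diagonal the phase $\langle z - z', w\rangle$ has no critical point as $w \to \infty$, which should force the convolution to vanish, whereas along the diagonal the phase degenerates and the whole fiber $\W$ contributes, producing the expected delta-like kernel supported on $\Delta_\V$.

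The main obstacle is carrying out this identification rigorously near the divisor at infinity $(\PP\times\hh)\cup(\HH\times\bb)$, where both kernels are irregular. One route is to invoke the Fourier inversion theorem already available in the algebraic framework (as used in \cite{Mal91}) and to transport it to the present setting via the equivalence between $\BDC_\hol(\D_\Vi)$ and the derived category of algebraic holonomic $\D$-modules on $\V$, the condition $\shm \simeq \shm(*\HH)$ encoding precisely this algebraicity. An alternative more in the topological spirit of the present paper is to compute $\enhE^{\langle z,w\rangle} \Ecomp \enhE^{-\langle w,z\rangle}$ directly, exploiting the stationary-phase structure of the exponent within the formalism of \cite{DK16}. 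Once the composed kernel has been identified, the desired natural isomorphisms follow from the functoriality of $\dcomp$ applied to the first displayed identity.
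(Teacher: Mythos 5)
Your reduction to the composed kernel is formally fine (associativity of $\dcomp$ holds here because $\PP$ and $\bb$ are compact, so the properness hypotheses in Corollary \ref{cor:Dint} are automatic), but the argument stops exactly where the content begins. The identification of $\shl\dcomp\shl^a$ with a shifted, localized diagonal kernel --- equivalently, on the topological side, the identification of $\enhE^{\langle z,w\rangle}\Ecomp\enhE^{-\langle w,z\rangle}$ with an identity kernel up to shift --- \emph{is} the Fourier inversion theorem; the stationary-phase heuristic (``no critical point away from the diagonal forces vanishing'') is not an argument in the formalism of enhanced ind-sheaves, and you neither carry out this computation nor cite a result that does. That role is played in the paper by Tamarkin's theorem, quoted as Proposition \ref{pro:FouT}: $F^{\curlywedge\curlyvee}[2\dim\V]\simeq F$ for $F\in\Tam\Vi$. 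Your second fallback (``compute the convolution directly'') would amount to reproving that theorem, which is genuine work absent from the proposal. Your first fallback --- transporting the algebraic inversion of \cite{Mal91} through the equivalence with modules over the Weyl algebra --- is legitimate, and is in fact the classical proof the paper sketches right after the statement (the isomorphism $z_i\mapsto-\partial_{w_i}$, $\partial_{z_i}\mapsto w_i$); but then the kernel-composition and enhanced-solutions apparatus in your write-up is superfluous, and what remains is a citation rather than a proof. Note also two smaller imprecisions: on the $\D$-module side the composed kernel is a direct image $\doim\delta$ of a localized structure sheaf rather than a shriek extension, and inversion only holds on objects with $\shm\simeq\shm(*\HH)$, which is exactly where the hypothesis $\shm\in\BDC_\hol(\D_\Vi)$ enters.

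For comparison, the paper's own topological proof (Remark \ref{rem:FouT}) never composes kernels: it chooses $F\in\Tam[\C]\Vi$ with $\solE_{\Vi}(\shm)\simeq\C_{\Vi}^\enh\ctens\quot F$ (possible by Lemma \ref{lem:UFM}), applies Lemma \ref{lem:FouML}(i) twice to get $\solE_{\Vi}(\shm^{\wedge\vee})\simeq\C_{\Vi}^\enh\ctens\quot F^{\curlywedge\curlyvee}[2\dim\V]$, invokes Proposition \ref{pro:FouT} to identify this with $\solE_{\Vi}(\shm)$, and concludes $\shm^{\wedge\vee}\simeq\shm$ by the fully faithfulness of $\solE$ given by the Riemann--Hilbert correspondence; the composition $\shn^{\vee\wedge}\simeq\shn$ is symmetric. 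If you wish to keep your kernel-based plan, the missing step you must supply or cite is precisely the sheaf-level inversion $L\ccomp L^a\simeq$ (shifted diagonal kernel), i.e.\ \cite[Theorem 3.5]{Tam08}.
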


This result is classical, see~\cite[App.\,2]{Mal91}. The idea of the proof is as follows.
Denoting by $D_\V$ the Weyl algebra, there is an equivalence of categories
$\BDC_\hol(D_\V) \simeq \BDC_\hol(\D_\Vi)$.
Under this equivalence, the Fourier-Laplace transform is induced by the
algebra isomorphism $D_\V \simeq D_{\W}$ given by $z_i\mapsto -\partial_{w_i}$,
$\partial_{z_i}\mapsto w_i$.

Using the Riemann-Hilbert correspondence and a result of~\cite{Tam08},
we give an alternative topological proof of
Theorem~\ref{thm:FLD} in Remark~\ref{rem:FouT} below.

\subsection{Enhanced Fourier-Sato transform}

Recall that $j\colon\V\to\PP$ denotes the embedding

\begin{definition}\label{def:Vi}
Let $\Tam \Vi$ be the full triangulated subcategory of
$\Tam \V$ whose objects $F$ satisfy
$\reim{\tilde\jmath}F\in \Tam \PP$.
\end{definition}

Consider the projections
\[
\xymatrix@C=.5em{
& \V\times \W\times\R \ar[dr]^{\tilde q} \ar[d]_{\overline p} \ar[dl]_{\tilde
p} \\
\V\times\R & \V & \W\times\R.
}
\]

\begin{definition}\label{def:FouSato}
Using Notation~\ref{notation:Evi}, set
\[
L = E^{\langle z,w\rangle}, \quad
L^a = E^{-\langle w,z\rangle}.
\]
The enhanced Fourier-Sato transform of $F \in \Tam \Vi$ is given by
\[
F^\curlywedge = F \ccomp L
\in \Tam\Wi.
\]
The enhanced inverse Fourier-Sato transform of $G \in \Tam \Wi$ is given by
\[
G^\curlyvee = G \ccomp L^a
\in \Tam\Vi.
\]
\end{definition}

The transform $F\mapsto F^\curlywedge$ has been investigated by
Tamarkin~\cite{Tam08} (in the more general case of vector spaces over $\R$).
He proved in particular the following result.

\begin{proposition}[{{\cite[Theorem~3.5]{Tam08}}, see
also~\cite[\S5.1]{KS16}}]\label{pro:FouT}
The enhanced Fourier-Sato transform gives an equivalence of categories
\[
\curlywedge\colon\Tam\Vi \isoto \Tam\Wi.
\]
A quasi-inverse is given by $G \mapsto G^\curlyvee[2\dim\V]$.
\end{proposition}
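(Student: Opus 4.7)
I would follow Tamarkin's strategy \cite[Theorem 3.5]{Tam08} and establish the inversion formula $(F^\curlywedge)^\curlyvee\simeq F[-2\dim\V]$ by computing the composition of the kernels $L$ and $L^a$. A base-change and Fubini type manipulation applied to the projections from $\V\times\W\times\V$ yields
\[
(F^\curlywedge)^\curlyvee \simeq F \ccomp M,\qquad M \defeq L \ccomp L^a,
\]
where $M$ is the kernel on $\V\times\V$ obtained by convolving $L$ and $L^a$ over the middle space $\W$.

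The heart of the argument is the explicit computation of $M$. In the $t$-direction, the convolution $\tconv$ of the pullbacks of $L$ and $L^a$ combines the two half-space conditions $\{t_1+\Re\langle z,w\rangle\geq 0\}$ and $\{t_2-\Re\langle w,z'\rangle\geq 0\}$ into the single condition $\{t+\Re\langle z-z',w\rangle\geq 0\}$, since the fiber of $\mu$ over any such point is a closed bounded interval whose $R\Gamma_c$ is $\field$ concentrated in degree zero. Integrating the resulting sheaf along $\W$ then reduces to computing, for each $(z,z',t)$, the compactly supported cohomology of $\{w\in\W\semicolon \Re\langle z-z',w\rangle\geq -t\}$. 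Off the diagonal this set is a closed affine half-space in $\W$, whose $R\Gamma_c$ vanishes (it retracts onto a product with $[0,\infty)$); on the diagonal and for $t\geq 0$ it is all of $\W$, contributing $\field[-2\dim\V]$. Thus $M\simeq\field_{\Delta\times\{t\geq 0\}}[-2\dim\V]$, which is a shift of the identity kernel for $\ccomp$, and one obtains $(F^\curlywedge)^\curlyvee\simeq F[-2\dim\V]$. Exchanging the roles of $\V$ and $\W$ gives $(G^\curlyvee)^\curlywedge\simeq G[-2\dim\V]$.

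It then remains to verify that $\curlywedge$ actually sends $\Tam\Vi$ into $\Tam\Wi$, i.e.\ that $\reim{\tilde\jmath}(F^\curlywedge)$ is R-constructible on $\bb\times\R$ for every $F\in\Tam\Vi$. I expect this to be the main obstacle: although the kernel computation is conceptually clean, controlling R-constructibility across the hyperplane $\hh\subset\bb$ at infinity requires extending $L$ to the complement of $\V\times\W$ in $\PP\times\bb$ and invoking properness of a suitable projection on the support of the involved sheaves, which is the technical point addressed in \cite[Theorem 3.5]{Tam08}.
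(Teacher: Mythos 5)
You should first note that the paper does not prove this proposition at all: it is quoted from \cite{Tam08} (see also \cite[\S5.1]{KS16}), so there is no internal proof to compare yours with. Your sketch is the standard kernel-calculus argument for the inversion, and its core computation is correct: the $t$-convolution of the two half-space kernels is $\field_{\{t+\Re\langle z-z',w\rangle\geq 0\}}$ (the fibre of $\mu$ is a compact interval), and pushing forward along $\W$ kills the off-diagonal locus (a closed affine half-space has vanishing compactly supported cohomology) and leaves $\field[-2\dim\V]$ on $\Delta\times\{t\geq 0\}$, which indeed acts as the identity on objects satisfying $F\simeq\field_{\{t\geq 0\}}\tconv F$. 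Two points need tightening. First, a stalkwise computation only determines the support and stalks of $M=L\ccomp L^a$; to conclude $M\simeq\field_{\Delta\times\{t\geq 0\}}[-2\dim\V]$, and to obtain an isomorphism of functors rather than an objectwise one, you should construct a natural morphism (e.g.\ via the adjunction triangle for the closed embedding of $\Delta\times\{t\geq 0\}$ into the support) and then verify it is an isomorphism on stalks. Second, your ``main obstacle'' is correctly identified but misattributed: \cite{Tam08} works with no constructibility hypotheses, so the preservation of the conditions defining $\Tam\Vi$ (namely $\R$-constructibility of $\reim{\tilde\jmath}F$ after extension to $\PP\times\PR$) is not addressed there. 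It follows instead from the fact that the set $\{t+\Re\langle z,w\rangle\geq 0\}$ is semialgebraic, hence subanalytic in $\PP\times\bb\times\PR$, so that all the operations in $F\ccomp L$ can be computed on the compactifications with proper maps, where $\R$-constructibility is stable under Grothendieck operations; this is the content of \cite[\S5.1]{KS16} and of the bordered-space formalism of \cite{DK16}. With these two points supplied, your argument amounts to a complete proof of the cited result.
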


The following result will be used when dealing with
enhanced solutions of regular holonomic $\D$-modules.
It was noticed in~\cite[Proposition~A.3]{DAg14}. We let $\varepsilon$ as in \eqref{eq:defeps}.

\begin{lemma}\label{lem:F'TamFou}
For $F'\in \BDC_\Rc(\field_\Vi)$ in the notation analogous to Definition \ref{def:Vi}, one has
\[
\varepsilon(F')^\curlywedge \simeq \reim{\tilde q}(L\tens\opb{\overline p} F').
\]
\end{lemma}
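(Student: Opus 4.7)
The plan is to unfold the definitions and reduce to a projection-formula argument, exploiting that $L$ is idempotent under convolution with $\field_{\{t\geq 0\}}$.

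By Definition \ref{def:FouSato}, one has $\varepsilon(F')^\curlywedge = \reim{\tilde q}\bigl(L\tconv\opb{\tilde p}\varepsilon(F')\bigr)$, so it suffices to produce a natural isomorphism
\begin{equation*}
L\tconv\opb{\tilde p}\varepsilon(F') \simeq L\tens\opb{\overline p}F'.
\end{equation*}
Since the composition $\pi\circ\tilde p$ equals $\overline p$ as a map $\V\times\W\times\R\to\V$, formula \eqref{eq:defeps} yields $\opb{\tilde p}\varepsilon(F')\simeq\field_{\{t\geq 0\}}\tens\opb{\overline p}F'$.

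Next I would expand the convolution: writing $\mu,q_1,q_2\colon(\V\times\W)\times\R\times\R\to(\V\times\W)\times\R$ as in the convolution subsection,
\begin{equation*}
L\tconv(\field_{\{t\geq 0\}}\tens\opb{\overline p}F') = \reim\mu\bigl(\opb{q_1}L\tens\opb{q_2}\field_{\{t\geq 0\}}\tens\opb{q_2}\opb{\overline p}F'\bigr).
\end{equation*}
Both $\overline p\circ q_2$ and $\overline p\circ\mu$ send $(z,w,t_1,t_2)$ to $z$, so $\opb{q_2}\opb{\overline p}F'\simeq\opb\mu\opb{\overline p}F'$, and the projection formula for $\reim\mu$ lets me pull this sheaf out of the pushforward, giving
\begin{equation*}
L\tconv\opb{\tilde p}\varepsilon(F') \simeq (L\tconv\field_{\{t\geq 0\}})\tens\opb{\overline p}F'.
\end{equation*}

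Finally, $L=E^{\langle z,w\rangle}$ lies in $\Tam{\V\times\W}$ by Notation \ref{notation:Evi}, and the very definition of $\Tam M$ in Section 1.7 requires $L\simeq\field_{\{t\geq 0\}}\tconv L$; by commutativity of $\tconv$ this gives $L\tconv\field_{\{t\geq 0\}}\simeq L$, which completes the desired isomorphism and hence the lemma after applying $\reim{\tilde q}$. The argument is essentially bookkeeping, and the only fussy point is the identity $\overline p\circ q_2=\overline p\circ\mu$ that legitimizes the projection-formula step.
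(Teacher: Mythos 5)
Your argument is correct. Note that the paper itself gives no proof of Lemma \ref{lem:F'TamFou}: it simply refers to \cite[Proposition~A.3]{DAg14}. What you have written is exactly the standard computation behind that citation: unwinding $\varepsilon(F')$ via $\pi\circ\tilde p=\overline p$, identifying $\opb{q_2}\opb{\overline p}F'\simeq\opb\mu\opb{\overline p}F'$ (legitimate since $\overline p\circ q_2=\overline p\circ\mu$), applying the projection formula for $\reim\mu$ to get $L\tconv\opb{\tilde p}\varepsilon(F')\simeq(L\tconv\field_{\{t\geq 0\}})\tens\opb{\overline p}F'$, and concluding with $L\tconv\field_{\{t\geq 0\}}\simeq L$. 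For the last step you invoke membership of $L=E^{\langle z,w\rangle}$ in $\Tam{\V\times\W}$ as asserted in Notation \ref{notation:Evi}; if you prefer not to rely on that assertion, the isomorphism $\field_{\{t\geq 0\}}\tconv\field_{\{t+\Re\langle z,w\rangle\geq 0\}}\simeq\field_{\{t+\Re\langle z,w\rangle\geq 0\}}$ follows from a one-line fibrewise computation of $\reim\mu$ (the relevant fibres are compact intervals), since $\langle z,w\rangle$ is finite everywhere on $\V\times\W$. So your proof is a correct, self-contained substitute for the external reference rather than a divergence from the paper's method.
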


As explained in \cite[\S3]{DK16} there is a natural notion of ``bordered space'', of which $\V_\infty = (\V,\PP)$ is an example.
The notion of enhanced ind-sheaves naturally extends to this setting (see \cite{KS16} or \cite{DK16bis}), and there is a natural equivalence of $\BEC {\V_\infty}$ with the full subcategory of $\BEC \PP$ whose objects~$K$ satisfy
$K\simeq \opb\pi\field_\V \tens K$. There are a natural functor
\[
\quot: \widetilde\dere^{\mathrm{b}}_+(\field_\V) \to \BEC {\V_\infty},
\]
and a natural object $\field^\enh_{\V_\infty}\in\BEC {\V_\infty}$.

Similarly to Definition \ref{def:FouSato}, there is an enhanced Fourier-Sato transform for ind-sheaves
\[
\curlywedge\colon \BEC {\Vi} \isoto \BEC{\Wi}.
\]
By \cite[Corollary 9.4.12]{DK16}, one has
\[
\solE_{\V_\infty\times\W_\infty}(\shl) \simeq \enhE^{\langle z,w\rangle}\defeq \C^\enh_{\V_\infty\times\W_\infty} \ctens \quot L.
\]
The next result is then easily checked.

\begin{lemma}\mbox{}\label{lem:FouML}
\begin{itemize}
\item[(i)]
Let $\shm\in\BDC_\hol(\D_\Vi)$ and $F\in\Tam[\C] \Vi$ satisfy
\begin{equation}
\label{eq:MFhyp}
\solE_{\Vi}(\shm) \simeq \C_{\Vi}^\enh \ctens \quot F.
\end{equation}
Then, there is an isomorphism
\[
\solE_{\Wi}(\shm^\wedge) \simeq \C_{\Wi}^\enh \ctens \quot F^\curlywedge [\dim\V].
\]
\item[(ii)]
Let $\shm\in\BDC_\reghol(\D_\Vi)$ and set $F'=\sol_\PP(\shm)|_\V$.
Then, there is an isomorphism
\[
\solE_{\Wi}(\shm^\wedge) \simeq \C_{\Wi}^\enh \ctens
\quot \reim{\tilde q}(L \tens \opb{\overline p} F') [\dim\V].
\]
\end{itemize}
Analogous results hold for $\wedge$ and $\curlywedge$ replaced by $\vee$
and $\curlyvee$, respectively.
\end{lemma}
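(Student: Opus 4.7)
The plan is to deduce part (i) by combining Corollary~\ref{cor:Dint} with Lemma~\ref{lem:ops}, and then to derive part (ii) from part (i) using Lemma~\ref{lem:F'TamFou}. For part (i), I would apply Corollary~\ref{cor:Dint} to the integral presentation $\shm^\wedge = \shm\dcomp\shl$ of the Fourier-Laplace transform, whose kernel $\shl = \she^{\langle z,w\rangle}_{\V\times\W|\PP\times\bb}$ lives on the compact manifold $\PP\times\bb$, so that the properness hypothesis $\opb p\supp(\shm)\cap\supp(\shl)$ proper over~$\bb$ is automatic. This yields
\[
\solE_\Wi(\shm^\wedge) \simeq \solE_\Vi(\shm)\Ecomp\solE_{\Vi\times\Wi}(\shl)\,[d_S-d_Y],
\]
with $d_S - d_Y = 2\dim\V - \dim\V = \dim\V$. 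I would then substitute the hypothesis $\solE_\Vi(\shm)\simeq\C_\Vi^\enh\ctens\quot F$ and the identity $\solE_{\Vi\times\Wi}(\shl)\simeq\C^\enh_{\Vi\times\Wi}\ctens\quot L$ recalled just before the lemma (a consequence of Theorem~\ref{thm:SolEExp}).

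Next, unravelling $K\Ecomp H = \Eeeim q(H\ctens\Eopb p K)$ requires three successive applications of Lemma~\ref{lem:ops}. The inverse image identity gives $\Eopb p(\C_\Vi^\enh\ctens\quot F)\simeq\C^\enh_{\Vi\times\Wi}\ctens\quot\opb{\tilde p}F$; the tensor-convolution identity gives
\[
(\C^\enh_{\Vi\times\Wi}\ctens\quot L)\ctens(\C^\enh_{\Vi\times\Wi}\ctens\quot\opb{\tilde p}F)\simeq\C^\enh_{\Vi\times\Wi}\ctens\quot(L\tconv\opb{\tilde p}F);
\]
and the proper direct image identity, applicable because $q\colon\PP\times\bb\to\bb$ is proper, gives $\Eeeim q(\C^\enh_{\Vi\times\Wi}\ctens\quot(L\tconv\opb{\tilde p}F))\simeq\C^\enh_\Wi\ctens\quot\reim{\tilde q}(L\tconv\opb{\tilde p}F)$. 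By Definition~\ref{def:FouSato}, the last term equals $\C^\enh_\Wi\ctens\quot F^\curlywedge$, and assembling these steps together with the shift $[\dim\V]$ yields part (i).

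For part (ii), I would invoke the standard fact that for a regular holonomic $\shm\in\BDC_\reghol(\D_\Vi)$ with $F' = \sol_\PP(\shm)|_\V$, the enhanced solutions are topologically trivial, namely $\solE_\Vi(\shm)\simeq\C_\Vi^\enh\ctens\quot\varepsilon(F')$. Part (i) applied with $F = \varepsilon(F')$ then gives $\solE_\Wi(\shm^\wedge)\simeq\C_\Wi^\enh\ctens\quot\varepsilon(F')^\curlywedge[\dim\V]$, and Lemma~\ref{lem:F'TamFou} identifies $\varepsilon(F')^\curlywedge\simeq\reim{\tilde q}(L\tens\opb{\overline p}F')$, yielding exactly the stated formula. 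The analogous statements for the inverse transforms $\vee$ and $\curlyvee$ follow the same scheme, with $\shl,L$ replaced by $\shl^a,L^a$.

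The main obstacle I anticipate is the book-keeping for the bordered-space structures $\Vi,\Wi,\Vi\times\Wi$: one must verify that each isomorphism obtained on the compactification $\PP\times\bb$ descends correctly to the corresponding bordered-space quotients, and that the subcategories $\Tam\Vi,\Tam\Wi$ are preserved throughout the three-step unravelling. The compactness of $\PP\times\bb$ is what makes the proper direct image clause of Lemma~\ref{lem:ops} available, while the confinement of the output to $\Wi$ comes from the fact that both $F$ and $L$ have support in the finite parts $\V$ and $\V\times\W$, respectively.
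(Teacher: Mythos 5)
Your proposal is correct and follows exactly the route the paper intends (the paper only says the lemma "is then easily checked"): part (i) is the Proposition on globally $\R$-constructible enhanced ind-sheaves — i.e.\ Corollary~\ref{cor:Dint} unravelled via Lemma~\ref{lem:ops} — applied to $\shm^\wedge=\shm\dcomp\shl$ with the kernel identity $\solE_{\Vi\times\Wi}(\shl)\simeq\C^\enh_{\Vi\times\Wi}\ctens\quot L$ and the shift $d_S-d_Y=\dim\V$, and part (ii) combines (i), the regular Riemann--Hilbert fact $\solE_{\Vi}(\shm)\simeq\C^\enh_{\Vi}\ctens\quot\varepsilon(F')$, and Lemma~\ref{lem:F'TamFou}. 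Your attention to the bordered-space bookkeeping and to properness over $\bb$ (automatic by compactness of $\PP\times\bb$) is exactly the point that makes the argument go through.
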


Note that, in view of Lemma~\ref{lem:UFM}, for any $\shm\in\BDC_\hol(\D_\Vi)$
there is an $F\in\Tam[\C] \Vi$ satisfying \eqref{eq:MFhyp}.

\begin{remark}
\label{rem:FouT}
Lemma~\ref{lem:FouML}~(i) and Proposition~\ref{pro:FouT} imply Theorem \ref{thm:FLD}, due to the Riemann-Hilbert correspondence. In fact, with
notations as in \eqref{eq:MFhyp}, one has
\begin{align*}
\solE_{\Vi}(\shm^{\wedge\vee})
&\simeq \C_{\Vi}^\enh \ctens \quot F^{\curlywedge\curlyvee}[2\dim\V]\\
&\simeq \C_{\Vi}^\enh \ctens \quot F
\simeq \solE_{\Vi}(\shm).
\end{align*}
Hence $\shm^{\wedge\vee} \simeq \shm$.
One similarly gets $\shn^{\vee\wedge} \simeq \shn$
for $\shn\in \BDC_\hol(\D_{\W_\infty})$.
\end{remark}

\section{Perverse sheaves on the affine line}\label{sec:perverse}

The results in this section concern perverse sheaves on the affine line and
their quivers. These are classical and well-known results (see e.g.\ \cite{GMV96}). However, we present them here from a point of view slightly different than usual, better suited to our needs. In particular, we give a purely topological description of Beilinson's maximal extension.

From now on, let $\V$ denote a complex affine line.

\subsection{Constructible complexes and perverse sheaves}
Let $\Sigma\subset\V$ be a discrete subset and let us denote by $\BDC_\Cc(\field_\V,\Sigma)$ the full triangulated subcategory of $\BDC(\field_\V)$ whose objects $F$ have cohomology sheaves which are $\C$-constructible with respect to the stratification $(\V\setminus\Sigma, \Sigma)$. In particular, $H^j(F)_{|\V\setminus\Sigma}$ is a locally constant sheaf. The abelian category $\Perv_\Sigma(\field_\V)$ of $\Sigma$-perverse sheaves is the heart of the middle perversity $t$-structure on $\BDC_\Cc(\field_\V,\Sigma)$. Note that $\field_\V[1]$ and $\field_\Sigma$ are objects of $\Perv_\Sigma(\field_\V)$ and, for an object $F$ of $\Perv_\Sigma(\field_\V)$, the shifted restriction $F_{|\V\setminus\Sigma}[-1]$ is a locally constant sheaf that we usually denote by~$L$.

\begin{definition}
Let $\Sigma, X \subset \V$. We say that $X$ is $\Sigma$-negligible if it is
disjoint from~$\Sigma$, locally closed, simply connected, and if $\rsect_\rc(\V;
\field_X) \simeq 0$.
\end{definition}

\begin{lemma}
\label{lem:pervbasic}
Let $\Sigma\subset\V$ be a discrete subset, and let $X\subset \V$ be
$\Sigma$-negligible.
If $F \in\BDC_\Cc(\field_\V,\Sigma)$, then
$$
\rsect_\rc(\V; \field_X\tens F) \simeq 0.\eqno\qed
$$
\end{lemma}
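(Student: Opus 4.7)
The plan is to reduce the computation to $X$ alone and then exploit the simple connectedness of $X$. Denote by $j\colon X\hookrightarrow\V$ the locally closed inclusion, so that by construction $\field_X\simeq\reim j\opb j\field_\V$. The projection formula then yields $\field_X\tens F\simeq\reim j\opb j F$, and since $\rsect_\rc(\V;\sep\reim j G)\simeq\rsect_\rc(X;\sep G)$ for any $G\in\BDC(\field_X)$ (the standard compatibility of compactly supported cohomology with extension by zero along a locally closed inclusion), we obtain
\[
\rsect_\rc(\V;\sep\field_X\tens F)\simeq\rsect_\rc(X;\sep\opb j F).
\]
Hence it suffices to prove the vanishing of the right-hand side.

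Next I would analyse the cohomology sheaves of $\opb j F$. Since $X\cap\Sigma=\emptyset$ and $F\in\BDC_\Cc(\field_\V,\Sigma)$, each $H^q(\opb j F)$ is a locally constant sheaf on $X$ with finite-dimensional stalks. Because $X$ is simply connected, every such local system is \emph{constant}; writing $V_q$ for its stalk, $H^q(\opb j F)$ is isomorphic to the constant sheaf $V_q\tens\opb j\field_\V$. This is the step I expect to be the main substantive point: it is the only place where simple connectedness is used, and without it a non-trivial local system on $X$ could perfectly well contribute non-vanishing compactly supported cohomology even when $\rsect_\rc(\V;\sep\field_X)\simeq 0$.

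Finally I would conclude by induction on the cohomological amplitude of $\opb j F$. In the base case, when $\opb j F\simeq V\tens\opb j\field_\V[-q]$ is concentrated in a single degree, the fact that $\rsect_\rc$ commutes with tensoring by a finite-dimensional vector space gives
\[
\rsect_\rc(X;\sep\opb j F)\simeq V\tens\rsect_\rc(X;\sep\opb j\field_\V)[-q]\simeq V\tens\rsect_\rc(\V;\sep\field_X)[-q]\simeq 0
\]
by the defining hypothesis of $\Sigma$-negligibility. For the inductive step, apply $\rsect_\rc(X;\sep-)$ to the canonical truncation distinguished triangle $\tau_{\leq k}\opb j F\to\opb j F\to\tau_{>k}\opb j F\to[+1]$; the two outer vertices vanish by the inductive hypothesis, forcing the middle one to vanish as well. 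The same conclusion could equally be read off from the hypercohomology spectral sequence, whose $E_2$-page is identically zero in our situation.
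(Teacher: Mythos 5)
Your argument is correct, and since the paper states this lemma with no written proof (the \qed signals it is regarded as immediate), what you have written is precisely the intended routine argument: reduce to $\rsect_\rc(X;\opb j F)$, use simple connectedness to see that the locally constant cohomology sheaves of $\opb j F$ on $X$ are constant, and conclude by dévissage on the cohomological amplitude together with the defining hypothesis $\rsect_\rc(\V;\field_X)\simeq 0$. The only implicit point worth keeping in mind is that the step ``simply connected $\Rightarrow$ constant local system'' uses that the $\Sigma$-negligible sets occurring in the paper are locally nice (subanalytic, locally contractible), which they all are.
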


\begin{lemma}
\label{lem:pervbasic2}
Let $\Sigma\subset\V$ be a discrete subset, and let $B_c\subset\V$ be a convex
open neighborhood of $c\in \Sigma$ such that $B_c\cap\Sigma = \{c\}$.
If $F \in\BDC_\Cc(\field_\V,\Sigma)$, then
\begin{align*}
&\rsect_\rc(\V; \field_{\{c\}} \tens F) \simeq \opb i_c F
\simeq \rsect(B_c; F), \\
&\rsect_\rc(\V; \field_{B_c} \tens F) \simeq \epb{i_c} F
\simeq \rsect_{\{c\}}(\V;F),
\end{align*}
where $i_c\colon \{c\} \to \V$ denotes the embedding.\qed
\end{lemma}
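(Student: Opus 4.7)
The identifications are standard consequences of the six-functor formalism, reducing essentially to the fact that a convex open $B_c$ with $B_c\cap\Sigma=\{c\}$ is a distinguished neighborhood of $c$ for any $\C$-constructible complex. I would verify them in turn. For the first line, the projection formula for the closed point embedding $i_c$ gives $\field_{\{c\}}\tens F\simeq\reim{i_c}\opb{i_c}F$, and applying $\rsect_\rc(\V;-)$ yields $\rsect_\rc(\V;\reim{i_c}\opb{i_c}F)\simeq\rsect_\rc(\{c\};\opb{i_c}F)=\opb{i_c}F$. For the identification $\opb{i_c}F\simeq\rsect(B_c;F)$, given a decreasing fundamental system of convex open neighborhoods $B_c\supset B_c^{(1)}\supset\cdots\to\{c\}$, the stalk equals $\ilim[n]\rsect(B_c^{(n)};F)$, and each restriction $\rsect(B_c;F)\to\rsect(B_c^{(n)};F)$ is an isomorphism because $F|_{B_c\setminus\{c\}}$ is cohomologically locally constant and the annulus $B_c\setminus\overline{B_c^{(n)}}$ deformation retracts radially onto a circle along which $F$ is locally constant.

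For the second line, the projection formula for the open embedding $j\colon B_c\to\V$ gives $\field_{B_c}\tens F\simeq\reim{j}(F|_{B_c})$ and hence $\rsect_\rc(\V;\field_{B_c}\tens F)\simeq\rsect_\rc(B_c;F|_{B_c})$. The identification $\epb{i_c}F\simeq\rsect_{\{c\}}(\V;F)$ is the usual description of the exceptional pullback at a closed point, and by excision $\rsect_{\{c\}}(\V;F)\simeq\rsect_{\{c\}}(B_c;F|_{B_c})$. It then remains to show $\rsect_\rc(B_c;F|_{B_c})\simeq\rsect_{\{c\}}(B_c;F|_{B_c})$; a d\'evissage along the open-closed triangle $\reim{j'}\opb{j'}F|_{B_c}\to F|_{B_c}\to\reim{i_c}\opb{i_c}F|_{B_c}$ (with $j'\colon B_c\setminus\{c\}\to B_c$) reduces this to the skyscraper case (immediate) and to the extension-by-zero case of a locally constant complex on the punctured disk, where the identity is an instance of Poincar\'e-Lefschetz duality for the open $2$-disk.

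I expect the main obstacle to be the distinguished-neighborhood principle invoked in both lines: namely, that a convex $B_c$ with $B_c\cap\Sigma=\{c\}$ correctly computes the stalk via $\rsect(B_c;-)$ and the local cohomology via $\rsect_\rc(B_c;-)$ for every $F\in\BDC_\Cc(\field_\V,\Sigma)$. Although classical, a careful write-up needs either a d\'evissage of $F$ along the stratification (reducing to constant sheaves on strata, where everything becomes a direct computation on the $2$-disk) or an appeal to the Kashiwara-Schapira machinery of cohomologically constructible complexes on stratified manifolds. Once this principle is in hand, all remaining steps are formal manipulations of the six operations.
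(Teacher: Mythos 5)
The paper states this lemma with a \qed and no argument, treating it as classical, so there is no written proof to compare with; your sketch supplies one and is essentially correct. Two refinements would make it watertight. First, in the dévissage for the second line you should compare the two sides through the natural map $\rsect_{\{c\}}(B_c;G)\to\rsect_\rc(B_c;G)$ (which exists because $\{c\}$ is compact), and then check it is an isomorphism on the skyscraper piece and on the piece $\reim{j'}(L')$ for $L'$ locally constant on $B_c\setminus\{c\}$; an abstract isomorphism of the two sides on each graded piece does not by itself assemble. On the second piece the verification is a direct punctured-disk computation rather than Poincaré--Lefschetz duality in the literal sense: both sides identify with $\rsect(S^1;L'')[-1]$ (so $\ker(1-T)$ and $\coker(1-T)$ in degrees $1$ and $2$), using that $\rsect(B_c;\reim{j'}L')\simeq 0$. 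Second, there is a shortcut that avoids this computation altogether: the whole second line is the Verdier dual of the first line applied to $\dual F\in\BDC_\Cc(\field_\V,\Sigma)$, since $\dual\,\rsect_\rc(\V;\field_{B_c}\tens F)\simeq \rsect(B_c;\dual F)$, $\opb{i_c}\dual F\simeq \dual\,\epb{i_c}F$, and biduality holds for $\C$-constructible complexes; this also makes the naturality of the isomorphisms (which the paper uses later, e.g.\ in Lemma \ref{lem:tildenuglob}) transparent. Your treatment of the first line is fine, with the usual caveat that the isomorphisms $\rsect(B_c;F)\to\rsect(B_c^{(n)};F)$ are best justified by the triangle $\rsect_{\{c\}}F\to F\to\roim{j'}\opb{j'}F$ plus homotopy invariance for the locally constant part, exactly as you indicate.
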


We will also use the following general lemma.

\begin{lemma}\label{lem:vanishingsemiopen}
If $\ell$ is a semi-open interval, $X$ is a manifold and $L$ is a locally constant sheaf on $X\times\ell$, then $\rsect_\rc(X\times\ell; L)=0$.\qed
\end{lemma}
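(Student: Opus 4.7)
The plan is to reduce to a computation on $\ell$ alone, via the projection to the first factor $q\colon X\times\ell\to X$. Concretely, I would prove the stronger statement that $\reim{q}L\simeq 0$; this suffices, since $\rsect_\rc(X\times\ell; L)\simeq\rsect_\rc(X;\reim{q}L)$.

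To show $\reim{q}L\simeq 0$, it is enough to verify that every stalk vanishes. By proper base change, for each $x\in X$ one has
$$(\reim{q}L)_x\simeq\rsect_\rc(\ell;\, L|_{\{x\}\times\ell}).$$
Since $\ell$ is an interval, hence simply connected, the locally constant sheaf $L|_{\{x\}\times\ell}$ is isomorphic to a constant sheaf $V_x\tens\field_\ell$ for some $\field$-vector space $V_x$. Hence $(\reim{q}L)_x\simeq V_x\tens\rsect_\rc(\ell;\field_\ell)$, and the whole question reduces to the single computation $\rsect_\rc(\ell;\field_\ell)\simeq 0$ for a semi-open interval~$\ell$.

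For this final step, take $\ell=[a,b)$ without loss of generality and consider the excision triangle attached to the open/closed decomposition $\ell=(a,b)\sqcup\{a\}$:
$$\rsect_\rc((a,b);\field)\to\rsect_\rc(\ell;\field)\to\rsect_\rc(\{a\};\field)\to[+1].$$
The outer terms are $\field[-1]$ and $\field$, so the claim amounts to showing that the connecting morphism is an isomorphism. This is immediate from the fact that the one-point compactification $\ell^+$ is homeomorphic to the contractible closed interval $[a,b]$, since $\rsect_\rc(\ell;\field)$ computes the reduced cohomology of $\ell^+$. No substantive obstacle arises: the only noteworthy point is that the fiberwise triviality of $L$ uses the simple connectedness of $\ell$, which is the sole role of the hypothesis that $\ell$ be an interval.
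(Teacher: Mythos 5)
Your argument is correct. The paper states this lemma without proof (the \qed{} follows the statement), and your reduction via $\reim{q}$ along the projection $q\colon X\times\ell\to X$, base change for proper direct images to pass to fibres, constancy of $L$ on each fibre $\{x\}\times\ell$ by simple connectedness, and the final computation $\rsect_\rc(\ell;\field_\ell)\simeq 0$ for a half-open interval is precisely the standard argument the authors leave implicit (note it nowhere uses that $X$ is a manifold, only that it is locally compact).
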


\subsection{From perverse sheaves to quivers}\label{sse:pvqv}
Assume that $\Sigma\subset\V$ is finite. We will denote by $\mathsf{Quiv}_\Sigma(\field)$ the category whose objects are the tuples
\[
\left(\Psi, \Phi_c, u_c, v_c\right)_{c\in\Sigma},
\]
where $\Psi$ and $\Phi_c$ are finite dimensional $\field$-vector spaces, and $u_c\colon\Psi\!\to\!\Phi_c$ and $v_c\colon\Phi_c\!\to\!\Psi$ are $\field$-linear maps such that $1-u_c v_c$ is invertible for any~$c$ (or, equivalently, such that $1-v_c u_c$ is invertible for any $c$). Morphisms in $\mathsf{Quiv}_\Sigma(\field)$ are the natural ones. Our aim in this section is to present a functor
\[
Q_\Sigma^{(\alpha,\beta)} \colon \Perv_\Sigma(\field_\V) \to \mathsf{Quiv}_\Sigma(\field)
\]
which, as we will recall in the next section, provides an equivalence of categories. This functor depends on choices $(\alpha,\beta)$ that we fix now, for the remaining part of this section.

\begin{convention}
\label{con:alphabeta}
We fix $\alpha\in\V\setminus\{0\}$ and $\beta\in\W\setminus\{0\}$ such that
\begin{align}
\label{eq:alpha2}
&\Re\langle\alpha,\beta\rangle = 0, \\
\label{eq:beta}
&\Re\langle c-c',\beta\rangle \neq 0, \quad\forall c, c'\in\Sigma,\ c\neq c'.
\end{align}
\end{convention}

\begin{notation}\mbox{}\label{nota:Bell}
\begin{enumerate}
\item\label{nota:Bell1}
The $\R$-linear projection
\[
p_\beta\colon \V_\R \to \R, \quad z \mapsto \Re\langle z,\beta\rangle
\]
enables one, according to \eqref{eq:beta}, to define a total order on $\Sigma$ by the formula
\[
c\mathbin{<_\beta}c' \quad\text{if}\quad p_\beta(c) < p_\beta(c').
\]
We will enumerate the elements of $\Sigma$ as
\[
c_1\mathbin{<_\beta}c_2\mathbin{<_\beta}\cdots\mathbin{<_\beta}c_n,
\]
and we will set
\[
r_i=p_\beta(c_i),\ r_0=-\infty,\ r_{n+1}=+\infty \quad (i=1,\dots,n),
\]
so that $-\infty=r_0<r_1<\cdots<r_n<r_{n+1}=+\infty$.

\item\label{nota:Bell2}
For $i=1,\dots,n$, the open bands
\[
B_{c_i}(\beta) \defeq p_\beta^{-1}(\,(r_{i-1},r_{i+1})\,)
\]
cover $\V$ and satisfy $B_{c_i}(\beta)\cap\Sigma = \{c_i\}$. We set
\begin{align*}
B_{c_i}^>(\beta) &\defeq p_\beta^{-1}(\,(r_{i},r_{i+1})\,),&
B_{c_i}^\leq(\beta) &\defeq p_\beta^{-1}(\,(r_{i-1},r_{i}]\,),\\
\ell_{c_i}(\alpha) &\defeq c_i + \R_{\geq 0}\alpha,&
\ell^\times_{c_i}(\alpha) &\defeq c_i + \R_{> 0}\alpha.
\end{align*}
Note that the half lines $\ell_{c_i}(\alpha)$ are disjoint, and set
\[
\ell_\Sigma(\alpha) \defeq \ell_{c_1}(\alpha) \cup \ell_{c_2}(\alpha)
\cup \dots \cup \ell_{c_n}(\alpha).
\]
We will write for short $B_{c_i}$, $B_{c_i}^\leq$, $\ell^\times_{c_i}$, etc., instead of
$B_{c_i}(\beta)$, $B_{c_i}^\leq(\beta)$, $\ell^\times_{c_i}(\alpha)$, etc.
These sets are pictured in Figure~\ref{fig:Quiver1}.
\begin{figure}
\includestandalone[scale=.8]{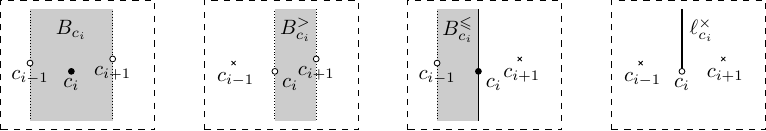}
\caption{The sets $B_{c_i}$, $B_{c_i}^>$, $B_{c_i}^\leq$, and
$\ell^\times_{c_i}$.}\label{fig:Quiver1}
\end{figure}
\end{enumerate}
\end{notation}

\begin{definition}\label{def:vannearcycles}
Let $c\in \Sigma$ and $F \in\BDC_\Cc(\field_\V,\Sigma)$. The complexes of \emph{nearby cycles at $c$}, \emph{vanishing cycles at $c$}, and \emph{global nearby cycles} are respectively defined by the formulas
\begin{align*}
\Psi_c^{(\alpha,\beta)}(F)=\Psi_c(F) &\defeq \rsect_\rc(\V; \field_{\ell^\times_c} \tens F), \\
\Phi_c^{(\alpha,\beta)}(F)=\Phi_c(F) &\defeq \rsect_\rc(\V;\field_{\ell_c} \tens F),\\
\Psi^{(\alpha,\beta)}(F)=\Psi(F) &\defeq \rsect_\rc(\V;\field_{\V\setminus\ell_\Sigma} \tens F)[1].
\end{align*}
\end{definition}

We construct below the morphisms $u_{cc}:\Psi_c(F)\to\Phi_c(F)$ and $v_{cc}:\Phi_c(F)\to\Psi_c(F)$ and we define the monodromy as
\[
T_{cc}:=1-v_{cc}u_{cc}:\Psi_c(F)\to\Psi_c(F).
\]
On the one hand, the exact sequence $0\to\field_{\ell^\times_c}\to\field_{\ell_c}\to\field_{\{c\}}\to0$ enables us to obtain the distinguished triangle defining $u_{cc}$:
\[
\Psi_c(F)\to[u_{cc}]\Phi_c(F)\to \opb{i_c}F\to[+1].
\]
In order to define the morphism
\[
v_{cc}:\Phi_c(F)\to\Psi_c(F),
\]
we consider the exact sequences
\begin{equation}
\label{eq:sesiso}
\begin{split}
&0 \to \field_{B_c^>} \to \field_{B_c^> \cup \ell^\times_c} \to \field_{\ell^\times_c} \to 0, \\
&0 \to \field_{B_c^>} \to \field_{B_c \setminus \ell_c} \to \field_{B_c^\leq \setminus \ell_c} \to 0.
\end{split}
\end{equation}
Since $B_c^\leq \setminus \ell_c$ and $B^>_c \cup \ell^\times_c$ are $\Sigma$-negligible, they give rise to the isomorphisms
\begin{equation}\label{eq:isoPsicPsic}
\Psi_c(F)\isoto\rsect_\rc(\V;\field_{B^>_c} \tens F)[1]\isoto\rsect_\rc(\V;\field_{B_c\setminus\ell_c} \tens F)[1].
\end{equation}
On the other hand, the exact sequence
\[
0 \to \field_{B_c\setminus\ell_c} \to \field_{B_c} \to \field_{\ell_c} \to 0
\]
gives rise to the morphism
\[
v_{cc}:\Phi_c(F)\to\rsect_\rc(\V;\field_{B_c\setminus\ell_c} \tens F)[1]\underset{\eqref{eq:isoPsicPsic}}\simeq\Psi_c(F).
\]

For each $c\in\Sigma$, we have a short exact sequence
\begin{equation}
\label{eq:sesisoS}
0 \to \field_{B_c \setminus \ell_c} \to \field_{\V \setminus \ell_\Sigma} \to
\field_{(\V \setminus \ell_\Sigma) \setminus B_c} \to 0,
\end{equation}
and we note that $(\V\setminus \ell_\Sigma) \setminus B_c$ is $\Sigma$-negligible. We deduce an isomorphism
\refstepcounter{equation}
\begin{multline}\label{eq:isoPsicPsi}\tag*{{(\theequation)$_\eqc$}}
\Psi_c(F)\xrightarrow[\eqref{eq:isoPsicPsic}]{~\sim~}\rsect_\rc(\V;\field_{B_c\setminus\ell_c} \tens F)[1]\\[-5pt]
\isoto\rsect_\rc(\V;\field_{\V\setminus\ell_\Sigma} \tens F)[1]=\Psi(F)
\end{multline}
for each $c \in \Sigma $. The subsets appearing in this construction are sketched in Figure \ref{fig:PsicPsi}.
\begin{figure}
\includestandalone{PsicPsi}
\caption{The sets $\ell_c$, $B_c \smallsetminus \ell_c $ and $V \smallsetminus \ell_\Sigma $ appearing in \ref{eq:isoPsicPsi}.}\label{fig:PsicPsi}
\end{figure}

Let us now apply this to perverse sheaves.

\begin{lemma}
Let $F$ be an object of $\Perv_\Sigma(\field_\V)$.
Recalling that $L$ denotes the local system $F|_{\V\setminus\Sigma}[-1]$, we have
\begin{align*}
\Psi_c(F) &\simeq H^0\Psi_c(F)=H^0\rsect_\rc(\ell^\times_c;F)=H^1_\rc(\ell^\times_c;L), \\
\Phi_c(F) &\simeq H^0\Phi_c(F)=H^0\rsect_\rc(\ell_c;F),\\
\Psi(F) &\simeq H^0\Psi(F)=H^2_\rc(\V\setminus\ell_\Sigma;L),
\end{align*}
and these are exact functors from $\Perv_\Sigma(\field_\V) $ to finite dimensional $\field $-vector spaces.\qed
\end{lemma}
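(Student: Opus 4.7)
The plan is to verify, for each $F\in\Perv_\Sigma(\field_\V)$, that the three complexes $\Psi_c(F)$, $\Phi_c(F)$ and $\Psi(F)$ are concentrated in cohomological degree zero; exactness on $\Perv_\Sigma(\field_\V)$ then follows formally, since these are triangulated functors and a distinguished triangle whose three terms all lie in degree zero yields a short exact sequence of $\field$-vector spaces. Finite-dimensionality will be apparent from each computation, as the resulting group is in each case identified with a single stalk of the finite-rank local system $L$.

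The two easier cases, $\Psi_c$ and $\Psi$, both exploit that $\ell_c^\times$ and $\V\setminus\ell_\Sigma$ are disjoint from $\Sigma$, so that $F$ restricts to $L[1]$ on them. For $\Psi_c(F)=\rsect_\rc(\ell_c^\times;L)[1]$, the ray $\ell_c^\times$ is homeomorphic to $\R$ and hence simply connected, so $L|_{\ell_c^\times}$ is constant and $H^j_\rc(\R;\field)$ vanishes except for $j=1$. For $\Psi(F)=\rsect_\rc(\V\setminus\ell_\Sigma;L)[2]$, I would first observe that $\V\setminus\ell_\Sigma$ is contractible: since every ray $\ell_{c_i}$ points in the direction $+\alpha$, the flow along $-\alpha$ provides a deformation retraction of $\V\setminus\ell_\Sigma$ onto a left half-plane $\{p_\beta<r_1\}$ that is itself disjoint from $\ell_\Sigma$. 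Then $L|_{\V\setminus\ell_\Sigma}$ is constant, and Poincar\'e duality on this oriented real $2$-manifold yields $H^j_\rc(\V\setminus\ell_\Sigma;L)^\vee\simeq H^{2-j}(\V\setminus\ell_\Sigma;L^\vee)$, which is nonzero only for $j=2$.

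The main step, which I expect to be the most delicate, concerns $\Phi_c(F)$. Here I would combine two distinguished triangles. The first is the defining triangle of $u_{cc}$,
\[
\Psi_c(F)\to\Phi_c(F)\to\opb{i_c}F\to[+1].
\]
The second is extracted from the excision sequence $0\to\field_{B_c\setminus\ell_c}\to\field_{B_c}\to\field_{\ell_c}\to 0$, by identifying the first term with $\Psi_c(F)[-1]$ via \eqref{eq:isoPsicPsic} and the middle term with $\epb{i_c}F$ via Lemma \ref{lem:pervbasic2}, and then rotating:
\[
\epb{i_c}F\to\Phi_c(F)\to\Psi_c(F)\to[+1].
\]
Perversity of $F$ on the complex curve $\V$ places $\opb{i_c}F$ in degrees $\leq 0$ and $\epb{i_c}F$ in degrees $\geq 0$; together with the concentration of $\Psi_c(F)$ in degree $0$ already established, the first triangle forces $H^i\Phi_c(F)=0$ for $i>0$ and the second forces $H^i\Phi_c(F)=0$ for $i<0$. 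The key point is that neither triangle alone suffices: both the $!$- and $*$-halves of the perverse $t$-structure must be brought to bear simultaneously.
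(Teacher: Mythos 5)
The paper records this lemma with no written proof, so there is no argument to compare against line by line; what you propose is the natural justification, and its core is correct. In particular your treatment of $\Phi_c(F)$ is exactly the right mechanism: the triangle $\Psi_c(F)\to\Phi_c(F)\to\opb{i_c}F\to[+1]$ kills $H^i\Phi_c(F)$ for $i>0$ by the support condition, while the rotated triangle $\epb{i_c}F\to\Phi_c(F)\to\Psi_c(F)\to[+1]$, obtained from $0\to\field_{B_c\setminus\ell_c}\to\field_{B_c}\to\field_{\ell_c}\to0$ together with \eqref{eq:isoPsicPsic} and Lemma~\ref{lem:pervbasic2}, kills $H^i\Phi_c(F)$ for $i<0$ by the cosupport condition; and passing from degree-zero concentration to exactness of the three functors on $\Perv_\Sigma(\field_\V)$ is standard, as you say.

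One step is wrong as literally stated, though easily repaired: the flow along $-\alpha$ cannot deformation retract $\V\setminus\ell_\Sigma$ onto the half-plane $\{p_\beta<r_1\}$, because \eqref{eq:alpha2} gives $\Re\langle\alpha,\beta\rangle=0$, so $p_\beta$ is constant along that flow and a point with $p_\beta\geq r_1$ never reaches $\{p_\beta<r_1\}$. The correct target is a half-plane bounded in the $\alpha$-direction: since $\Im\langle\alpha,\beta\rangle\neq0$, all the rays $\ell_c(\alpha)$ lie in $\{\pm\Im\langle z,\beta\rangle\geq m_0\}$ for a suitable sign and constant, the translation flow along $-\alpha$ preserves $\V\setminus\ell_\Sigma$ and strictly decreases $\pm\Im\langle z,\beta\rangle$, and a cut-off version of it retracts $\V\setminus\ell_\Sigma$ onto the complementary convex half-plane; this gives the contractibility you need for $\Psi(F)$. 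Alternatively, and closer to the paper's own toolkit, you can avoid contractibility altogether: the isomorphisms \ref{eq:isoPsicPsi}, built from the $\Sigma$-negligible sets of Lemma~\ref{lem:pervbasic}, identify $\Psi(F)$ with $\Psi_c(F)$, so concentration and finite-dimensionality for $\Psi$ follow from the case of $\Psi_c$ already settled. A last cosmetic point: $\Phi_c(F)$ is not identified with a single stalk of $L$; its finite-dimensionality follows instead from either triangle, whose outer terms (for instance $\Psi_c(F)$ and $\opb{i_c}F$) are finite dimensional.
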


\begin{lemma}\label{lem:fidelite}
Let $F,G\in \Perv_\Sigma(\field_\V)$ and let $\varphi,\psi:F\to G$ be two morphisms. Assume that $\varphi$ and $\psi$ coincide on $\V\setminus\Sigma$ and that $\Phi_c(\varphi)=\Phi_c(\psi)$ for every $c\in \Sigma$. Then $\varphi=\psi$.
\end{lemma}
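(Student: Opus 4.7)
Setting $\eta := \varphi - \psi \in \Hom(F, G)$, we wish to deduce $\eta = 0$ from the hypotheses $\eta|_{\V \setminus \Sigma} = 0$ and $\Psi_c(\eta) = \Phi_c(\eta) = 0$ for every $c \in \Sigma$.

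The first step is a Mayer--Vietoris localization around the singularities. Pick pairwise disjoint convex open neighborhoods $B_c$ of the $c \in \Sigma$ with $B_c \cap \Sigma = \{c\}$, and work with the open cover $\V = (\V \setminus \Sigma) \cup \bigcup_{c \in \Sigma} B_c$. On each pairwise intersection $B_c \setminus \{c\}$ both $F$ and $G$ restrict to local systems placed in degree $-1$, so $\rhom(F, G)$ there is concentrated in degree $0$ and its $H^{-1}$ vanishes. The Mayer--Vietoris long exact sequence for $\rsect(\V; \rhom(F, G)) = \RHom(F, G)$ then produces the injection
\[
\Hom(F, G) \hookrightarrow \Hom\bigl(F|_{\V \setminus \Sigma}, G|_{\V \setminus \Sigma}\bigr) \oplus \bigoplus_{c \in \Sigma} \Hom(F|_{B_c}, G|_{B_c}).
\]
Since the first factor sends $\eta$ to zero, the question reduces to showing $\eta|_{B_c} = 0$ for each $c \in \Sigma$.

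The second step is the local problem on $B_c$, where $F, G \in \Perv_{\{c\}}(\field_{B_c})$, the restriction $\eta|_{B_c \setminus \{c\}}$ vanishes, and the induced maps $\Psi_c(\eta|_{B_c})$ and $\Phi_c(\eta|_{B_c})$ are zero. For perverse sheaves on a disc with a single singular point, the functor $F \mapsto (\Psi_c F, \Phi_c F, u_c, v_c)$ is an equivalence of categories onto the corresponding quiver category (a classical result, cf.\ \cite{GMV96}); in particular it is faithful on morphisms. The vanishing of both quiver components of $\eta|_{B_c}$ therefore forces $\eta|_{B_c} = 0$, and combined with the previous paragraph this gives $\eta = 0$.

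The principal obstacle is to make the second step fit cleanly into Section \ref{sec:perverse} without a circular appeal to the global quiver equivalence that is itself being established here. To give a self-contained topological argument rather than invoking \cite{GMV96}, one would exploit Beilinson's maximal extension---which the authors announce they will present topologically in this section---to build, for each perverse $F|_{B_c}$, an explicit two-term presentation involving $\Psi_c(F)$, $\Phi_c(F)$, $u_{cc}$ and $v_{cc}$, constructed via the exact sequences \eqref{eq:sesiso}--\eqref{eq:sesisoS} and the isomorphisms \eqref{eq:isoPsicPsic}--\eqref{eq:isoPsicPsi}. Under such a presentation a morphism $\eta|_{B_c}$ is literally a pair of linear maps on $\Psi_c$ and $\Phi_c$ compatible with $u_{cc}$ and $v_{cc}$, so their joint vanishing immediately yields $\eta|_{B_c} = 0$.
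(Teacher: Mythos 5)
Your Mayer--Vietoris reduction is sound: on $B_c\setminus\{c\}$ both $F$ and $G$ restrict to local systems in degree $-1$, so $\rhom(F,G)$ is a local system in degree $0$ there, its $H^{-1}$ on $B_c\setminus\{c\}$ vanishes, and the restriction map $\Hom(F,G)\to\Hom(F|_{\V\setminus\Sigma},G|_{\V\setminus\Sigma})\oplus\bigoplus_{c\in\Sigma}\Hom(F|_{B_c},G|_{B_c})$ is indeed injective. The trouble is the local step, which is where all the content lies and which you do not actually prove. As written it rests on citing the classical disc/quiver equivalence (\cite{GMV96}); besides the unchecked (though true) identification of the paper's half-line functors $\Psi_c,\Phi_c$ (and of the global versus local $\Phi_c(\eta)$) with the classical nearby/vanishing cycle data, this is precisely the statement that Section~\ref{sec:perverse} is re-deriving, and the present lemma is itself an ingredient of the paper's proof of that equivalence: it is invoked in the proof of Corollary~\ref{cor:BeiReconstr}. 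Your fallback sketch does not repair this. Asserting that, given a two-term presentation built from Beilinson's extension, a morphism ``is literally a pair of linear maps on $\Psi_c$ and $\Phi_c$'' is exactly the faithfulness you are trying to establish; and if the presentation you have in mind is the one the paper constructs in \eqref{eq:GF}, the argument becomes genuinely circular, since the proof that $G(F)\simeq F$ uses Lemma~\ref{lem:fidelite}.

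For comparison, the paper's proof needs neither localization nor any appeal to the equivalence and takes four lines: since $\Perv_\Sigma(\field_\V)$ is abelian, $F'\defeq\im(\varphi-\psi)$ is a perverse subsheaf of $G$; the first hypothesis forces $F'$ to be supported on $\Sigma$, hence of the form $i_{\Sigma*}F'_\Sigma$ for a sheaf on $\Sigma$; exactness of $\Phi_c$ on $\Perv_\Sigma(\field_\V)$ gives $\Phi_c(F')=\im\Phi_c(\varphi-\psi)=0$, and since $\Phi_c$ of a skyscraper at $c$ is just its stalk, $F'_\Sigma=0$, i.e.\ $\varphi=\psi$. Note that this same image-plus-exactness argument, run on $B_c$, would also close your local step directly---at which point the Mayer--Vietoris reduction becomes unnecessary.
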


\begin{proof}
The image $F' $ of $\varphi-\psi $ is a sub-perverse sheaf of $G $. By the first assumption, it is supported on $\Sigma $. By exactness of $\Phi_c $ for perverse sheaves, the second assumption gives $\Phi_c F'=0 $, hence $F'=0 $. Therefore, $\varphi-\psi=0 $.
\end{proof}

\begin{definition}\label{def:quiverF}
Let $F \in \Perv_\Sigma(\field_\V)$.
The \emph{quiver} of $F$ is the datum
\[
Q_\Sigma^{(\alpha,\beta)}(F) = \left(\Psi(F), \Phi_c(F), u_c, v_c
\right)_{c\in\Sigma},
\]
with
\[
u_c\defeq u_{cc}\circ \text{\ref{eq:isoPsicPsi}}^{-1},\quad v_c\defeq
\text{\ref{eq:isoPsicPsi}} \circ v_{cc}.
\]
\end{definition}

Note that the choice of $\alpha$ and $\beta$ induces an orientation of $\V\simeq\R^2$ by the ordered basis given by $\alpha$ and a vector tangent at time zero to the rotation of $\ell^\times_c$ around $c$ in the direction of $B_c^<$. (The orientation class depends on the sign of $\Im\langle\alpha,\beta\rangle$.)

This implies a canonical isomorphism $\pi_1(B_c\setminus\{c\})\simeq\Z$, independent on the choice of a base point. We fix a base point $b_c\in\ell^\times_c$, and we denote by $T_{cc}^{(j)}$ the monodromy of~$H^j\opb{i_{b_c}}F$ induced by $1\in\Z\simeq\pi_1(B_c\setminus\{c\})$.

Let $(c,b_c)$ denote the open interval in $\ell^\times_c$. The exact sequence
\[
0\to\field_{(c,b_c)}\to \field_{(c,b_c]}\to\field_{\{b_c\}}\to0,
\]
together with the inclusion morphism $\field_{(c,b_c)}\to\field_{\ell^\times_c}$, induce an isomorphism $\chi_c \colon \opb{i_{b_c}}F[-1]\isoto\Psi_c(F)$, according to Lemma \ref{lem:vanishingsemiopen}.

\begin{lemma}\label{lem:PsiT}
Let $F\in \BDC_\Cc(\field_\V,\Sigma)$.
The isomorphism $$\chi_c \colon \opb{i_{b_c}}F[-1]\isoto\Psi_c(F)$$ intertwines $T_{cc}^{(j)}$ and $H^j(T_{cc})$ on the $j$-th cohomology sheaves.
\end{lemma}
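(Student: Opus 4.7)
The plan is to localize the claim near $c$, reduce to the case of a shifted local system extended by zero to $c$, trivialize the local system on the simply connected set $B_c\setminus\ell_c$, and identify the boundary map controlling $v_{cc}$ with $1-T$, where $T$ is the geometric monodromy.

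All data in the statement depend only on $F$ restricted to a small neighborhood of $\ell_c\cup\{b_c\}$ inside $B_c$, so I would first replace $F$ by $\field_D\tens F$ for a small open disk $D\subset B_c$ centered at $c$. By truncation distinguished triangles, and since all functors involved are triangulated, I would reduce to $F$ having a single nonzero cohomology sheaf, and after a shift, to a sheaf in degree zero. Sheaves supported at $\{c\}$ satisfy $\Psi_c(F)=\opb{i_{b_c}}F=0$ and the statement becomes vacuous, so we may assume $F|_{D\setminus\{c\}}$ is a local system $L$. Since the defining triangle of $u_{cc}$ and the isomorphism \eqref{eq:isoPsicPsic} used to build $v_{cc}$ involve $F$ only on $\ell^\times_c$, $\ell_c$, and $B_c\setminus\ell_c$, the precise extension of $L$ across $c$ plays no role, and we may take $F=j_!L$ with $j\colon D\setminus\{c\}\hookrightarrow D$.

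The set $B_c\setminus\ell_c$ is open, connected, and simply connected (it deformation retracts onto any point of $B_c^<$), so $L|_{B_c\setminus\ell_c}$ trivializes to a constant sheaf with stalk $L_{b_c}$. The two one-sided limits of this trivialization at a point of $\ell^\times_c$ differ by the monodromy $T\in\mathrm{End}(L_{b_c})$ of $L$ around $c$ in the direction dictated by the orientation convention preceding the lemma. Under these identifications one reads off
\[
\Psi_c(F)\simeq\Phi_c(F)\simeq\opb{i_{b_c}}F[-1]\simeq\rsect_\rc(B_c\setminus\ell_c;F)[1]\simeq L_{b_c}[-1],
\]
and checks that $H^1(\chi_c)$, $H^1(u_{cc})$, and the iso \eqref{eq:isoPsicPsic} in degree $1$ all become the identity of $L_{b_c}$; for $u_{cc}$ this uses $\opb{i_c}(j_!L)=0$ to collapse its defining triangle.

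The crucial computation is $H^1(v_{cc})$. The morphism $\Phi_c(F)\to\rsect_\rc(B_c\setminus\ell_c;F)[1]$ is the boundary associated with $0\to\field_{B_c\setminus\ell_c}\to\field_{B_c}\to\field_{\ell_c}\to 0$. The middle term is $\rsect_\rc(B_c;j_!L)=\rsect_\rc(B_c\setminus\{c\};L)$, and since $B_c\setminus\{c\}$ is homotopy equivalent to $S^1$ with $L$ having monodromy $T$, a Künneth computation gives $H^1_\rc=L_{b_c}^T$ and $H^2_\rc=(L_{b_c})_T$. The corresponding long exact sequence segment
\[
L_{b_c}^T\hookrightarrow L_{b_c}\to[H^1(v_{cc})]L_{b_c}\twoheadrightarrow(L_{b_c})_T
\]
has first and last arrows given by inclusion of invariants and projection to coinvariants, respectively. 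To pin the middle arrow down, I would represent a class in $H^1_\rc(\ell_c;F)$ by $v\in L_{b_c}$: lifting $v$ via the trivialization to a section on $B_c\setminus\ell_c$ produces a section whose two one-sided extensions across $\ell^\times_c$ disagree by $Tv-v$, which is exactly the $H^2_\rc(B_c\setminus\ell_c;F)$-class that $\partial$ returns. This yields $H^1(v_{cc})=1-T$ up to the orientation sign, whence $H^1(T_{cc})=1-H^1(v_{cc})\circ H^1(u_{cc})=T$ on $L_{b_c}$, matching $T_{cc}^{(0)}$ via $H^1(\chi_c)=\mathrm{id}$. The statement in arbitrary cohomological degree follows from the initial reduction.

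The main obstacle is the orientation bookkeeping: one must verify that the direction in which $T$ is picked up when crossing the cut $\ell^\times_c$ in the trivialization of $L$ on $B_c\setminus\ell_c$ matches the normalization $1\in\pi_1(B_c\setminus\{c\})\simeq\Z$ fixed by the ordered basis $(\alpha,\,\text{tangent vector rotating } \ell^\times_c\text{ into }B_c^<)$. Otherwise one would end up with $T^{-1}$ in place of $T$. This is a direct geometric inspection and involves no additional idea.
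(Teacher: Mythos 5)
Your proposal is correct in substance and follows essentially the same route as the paper's proof: both reduce to a local system $L$ near $c$, identify $v_{cc}u_{cc}$ with the connecting morphism attached to the slit region, and compute that the jump across the cut gives $1-T$, whence $T_{cc}=1-v_{cc}u_{cc}$ is the geometric monodromy. The only real difference is bookkeeping: the paper transports the computation to the real oriented blow-up, replacing your slit band $B_c\setminus\ell_c$ by the slit circle $S_c\setminus\{\widetilde c\}$, and writes $H^1_\rc$ of the latter as an explicit cokernel (with $\psi(x)=(x,T_{cc}^{(0)-1}x)$ and $\eta(x,x')=x-T_{cc}^{(0)}x'$), so that \eqref{eq:isoPsicPsic} becomes $z\mapsto\eta(z,0)$ and $\widetilde\varphi$ becomes $y\mapsto\eta(y,y)$, giving $z=y-T_{cc}^{(0)}y$ at once; it also keeps a general $F\in\BDC_\Cc(\field_\V,\Sigma)$ through the triangle manipulations and only invokes a reduction (to perverse $F$) at the final linear-algebra step, whereas you devissage to $F=j_!L$ early — which is legitimate, but should be justified by functoriality of $u_{cc}$, $v_{cc}$, $\chi_c$ under $j_!\opb{j}F\to F$, since $\Phi_c$ itself does depend on the extension across $c$. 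Be aware that the ``orientation bookkeeping'' you defer at the end is not a peripheral check: it is the only point where the lemma has content (it is what distinguishes $T$ from $T^{-1}$), and the paper's cokernel presentation is precisely the device that pins it down, using that \eqref{eq:isoPsicPsic} factors through $B_c^>$ (so stalks at $\ell^\times_c$ are taken from that side) together with the convention that $1\in\pi_1(B_c\setminus\{c\})$ rotates $\ell^\times_c$ towards $B_c^<$; a complete write-up along your lines must carry out that inspection rather than leave it as a remark.
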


Before proving the lemma, let us introduce the real blow-up $\varpi_\Sigma\colon\widetilde \V_\Sigma\to \V$ of $\V$ at all points $c\in\Sigma$, and let us set $S_c=\opb{\varpi_\Sigma}(c)$, $S_\Sigma=\bigcup_{c\in\Sigma}S_c=\opb{\varpi_\Sigma}(\Sigma)$. We have a commutative diagram with cartesian square:
\[
\xymatrix{
& \widetilde \V_\Sigma \ar[d]^{\varpi_\Sigma} & S_\Sigma \ar[l] \ar[d] \\
\V\setminus\Sigma \ar@{ >->}[r]^-{j_\Sigma} \ar@{ >->}[ur]^-{\tilde\jmath_\Sigma} & \V &
\Sigma \ar[l]_{i_\Sigma}.
}
\]
We write for short $\widetilde\V_c$ and $\varpi_c$ instead of $\widetilde\V_{\{c\}}$ and $\varpi_{\{c\}}$.

\begin{notation}\mbox{}\label{nota:realblowup}
\begin{enumerate}
\item
We identify $\ell_c^\times$ with $\varpi_\Sigma^{-1} \ell^\times_c$, and we denote by $\widetilde\ell_{c}$ its closure in $\widetilde \V_\Sigma$, with $\widetilde c\defeq\partial\widetilde\ell_{c}=\widetilde\ell_{c}\cap S_c$. We also set $\widetilde\Sigma=\bigcup_{c\in\Sigma}\{\widetilde c\}$ and $\widetilde B_c=\opb{\varpi_\Sigma}B_c$.
\item
For an object $F\in\BDC_\Cc(\field_\V,\Sigma)$, we will set $\widetilde F \defeq \derr\tilde\jmath_{\Sigma\,*} \opb{j_\Sigma} F$. If $F$ is in $\Perv_\Sigma(\field_\V)$, we have $\widetilde F=\wtL[1]$, if $\wtL$ denotes the locally constant sheaf $\widetilde\jmath_{\Sigma\,*}L$.
\end{enumerate}
\end{notation}

\begin{proof}[Proof of Lemma \ref{lem:PsiT}]
We will prove the similar property for the isomorphism $\opb{i_{\widetilde c}} \widetilde F[-1]\isoto\Psi_c(F)$ obtained in a way similar to $\chi_c$. The composition of the morphisms
\[
\field_{\ell^\times_c} \to \field_{\ell_c} \to \field_{B_c\setminus\ell_c}[1],
\]
together with \eqref{eq:isoPsicPsic}, gives rise to $v_{cc} u_{cc}$. Due to the morphism of distinguished triangles
\[
\xymatrix@R=.5cm{
\field_{B_c\setminus \{c\}}\ar[d]\ar[r]&\field_{\ell^\times_c}\ar[r]^-\varphi\ar[d]&\field_{B_c\setminus\ell_c}[1]\ar[r]^-{+1}\ar@{=}[d]&\\
\field_{B_c}\ar[r]&\field_{\ell_c}\ar[r]&\field_{B_c\setminus\ell_c}[1]\ar[r]^-{+1}&
}
\]
we identify it with the morphism $\varphi$. We now consider the morphism of distinguished triangles
\[
\xymatrix@R=.5cm{
\field_{\{\widetilde c\}}[-1]\ar[r]\ar[d]^{\widetilde\varphi}&\field_{\ell^\times_c}\ar[d]^\varphi\ar[r]&\field_{\widetilde\ell_c}\ar[d]\ar[r]^-{+1}&\\
\field_{S_c\setminus \{\widetilde c\}}\ar[r]&\field_{B_c\setminus\ell_c}[1]\ar[r]&\field_{\widetilde B_c\setminus\widetilde\ell_{c}}[1]\ar[r]^-{+1}&
}
\]
By Lemma \ref{lem:vanishingsemiopen}, the upper left horizontal morphism induces an identification $\opb{i_{\widetilde c}} \widetilde F[-1]\isoto\Psi_c(F)$ and, with respect to this identification, $v_{cc} u_{cc}$ is identified with the natural morphism $\opb{i_{\widetilde c}} \widetilde F[-1]\to \rsect_\rc(S_c\setminus \{\widetilde c\};\widetilde F)$ induced by $\widetilde\varphi$, whose cone is $\rsect_\rc(S_c;\widetilde F)$. By the same argument, the identification \eqref{eq:isoPsicPsic} is the composed isomorphism $\opb{i_{\widetilde c}} \widetilde F[-1]\to\rsect_\rc(S_c^>;\widetilde F)\to \rsect_\rc(S_c\setminus \{\widetilde c\};\widetilde F)$. Here $S_c^>\subset S_c$ denotes the open half circle of directions in $B_c^>$.

In order to prove the lemma, we can now assume that $F\in\Perv_\Sigma(\field_\V)$. Let us identify (with obvious notation) $H^1_\rc(S_c\setminus \{\widetilde c\};\widetilde L)$ with the cokernel of the restriction morphism $H^0([\widetilde c,\widetilde c-2\pi];\widetilde L)\to \widetilde L_{\widetilde c}\oplus\widetilde L_{\widetilde c-2\pi}$ that we represent as $\psi:\widetilde L_{\widetilde c}\to\widetilde L_{\widetilde c}\oplus\widetilde L_{\widetilde c}$ given by $x\mapsto(x,T_{cc}^{(0)-1}x)$, and let us set $\eta:\widetilde L_{\widetilde c}\oplus\widetilde L_{\widetilde c}\to\coker\psi$, that we identify with $\widetilde L_{\widetilde c}\oplus\widetilde L_{\widetilde c} \to \widetilde L_{\widetilde c} $ defined by $(x,x') \mapsto x-T_{cc}^{(0)}(x') $.
On the one hand, the identification \eqref{eq:isoPsicPsic} is the composition $z\mapsto(z,0)\mapsto\eta(z,0)$ and, on the other hand, the morphism (induced by) $\widetilde\varphi$ is the composition $y\mapsto(y,y)\mapsto\eta(y,y)$. It follows that $z=y-T_{cc}^{(0)} y$ holds in $\coker(\psi) $.
\end{proof}

\subsection{From quivers to perverse sheaves}\label{sec:Beilinson}
Our aim in this section is to recall that the functor $Q_\Sigma^{(\alpha,\beta)}$ is an equivalence of categories, by using Beilinson's maximal extension. We continue fixing $\alpha,\beta$ as in Convention \ref{con:alphabeta} and using Notation \ref{nota:realblowup}.

Let us set (see Figure~\ref{fig:Blow} for $\Sigma=\{c\}$)
\[
\widetilde \V_\Sigma^\Xi \defeq \widetilde \V_\Sigma \setminus \widetilde\Sigma.
\]
\begin{figure}
\includestandalone{Blow-cs}
\caption{The maps $\widetilde\V_c^\Xi \rightarrowtail \widetilde\V_c \to[\varpi_c] \V $.}
\label{fig:Blow}
\end{figure}

\begin{notation}\mbox{}\label{nota:Beilinsonblowup}
Consider the following subsets of $\widetilde\V^\Xi_\Sigma$, some of which will be of use in Section~\ref{sec:Stokes} (see Figure \ref{fig:PhipmBeilinson})
\begin{enumerate}
\item
We set $B_{\widetilde c} \defeq \varpi_\Sigma^{-1}(B_c) \smallsetminus \{ \widetilde c \}$, $B^\geq_{\widetilde c} \defeq \varpi_\Sigma^{-1}(B_c^\geq) \smallsetminus \{ \widetilde c \}$, $B^<_{\widetilde c} \defeq \varpi_\Sigma^{-1}(B_c^<) \smallsetminus \{ \widetilde c \}$.
We also set $S_{\widetilde c}\defeq S_c \smallsetminus \{ \widetilde c \} $ and $S^\geq_{\widetilde c} \defeq B^\geq_{\widetilde c} \cap S_{\widetilde c}$, $S^<_{\widetilde c} \defeq B^<_{\widetilde c} \cap S_{\widetilde c}$.
\item
We set $\ell^\pm_{\widetilde c} \defeq \varpi_\Sigma^{-1}(\ell_c(\pm\alpha)) \smallsetminus \{ \widetilde c \}$, so that $\ell^\pm_{\widetilde c} = \ell_c^\times(\pm\alpha) \cup S_{\widetilde c} $. Note that this is a disjoint union in case of $\ell^+_{\widetilde c} $.
We also write for short $\ell_{\widetilde c}$ instead of $\ell^+_{\widetilde c}$.
\end{enumerate}
\end{notation}

\begin{definition}\label{def:BeilMax}
Beilinson's maximal extension (with respect to $\alpha$) is the functor
\[
\Xi_\Sigma\colon \BDC_\Cc(\field_\V,\Sigma) \to \BDC_\Cc(\field_\V,\Sigma)
\]
defined by
\[
\Xi_\Sigma(F) \defeq \reim{{\varpi_\Sigma}}(\field_{\widetilde
\V_\Sigma^\Xi}\tens\widetilde F).
\]
\end{definition}
Note that
\begin{equation}\label{eq:XiXiF}
\Xi_\Sigma(F) \simeq \Xi_\Sigma(\field_{\V\setminus\Sigma}\tens F) \quad\text{and}\quad\Xi_\Sigma(F)|_{\V\setminus\Sigma} \simeq F|_{\V\setminus\Sigma}.
\end{equation}

\begin{prop}\label{prop:BeiMax}
Let $F$ be an object of $\BDC_\Cc(\field_\V,\Sigma)$. We have functorial (in $F$) distinguished triangles
\begin{align*}
&\reim{{j_\Sigma}}\opb{j_\Sigma}F \to[a'] \Xi_\Sigma(F) \to[b'] \bigoplus_{c\in\Sigma}\roim{{i_c}}\Psi_c(F) \to[+1],\\
&\bigoplus_{c\in\Sigma}\roim{{i_c}}\Psi_c(F) \to[b''] \Xi_\Sigma(F) \to[a''] \roim{{j_\Sigma}}\opb{j_\Sigma}F
\to[+1].
\end{align*}
The composition $a''\circ a'$ coincides with the canonical morphism
$\reim{{j_\Sigma}}\opb{j_\Sigma}F \to \roim{{j_\Sigma}}\opb{j_\Sigma}F$, and
one has $b' \circ b'' = \DSum_{c\in\Sigma}(1-T_{cc})$.

If moreover $F\in\Perv_\Sigma(\field_\V)$, then also $\Xi_\Sigma(F)\in\Perv_\Sigma(\field_\V)$, and the above triangles are short exact sequences in $\Perv_\Sigma(\field_\V)$:
\begin{align*}
&0 \to \reim{{j_\Sigma}}\opb{j_\Sigma}F \to[a'] \Xi_\Sigma(F) \to[b']
\DSum_{c\in\Sigma}\roim{{i_c}}\Psi_c(F) \to 0, \\
&0 \to \DSum_{c\in\Sigma}\roim{{i_c}}\Psi_c(F) \to[b''] \Xi_\Sigma(F) \to[a'']
\roim{{j_\Sigma}}\opb{j_\Sigma}F \to 0.
\end{align*}
\end{prop}

\begin{proof}
Applying the functor $\reim{{\varpi_\Sigma}}(\bullet\tens\widetilde F)$ to
the short exact sequences
\begin{align*}
& 0 \to \field_{\widetilde \V_\Sigma\setminus S_\Sigma} \to \field_{\widetilde \V_\Sigma^\Xi} \to \field_{S_{\widetilde\Sigma}} \to 0,\\
& 0 \to \field_{\widetilde \V_\Sigma^\Xi} \to \field_{\widetilde \V_\Sigma}
\to \field_{\widetilde \Sigma} \to 0,
\end{align*}
respectively, we get the desired distinguished triangles by using the identifications $\opb{i_{\widetilde c}}\widetilde F\isoto\Psi_c(F)$ and $\Psi_c(F)\isoto \rsect_\rc(S_{\widetilde c};\widetilde F)$ already used in the proof of Lemma \ref{lem:PsiT}.

If $F\in\Perv_\Sigma(\field_\V)$ then, since $\reim{{j_\Sigma}}\opb{j_\Sigma}F$, $\roim{{j_\Sigma}}\opb{j_\Sigma}F$ and $\roim{{i_c}}\Psi_c(F)$ are perverse, we obtain the perversity of $\Xi_\Sigma(F)$ as well as the short exact sequences in the proposition.

The composition $a''\circ a'$ is induced by the composition
\[
\field_{\widetilde \V_\Sigma\setminus S_\Sigma} \to \field_{\widetilde \V_\Sigma^\Xi} \to \field_{\widetilde \V_\Sigma},
\]
which is the morphism induced by the open inclusion $\widetilde \V_\Sigma\setminus
S_\Sigma \subset \widetilde \V_\Sigma$.
Hence the first assertion is clear.

The composition $b'\circ b''$ is induced by the composition
\[
\field_{\{\widetilde c\}}[-1] \to \field_{\widetilde \V_c^\Xi} \to
\field_{S_{\widetilde c}},
\]
and coincides with the first morphism of the triangle
\[
\field_{\{\widetilde c\}}[-1] \to \field_{S_{\widetilde c}} \to \field_{S_c}\overset{+1}\longrightarrow,
\]
which was denoted by $\widetilde\varphi$ in the proof of Lemma \ref{lem:PsiT}. The latter gives the proof of the desired assertion for $b'\circ b''$.
\end{proof}

\begin{corollary}\label{cor:BeiQuiv}
Let $F$ be an object of $\BDC_\Cc(\field_\V,\Sigma)$ and $c\in\Sigma$. There are natural isomorphisms
\[
\Psi_c(\Xi_c(F)) \simeq \Psi_c(F) \quad\text{and}\quad
\Phi_c(\Xi_c(F)) \simeq \Psi_c(F) \dsum \Psi_c(F).
\]
If moreover $F\in\Perv_\Sigma(\field_\V)$ then, with respect to these isomorphisms, $u_{cc}$ and $v_{cc}$ for $\Xi_c(F)$ are respectively given by
\[
\Psi_c(F) \to[{\left(\begin{smallmatrix}1\\ \\0\end{smallmatrix}\right)}]{}
\begin{matrix}\Psi_c(F)\\ \dsum \\\Psi_c(F)\end{matrix} \qquad\text{and}\qquad
\begin{matrix}\Psi_c(F)\\ \dsum \\\Psi_c(F)\end{matrix}
\to[{\left(\begin{smallmatrix}1-T_{cc}&&-1\end{smallmatrix}\right)}] \Psi_c(F).
\]
\end{corollary}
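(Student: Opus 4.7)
The plan is to unfold $\Xi_c(F) = \reim{\varpi_c}(\field_{\widetilde\V_c^\Xi}\tens\widetilde F)$ and apply $\Psi_c$ and $\Phi_c$ via the projection formula. Since $\varpi_c^{-1}(\ell_c^\times) = \ell_c^\times \subset \widetilde\V_c^\Xi$, one gets $\Psi_c(\Xi_c(F)) \simeq \rsect_\rc(\ell_c^\times; F) = \Psi_c(F)$ at once. For $\Phi_c$, the preimage $\varpi_c^{-1}(\ell_c)\cap\widetilde\V_c^\Xi = \ell_c^\times \sqcup S_{\widetilde c}$ is a disjoint union of two closed subsets of $\widetilde\V_c^\Xi$, giving the canonical splitting $\field_{\ell_c^\times\cup S_{\widetilde c}} \simeq \field_{\ell_c^\times}\oplus\field_{S_{\widetilde c}}$ and hence
\[
\Phi_c(\Xi_c(F)) \simeq \rsect_\rc(\ell_c^\times; F)\oplus\rsect_\rc(S_{\widetilde c}; \widetilde F) \simeq \Psi_c(F)\oplus\Psi_c(F).
\]
I identify the second summand with $\Psi_c(F)$ via the connecting isomorphism of the sub-SES $0\to\field_{B_c\setminus\ell_c}\to\field_{(B_c\setminus\ell_c)\cup S_{\widetilde c}}\to\field_{S_{\widetilde c}}\to 0$; its middle term has vanishing $\rsect_\rc(\cdot\tens\widetilde F)$ because the natural restriction map $\rsect_\rc(B_{\widetilde c};\widetilde F)\to\rsect_\rc(\ell_c^\times;F)$ is an isomorphism, both sides being $\Psi_c(F)$ as in the proof of Lemma~\ref{lem:PsiT}.

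For $u_{cc}$, pulling back the defining SES $0\to\field_{\ell_c^\times}\to\field_{\ell_c}\to\field_{\{c\}}\to 0$ by $\varpi_c$ and tensoring with $\field_{\widetilde\V_c^\Xi}\tens\widetilde F$ yields $0\to\field_{\ell_c^\times}\tens\widetilde F\to\field_{\ell_c^\times\cup S_{\widetilde c}}\tens\widetilde F\to\field_{S_{\widetilde c}}\tens\widetilde F\to 0$, whose first arrow is, under our splitting, precisely the first-coordinate inclusion, so $u_{cc}=\binom{1}{0}$.

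Writing $v_{cc}=(a,b)$, I determine the components by naturality of quivers along the two short exact sequences of Proposition~\ref{prop:BeiMax}. The subobject $a'\colon\reim{j_c}\opb{j_c}F\hookrightarrow\Xi_c(F)$ satisfies $\Psi_c(a')=\id$ and $\Phi_c(a')=\binom{1}{0}$ (it is precisely the $\ell_c^\times$-contribution); since $\reim{j_c}\opb{j_c}F$ has quiver $(\Psi_c(F),\Psi_c(F),\id,1-T_{cc})$ — with $u=\id$ because $\opb{i_c}$ of it vanishes, and $vu=1-T_{cc}$ — naturality of $v$ forces $a=1-T_{cc}$. For $b$: by naturality of connecting morphisms applied to the inclusion of the sub-SES above into the full SES $0\to\field_{B_c\setminus\ell_c}\to\field_{B_{\widetilde c}}\to\field_{\ell_c^\times\cup S_{\widetilde c}}\to 0$ (whose connecting is $v_{cc}$ by definition), the restriction of $v_{cc}$ to the second summand equals the connecting isomorphism used to identify it with $\Psi_c(F)$; the sign coming from the triangulated rotation convention then produces $b=-1$.

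The main obstacle is fixing the sign $b=-1$. The other ingredients — decomposing $\Phi_c(\Xi_c(F))$ as a direct sum, reading off $u_{cc}$ from the pulled-back SES, and extracting $a=1-T_{cc}$ by naturality — are essentially mechanical; but pinning down the sign in the second component of $v_{cc}$ requires careful bookkeeping of the connecting morphisms in the two nested short exact sequences and of the rotation conventions in the triangulated category.
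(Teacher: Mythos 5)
Your construction of the two isomorphisms and of $u_{cc}$ follows essentially the same route as the paper: the paper also computes $\Phi_c(\Xi_c(F))$ on the real blow-up from $\varpi_c^{-1}(\ell_c)\cap\widetilde\V_c^\Xi=\ell_c^\times\sqcup S_{\widetilde c}$, and your derivation of the first entry $1-T_{cc}$ of $v_{cc}$ by naturality along $a'$ is equivalent to the paper's computation of $\Phi_c(a''\circ a')$. One repair is needed along the way: the vanishing $\rsect_\rc\bigl(\widetilde\V_c;\field_{(B_c\setminus\ell_c)\cup S_{\widetilde c}}\tens\widetilde F\bigr)\simeq 0$ is true (note $(B_c\setminus\ell_c)\cup S_{\widetilde c}=\widetilde B_c\setminus\widetilde\ell_c$, the set already appearing in the proof of Lemma~\ref{lem:PsiT}), but your justification via a ``restriction map $\rsect_\rc(B_{\widetilde c};\widetilde F)\to\rsect_\rc(\ell_c^\times;F)$'' is not an argument; the correct reason is a Lemma~\ref{lem:vanishingsemiopen}-type statement ($\widetilde F$ has locally constant cohomology on this set, which has no compactly supported cohomology), exactly as the paper disposes of the analogous sets after \eqref{eq:sesisotilde}.

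The genuine gap is the entry $b=-1$, and it is not mere bookkeeping you postponed: the argument you sketch, taken at face value, yields $b=+1$. You identify the second summand $\rsect_\rc(\widetilde\V_c;\field_{S_{\widetilde c}}\tens\widetilde F)$ with $\Psi_c(F)$ by the connecting morphism $\delta$ of your sub-sequence (composed with \eqref{eq:isoPsicPsic}); naturality of connecting morphisms for the termwise inclusion of that sequence into $0\to\field_{B_c\setminus\ell_c}\to\field_{B_{\widetilde c}}\to\field_{\ell_c^\times\sqcup S_{\widetilde c}}\to 0$ says that the restriction of $v_{cc}$ to the second summand \emph{is} $\delta$, hence reads as $+\mathrm{id}$ through your identification; rotating a triangle puts the sign on the map $A[1]\to B[1]$, not on $C\to A[1]$, so no sign can be conjured from the rotation convention. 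The $-1$ of the Corollary is taken with respect to a different identification of the second summand, namely the one through the open half-circle $S_c^>$ (the blow-up form of \eqref{eq:isoPsicPsic} made explicit in the proof of Lemma~\ref{lem:PsiT}), and it is that identification which is used later, e.g.\ in \eqref{eq:lcSc} and \eqref{eq:uivj}. Comparing it with your $\delta$-based one is exactly the monodromy computation of Lemma~\ref{lem:PsiT}; that computation is also what gives $\Phi_c(b'')\colon y\mapsto(y,(1-T_{cc})y)$, and the paper then fixes the sign by exactness of the second sequence of Proposition~\ref{prop:BeiMax}, i.e.\ $\ker\Phi_c(a'')=\im\Phi_c(b'')$. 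So you must either carry out that comparison of identifications or compute $\Phi_c(b'')$ and use exactness as the paper does; as written, the decisive sign is asserted rather than proved, and with the identification you actually construct it comes out with the opposite value.
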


\begin{proof}
We can assume that $\Sigma=\{c\}$. Applying $\Psi_c$ to the distinguished triangles of Proposition \ref{prop:BeiMax}, we find that $\Psi_c(a'):\Psi_c(F)\isoto\Psi_c(\Xi_c(F))$ and $\Psi_c(a''):\Psi_c(\Xi_c(F))\isoto\Psi_c(F)$ are isomorphisms which are quasi-inverse one to the other.

Since $\reim{{j_c}}\opb{j_c}F$, $\Xi_c(F)$ and $\roim{{j_c}}\opb{j_c}F$ are respectively the push-forward by the proper map $\varpi_c$ of $\field_{\widetilde\V_c\setminus S_c}\otimes\widetilde F$, $\field_{\widetilde\V^\Xi_c}\otimes\widetilde F$ and $\widetilde F$, we can compute their vanishing cycles $\Phi_c$ by applying $\rsect_\rc(\widetilde\V_c,\field_{\varpi_c^{-1}(\ell_c)}\otimes\bullet)$ to the latter.
By noticing that $\varpi_c^{-1}(\ell_c)\cap\widetilde\V_c^\Xi = \ell_{\widetilde c} = \ell_c^\times\sqcup S_{\widetilde c}$, we get
\[
\Phi_c(\Xi_c(F))=\rsect_\rc(\widetilde\V_c,\field_{\ell_{\widetilde c}}\otimes\widetilde F)=\rsect_\rc(\widetilde\V_c,\field_{\ell_c^\times}\otimes\widetilde F)\oplus\rsect_\rc(\widetilde\V_c,\field_{S_{\widetilde c}}\otimes\widetilde F).
\]
that we recognize, due to \eqref{eq:isoPsicPsic}, to be isomorphic to $\Psi_c(F)\oplus\Psi_c(F)$.

Let us apply $\Phi_c$ to the distinguished triangles of Proposition \ref{prop:BeiMax}. We have \hbox{$u_{cc}:\Psi_c(\reim{{j_c}}\opb{j_c}F)\isoto\Phi_c(\reim{{j_c}}\opb{j_c}F)$} and $v_{cc}:\Phi_c(\roim{{j_c}}\opb{j_c}F)\isoto\Psi_c(\roim{{j_c}}\opb{j_c}F)$. We conclude that
\[
\Phi_c(a''\circ a')=v_{cc}u_{cc}=1-T_{cc}:\Psi_c(F)\to\Psi_c(F).
\]
Moreover, $\Phi_c(a')$ (resp.\ $\Phi_c(a'')$) is identified with $u_{cc}$ (resp.\ $v_{cc}$) for $\Xi_c(F)$. With respect to the above decomposition, the map $\Phi_c(a')$ is given by $x\mapsto(x,0)$ and the map $\Phi_c(b')$ is the second projection.

Let us now assume that $F\in\Perv_\Sigma(\field_\V)$, so that we can use linear algebra. The morphism $\Phi_c(b''):\Psi_c(F)\to\Psi_c(F)\oplus\Psi_c(F)$ is induced by $\field_{\{\widetilde c\}}[-1]\to\field_{\ell_c^\times}\oplus\field_{S_{\widetilde c}}$, and we can write it as $y\mapsto(y,(1-T_{cc})y)$ according to the proof of Lemma \ref{lem:PsiT} for~$\widetilde\varphi$.

Finally, the morphism $\Phi_c(a''):\Psi_c(F)\oplus\Psi_c(F)\to\Psi_c(F)$ is such that $\Phi_c(a'')(x,0)=(1-T_{cc})x$ and $\ker\Phi_c(a'')=\im\Phi_c(b'')$, hence \hbox{$\Phi_c(a'')(x,y)=(1-T_{cc})x-y$}.
\end{proof}

Let us consider the functor which associates to $F$ in $\Perv_\Sigma(\field_{\V})$ the double complex
\begin{equation}\label{eq:GF}
G_2(F)=\left\{\begin{array}{c}
\xymatrix@C=1.5cm{
\bigoplus_{c\in\Sigma}\roim{{i_c}}\Psi_c(F) \ar[r]^-{b''} \ar[d]_{\bigoplus u_{cc}} & \Xi_\Sigma(F) \ar[d]_{-b'} \\
\bigoplus_{c\in\Sigma}\roim{{i_c}}\Phi_c(F) \ar[r]^-{\bigoplus v_{cc}} & \bigoplus_{c\in\Sigma}\roim{{i_c}}\Psi_c(F).
}
\end{array}\right.
\end{equation}
We denote by $G(F)$ the associated total complex shifted by one, so that the first term is in degree $-1$ (we will omit the change of signs in the differentials due to the shift).

\begin{corollary}\label{cor:BeiReconstr}
If $F$ is an object of $\Perv_\Sigma(\field_\V)$, then so is $G(F)$, and the functor $G:\Perv_\Sigma(\field_\V)\to\Perv_\Sigma(\field_\V)$ is an equivalence of categories which satisfies $G(F)\simeq F$, that is,
\begin{multline*}
F \simeq H^1\Bigg(
\DSum_{c\in\Sigma}\roim{{i_c}}\Psi_c(F)
\to[{\biggl(\begin{smallmatrix}b''\\[5pt]\DSum\limits_{c\in\Sigma}u_c\end{smallmatrix}
\biggr)}]
\begin{matrix}
\Xi_\Sigma(F) \\ \dsum \\ \DSum\limits_{c\in\Sigma}\roim{{i_c}}\Phi_c(F)
\end{matrix}\\
\to[{\left(\begin{smallmatrix}-b'&\DSum\limits_{c\in\Sigma}v_c\end{smallmatrix}
\right)}] \DSum_{c\in\Sigma}\roim{{i_c}}\Psi_c(F) \Bigg).
\end{multline*}
\end{corollary}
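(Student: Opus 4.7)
I plan to proceed in three stages: (i) verify $G(F)$ is a complex, (ii) show its cohomology is concentrated in degree $0$ so that $G(F)\in\Perv_\Sigma(\field_\V)$, and (iii) construct a natural isomorphism $G(F)\simeq F$. The equivalence of categories statement then follows automatically from $G\simeq\mathrm{id}$.

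For (i)--(ii), the composition of the two successive differentials of $G(F)$ equals
\[
-b'\circ b'' + \bigoplus_c v_{cc}u_{cc} \;=\; -\bigoplus_c(1-T_{cc}) + \bigoplus_c(1-T_{cc}) \;=\; 0,
\]
using $b'\circ b''=\bigoplus_c(1-T_{cc})$ from Proposition \ref{prop:BeiMax} and $T_{cc}=1-v_{cc}u_{cc}$. All terms of $G_2(F)$ are perverse sheaves, so the cohomology of $G(F)$ may be computed in the abelian category $\Perv_\Sigma(\field_\V)$. By Proposition \ref{prop:BeiMax}, $b''$ is monic and $b'$ is epic in $\Perv_\Sigma(\field_\V)$, hence the outer differentials $(b'',\bigoplus u_{cc})$ and $(-b',\bigoplus v_{cc})$ are respectively monic and epic. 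Therefore $H^{\pm 1}(G(F))=0$ and $G(F)$ is perverse.

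For (iii), the strategy is to compute the quiver of $G(F)$, match it with that of $F$, and deduce the isomorphism. The functors $\Psi_c$ and $\Phi_c$ are exact on $\Perv_\Sigma$, so they commute with $H^0$ of the total complex. On $\V\setminus\Sigma$ only the $\Xi_\Sigma(F)$ summand contributes, giving $G(F)|_{\V\setminus\Sigma}\simeq F|_{\V\setminus\Sigma}$. For each $c\in\Sigma$, Convention \ref{con:alphabeta} ensures $\ell_c\cap\Sigma=\{c\}$, so $\Psi_c(\roim{i_d}V)=0$ for all $d$ and $\Phi_c(\roim{i_d}V)$ equals $V$ if $d=c$ and $0$ otherwise. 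Combined with Corollary \ref{cor:BeiQuiv}'s description of $\Psi_c(\Xi_\Sigma(F))=\Psi_c(F)$ and $\Phi_c(\Xi_\Sigma(F))=\Psi_c(F)\oplus\Psi_c(F)$ together with the explicit matrix forms of $u^\Xi_{cc}, v^\Xi_{cc}$, a direct linear-algebra computation of the middle cohomologies of $\Psi_c(G_2(F))$ and $\Phi_c(G_2(F))$ yields $\Psi_c(G(F))\simeq\Psi_c(F)$ and $\Phi_c(G(F))\simeq\Phi_c(F)$ with matching $u_{cc}, v_{cc}$ (up to a sign convention in the $\Phi_c$-identification).

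To exhibit the natural isomorphism, set $K:=\ker((-b',\bigoplus v_{cc}))$; pulling back the first short exact sequence of Proposition \ref{prop:BeiMax} along $\bigoplus v_{cc}$ gives
\[
0\to\reim{j_\Sigma}\opb{j_\Sigma}F\to K\to\bigoplus_c\roim{i_c}\Phi_c(F)\to 0.
\]
One then constructs $\tilde\varphi:K\to F$ by combining the counit $\reim{j_\Sigma}\opb{j_\Sigma}F\to F$ with suitable adjunction-defined maps $\roim{i_c}\Phi_c(F)\to F$, descending to $\bar\varphi:G(F)\to F$ thanks to the identity $b'b''=\bigoplus_c v_{cc}u_{cc}$. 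Conservativity of the quiver functor on $\Perv_\Sigma$ (a perverse sheaf with vanishing restriction to $\V\setminus\Sigma$ is a direct sum of skyscrapers $\roim{i_c}V_c$, whose $\Phi_c$ equals $V_c$; hence if all $\Phi_c$ vanish then the sheaf is zero) then forces $\ker\bar\varphi$ and $\mathrm{coker}\,\bar\varphi$ to vanish, so $\bar\varphi$ is an isomorphism. The main obstacle is precisely this construction of $\bar\varphi$: since $\mathrm{Hom}(\roim{i_c}\Phi_c(F),F)=\mathrm{Hom}(\Phi_c(F),H^0(i_c^!F))$ and no canonical map $\Phi_c(F)\to H^0(i_c^!F)$ exists a priori, one must carefully choose the adjunction data (using the distinguished triangles of Section \ref{sse:pvqv}) so that the resulting $\Phi_c(\bar\varphi)$ realizes the quiver-isomorphism computed above. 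This bookkeeping is the delicate core of the argument.
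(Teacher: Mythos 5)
Your steps (i)--(ii) are correct: the identity $b'\circ b''=\bigoplus_c v_{cc}u_{cc}$ does make $G(F)$ a complex, and since $b''$ is monic and $b'$ is epic in $\Perv_\Sigma(\field_\V)$ (by the two short exact sequences of Proposition \ref{prop:BeiMax}), the outer differentials are respectively monic and epic, so the cohomology is concentrated in the middle degree and is perverse. This is a legitimate, slightly more direct route than the paper's check that $\Psi_c(G(F))$ and $\Phi_c(G(F))$ are concentrated in degree $0$. Your quiver computation for $G(F)$ (the skyscrapers are killed by $\Psi_c$, only the $d=c$ skyscraper survives $\Phi_c$, and Corollary \ref{cor:BeiQuiv} handles the $\Xi_\Sigma(F)$-term) is also sound and matches what the paper does.

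The genuine gap is step (iii): the isomorphism $G(F)\simeq F$, which is the actual content of the corollary and on which essential surjectivity, fullness and faithfulness all rest, is never constructed, and the route you sketch would fail. Matching quivers does not by itself yield a morphism $G(F)\to F$: the fact that the quiver data determine the perverse sheaf is a \emph{consequence} of this corollary, so it cannot be invoked here. Moreover your plan to build $\tilde\varphi\colon K\to F$ by combining the counit $\reim{{j_\Sigma}}\opb{j_\Sigma}F\to F$ with maps $\roim{{i_c}}\Phi_c(F)\to F$ cannot work: a morphism out of the extension $K$ is not assembled from morphisms on its sub and its quotient, and, worse, the required maps $\roim{{i_c}}\Phi_c(F)\to F$ in general do not exist at all, since $\Hom(\roim{{i_c}}V,F)\simeq\Hom(V,H^0\epb{i_c}F)$ vanishes whenever $F$ has no nonzero subobject supported at $c$ (for instance $F$ the intermediate extension of a rank-one local system with nontrivial monodromy at $c$, for which nevertheless $\Phi_c(F)\neq0$); no choice of ``adjunction data'' repairs this, because the extension class of $K$ is exactly what carries the missing information. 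The paper avoids any such componentwise map: it first checks $G(F)\simeq F$ on the two building blocks --- perverse sheaves supported on $\Sigma$, and localized sheaves $F=\reim{{j_\Sigma}}\opb{j_\Sigma}F$, where the explicit diagram \eqref{eq:Gj} shows that $u_{cc}$ of $G(F)$ is invertible --- and then, for general $F$, applies $G$ to the canonical morphism $\eta\colon\reim{{j_\Sigma}}\opb{j_\Sigma}F\to F$, identifies the cone of $G(\eta)$ with $\opb{i_\Sigma}F$, and uses Lemma \ref{lem:fidelite} (a morphism of perverse sheaves is determined by its restriction to $\V\setminus\Sigma$ together with its $\Psi_c$ and $\Phi_c$) to identify the resulting connecting morphism with the canonical one; this yields $G(F)\simeq F$, and fullness and faithfulness follow by the same $\Phi_c$-argument. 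Some such bootstrapping from the two generating cases (or another genuine global construction of the comparison map) is what your proposal is missing.
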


\begin{proof}
We obviously have $\opb{j_\Sigma}G(F)=\opb{j_\Sigma}F$. For the perversity of $G(F)$, it is then enough to check that $\Psi_c(G(F))$ and $\Phi_c(G(F))$ have cohomology in degree zero only, for any $c\in\Sigma$. This is clear for $\Psi_c(G(F))=\Psi_c(F)$. By the computation of Corollary \ref{cor:BeiQuiv}, $\Phi_c(G_2(F))$ is isomorphic to the double complex
\[
\xymatrix@C=1.5cm{
\Psi_c(F) \ar[r]^-{(1,1-T_{cc})} \ar[d]_{u_{cc}} & \Psi_c(F)\oplus\Psi_c(F) \ar[d]_{-p_2} \\
\Phi_c(F) \ar[r]^-{v_{cc}} & \Psi_c(F),
}
\]
and the claim follows, since the upper horizontal arrow is injective and the right vertical arrow is onto. Note also that $u_{cc}$ for $G_2(F)$ is induced by the inclusion of the first summand in $\Psi_c(F)\oplus\Psi_c(F)$.

We now prove the essential surjectivity of $G$ by showing $G(F)\simeq F$. If $F$ is supported on $\Sigma$, then $G(F)=\bigoplus_{c\in\Sigma}\roim{{i_c}}\Phi_c(F)=F$. If $F$ is equal to $\reim{{j_\Sigma}}\opb{j_\Sigma}F$, then~$u_{cc}$ is an isomorphism for every $c$. Let us prove that \emph{$u_{cc}$ for $G(F)$ is an isomorphism}. We can assume $u_{cc}=\id$ for $F$, so that $v_{cc}=1-T_{cc}$ for $F$. Then $u_{cc}$ for $G(F)$ is induced by the vertical morphism of complexes
\begin{equation}\label{eq:Gj}
\begin{array}{c}
\xymatrix@C=1.8cm{
0\ar[r]\ar[d]&0\oplus\Psi_c(F)\oplus0\ar[r]\ar[d]^{(0,1,0)}&0\ar[d]\\
\Psi_c(F)\ar[r]^-{(1,1,1-T_{cc})}&\Psi_c(F)\oplus\Psi_c(F)\oplus\Psi_c(F)\ar[r]^-{(1-T_{cc})+0-1}&\Psi_c(F).
}
\end{array}
\end{equation}
In such a way, the claim is obvious, and implies that $G(\reim{{j_\Sigma}}\opb{j_\Sigma}F)=\reim{{j_\Sigma}}\opb{j_\Sigma}F$.

Let us denote by $\eta$ the morphism $\reim{{j_\Sigma}}\opb{j_\Sigma}F\to F$. The cone of $G(\eta)$ in $\BDC_\Cc(\field_\V,\Sigma)$ is isomorphic to the complex
\[
\bigoplus_{c\in\Sigma}\Psi(F)\to[\bigoplus_cu_{cc}]\bigoplus_{c\in\Sigma}\Phi(F),
\]
that is, to $\opb{i_\Sigma}F$. We obtain in this way a morphism $\eta':\opb{i_\Sigma}F[-1]\to\reim{{j_\Sigma}}\opb{j_\Sigma}F$. We claim that this morphism is equal to the natural morphism, which will prove $G(F)\simeq F$. This is obvious away from $\Sigma$, so it suffices, according to Lemma \ref{lem:fidelite}, to show that $\Phi_c(\eta')$ is $\Phi_c$ of the natural morphism for every $c\in \Sigma$. In other words, it suffices to show that $\Phi_c(G(\eta))=\Phi_c(\eta)$ for every $c\in \Sigma$. This is done by using a presentation like in the lower line of \eqref{eq:Gj}.

The previous result also implies the fullness. For the faithfulness we note that, if a morphism $\eta:F\to F'$ is such that $G(\eta)=0$, then $\eta_{|\V\setminus\Sigma}=0$, and $\Phi_c(G(\eta))=0$ for every $c\in \Sigma$, which implies $\Phi_c(\eta)=0$ by an argument similar to the above one.
\end{proof}

Note that the datum of $F_{|\V\setminus\Sigma}$ is enough to recover $\Xi_\Sigma(F)$, and that $\Xi_\Sigma$ induces a faithful functor $\Perv(\field_{\V\setminus\Sigma})\to\Perv_\Sigma(\field_\V)$.
Then, by looking at local systems on $\V\setminus\Sigma$ as monodromy representations, and by using the above equivalence $G$, we get

\begin{corollary}
The functor $Q_\Sigma^{(\alpha,\beta)}$ is an equivalence of categories.
\end{corollary}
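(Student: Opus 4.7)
My plan is to produce an explicit quasi-inverse $R \colon \mathsf{Quiv}_\Sigma(\field) \to \Perv_\Sigma(\field_\V)$ to $Q_\Sigma^{(\alpha,\beta)}$, assembling the pieces already available (Beilinson's maximal extension together with the reconstruction formula of Corollary \ref{cor:BeiReconstr}).

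The first step, given a quiver $Q = (\Psi, \Phi_c, u_c, v_c)_{c\in\Sigma}$, is to promote it to a local system on $\V\setminus\Sigma$. Since $1 - v_c u_c$ is invertible, $T_c \defeq 1 - v_c u_c$ is an automorphism of $\Psi$. The orientation of $\V$ fixed after Lemma \ref{lem:PsiT} identifies $\pi_1(\V\setminus\Sigma)$ with the free group on loops $\gamma_c$ encircling each $c\in\Sigma$, and the assignment $\gamma_c \mapsto T_c$ defines a monodromy representation, hence a local system $L_Q$ on $\V\setminus\Sigma$. Setting $F_Q^\circ \defeq L_Q[1]$, I form the Beilinson extension $\Xi_\Sigma(F_Q^\circ) \in \Perv_\Sigma(\field_\V)$. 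By Corollary \ref{cor:BeiQuiv} combined with Lemma \ref{lem:PsiT}, there are then canonical isomorphisms $\Psi_c(\Xi_\Sigma F_Q^\circ) \simeq \Psi$ and $\Phi_c(\Xi_\Sigma F_Q^\circ) \simeq \Psi \dsum \Psi$, with the local maps $u_{cc}, v_{cc}$ given by the explicit matrices displayed in that corollary.

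Imitating the formula in Corollary \ref{cor:BeiReconstr}, I then define
\[
R(Q) \defeq H^1\Bigl(
\DSum_{c\in\Sigma} \roim{i_c}\Psi
\to \Xi_\Sigma(F_Q^\circ) \dsum \DSum_{c\in\Sigma} \roim{i_c}\Phi_c
\to \DSum_{c\in\Sigma} \roim{i_c}\Psi
\Bigr),
\]
with first arrow $\bigl(b'',\bigoplus_c u_c\bigr)$ and second arrow $\bigl(-b',\bigoplus_c v_c\bigr)$ in the notation of \eqref{eq:GF}, using the identifications just obtained. The perversity of $R(Q)$ and the isomorphism $Q_\Sigma^{(\alpha,\beta)}(R(Q)) \simeq Q$ follow from the same linear-algebra verification as in the proof of Corollary \ref{cor:BeiReconstr}: one has $\opb{j_\Sigma} R(Q) \simeq L_Q$ by \eqref{eq:XiXiF}, while $\Psi_c$ and $\Phi_c$ of the defining double complex reduce, via Corollary \ref{cor:BeiQuiv}, to explicit two-step complexes whose cohomologies are $\Psi$ and $\Phi_c$, equipped with the original maps $u_c$ and $v_c$. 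The reverse composition $R \circ Q_\Sigma^{(\alpha,\beta)} \simeq \id$ is then essentially the content of Corollary \ref{cor:BeiReconstr}, once Lemma \ref{lem:PsiT} is invoked to identify the monodromy of $F|_{\V\setminus\Sigma}[-1]$ around $c$ with $1 - v_c u_c$, so that $L \simeq L_{Q_\Sigma^{(\alpha,\beta)}(F)}$.

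The main obstacle I expect is not mathematical content but careful bookkeeping of conventions: one must check that the orientation fixed by $(\alpha,\beta)$, the canonical isomorphism \ref{eq:isoPsicPsi}, the sign conventions in the double complex \eqref{eq:GF}, and the formula $T_c = 1 - v_c u_c$ are all compatibly arranged, so that the two compositions recover the identity functor on the nose (up to natural isomorphism) rather than some twist, for instance by a monodromy factor coming from the choice of base point $b_c \in \ell_c^\times$. Once this is settled, everything reduces to the same linear algebra already carried out in Corollaries \ref{cor:BeiQuiv} and \ref{cor:BeiReconstr}.
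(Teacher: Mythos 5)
Your proposal is correct and follows essentially the same route as the paper: the paper's (very terse) proof likewise builds the local system from the monodromies $T_c=1-v_cu_c$ viewed as a representation of $\pi_1(\V\setminus\Sigma)$, uses that $\Xi_\Sigma(F)$ depends only on $F|_{\V\setminus\Sigma}$, and invokes the equivalence $G$ of Corollary \ref{cor:BeiReconstr} to reconstruct the perverse sheaf from the quiver data. Your explicit quasi-inverse $R$ is just this argument spelled out, with the bookkeeping of the identifications \ref{eq:isoPsicPsi} and Lemma \ref{lem:PsiT} that the paper leaves implicit.
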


\begin{corollary}\label{cor:Bei}
Let $A,B\colon \Perv_\Sigma(\C_\V) \to \Mod(\field)$ be two exact functors, and let $\alpha,\beta\colon A\to B$ be two morphisms of functors. Assume that $\alpha$ and $\beta$ coincide when restricted to the subcategory consisting of perverse sheaves supported on $\Sigma$, and to $\Xi(\Perv(\field_{\V\setminus\Sigma}))$. Then $\alpha=\beta$.
\end{corollary}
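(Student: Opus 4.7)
The plan is to use the reconstruction of every $F\in\Perv_\Sigma(\C_\V)$ as the middle cohomology of a functorial three-term complex whose entries lie in the two subcategories on which $\alpha$ and $\beta$ are already known to agree. Exactness of $A$ and $B$, together with naturality, will then force $\alpha_F=\beta_F$.

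More precisely, I would first invoke Corollary \ref{cor:BeiReconstr}, which exhibits $F$ as the middle cohomology of the complex $G_2(F)$ of \eqref{eq:GF}, namely
\[
\bigoplus_{c\in\Sigma}\roim{{i_c}}\Psi_c(F) \to \Xi_\Sigma(F) \oplus \bigoplus_{c\in\Sigma}\roim{{i_c}}\Phi_c(F) \to \bigoplus_{c\in\Sigma}\roim{{i_c}}\Psi_c(F),
\]
whose differentials are built out of $b',b'',u_c,v_c$ and hence are functorial in $F$. Each term $\roim{{i_c}}\Psi_c(F)$ and $\roim{{i_c}}\Phi_c(F)$ is a perverse sheaf supported on $\Sigma$, while $\Xi_\Sigma(F)\simeq\Xi_\Sigma(\field_{\V\setminus\Sigma}\tens F)$ belongs to $\Xi(\Perv(\field_{\V\setminus\Sigma}))$ by \eqref{eq:XiXiF}. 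Consequently, on every term of $G_2(F)$ the hypothesis already gives $\alpha=\beta$.

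Applying the exact functors $A$ and $B$ termwise produces two complexes of $\field$-vector spaces whose middle cohomology computes $A(F)$ and $B(F)$, respectively. The natural transformations $\alpha,\beta\colon A\to B$, evaluated on the differentials of $G_2(F)$, yield commutative diagrams assembling morphisms of complexes $A(G_2(F))\to B(G_2(F))$ that realize $\alpha_F$ and $\beta_F$ upon passing to cohomology. Since $\alpha$ and $\beta$ agree on each entry of $G_2(F)$, the induced maps on middle cohomology coincide, and therefore $\alpha_F=\beta_F$.

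I do not anticipate a genuine obstacle here: the argument is a clean application of Beilinson's gluing. The only point that requires some care is to check that the construction of $G_2(F)$, and in particular the morphisms $b',b'',u_c,v_c$, is natural in $F$, so that the natural transformations $\alpha,\beta$ actually pass through as morphisms of complexes. This functoriality is guaranteed by Proposition \ref{prop:BeiMax} together with the functoriality of the quiver construction of Section \ref{sse:pvqv}.
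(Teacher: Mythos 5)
Your argument is correct and is essentially the paper's own proof: the paper deduces $\alpha\circ G=\beta\circ G$ from the hypotheses (exactly your termwise comparison on the complex $G_2(F)$ of \eqref{eq:GF}, whose entries are supported on $\Sigma$ or lie in $\Xi_\Sigma(\Perv(\field_{\V\setminus\Sigma}))$) and then concludes from Corollary \ref{cor:BeiReconstr}, i.e.\ from $G$ being an equivalence with $G(F)\simeq F$. You have simply unfolded the exactness and naturality details that the paper's two-line proof leaves implicit.
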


\begin{proof}
The assumption implies $\alpha\circ G=\beta\circ G$. We conclude by using that $G$ is an equivalence.
\end{proof}

\section{Stokes phenomena}\label{sec:Stokes}

Let $F\in\Perv_\Sigma(\field_\V)$, and consider its enhanced Fourier-Sato transforms
\begin{align}
\enhK &\defeq \varepsilon(F)^\curlywedge \in\BECRc[\field] {\W_\infty}, \notag \\
K &\defeq e(F)^\curlywedge \simeq \field_{\W_\infty}^\enh \ctens \quot\enhK \in\BECRc {\W_\infty}.\label{eq:defK}
\end{align}
We will describe here the exponential components and the Stokes multipliers at infinity of $K$, along the lines described in \cite[\S9.8]{DK16} (see also \cite[\S8]{Kas16}).
Recall that if $F=\sol(\shm)$ for $\shm$ a regular holonomic $\D_{\V_\infty}$-module, then $K\simeq\solE(\shm^\wedge)$.

\subsection{Exponential form on sectors}

Fix a pair $(\alpha, \beta) $ as in Convention~\ref{con:alphabeta}.
Set
\begin{align*}
H_{\pm\alpha} &\defeq \{ w\in\W\setminus\{0\} \semicolon \pm\Re\langle \alpha,w \rangle \geq 0\}, \\
h_{\pm\beta} &\defeq \pm\R_{>0}\beta \subset \W,
\end{align*}
so that $\W\setminus\{0\} = H_\alpha \cup H_{-\alpha}$ and $H_\alpha \cap H_{-\alpha} = h_\beta \cup h_{-\beta}$. We consider $H_{\pm\alpha}$ as closed sectors centered at infinity.

Recall from Lemma \ref{lem:F'TamFou} that
\[
\enhK \simeq \reim{\tilde q}(\field_{\{t+\Re zw\geq 0\}}\tens\opb{\overline p} F),
\]
where we considered the projections
\[
\W \times\R \from[\tilde q] \V\times\W \times\R \to[\overline p] \V.
\]
\begin{prop}\label{prop:expsectors}
There are natural isomorphisms in $\BECRc[\field] {\W_\infty}$
$$
s_{\pm\alpha} \colon \Erest{H_{\pm\alpha}} \enhK \isoto
\Erest{H_{\pm\alpha}}{\DSum_{c\in\Sigma}
\bigl(\Phi^{(\pm\alpha,\beta)}_c (F) \tens E^{c w} \bigr)}
$$
(using Notation \ref{notation:Evi} and Definition \ref{def:vannearcycles}).
In particular, the exponents of $e(F)^\curlywedge$ at $\infty$ are the linear functions $c w$, for $c\in\Sigma$.
\end{prop}

\begin{proof}
Since the proofs are similar, let us only discuss $s_\alpha$.

(i) In order to better motivate our geometric constructions, let us start by computing the stalk $\enhK_{(w,t)}$
for $w\in H_\alpha$ and $t\in\R$.
One has
\[
\enhK_{(w,t)} \simeq\rsect_\rc(\V;\field_{Z_{(w,t)}}\tens F),
\]
where we set
\[
Z_{(w,t)} = \{z\in\V\semicolon t+\Re zw \geq 0\}.
\]
Note that $Z_{(w,t)}= -(t/|w|^2)\overline w + \{z\in\V \semicolon \Re z\geq 0\} \overline w$ is a closed half-space with inner conormal $\overline w$. Note also that $Z_{(w,t)}\supset Z_{(w,s)}$ for $s<t$.

Consider the pairwise disjoint half-lines $\ell_c(\alpha) = c+\R_{\geq 0}\alpha$, for $c \in \Sigma $. Since $w\in H_\alpha$, one has $\Re\alpha w\geq 0$. Hence $\ell_c(\alpha) \subset Z_{(w,t)}$ if and only if
$c \in Z_{(w,t)}$ if and only if $t+\Re c w\geq 0$.
Set
\begin{align*}
\Sigma_{(w,t)} &= \{ c \in \Sigma; t+ \Re cw \geq 0 \}
= \Sigma\cap Z_{(w,t)}, \\
\ell_{\Sigma_{(w,t)}}(\alpha) &= \Union_{c\in\Sigma_{(w,t)}} \ell_c(\alpha).
\end{align*}
Since $Z_{(w,t)} \setminus \ell_{\Sigma_{(w,t)}}(\alpha) $ is $\Sigma $-negligible (see Figure~\ref{fig:hyperplane}),
the short exact sequence
\begin{equation}
\label{eq:Hwtdt}
0 \to \field_{Z_{(w,t)}\setminus \ell_{\Sigma_{(w,t)}}(\alpha)}
\to \field_{Z_{(w,t)}} \to \DSum_{c\in\Sigma_{(w,t)}} \field_{\ell_c(\alpha)} \to 0
\end{equation}
induces an isomorphism
\begin{equation}\label{eq:Gwttriangle}
\enhK_{(w,t)} \simeq
\DSum_{c\in\Sigma_{(w,t)}} \rsect_\rc(\V; \field_{\ell_c(\alpha)} \tens F) = \DSum_{c\in\Sigma_{(w,t)}} \Phi^{(\alpha,\beta)}_c(F).
\end{equation}

\begin{figure}
\includestandalone{hyperplane}
\caption{\label{fig:hyperplane}Solid points represent elements of $\Sigma_{(w,t)}$, and crosses represent the other elements of $\Sigma$.}
\end{figure}

(ii) Let us now treat the global case. For $c\in\Sigma$, set
\begin{align*}
L_c(\alpha) &= \ell_c(\alpha) \times \{(w,t)\in\W\times\R\semicolon t+\Re c w\geq 0 \}, \\
L_\Sigma(\alpha) &= \Union_{c\in\Sigma} L_c(\alpha),
\end{align*}
which are closed subsets of $\V\times\W\times \R$.
The short exact sequence
\[
0 \to
\field_{\{t+\Re z w\geq 0\} \setminus L_\Sigma(\alpha)}
\to \field_{\{t+\Re z w\geq 0\}}
\to \DSum_{c\in\Sigma} \field_{L_c(\alpha)} \to 0
\]
induces a distinguished triangle
\begin{equation}
\label{eq:Gwttriangle2}
\opb\pi\field_{H_\alpha} \tens \enhK' \to
\opb\pi\field_{H_\alpha} \tens \enhK \to[\rho_\alpha]
\opb\pi\field_{H_\alpha} \tens \enhK'' \to[+1]
\end{equation}
with
\begin{align*}
\enhK' & = \reim{\tilde q}(\field_{\{t+\Re z w\geq 0\} \setminus
L_\Sigma(\alpha)} \tens \opb{\overline p} F), \\
\enhK'' & = \DSum_{c\in\Sigma} \reim{\tilde q}(\field_{L_c(\alpha)} \tens
\opb{\overline p} F).
\end{align*}
For $(w,t)\in H_\alpha$, the set
\[
\widetilde q^{-1}(w,t) \cap \bigl(\{t+\Re z w\geq 0\} \setminus
L_\Sigma(\alpha)\bigr)=Z_{(w,t)} \setminus \ell_{\Sigma_{(w,t)}}(\alpha)
\]
is $\Sigma $-negligible. It follows that $\enhK'_{(w,t)} \simeq 0$. Hence $\opb\pi\field_{H_\alpha} \tens \enhK' \simeq 0$,
and $\rho_\alpha$ is an isomorphism.
Consider the diagram with Cartesian squares
\[
\xymatrix{
\V\times\W\times\R \ar[d]_{\overline p} \ar@/^1.5pc/[rrr]^{\widetilde q} & L_c(\alpha) \ar@{ >->}[l] \ar[r]^-{q_2} \ar[d]^{q_1} & \{t+\Re c w\geq 0 \} \ar@{ >->}[r]_-j \ar[d] & \R\times\W \\
\V & \ell_c(\alpha) \ar[r] \ar@{ >->}[l] & \{pt\}.
}
\]
One has the isomorphisms
\begin{align}\label{eq:rhoalpha}
\opb\pi\field_{H_\alpha} \tens \enhK \notag
&\isoto[\rho_\alpha] \opb\pi\field_{H_\alpha} \tens \enhK''\\ \notag
&= \opb\pi\field_{H_\alpha} \tens
\left(\textstyle\DSum_{c\in\Sigma} \reim{\tilde q}(\field_{L_c(\alpha)} \tens
\opb{\overline p} F) \right) \\
&\simeq \opb\pi\field_{H_\alpha} \tens
\left(\textstyle\DSum_{c\in\Sigma} \reim j \reim{{q_2}} \opb{q_1} (F|_{\ell_c(\alpha)}) \right) \\ \notag
&\xleftarrow[(*)]{~\sim~} \opb\pi\field_{H_\alpha} \tens
\left(\textstyle\DSum_{c\in\Sigma} \rsect_\rc(\V;\field_{\ell_c(\alpha)} \tens F) \tens \field_{\{t+ \Re cw \geq 0 \}}
\right) \\ \notag
&= \opb\pi\field_{H_\alpha} \tens
\left(\textstyle\DSum_{c\in\Sigma} \Phi^{(\alpha,\beta)}_c(F) \tens E^{cw} \right),
\end{align}
where $(*)$ follows from \cite[Proposition 2.6.7]{KS90}.
\end{proof}

By convolution with $\field_{\W_\infty}^\enh $ we deduce the isomorphisms
\begin{equation}\label{eq:sprime}
\boldsymbol{s}_{\pm\alpha} \colon \Erest{H_{\pm\alpha}} K \isoto
\Erest{H_{\pm\alpha}}{\DSum_{c\in\Sigma}
\bigl(\Phi_c (F) \tens \enhE^{c w} \bigr)}
\end{equation}
with $\enhE^{cw} =\field_{\W_\infty}^\enh \ctens\quot E^{cw} $ and $K $ as in \eqref{eq:defK}. Similar to \cite[9.8]{DK16}, we have the following lemma.

\begin{lemma}\label{lem:isomH}
Let $(\Phi_c)_{c\in\Sigma}$ be $\field $-vector spaces and set $\Phi_\Sigma=\bigoplus_{c\in\Sigma}\Phi_c$. Let $c,d \in \Sigma$.
\begin{enumerate}[(i)]
\item\label{lem:isomH1}
Assume that $\Re\langle c,\beta \rangle > \Re\langle d,\beta \rangle$. Then there are natural isomorphisms:
\begin{multline*}
\Hom[{\BEC {\W_\infty}}] ( \Erest{h_{\beta}} \enhE^{cw} \tens \Phi_c, \Erest{h_{\beta}}\enhE^{dw} \tens \Phi_d) \\
\simeq\Hom[{\BEC[\field]{\W_\infty}}] ( \Erest{h_{\beta}} E^{cw} \tens \Phi_c, \Erest{h_{\beta}} E^{dw} \tens \Phi_d) \simeq
\Hom[\field] ( \Phi_c, \Phi_d).
\end{multline*}
\item \label{lem:isomH2}
If $\Re\langle c,\beta \rangle < \Re\langle d,\beta \rangle$, we have
\begin{multline*}
\Hom[{\BEC {\W_\infty}}] ( \Erest{h_{\beta}} \enhE^{cw} \tens \Phi_c, \Erest{h_{\beta}}\enhE^{dw} \tens \Phi_d) \\
\simeq\Hom[{\BEC[\field]{\W_\infty}}] ( \Erest{h_{\beta}} E^{cw} \tens \Phi_c, \Erest{h_{\beta}} E^{dw} \tens \Phi_d)=0.
\end{multline*}
\item\label{lem:isomH3}
Each of the sectors $H_{\pm \alpha} $ contains exactly one Stokes-line for each pair $c \neq d $ in $\Sigma $ and
\begin{multline*}
\Endo[{\BEC {\W_\infty}}] \Bigl( \Erest{H_{\alpha}} {\textstyle\DSum}_{c\in \Sigma} (\enhE^{cw} \tens \Phi_c)\Bigl)\\
\simeq\Endo[{\BEC[\field]{\W_\infty}}] \Bigl( \Erest{H_{\alpha}} {\textstyle\DSum}_{c\in \Sigma} (E^{cw} \tens \Phi_c)\Bigr)\simeq \mathfrak{t},
\end{multline*}
with $\mathfrak{t} \subset \Endo[\field] \Phi_\Sigma $ denoting the subspace of all block diagonal endomorphisms, for $\Phi_\Sigma = \dsum_c\Phi_c$.
\end{enumerate}
\noindent The statements \eqref{lem:isomH1} and \eqref{lem:isomH2} hold for $-\beta $ instead of $\beta $ in the same way.
\end{lemma}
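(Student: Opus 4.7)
The plan is to reduce each of the Hom computations to a sheaf-level calculation on $\W\times\R$, and then to exploit the explicit geometry of the closed subsets $\{t+\Re cw\geq 0\}$ intersected with the relevant subset ($h_\beta$ in (1)--(2), $H_\alpha$ in (3)).

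First, the isomorphisms identifying $\Hom_{\BEC {\W_\infty}}$ with $\Hom_{\BEC[\field]{\W_\infty}}$ in all three parts follow by a general adjunction argument using $\enhE^\varphi = \field^\enh \ctens \quot E^\varphi$ together with Lemmas \ref{lem:ops} and \ref{lem:pi}: since the sheaves $\Erest{h_\beta} E^{cw}$ and $\Erest{H_\alpha} E^{cw}$ belong to $\widetilde\dere^{\mathrm{b}}_+(\field_\W)$ (one checks $\field_{\{t\geq 0\}}\tconv E^{cw}\simeq E^{cw}$ directly), the functor $\field^\enh\ctens\quot(\cdot)$ identifies morphisms between their enhanced-sheaf images with morphisms between the corresponding enhanced ind-sheaves, in the spirit of \cite[Proposition 4.3.10]{DK16}.

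For (1) and (2), parameterize $h_\beta$ by $s>0$ via $w=s\beta$ and set $a_c=\Re\langle c,\beta\rangle$. After factoring out the vector spaces $\Phi_c$, $\Phi_d$, it suffices to compute $\Hom_{\BDC(\field_{\W\times\R})}(\field_{Z_c^\beta},\field_{Z_d^\beta})$ where $Z_c^\beta=\{(w,t):w\in h_\beta,\ t+\Re cw\geq 0\}$, a closed half-plane inside the strip $h_\beta\times\R$. In \emph{Case (1)} ($a_c>a_d$) one has $Z_d^\beta\subset Z_c^\beta$, so by the $(i_*,i^{-1})$-adjunction for the closed inclusion $i\colon Z_c^\beta\hookrightarrow\W\times\R$, $\Hom(\field_{Z_c^\beta},\field_{Z_d^\beta})\simeq\Gamma(Z_c^\beta,\field_{Z_d^\beta}|_{Z_c^\beta})\simeq\Gamma(Z_d^\beta,\field)=\field$ by connectedness, yielding $\Hom_\field(\Phi_c,\Phi_d)$. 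In \emph{Case (2)} ($a_c<a_d$) one has $Z_c^\beta\subsetneq Z_d^\beta$; the short exact sequence $0\to\field_{Z_d^\beta\setminus Z_c^\beta}\to\field_{Z_d^\beta}\to\field_{Z_c^\beta}\to 0$, upon applying $\Hom(-,\field_{Z_d^\beta})$, produces a four-term exact sequence in which the map $\Hom(\field_{Z_d^\beta},\field_{Z_d^\beta})=\field\to\Hom(\field_{Z_d^\beta\setminus Z_c^\beta},\field_{Z_d^\beta})=\field$ sends the identity to the canonical inclusion morphism and is therefore an isomorphism, forcing $\Hom(\field_{Z_c^\beta},\field_{Z_d^\beta})=0$. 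Equivalently, using $(i_!,i^!)$ for the closed inclusion $Z_c^\beta\hookrightarrow\W\times\R$, one computes $i^!\field_{Z_d^\beta}$ to be the extension by zero from the (non-compact) open interior of $Z_c^\beta$, whose compactly-supported $H^0$ vanishes.

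For (3), a Stokes line associated to a pair $c\neq d$ is the real line $\{w:\Re((c-d)w)=0\}$ through the origin. If $c-d$ were a real multiple of $\alpha$, then by \eqref{eq:alpha2} one would get $\Re\langle c-d,\beta\rangle=0$, contradicting \eqref{eq:beta}; hence the Stokes line is distinct from the boundary $\R\beta$ of $H_{\pm\alpha}$, and each sector $H_{\pm\alpha}$ contains exactly one of its two open half-rays. The same argument shows that $\Re((d-c)w)$ is unbounded both above and below on the half-plane $H_\alpha$, so neither $Z_c\cap(H_\alpha\times\R)$ nor $Z_d\cap(H_\alpha\times\R)$ contains the other. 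A variant of the Case (2) computation above, now adapted to the sector $H_\alpha$ (using the SES $0\to\field_{Z_d\setminus Z_c}\to\field_{Z_d}\to\field_{Z_c\cap Z_d}\to 0$ restricted to $H_\alpha\times\R$ and the vanishing of compactly-supported $H^0$ of the non-compact open interior involved), yields $\Hom(\Erest{H_\alpha}\enhE^{cw}\tens\Phi_c,\Erest{H_\alpha}\enhE^{dw}\tens\Phi_d)=0$ for $c\neq d$, while the diagonal blocks reproduce $\End_\field(\Phi_c)$. Assembling gives $\End\simeq\bigoplus_{c\in\Sigma}\End_\field(\Phi_c)=\mathfrak{t}$. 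The statement with $-\beta$ in place of $\beta$ follows by the symmetric argument exchanging $H_\alpha\leftrightarrow H_{-\alpha}$.

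The main obstacle is the careful handling of the Hom vanishing in Case (2) (and its sectoral analogue in (3)): naively one expects a morphism coming from the inclusion $Z_c^\beta\subset Z_d^\beta$, and only the non-compactness of the ``excess region'' $Z_d^\beta\setminus Z_c^\beta$ together with the locally closed (rather than closed or open) structure of the kernel of the surjection $\field_{Z_d^\beta}\twoheadrightarrow\field_{Z_c^\beta}$ forces the vanishing through the long exact sequence.
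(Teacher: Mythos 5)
Your sheaf-level geometry is essentially right: on the strip $h_\beta\times\R$ the sets $\{t+\Re cw\geq0\}$ are nested closed half-plane-like regions, the ordering of $\Re\langle c,\beta\rangle$ decides the direction of inclusion, and the Hom computations you give for the three cases (restriction map in case (1), vanishing via connectedness of the excess region in case (2), and the Stokes-line argument showing $\Re((c-d)w)$ changes sign on $H_{\pm\alpha}$ in case (3)) are correct, up to a small slip: there is no ``$(i_*,i^{-1})$-adjunction'' in that order; for a closed subset one should use $\Hom(\field_Z,G)\simeq\Gamma_Z(X;G)$, or simply the canonical surjection $\field_{Z_c}\twoheadrightarrow\field_{Z_d}$ for $Z_d\subset Z_c$ closed, which gives the same answer. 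Note also that the paper itself gives no proof of this lemma; it only points to \cite[\S9.8]{DK16}, so the real question is whether your argument supplies what that reference supplies.

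It does not quite, and this is the genuine gap: the first isomorphism in each item, identifying $\Hom[{\BEC {\W_\infty}}]$ of the objects $\enhE^{cw}=\field_{\W_\infty}^\enh\ctens\quot E^{cw}$ with the sheaf-level $\Hom[{\BEC[\field]{\W_\infty}}]$, is exactly where the Stokes phenomenon lives, and your justification (``the functor $\field_{\W_\infty}^\enh\ctens\quot(\cdot)$ identifies morphisms, in the spirit of Proposition 4.3.10 of \cite{DK16}'') invokes a general full-faithfulness which is false. Already over a point, $\Hom(\quot\field_{\{t\geq1\}},\quot\field_{\{t\geq0\}})=0$ at the enhanced-sheaf level, while after applying $\field^\enh\ctens(\cdot)$ both objects become $\field^\enh$ and the Hom is $\field$. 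The correct route is to compute the enhanced ind-sheaf Hom as a limit over translations, $\varprojlim_a\varinjlim_b\Hom\bigl(\field_{\{t+\Re cw\geq a\}}\tens\opb\pi\field_{h_\beta},\field_{\{t+\Re dw\geq b\}}\tens\opb\pi\field_{h_\beta}\bigr)$ (this is the content of \cite[Lemma 9.8.1]{DK16} and \cite[\S8]{Kas16}), and then observe that because $\Re\langle c-d,\beta\rangle\neq0$ forces $\Re((c-d)w)$ to diverge linearly along $h_{\pm\beta}$ (resp.\ to be unbounded in both directions on $H_{\pm\alpha}$), the answer is independent of the shifts $a,b$ and agrees with the $a=b=0$ computation you carried out; your own half-plane argument, rerun with the translated sets, does precisely this. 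As written, however, the reduction from the ind-sheaf Hom to the sheaf Hom is unproved, and the principle you appeal to would, if true, make the whole lemma trivial and the distinction between $\enhE^{cw}$ and $E^{cw}$ irrelevant, which it is not.
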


By \eqref{lem:isomH3} the isomorphisms of Proposition \ref{prop:expsectors} and \eqref{eq:sprime} above are unique up to base-change by block-diagonal isomorphisms in $\mathfrak{t}$.

\subsection{Stokes multipliers} \label{sec:Stokesmulti}

We will now take up the point of view of Stokes multipliers as explained in the introduction.

As in the previous subsection, consider the covering $\W = H_\alpha\cup H_{-\alpha}$, with $H_\alpha \cap\nobreak H_{-\alpha} = h_\beta \cup h_{-\beta} $.
Let
\[
Q_\Sigma^{(\alpha,\beta)}(F) = \left(\Psi(F), \Phi_c^{(\alpha,\beta)}(F), u_c, v_c
\right)_{c\in\Sigma}
\]
be the quiver associated to $F $. Let us write
\[
\Psi:=\Psi(F) , \quad \Phi_c \defeq \Phi^{(\alpha,\beta)}_c(F) , \quad \Phi_c^- \defeq \Phi^{(-\alpha,\beta)}_c(F)
\]
There are two types of monodromy operators attached to the quiver:
\begin{align*}
\PhiT_c & \defeq 1- u_c v_c \in \mathrm{End}(\Phi_c), \\
\PsiT_c & \defeq 1- v_c u_c \in \mathrm{End}(\Psi).
\end{align*}

\begin{remark}\label{rem:locglob}
Note that by Definition \ref{def:quiverF}, the morphisms $u_c $ and $v_c $ include the isomorphism \ref{eq:isoPsicPsi} for each $c $ which identifies the nearby cycle $\Psi_c(F) $ at $c $ with the global one $\Psi(F) $. In particular, we observe that~$\PhiT_c $ equals the local monodromy $1-v_{cc}u_{cc} $ of $\Phi_c(F) $, whereas
\begin{equation}\label{eq:TlocT}
\PsiT_c := \text{\ref{eq:isoPsicPsi}} \circ \PsiT_{cc} \circ
\text{\ref{eq:isoPsicPsi}}^{-1}
\end{equation}
is conjugate by \ref{eq:isoPsicPsi} to the local monodromy $\PsiT_{cc}=1-v_{cc}u_{cc} $.

Similar considerations have to be kept in mind for the composition
\[
u_{c} v_{d}: \Phi_c(F) \to \Psi(F) \to \Phi_d(F)
\]
for $c, d \in \Sigma $, which will appear in the main result below and which reads as follows in terms of the local data $u_{cc} $ and $v_{cc} $:
\[
u_cv_d = u_{cc} \circ
\text{{\renewcommand\eqc{c}\ref{eq:isoPsicPsi}}}^{-1} \circ
\text{{\renewcommand\eqc{d}\ref{eq:isoPsicPsi}}} \circ
v_{dd}.
\]
\end{remark}

For the main result to follow, we will also consider the quiver $\bigl(\Phi_\Sigma,\ \Psi,\
V_\Sigma,\ U_\Sigma \bigr),
$
where $\Phi_\Sigma \defeq \DSum_{i=1}^n \Phi_{c_i}$, $U_\Sigma \defeq
{}^t(U_1,\dots,U_n)$, $V_\Sigma \defeq (V_1,\dots,V_n)$,
\begin{align*}
U_i &\defeq u_{c_i} \PsiT_{c_{i+1}} \PsiT_{c_{i+2}} \cdots \PsiT_{c_n} \, \\
V_i &\defeq v_{c_i}
\end{align*}
(which will turn out to be the quiver of the Fourier-Sato transform of $F $ at the origin with respect to the pair $(\beta,-\alpha) $, see Proposition \ref{pro:Foupervsmash}. Consequently, the monodromy isomorphism $1-U_\Sigma V_\Sigma $ of $\Phi_\Sigma $ around $0 $ is the one given by the induced orientation of $(\beta,-\alpha) $ and hence coincides with the monodromy of $\Phi_\Sigma $ around $\infty $ in the orientation given by $(\beta,\alpha) $).

By Proposition \ref{prop:expsectors}, there are natural isomorphisms
\[
\sigma'_{\pm\beta} \colon
\opb\pi\field_{h_{\pm \beta}}\tens\DSum_{c\in\Sigma} (\Phi_c \tens E^{cw}) \isoto
\opb\pi\field_{h_{\pm \beta}}\tens\DSum_{c\in\Sigma} (\Phi^-_c \tens E^{cw}),
\]
given by $\sigma'_{\pm\beta}:= \big(\opb\pi\field_{h_{\pm \beta}} \tens s_{-\alpha} \big) \circ \big(\opb\pi\field_{h_{\pm \beta}} \tens s_\alpha\big)^{-1}$.

On the other hand, we have a natural identification $\tau:\Phi_c^-\isoto\Phi_c$ obtained as follows. We~define~$B_c^\geq$ as in Notation \ref{nota:Bell}\eqref{nota:Bell2}. Since $B_c^\geq\setminus \ell_c(\pm\alpha)$ are $\Sigma $-negligible (see Figure~\ref{fig:Phipm}), the exact sequences
\begin{equation}\label{eq:Phipm}
0 \to \field_{B_c^\geq \setminus \ell_c(\pm\alpha)} \to \field_{B_c^\geq} \to \field_{\ell_c(\pm\alpha)} \to 0
\end{equation}
induce isomorphisms
\[
\tau_\pm\colon \Phi_c^{(\pm\alpha,\beta)} (F) \isoto
\rsect_\rc(\V;\field_{B_c^\geq(\beta)} \tens F),
\]
from which we define
\begin{equation}\label{eq:Phipmiso}
\tau\defeq\tau_+^{-1}\circ\tau_- \colon \Phi_c^{(-\alpha,\beta)} (F)\isoto
\Phi_c^{(\alpha,\beta)} (F).
\end{equation}

\begin{figure}
\includestandalone{Phipm}
\caption{The sets inducing one of the exact sequences \eqref{eq:Phipm}. For the other, replace $\ell_c(\alpha) $ with $\ell_c(-\alpha) $.}\label{fig:Phipm}
\end{figure}

Setting $\sigma_{\pm\beta} = \tau\circ\sigma'_{\pm\beta}$, we obtain the automorphisms
\[
\sigma_{\pm\beta} \in \mathrm{End}(\opb\pi\field_{h_{\pm \beta}}\tens\DSum_{c\in\Sigma} (\Phi_c \tens E^{cw})).
\]

If $\mathrm{End}^{\pm}(\Phi_\Sigma) $ denotes the subspace of upper/lower block diagonal matrices, the ordering $c_1 <_\beta \ldots <_\beta c_n $ induces, according to Lemma \ref{lem:isomH}, identifications
\begin{equation}
\label{eq:epmebta}
e_{\pm\beta}\colon \mathrm{End}\left(\opb\pi\field_{h_{\pm \beta}} \tens \textstyle\DSum_{c \in \Sigma} (\Phi_c \otimes E^{cw})\right) \isoto \mathrm{End}^\pm(\Phi_\Sigma),
\end{equation}
and, using notation as in \eqref{eq:sprime}, similar identifications $\boldsymbol{e}_{\pm\beta}$. The {\em Stokes multipliers} are then given by
\[
S_{\pm\beta} := e_{\pm\beta} (\sigma_{\pm\beta})=\boldsymbol{e}_{\pm\beta} (\boldsymbol{\sigma}_{\pm\beta}) \in \mathrm{End}^\pm(\Phi_\Sigma),
\]
an upper/lower block triangular matrix, respectively.

\begin{theorem}\label{thm:Stokesmultipliers}
Let $F\in\Perv_\Sigma(\C_\V) $ be a perverse sheaf with quiver $Q_\Sigma^{(\alpha,\beta)}(F)=(\Psi, \Phi_{c},u_{c}, v_{c})_{c \in \Sigma} $. Write for short $u_i=u_{c_i}$, $v_i=v_{c_i}$ and $\PhiT_i = 1-u_i v_i$.

Let $K\in\BEC {\W_\infty}$ be the enhanced Fourier-Sato transform of $F $. Then the Stokes multipliers $S_{\pm\beta} $ of $K$ at $\infty$ are given by the block triangular matrices
\begin{align}
\label{eq:S+}
S_\beta & =
\begin{pmatrix}
1 & u_1 v_2 & u_1v_3 & \cdots & u_1v_n \\
& 1 & u_2 v_3 & \cdots & u_2 v_n \\
& & \ddots & & \vdots \\
&&& & 1
\end{pmatrix},
\end{align}
and
\begin{align}
\label{eq:S-}
S_{-\beta} &=
\begin{pmatrix}
\PhiT_1\\
-u_2v_1 & \PhiT_2 \\
-u_3 v_1 & -u_3 v_2 & \ddots \\
\vdots & \vdots && \ddots\\
-u_n v_1 & -u_n v_2 & \cdots & -u_n v_{n-1} & \PhiT_n
\end{pmatrix}.
\end{align}
In particular, one has
$$
\opb{S_\beta} \, S_{-\beta} = \PhiT_\Sigma,
$$
where $\PhiT_\Sigma \defeq 1- U_\Sigma V_\Sigma$ is the monodromy of $\Phi_\Sigma$ around $\infty $.
\end{theorem}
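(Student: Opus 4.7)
The plan is to unfold the definitions of $S_\beta$ and $S_{-\beta}$ and identify each block by combining the stalk computation along $h_{\pm\beta}$ (as in the proof of Proposition~\ref{prop:expsectors}) with the Hom-calculations of Lemma~\ref{lem:isomH}. The block-triangular shape is automatic from Lemma~\ref{lem:isomH}\eqref{lem:isomH2}: for $c_i >_\beta c_j$ the $(i,j)$-block of $S_\beta$ vanishes, and symmetrically the $(j,i)$-block of $S_{-\beta}$ vanishes. So only the diagonal and strictly above-diagonal blocks of $S_\beta$ (resp.\ strictly below-diagonal blocks of $S_{-\beta}$) need to be computed.

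First I would treat the diagonal blocks. Fix $c\in\Sigma$. On $h_\beta$, both sectorial isomorphisms $s_{\pm\alpha}$ identify the $c$-summand of $\enhK$ with $\Phi_c^{(\pm\alpha,\beta)}(F)\tens E^{cw}$ through the canonical identification of $\Phi_c^{(\pm\alpha,\beta)}(F)$ with $\rsect_\rc(\V;\field_{B_c^\geq(\beta)}\tens F)$ provided by \eqref{eq:Phipm}, namely the map $\tau_\pm$. Hence on the $c$-diagonal $\sigma'_\beta = \tau_-^{-1}\circ\tau_+$ and $\sigma_\beta = \tau\circ\sigma'_\beta = \tau_+^{-1}\tau_-\circ\tau_-^{-1}\tau_+ = \id$, so the $c$-diagonal block of $S_\beta$ is the identity. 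On $h_{-\beta}$ the natural intermediate is instead $\rsect_\rc(\V;\field_{B_c^\leq(\beta)}\tens F)$; composing with $\tau$ (still defined via $B_c^\geq$) produces a full loop around $c$ in $B_c\setminus\{c\}$, and by Lemma~\ref{lem:PsiT} together with Remark~\ref{rem:locglob} the resulting endomorphism of $\Phi_c$ is $\PhiT_c = 1-u_c v_c$.

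For the off-diagonal blocks I would invoke Corollary~\ref{cor:Bei}: both sides of \eqref{eq:S+}--\eqref{eq:S-} are natural in $F$ and depend on $F$ through exact functors, so it suffices to verify the formulas on perverse sheaves supported on $\Sigma$ (where every quiver map vanishes and the formulas are trivial) and on Beilinson maximal extensions $\Xi_\Sigma(L)$. In the second case Corollary~\ref{cor:BeiQuiv} gives an explicit description of $\Phi_c$, and the stalk formula \eqref{eq:Gwttriangle} lets us read off the $(i,j)$-block of $\sigma_\beta$ for $c_i<_\beta c_j$ as follows: lift a class in $\Phi_{c_j}$ to a class of $\enhK$ supported near $\ell_{c_j}(\alpha)$, then project via $s_{-\alpha}$ onto $\Phi_{c_i}\tens E^{c_i w}$. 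Geometrically, this bridges the two half-lines through the open region $\V\setminus\ell_\Sigma(\alpha)$ (see Figure~\ref{fig:PsicPsi}), which by the construction of the quiver maps in Section~\ref{sse:pvqv} realises exactly the composition $\Phi_{c_j}\to\Psi_{c_j}\isoto\Psi(F)\isoto\Psi_{c_i}\to\Phi_{c_i}$, namely $u_i v_j$. The analogous computation on $h_{-\beta}$ yields $-u_i v_j$, the sign reflecting the reversal of orientation of the bridging cycle.

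The main obstacle will be the careful bookkeeping of the orientations of the bridging paths and the interplay between local nearby cycles $\Psi_c$ and the global nearby cycle $\Psi(F)$ through the isomorphisms \ref{eq:isoPsicPsi}, as emphasised in Remark~\ref{rem:locglob}. Once the blocks of $S_{\pm\beta}$ are identified, the identity $\opb{S_\beta}\,S_{-\beta}=\PhiT_\Sigma$ reduces to a direct block matrix calculation: the strictly upper blocks of $\opb{S_\beta}$ are alternating sums of iterated compositions $u_{c_{i_0}}v_{c_{i_1}}u_{c_{i_1}}v_{c_{i_2}}\cdots$, which together with the strictly lower blocks $-u_i v_j$ of $S_{-\beta}$ and the diagonal blocks $\PhiT_{c_i}$ telescope to $1-U_\Sigma V_\Sigma$ with $U_i=u_{c_i}\PsiT_{c_{i+1}}\cdots\PsiT_{c_n}$ and $V_i=v_{c_i}$.
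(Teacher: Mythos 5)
Your overall strategy is the paper's own: both sides of \eqref{eq:S+}--\eqref{eq:S-} are natural endo-transformations of the exact functor $\Phi_\Sigma$, Corollary~\ref{cor:Bei} reduces the verification to perverse sheaves supported on $\Sigma$ and to Beilinson maximal extensions, and the final identity $\opb{S_\beta}S_{-\beta}=\PhiT_\Sigma$ is a direct matrix computation (your description of the telescoping in $S_\beta^{-1}$ matches the explicit formula used in the paper). The genuine gap is that the heart of the theorem --- the identification of the blocks of $\sigma_{\pm\beta}$ for $F=\Xi_\Sigma(\reim{{j_\Sigma}}L[1])$ with the quiver maps --- is asserted rather than proved. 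Saying that passing from the $\ell_{c_j}(\alpha)$-summand to the $\ell_{c_i}(\alpha)$-summand ``bridges the two half-lines through $\V\setminus\ell_\Sigma(\alpha)$ and hence realises $u_iv_j$'' is exactly what must be established, with signs, with the distinction between the local nearby cycles $\Psi_c$ and the global $\Psi(F)$ through \ref{eq:isoPsicPsi}, and keeping in mind that for the maximal extension $\Phi_c(\Xi_\Sigma F)\simeq\Psi_c(F)\oplus\Psi_c(F)$, so that each block is a $2\times2$ matrix which has to be matched against \eqref{eq:uivjfinal}. In the paper this is carried out by dualizing to Borel--Moore homology on the punctured real blow-up (Lemma~\ref{lem:lemmaFT}, Appendix~\ref{app:BM}), choosing the explicit cycle bases $\fra_c^\pm,\frb_c$, and deriving the relations \eqref{eq:BMjrelation} and \eqref{eq:BMjrelation-} from an explicit $2$-chain; nothing in your sketch replaces that computation, and you yourself flag the orientation bookkeeping as ``the main obstacle'' without resolving it.

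Moreover, your shortcut for the diagonal blocks does not work as stated. The claim that on $h_{\pm\beta}$ the sectorial isomorphisms $s_{\pm\alpha}$ identify the $c$-summand ``through $\tau_\pm$'', so that the diagonal of $\sigma_\beta$ is formally $\tau_+^{-1}\tau_-\circ\tau_-^{-1}\tau_+=\id$, is unjustified: $s_{\pm\alpha}$ is built from the decomposition of $Z_{(w,t)}$ by the half-lines $\ell_c(\pm\alpha)$ (the sequence \eqref{eq:Hwtdt}), not from the bands $B_c^\geq$, and the asserted factorization through $\rsect_\rc(\V;\field_{B_c^\geq}\tens F)$ is equivalent to part of what needs proof. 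In the paper even the diagonal entries come out of the same cycle relations as the off-diagonal ones: $(\widetilde\sigma_\beta)_{jj}=\id$ results from a cancellation of the circle class $\frb_j$ after applying \eqref{eq:tautilde} and \eqref{eq:BMjrelation}, and $(\widehat\sigma_{-\beta})_{jj}$ is a nontrivial $2\times2$ matrix that is only identified with $\PhiT_j$ after conjugation by $b_j$ and dualization as in \eqref{eq:uivj}. Likewise ``a full loop around $c$, hence $\PhiT_c$ by Lemma~\ref{lem:PsiT}'' conflates the monodromy on $\Psi_c$ with that on $\Phi_c$ and again presupposes the factorization. So the route is the right one, but the proof is incomplete precisely where the work lies.
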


\begin{proof}
The equalities \eqref{eq:S+} and \eqref{eq:S-} are equalities
between natural endo-transformations of the exact functor $\Phi_\Sigma \colon \mathrm{Perv}_\Sigma(\field_\V) \to \Mod(\field)$.
Such equalities hold if $F$ is supported on $\Sigma$, where they read $S_{\pm\beta} = \id$.
By Corollary~\ref{cor:Bei}, we are then left to consider the case where
$F=\Xi_\Sigma(\reim{{j_\Sigma}}L[1])$ is the Beilinson maximal extension of a local system $L$ on $\V\setminus\Sigma$. We do it in Subsection \ref{sec:Beilmaxext}.

Finally, in order to prove the equality $\opb{S_\beta} \, S_{-\beta} = \PhiT_\Sigma$,
one checks that
\[
S_\beta^{-1} =
\begin{pmatrix}
1 & -u_1v_2 & -u_1 \PsiT_2 v_3 & \dots & -u_1\PsiT_2\PsiT_3\cdots \PsiT_{n-1}v_n \\
& 1 & -u_2v_3 & & -u_2\PsiT_3\cdots \PsiT_{n-1}v_n \\
& & 1 & & -u_3\PsiT_4\cdots \PsiT_{n-1}v_n \\
& & & \ddots & \vdots \\
& & & & -u_{n-1}v_n \\
& & & & 1
\end{pmatrix}
\]
i.e., for $i<j$ one has
$(S_\beta)^{-1}_{ij} =
-u_i \PsiT_{i+1}\PsiT_{i+2}\cdots \PsiT_{j-1}v_j$.
Hence
\begin{multline*}
\opb{S_{\beta}} S_{-\beta}\\
 = 1-
\begin{pmatrix}
u_1\PsiT_2 \PsiT_3 \cdots \PsiT_nv_1 & u_1\PsiT_2\PsiT_3 \cdots \PsiT_n v_2 & \dots & u_1\PsiT_2\PsiT_3\cdots \PsiT_nv_n \\
u_2\PsiT_3 \cdots \PsiT_n v_1 & u_2\PsiT_3 \cdots \PsiT_n v_2 & \dots & u_2\PsiT_3 \cdots \PsiT_nv_n \\
\vdots & \vdots & \ddots & \vdots \\
u_{n-1}T_nv_1 &u_{n-1}T_nv_2 & \dots & u_{n-1}T_nv_n \\
u_nv_1 & u_nv_2 & \dots & u_nv_n
\end{pmatrix} \\
= 1 - U_\Sigma V_\Sigma.\qedhere
\end{multline*}
\end{proof}

\subsection{Proof of the main result for Beilinson's maximal extension}\label{sec:Beilmaxext}

We now prove Theorem \ref{thm:Stokesmultipliers} in the case where
\[
F = \Xi_\Sigma(\reim{{j_\Sigma}}L[1])
\]
is a Beilinson maximal extension for $j_\Sigma\colon \V\setminus\Sigma \to \V$ the open embedding and $L$ a local system on $\V\setminus\Sigma$. Due to \eqref{eq:XiXiF}, we then also have $F=\Xi_\Sigma(F) $.

In the terminology of Notation \ref{nota:realblowup},
\[
F = \reim{{\varpi_\Sigma}}(\field_{\widetilde
\V_\Sigma^\Xi}\tens\wtL[1]),
\]
for $\wtL \defeq \derr\tilde\jmath_{\Sigma\,*} L$. Recall Notation~\ref{nota:Beilinsonblowup}.

We want to prove that the isomorphism
\begin{equation}\label{eq:SBeil}
S_{\pm\beta}: \DSum_{c\in\Sigma}\Phi^{(\alpha,\beta)}_c(F) \isoto
\DSum_{c\in\Sigma}\Phi^{(\alpha,\beta)}_c(F)
\end{equation}
has the form given in \eqref{eq:S+} or \eqref{eq:S-} respectively.

Now, $S_{\pm \beta} $ is induced by the restrictions of $\rho_{\pm\alpha} $ \eqref{eq:rhoalpha} to $h_{\pm\beta} $:
\begin{equation}\label{eq:Spmbeta}
\begin{array}{c}
\xymatrix{
&\opb\pi\field_{h_{\pm\beta}} \tens
\left(\DSum_{c\in\Sigma} \Phi^{(\alpha,\beta)}_c(F) \tens E^{cw} \right) \ar[dd] \\
\opb\pi\field_{h_{\pm\beta}} \tens
\enhK
\ar[ru]_-{\simeq}^-{\rho_\alpha}
\ar[rd]_-{\simeq}^-{\rho_{-\alpha}} \\
&\opb\pi\field_{h_{\pm\beta}} \tens
\left(\DSum_{c\in\Sigma} \Phi^{(-\alpha,\beta)}_c(F) \tens E^{cw} \right)
}
\end{array}
\end{equation}
followed by \eqref{eq:Phipmiso}. Due to the natural isomorphism \eqref{eq:epmebta}, the bottom line of \eqref{eq:Spmbeta} is determined by the linear maps it induces on a stalk at a point
\[
(w,t) \in \pi^{-1}(h_{\pm\beta}) \cap \bigcap_{c\in\Sigma} \{ t+ \Re cw \ge 0 \} .
\]
Let us fix such a point $(w,t) $ -- e.g. fix any $w \in h_{\pm\beta} $ and then any $t\gg 0 $ big enough will satisfy this condition. Recall that $$\enhK \simeq \reim{\tilde q}(\field_{\{t+\Re zw\geq 0\}}\tens\opb{\overline p} F)$$ and that $Z_{(w,t)} = \{z\in\V\semicolon t+\Re zw \geq 0\} $ denoted the  fiber of $\widetilde{q} $ over $(w,t) $. Hence, on the level of stalks over $(w,t) $ and invoking the real oriented blow-up due to the definition of the maximal Beilinson extension, the morphism $\rho_{\pm\alpha} $ reads as
\[
\rsect_\rc(\widetilde \V; \field_{\opb\varpi_\Sigma Z_{(w,t)} \cap \widetilde\V^\Xi_\Sigma} \tens \wtL[1]) \isoto
\DSum_{c\in\Sigma} \rsect_\rc(\widetilde \V; \field_{\ell_{\widetilde c}^\pm} \tens \wtL[1]).
\]

We will now pass to the dual side using Borel-Moore homology based on subanalytic chains in the spirit of \cite[\S 9.2]{KS90}. In Appendix \ref{app:BM}, we will explain how to adapt the methods of \cite{KS90} to our situation, and in particular how to derive Lemma \ref{lem:lemmaFT} which will be the main technical tool for our computations. The definition of the Borel-Moore homology groups $H_j^{BM}(X;L) $ on a real analytic manifold $X $ with values in a local system is given in the Appendix.

Let $U\subset X$ be an open subanalytic subset, and set $Y = X \setminus U$.
If $L$ is a local system of finite rank on $X$,
the exact sequence
\begin{equation}
\label{eq:UXY}
0 \to \field_U \to \field_X \to \field_Y \to 0
\end{equation}
induces an exact homology sequence
\begin{equation}
\label{eq:HBMex}
H_{j+1}^{BM}(X; L^*) \to H_{j+1}^{BM}(U; L^*)
\to[\delta] H_j^{BM}(Y; L^*) \to H_j^{BM}(X; L^*)
\end{equation}
whose morphisms are induced, at the level of chains, by restriction $[S]\to{}[S|_U]$, boundary value $\delta\colon[S]\to{[\partial S]}$, and natural embedding $[S]\to{}[S]$, respectively.
The boundary value $\delta$ is also easily described using the following lemma, proved in the Appendix.

\begin{lemma}\label{lem:lemmaFT}
Let $F\in\Perv_\Sigma(\C_\V)$, and assume that $X\subset\V\setminus\Sigma$.
Let $L=F|_X[-1]$ be the associated local system on $X$.
\begin{itemize}
\item[(i)]
For any $Z\subset X$ locally closed subanalytic, and any $j\in\Z$, one has
\[
(H^j\rsect_\rc(\V; \field_Z\tens F))^* \simeq H_{j+1}^{BM}(Z; L^*).
\]
\item[(ii)]
Let $U\subset X$ be an open subanalytic subset, and set $Y = X \setminus U$.
Then, by the above duality, the morphisms
\begin{multline*}
H^{j-1}\rsect_\rc(\V; \field_X\tens F) \to H^{j-1}\rsect_\rc(\V; \field_Y\tens F)
\\
\to H^j\rsect_\rc(\V; \field_U\tens F) \to H^j\rsect_\rc(\V; \field_X\tens F)
\end{multline*}
induced by \eqref{eq:UXY} are adjoint to the morphisms in \eqref{eq:HBMex}.
\end{itemize}
\end{lemma}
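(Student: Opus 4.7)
The plan is to derive both parts of Lemma \ref{lem:lemmaFT} from Verdier duality applied to the local system $L$ on $X\subset\V\setminus\Sigma$, with the shift $[1]$ in $L=F|_X[-1]$ explaining the index shift $j\mapsto j+1$ in the statement.

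For (i), since $Z\subset X$ and $F|_X\simeq L[1]$, one first reduces
\[
\rsect_\rc(\V;\,\field_Z\tens F)\simeq\rsect_\rc(Z;\,L|_Z)[1],
\]
so that $H^j\rsect_\rc(\V;\,\field_Z\tens F)\simeq H^{j+1}_c(Z;\,L|_Z)$. Then I would apply Verdier duality in its Poincar\'e--Lefschetz form,
\[
\bigl(\rsect_\rc(Z;\,L|_Z)\bigr)^{\ast}\simeq \rsect(Z;\,\ddual_Z(L|_Z))\simeq \rsect(Z;\,L^{\ast}|_Z\tens\omega_Z),
\]
where $\omega_Z$ is the Verdier dualizing complex of $Z$. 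Since $Z$ is a locally closed subanalytic subset of the oriented real surface $X$, the definition $H^{BM}_k(Z;\,L^{\ast})\defeq H^{-k}\rsect(Z;\,L^{\ast}|_Z\tens\omega_Z)$ (as in \cite[\S9.2]{KS90} and the appendix) yields exactly
\[
(H^{j+1}_c(Z;\,L))^{\ast}\simeq H^{BM}_{j+1}(Z;\,L^{\ast}),
\]
which is (i).

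For (ii), the short exact sequence $0\to\field_U\to\field_X\to\field_Y\to 0$ tensored with $F$ gives, after applying $H^{\bullet}\rsect_\rc(\V;-)$, the four-term exact sequence displayed in the statement. Dualizing the same short exact sequence (equivalently, applying Verdier duality termwise to the dualizing-complex side) produces the Borel--Moore exact sequence \eqref{eq:HBMex}, and by naturality of Verdier duality its boundary $\delta$ is the transpose of the connecting morphism in the four-term sequence. The chain-level descriptions of restriction $[S]\mapsto[S|_U]$, natural embedding $[S]\mapsto[S]$, and boundary $[S]\mapsto[\partial S]$ follow from the subanalytic-chain presentation of $H^{BM}_{\ast}$ reviewed in the appendix.

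Conceptually both parts are a single application of Verdier duality; the only care required is the bookkeeping of shifts and orientation factors. The combination of the shift $[1]$ in $L=F|_X[-1]$ with $\omega_X\simeq\field_X[2]$ (valid because $X$ is an oriented real surface) is precisely what makes the index $j+1$, rather than $j$ or $2-j$, appear on the right of (i). The main nontrivial point is the explicit identification of the connecting morphism with the chain-level boundary $\partial$, which is where the subanalytic-chain formalism really enters and is the one place where the appendix must do concrete work.
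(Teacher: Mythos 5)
Your part (i) is correct and coincides in substance with the paper's own argument: the paper likewise deduces it from the duality $\dual\,\rsect_\rc(X;G)\simeq\RHom(G,\omega_X)$ (Lemma~\ref{lem:BMderdual}), and your bookkeeping of the shifts, giving the index $j+1$, is right.

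The genuine gap is in (ii), and you flag it yourself without closing it: the whole content of (ii) is that the connecting morphism of the dualized sequence is the \emph{geometric} boundary $[S]\mapsto[\partial S]$ of \eqref{eq:HBMex}, compatibly with the duality isomorphism of (i). Naturality of Verdier duality only yields an abstract long exact sequence with transposed maps; it does not identify its maps with restriction, boundary and embedding of subanalytic chains. The naive way to get that identification fails: the sheaf of subanalytic chains $\cs^X_\bullet$ is a $c$-soft but not flabby resolution of $\omega_X$, so $\Hom(G,\cs^X_\bullet)\not\simeq\RHom(G,\omega_X)$ precisely for the relevant term $G=\field_Y\tens L$ with $Y$ closed (Remark following \eqref{eq:BMG}); hence one cannot apply $\Hom(\,\cdot\,,\cs^X_\bullet)$ to \eqref{eq:UXY} and read off the boundary by the snake lemma. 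The paper's proof supplies exactly the missing ingredient via the subanalytic sheaf $\bm^X_\bullet$, which is quasi-injective and resolves $\iota\omega_X$ (Proposition~\ref{pro:CSomega}): the functor $G\mapsto\Hom(\iota G,\bm^X_\bullet)$ is then exact on $\Mod_\Rc(\field_X)$ and functorially isomorphic, at the level of complexes, to $\dual\,\rsect_\rc(X;G)$ (Lemma~\ref{lem:BMderdual}); applied to \eqref{eq:UXY} it gives a genuine short exact sequence of chain complexes (Lemma~\ref{lem:BMrel}) whose snake-lemma boundary is literally $[S]\mapsto[\partial S]$, and the functoriality of the isomorphism identifies this sequence with the dual of the four-term sequence, which is the assertion of (ii). Note also that this uses the chain model relative to $X$, $\BM^X_\bullet(\field_Z\tens L)$, rather than the intrinsic one over $Z$ that your dualization over $Z$ implicitly produces (second Remark of Appendix~\ref{app:BM}); without that, the isomorphism you construct in (i) is a priori a different one, and the adjointness in (ii) does not follow from it.
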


We will make extensive use of (ii) in the following. Applied to the definition of $S_{\pm\beta} $ in \eqref{eq:SBeil}, we see that the latter is the transpose of the composition
\begin{multline}\label{eq:BMHwtBeil}
\sigma_{\pm\beta}:
\DSum_{c\in\Sigma} H_1^{BM}(\ell_{\widetilde c}^+ ; \wtL^\ast) \isoto[\tau]
\DSum_{c\in\Sigma} H_1^{BM}(\ell_{\widetilde c}^- ; \wtL^\ast) \\
\isoto[\mathrm{(i)}]
H_1^{BM}(\opb\varpi_\Sigma Z_{(w,t)} \cap \widetilde\V_\Sigma^\Xi ; \wtL^\ast) \isofrom[\mathrm{(ii)}] \DSum_{c\in\Sigma} H_1^{BM}(\ell_{\widetilde c}^+ ; \wtL^\ast)
\end{multline}
where (i) and (ii) are induced by the respective closed embeddings and $\tau $ is the direct sum of the transposes of \eqref{eq:Phipmiso} which we now compute on the real oriented blow-up $\widetilde\V_\Sigma $. Instead of \eqref{eq:Phipm}, we now use the exact sequence
\[
0 \to \field_{B_{\widetilde c}^\geq \setminus \ell^\pm_{\widetilde c}} \to \field_{B_{\widetilde c}^\geq} \to \field_{\ell^\pm_{\widetilde c}} \to 0.
\]
Push-forward with respect to $\varpi_\Sigma $ induces \eqref{eq:Phipm}. We obtain the isomorphisms
\begin{equation}\label{eq:BMPhipm}
\tau_{c}: H_1^{BM}(\ell_{\widetilde c}^+ ; \wtL^\ast) \isoto
H_1^{BM}(B_{\widetilde c}^\geq; \wtL^\ast) \isofrom
H_1^{BM}(\ell_{\widetilde c}^- ; \wtL^\ast)
\end{equation}
induced by the closed embeddings (see Figure \ref{fig:PhipmBeilinson}). The isomorphism $\tau $ of \eqref{eq:BMHwtBeil} is the direct sum $\tau=\bigoplus_{c \in \Sigma} \tau_c $.

\begin{figure}
\includestandalone{PhipmBeilinsonn}
\caption{The sets inducing the isomorphisms in \eqref{eq:BMPhipm}.}\label{fig:PhipmBeilinson}
\end{figure}

\setlength{\unitlength}{1mm}
\begin{figure}
\includestandalone{BM_Beilinson_0nn}
\caption{A choice of basis for $H_1^{BM}(\ell^+_{\widetilde c}; \field) $ and $H_1^{BM}(\ell^-_{\widetilde c}; \field) $ respectively.}\label{fig:BM_Beilinson_1}
\end{figure}

The decomposition $\ell_{\widetilde c}^+=\ell_c^\times \sqcup S_{\widetilde c}$ leads to an isomorphism
\begin{equation}\label{eq:lcSc}
\eta_c: H_1^{BM}(\ell_c^\times; \widetilde{L}^\ast) \oplus H_1^{BM}(S_{\widetilde c}; \widetilde{L}^\ast) \isoto H_1^{BM}(\ell_{\widetilde c}^+; \widetilde{L}^\ast) ,
\end{equation}
which in turn is the transpose of the isomorphism $\Phi_c(\Xi_\Sigma F) \simeq \Psi_c(F) \oplus \Psi_c(F) $ of Corollary \ref{cor:BeiQuiv}.

The Borel-Moore cycles $\fra_c^+:=\ell_c^\times $ with orientation towards $\widetilde c $ and $\frb_c:=S_c \smallsetminus \widetilde c $ in counter-clockwise orientation, represent a basis (see Figure \ref{fig:BM_Beilinson_1}) for $H^{BM}_1(\ell_{\widetilde c}^+; \field) $ compatible with the decomposition \eqref{eq:lcSc}. The basis $\fra_c^- $, $\frb_c^- $ we use for $H^{BM}_1(\ell_{\widetilde c}^-; \field) $ can be read off in Figure \ref{fig:BM_Beilinson_1}.

Let us fix the index $j \in \{1, \ldots, n \} $. Consider the Borel-Moore 2-chain pictured in Figure \ref{fig:BM2chain} twisted by a section $v $ of the local system $L $. In the following, we will write $\fra_j \tens v $ for the induced Borel-Moore cycle, where we consider $\ell^\times_j $ as a subset of the boundary of $B^\geq_{\widetilde c_j} $ and take $v $ to be the
restriction to $\ell^\times_j $ of its
extension to $B^\geq_{\widetilde c_j} $ (in other words we extend $v $ to $\ell^\times_j $ approaching from the right in the given orientation).

Considering the boundary of this Borel-Moore 2-chain and the convention on the notation just given, we obtain the relation
\begin{equation}\label{eq:BMjrelation}
\fra_j^- \tens v = \fra_j^+ \tens v - \frb_j^+ \tens v + \sum_{\nu=j+1}^n \big(\fra_\nu^+ \tens (1-\opb{(\PsiT_{\nu\nu}^\ast)})v - \frb_\nu^+ \tens v \big)
\end{equation}
in $H_1^{BM}(\opb\varpi_\Sigma Z_{(w,t)} \cap \widetilde\V_\Sigma^\Xi ; \widetilde{L^\ast}_\Sigma ) $, where $\PsiT_{\nu\nu} $ is the local monodromy of $\Psi_\nu $ (see Lemma \ref{lem:PsiT}) and $\opb{(\PsiT_{\nu\nu}^\ast)} $ is its transpose due to the following remark.

\begin{remark}\label{rem:dualmonodromy}
For a local system $L $ on the punctured disc around a point with generic fiber $L_b $ and monodromy $\PsiT $, the monodromy $\PsiT^\ast $ of the dual local system $L^\ast $ is characterized by the requirement $\langle \PsiT^\ast v, \PsiT z\rangle = \langle v,z \rangle $ for $z \in L_b $ and $v \in L_b^\ast $. Therefore the transpose of the monodromy isomorphism $\PsiT:L_b \to L_b $ is given by $\opb{(\PsiT^\ast)} $.
\end{remark}

For each $i\in\{1,\ldots, n\} $, let us introduce the following notation
\begin{multline}\label{eq:sigtildij}
(\widetilde\sigma_{\pm\beta})_{ij} :
\begin{matrix}
H_1^{BM}(\ell_j^\times; \widetilde{L}^\ast) \\
\oplus\\
H_1^{BM}(S_{\widetilde j}; \widetilde{L}^\ast)
\end{matrix}
\isoto[\eta_j] H_1^{BM}(\ell_{\widetilde j}^+; \widetilde{L}^\ast)\\ \to[(\sigma_{\pm\beta})_{ij}]
H_1^{BM}(\ell_{\widetilde i}^+; \widetilde{L}^\ast) \isoto[\eta_i^{-1}]
\begin{matrix}
H_1^{BM}(\ell_i^\times; \widetilde{L}^\ast) \\
\oplus\\
H_1^{BM}(S_{\widetilde i}; \widetilde{L}^\ast)
\end{matrix}
\end{multline}
Note, that $H_1^{BM}(S_{\widetilde j}; \widetilde L^\ast) \simeq H_1^{BM}(\ell_j^\times; \widetilde L^\ast) $. Since $\tau $ maps
\begin{equation} \label{eq:tautilde}
\fra_j^+ \tens v \mapsto \fra_j^- \tens v + \frb_j^- \tens v \text{\quad and \quad} \frb_j^+ \tens v \mapsto \frb_j^- \tens v \ ,
\end{equation}
we obtain from \eqref{eq:BMjrelation} that
\begin{equation}\label{eq:sigma+matrix}
(\widetilde\sigma_\beta)_{ij} =\left\{
\begin{array}{ll}
0 & \text{if $i<j $.}\\[.3cm]
\mathrm{id} & \text{if $i=j $ and} \\[.3cm]
\begin{pmatrix}
1-\opb{(\PsiT_{ii}^\ast)}&0 \\ -1& 0
\end{pmatrix}
& \text{if $i>j $,}
\end{array}\right.
\end{equation}
\begin{figure}
\includestandalone{BM_Beilinson_3nn}
\caption{The region $Z_{(w,t)} $ with $t\gg 0 $ (light grey) and the Borel-Moore 2-chain (dark grey) inducing the relation \eqref{eq:BMjrelation} for $w \in h_\beta $ (left hand side) and \eqref{eq:BMjrelation-} for $w \in h_{-\beta} $ (right hand side).}
\label{fig:BM2chain}
\end{figure}
The situation for $\widetilde\sigma_{-\beta} $ is similar. We obtain the relation
\begin{equation}\label{eq:BMjrelation-}
\fra_j^- \tens v = \fra_j^+ \tens \opb{(\PsiT_{jj}^\ast)} v + \sum_{\nu=1}^{j-1} \big( \fra_\nu^+ \tens (\opb{(\PsiT_{\nu\nu}^\ast)} -1)v + \frb_\nu^+ \tens v \big).
\end{equation}
Together with \eqref{eq:tautilde}, we deduce that
\begin{equation}\label{eq:sigma-matrix}
(\widetilde\sigma_{-\beta})_{ij} =\left\{
\begin{array}{ll}
0 & \text{if $i>j $,} \\[.3cm]
\begin{pmatrix}
\opb{(\PsiT_{ii}^\ast)} & 0\\ 1& 1
\end{pmatrix}& \text{if $i=j $ and} \\[.5cm]
\begin{pmatrix}
\opb{(\PsiT_{ii}^\ast)}-1 & 0\\ 1 & 0
\end{pmatrix}
& \text{if $i<j $.}
\end{array}\right.
\end{equation}

In order to be able to compare to the right hand side of \eqref{eq:S+} and \eqref{eq:S-}, we have to apply the transpose of {\renewcommand\eqc{c}\ref{eq:isoPsicPsi}} to both sides. We will use the following variant of \text{{\renewcommand\eqc{c}\ref{eq:isoPsicPsi}}} involving the punctured real oriented blow-up $\widetilde V_\Sigma^\Xi$. In the Notation \ref{nota:realblowup} and additionally setting $\ell_{\widetilde\Sigma}=\bigcup_{c \in \Sigma}\ell_{\widetilde c} $, we consider the short exact sequences (see Figure~\ref{fig:sesisotilde}) 
\begin{align}\label{eq:sesisotilde}
& 0 \to \field_{B_{\widetilde c}^>} \to \field_{B_{\widetilde c}^> \cup \ell_c^\times} \to \field_{\ell_c^\times} \to 0\\
\notag
& 0 \to \field_{B_{\widetilde c}^>} \to \field_{B_{\widetilde c} \setminus \ell_{\widetilde c}} \to \field_{B_{\widetilde c}^\leq \smallsetminus \ell_{\widetilde c}} \to 0\\
\notag
&
0 \to \field_{B_{\widetilde c} \setminus \ell_{\widetilde c}} \to \field_{\widetilde V_\Sigma^\Xi\setminus \ell_{\widetilde\Sigma}} \to
\field_{(\widetilde V_\Sigma^\Xi \setminus \ell_{\widetilde\Sigma}) \setminus B_{\widetilde c}} \to 0,
\end{align}
analogous to \eqref{eq:sesiso} and \eqref{eq:sesisoS}. Since $B_{\widetilde c}^> \cup \ell_c^\times $, $B_{\widetilde c}^\leq \smallsetminus \ell_{\widetilde c} $ and $(\widetilde V_\Sigma^\Xi \setminus \ell_{\widetilde\Sigma}) \setminus B_{\widetilde c} $ have no compactly supported cohomology, the remaining morphisms induced by each of the sequences in \eqref{eq:sesisotilde} are isomorphisms. The resulting isomorphism
\refstepcounter{equation}
\begin{multline}\label{eq:isoPsicPsidual}
H_2^{BM}(\widetilde V_\Sigma^\Xi\smallsetminus\ell_{\widetilde\Sigma}; \widetilde L^\ast)
\isoto H_2^{BM}(B_{\widetilde c} \setminus \ell_{\widetilde c}; \widetilde L^\ast)\\
\isoto H_2^{BM}(B_{\widetilde c}^> ; \widetilde L^\ast)
\isoto
H_1^{BM}(\ell_c^\times; \widetilde L^\ast)
\tag*{{(\theequation)$_\eqc$}}
\end{multline}
is the transpose of {\renewcommand\eqc{c}\ref{eq:isoPsicPsi}}. Let us write
\[
(\widehat\sigma_{\pm\beta})_{ij} :=
(\text{{\renewcommand\eqc{i}\ref{eq:isoPsicPsidual}}} \oplus
\text{{\renewcommand\eqc{i}\ref{eq:isoPsicPsidual}}} )
\circ (\widetilde\sigma_{\pm\beta})_{ij} \circ
(\text{{\renewcommand\eqc{j}\ref{eq:isoPsicPsidual}}} \oplus
\text{{\renewcommand\eqc{j}\ref{eq:isoPsicPsidual}}} )^{-1}.
\]
\begin{figure}
\begin{align*}
&\includestandalone{PsicPsitilde1n}\\
&\includestandalone{PsicPsitilde3n}\\
&\includestandalone{PsicPsitilde2n}
\end{align*}
\caption{\label{fig:sesisotilde}The exact sequences \eqref{eq:sesisotilde}.}
\end{figure}
By \eqref{eq:TlocT} (and its dual version), we deduce from \eqref{eq:sigma+matrix} that
\begin{equation}\label{eq:sigmaijfinal}
(\widehat\sigma_\beta)_{ij} =\left\{
\begin{array}{ll}
0 & \text{if $i<j $.}\\[.3cm]
\mathrm{id} & \text{if $i=j $ and} \\[.3cm]
\begin{pmatrix}
1-\opb{(\PsiT_{i}^\ast)}&0 \\ -1& 0
\end{pmatrix}
& \text{if $i>j $,}
\end{array}\right.
\end{equation}
and analogously
\begin{equation}\label{eq:sigmafinal-}
(\widehat\sigma_{-\beta})_{ij} =\left\{
\begin{array}{ll}
0 & \text{if $i>j $,} \\[.3cm]
\begin{pmatrix}
\opb{(\PsiT_{i}^\ast)} & 0\\ 1& 1
\end{pmatrix}& \text{if $i=j $ and} \\[.5cm]
\begin{pmatrix}
\opb{(\PsiT_{i}^\ast)}-1 & 0\\ 1 & 0
\end{pmatrix}
& \text{if $i<j $.}
\end{array}\right.
\end{equation}

Now, let us consider the right hand side of \eqref{eq:S+} and \eqref{eq:S-}. We put
\[
a_j:=
\text{{\renewcommand\eqc{j}\ref{eq:isoPsicPsi}}} \oplus
\text{{\renewcommand\eqc{j}\ref{eq:isoPsicPsi}}}:
\Psi_j(F) \oplus \Psi_j(F) \isoto \Psi(F) \oplus \Psi(F) .
\]
By Corollary \ref{cor:BeiQuiv} and Remark \ref{rem:locglob}, we obtain the commutative diagram
\begin{equation}\label{eq:uivj}
\xymatrix@C=.5cm{
\Phi_j(\Xi_\Sigma F) \ar[r]^{v_{jj}} \ar[d]_{b_j}^{\simeq}
\ar@/^2pc/[rrrr]^{u_iv_j}
& \Psi_j(\Xi_\Sigma F) \ar[r]^{
\text{{\renewcommand\eqc{j}\ref{eq:isoPsicPsi}}}}
& \Psi(\Xi_\Sigma F) \ar[r]^{
\text{{\renewcommand\eqc{i}\ref{eq:isoPsicPsi}}}^{-1}}
& \Psi_i(\Xi_\Sigma F) \ar[r]^{u_{ii}}
& \Phi_i(\Xi_\Sigma F)
\ar[d]^{b_i}_{\simeq}
\\
\Psi(F) \oplus \Psi(F)
\ar[rr]^{
\left(\begin{smallmatrix}1-T_j&&-1\end{smallmatrix}\right)}
&& \Psi(F) \ar[rr]^{
\left(\begin{smallmatrix}1\\ 0\end{smallmatrix}\right)}
&& \Psi(F) \oplus \Psi(F),
}
\end{equation}
where $b_j $ is the composition
\[
b_j: \Phi_j(\Xi_\Sigma F) \isoto \Psi_j(F) \oplus \Psi_j(F) \isoto[a_j] \Psi(F) \oplus \Psi(F),
\]
being the transpose of
\[
\big( \text{{\renewcommand\eqc{j}\ref{eq:isoPsicPsidual}}} \oplus
\text{{\renewcommand\eqc{j}\ref{eq:isoPsicPsidual}}} \big)
\circ \eqref{eq:lcSc}
\colon \begin{matrix}
H_2^{BM}(\widetilde V_\Sigma^\Xi\setminus\ell_{\widetilde\Sigma}; \widetilde L^\ast)\\
\oplus\\
H_2^{BM}(\widetilde V_\Sigma^\Xi\setminus\ell_{\widetilde\Sigma}; \widetilde L^\ast)
\end{matrix}
\isoto H_1^{BM}(\ell_{\widetilde c}^+; \widetilde{L}^\ast) .
\]
Hence
\begin{align}\label{eq:uivjfinal}
& b_j \circ (u_i v_j) \circ b_j^{-1} =
\begin{pmatrix}
1-\PsiT_j & -1\\ 0&0
\end{pmatrix},
\\ \notag
& b_i \circ \PhiT_i \circ b_i^{-1} =1-b_i \circ (u_iv_i) \circ b_i^{-1}=
\begin{pmatrix}
\PsiT_i & 1\\ 0 & 1
\end{pmatrix}.
\end{align}
By duality and Remark \ref{rem:dualmonodromy}, the equations \eqref{eq:uivjfinal} together with \eqref{eq:sigmaijfinal} and \eqref{eq:sigmafinal-} prove Theorem \ref{thm:Stokesmultipliers} in the case where $F$ is a maximal Beilinson extension.
\qed

\begin{remark}
Let us comment on the case where
$F=\reim{{j_\Sigma}}L[1]$ is a localized perverse sheaf, i.e. it is associated to a regular singular meromorphic connection by the Riemann-Hilbert correspondence. Then $u_i $ is an isomorphism and up to isomorphism of quivers, we can assume that $u_{ii}=\mathrm{id} $ and consequently $\PsiT_{ii}=\PhiT_{ii}=1-v_{ii} $. Furthermore, $u_i= \text{{\renewcommand\eqc{i}\ref{eq:isoPsicPsi}}}^{-1} $. The Stokes matrices $S_{\pm \beta} $ of Theorem \ref{thm:Stokesmultipliers} then have the form
\begin{align*}
S_\beta & =
\begin{pmatrix}
1 & 1-\PsiT_2 & 1-\PsiT_3 & \cdots & 1-\PsiT_n \\
& 1 & 1-\PsiT_3 & \cdots & 1-\PsiT_n \\
& & \ddots & & \vdots \\
&&& & 1
\end{pmatrix},
\\[.3cm]
S_{-\beta} &=
\begin{pmatrix}
\PsiT_1\\
\PsiT_1-1 & \PsiT_2 \\
\PsiT_1-1 & \PsiT_2-1 & \ddots \\
\vdots & \vdots && \ddots\\
\PsiT_1-1 & \PsiT_2-1 & \cdots & \PsiT_{n-1}-1 & \PsiT_n
\end{pmatrix},
\end{align*}
where $\PsiT_i $ is the monodromy of the local system $L $ around the singularity~$c_i $ -- keeping in mind Remark \ref{rem:locglob}.
\end{remark}

\section{Quiver at finite distance}\label{sec:smash}

\subsection{Statement of the result}

Let $F\in\Perv_\Sigma(\field_\V)$, and consider the embeddings
\[
\xymatrix{
\W \ar@{ >->}[r]^-u & \W_\infty = (\W,\bb) & (\W\setminus\{0\},\bb\setminus\{0\}) \ar@{ >->}[l]_-v.
}
\]
Proposition~\ref{prop:expsectors} and Theorem~\ref{thm:Stokesmultipliers}
describe the structure of $\Eopb v e(F)^\curlywedge$.
Our aim is to analyze $\Eopb u e(F)^\curlywedge$, so that the description of $e(F)^\curlywedge$ will be complete.
This is achieved in Propositions~\ref{pro:fousma} and \ref{pro:Foupervsmash} below.

Consider the maps
\begin{equation*}
\xymatrix{
\V & \V\times\R_{<0} \ar@{ >->}[r]^-{j} \ar[l]_-\gamma & \V\times\R & \V
\ar@{ >->}[l]_-i,
}
\end{equation*}
where $j$ is the open embedding, $i(x) = (x,0)$, and $\gamma(x,s) = x/|s|$.

\begin{definition}\label{def:tildenu}
The smash functor is given by
\[
\smsh \colon \BDC(\field_\V) \to \BDC_{\R^+}(\field_\V), \quad
F \mapsto \opb i \roim {j} \opb\gamma F.
\]
\end{definition}

\begin{prop}\label{pro:fousma}
Let $F\in\Perv_\Sigma(\field_\V)$.
Then
\[
\Eopb u e(F)^\curlywedge \simeq e(\smsh(F)^\wedge).
\]
\end{prop}

\begin{proof}
Consider the triangle
\[
\opb\pi\field_{\W\setminus\{0\}}\tens \Eopb u e(F)^\curlywedge \to \Eopb u e(F)^\curlywedge \to \opb\pi\field_{\{0\}}\tens \Eopb u e(F)^\curlywedge \to[+1].
\]

For $c\in\Sigma$, the function $cw$ is locally bounded on $\W$. Hence $\field_{\W}^\enh\ctens E^{cw} \simeq \field_{\W}^\enh$.
Thus Proposition~\ref{prop:expsectors} implies that
$\opb\pi\field_{\W\setminus\{0\}}\tens \Eopb u e(F)^\curlywedge \simeq e(G')$ with $G' = \reim j' L'$, for $j'\colon \W\setminus\{0\} \to \W$ the embedding and $L'$ a local system on $\W\setminus\{0\}$.

One has $\opb\pi\field_{\{0\}}\tens \Eopb u e(F)^\curlywedge\simeq e(G'')$ for some $G''\in\BDC(\field_{\W})$ with $G''|_{\W\setminus\{0\}} \simeq 0$. It follows that $\Eopb u e(F)^\curlywedge\simeq e(G)$ for $G\in\BDC(\field_{\W})$ entering a distinguished triangle
\[
G' \to G \to G'' \to[+1].
\]

Since $\Eopb u e(F)^\curlywedge\simeq e(G)$, one has
\begin{align*}
G
&\simeq \rhom^\enh(\field_{\{t\geq 0\}},e(G)) \\
&\simeq \rhom^\enh(\field_{\{t\geq 0\}},\Eopb u e(F)^\curlywedge) \\
&\underset{(*)}\simeq \roim\pi (\epsilon(F)^\curlywedge),
\end{align*}
where $(*)$ follows from Lemma~\ref{lem:pi}.
One then concludes using Lemma~\ref{lem:TamFou} below.
\end{proof}

\begin{lemma}\label{lem:TamFou}
For $F\in\BDC(\field_\V)$, one has
\[
\roim\pi(\epsilon(F)^\curlywedge) \simeq \smsh(F)^\wedge.
\]
\end{lemma}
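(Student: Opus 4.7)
The plan is to compute $\roim\pi(\epsilon(F)^\curlywedge)$ explicitly using Lemma~\ref{lem:F'TamFou}, which gives $\epsilon(F)^\curlywedge \simeq \reim{\tilde q}(L \tens \opb{\overline p} F)$ with $L = E^{\langle z, w\rangle} = \field_{\{t + \Re\langle z, w\rangle \geq 0\}}$, and then to identify the resulting expression with $\smsh(F)^\wedge$. The guiding idea is that $\roim\pi$ ``integrates out'' the $t$-variable and thereby turns the enhanced kernel $L$ into the classical Fourier--Sato half-space kernel, retaining only the information about $F$ at infinity, which is precisely what $\smsh(F)$ records.

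The first step is to trivialize the enhanced kernel. Consider the shearing homeomorphism $h\colon \V\times\W\times\R \to \V\times\W\times\R$, $(z,w,t) \mapsto (z,w,t-\Re\langle z,w\rangle)$. One has $\opb h L \simeq \field_{\{t\geq 0\}}$ and $\overline p \circ h = \overline p$, so the projection formula gives
\[
L \tens \opb{\overline p} F \simeq \reim h(\field_{\{t\geq 0\}} \tens \opb{\overline p} F),
\]
whence $\reim{\tilde q}(L\tens\opb{\overline p}F) \simeq \reim{(\tilde q\circ h)}(\field_{\{t\geq 0\}}\tens\opb{\overline p}F)$ with $(\tilde q\circ h)(z,w,t)=(w,t-\Re\langle z,w\rangle)$. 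Applying $\roim\pi$ and using smooth base change along the fibration $\pi$ together with a fiberwise analysis of $\tilde q\circ h$ (whose fibre over $(w,r)$ is the graph $\{t = r+\Re\langle z,w\rangle\}$) one obtains
\[
(\roim\pi(\epsilon(F)^\curlywedge))_w \simeq \rsect\bl\R;\; r\mapsto \rsect_\rc(H_w(r);F)\br,
\]
where $H_w(r)=\{z\in\V : \Re\langle z,w\rangle\geq -r\}$.

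The second step is to match this expression with $\smsh(F)^\wedge_w$. By Definition~\ref{def:tildenu}, $\smsh(F) = \opb i \roim j \opb\gamma F$ is a conic sheaf on $\V$ encoding the germs of $F$ at infinity in each direction via the radial map $\gamma(x,s) = x/|s|$. Its Fourier--Sato transform is computed by the classical integral transform with half-space kernel, and a parallel computation using the definition of $\gamma$ identifies $\smsh(F)^\wedge_w$ with the same global sections formula: the parameter $r$ indexing the half-spaces $H_w(r)$ corresponds precisely to the radial coordinate implicit in $\gamma$.

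The main technical obstacle is carrying out this last identification with the correct shifts, orientations and normalizations -- in particular, tracing through the factor $|s|$ in the definition of $\gamma$. A cleaner route may be to observe that both $F \mapsto \roim\pi(\epsilon(F)^\curlywedge)$ and $F \mapsto \smsh(F)^\wedge$ are cohomological functors of $F$, exhibit a natural transformation between them coming from the manifest morphisms in Steps~1--2, and verify that it is an isomorphism on a generating class of objects of $\BDC(\field_\V)$, for instance $F=\field_V$ with $V\subset\V$ open subanalytic, where both sides reduce to explicit computations of relative cohomology of half-spaces.
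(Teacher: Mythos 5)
There is a genuine gap: the heart of the lemma is precisely the identification of $\roim\pi\reim{\tilde q}(L\tens\opb{\overline p}F)$ with $\smsh(F)^\wedge$, and your argument asserts it rather than proving it. Your Step 1 (the shearing $h$, $\opb h L\simeq\field_{\{t\geq 0\}}$, $\overline p\circ h=\overline p$) is fine, but the passage to the stalk formula is not: the composite you must compute is $\roim\pi\circ\reim{(\tilde q\circ h)}$, a mixture of proper and non-proper direct images, and stalks of $\roim\pi$ are \emph{not} computed fiberwise (they are limits of sections over preimages of shrinking neighborhoods), so ``integrating out $t$'' is unjustified for general $F$. More importantly, even granting the stalk heuristics on both sides, you never construct a morphism of objects of $\BDC(\field_\W)$ comparing the two functors; the ``manifest morphisms in Steps 1--2'' do not produce one, because nothing in your computation relates the enhanced kernel picture to the conic sheaf $\opb\gamma F$ entering the definition of $\smsh$. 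The fallback d\'evissage also cannot close this: without a natural transformation there is nothing to test on generators, and in any case sheaves $\field_V$ with $V$ open subanalytic only generate (weakly) constructible objects, whereas the lemma is stated for arbitrary $F\in\BDC(\field_\V)$ and neither side visibly commutes with the colimits needed to go beyond that; note also that Lemma~\ref{lem:F'TamFou}, which you invoke at the start, is stated only for $\R$-constructible $F$.

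For comparison, the paper's proof avoids all stalk computations: by \cite[Proposition A.3]{DAg14} one has $\epsilon(F)^\curlywedge\simeq(F\etens\field_{\{-1\}})^\wedge$, and by \cite[Lemma 6.1]{DAg14} this equals $(\reim{j}\epb\gamma F)^\wedge$, the \emph{classical} Fourier--Sato transform of the conic sheaf $\reim{j}\epb\gamma F$ on $\V\times\R$; the shift/orientation bookkeeping you defer (``the parameter $r$ corresponds to the radial coordinate in $\gamma$'') is exactly the content of that cited lemma. One then concludes by the standard compatibility of the Fourier--Sato transform, which converts $\roim\pi$ on the transform side into $\epb i$ on the source side, together with $\smsh(F)\simeq\epb i\reim{j}\epb\gamma F$ (Lemma~\ref{lem:tildenuvariant}) --- note the $\epb i$, $\epb\gamma$ here rather than $\opb i$, $\opb\gamma$, which is precisely the kind of adjustment your sketch leaves unresolved. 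If you want a self-contained proof along your lines, you would essentially have to reprove these two results of \cite{DAg14}; as written, the proposal replaces the key step by a plausibility argument.
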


\begin{proof}
One has
\begin{align*}
\epsilon(F)^\curlywedge
&\underset{(1)}\simeq (F \etens \field_{\{-1\}})^\wedge \\
&\underset{(2)}\simeq (\reim {j'} \opb\gamma F)^\wedge[1] \\
&\simeq (\reim {j'} \epb{\gamma} F)^\wedge,
\end{align*}
where $(1)$ follows from \cite[Proposition A.3]{DAg14}, and $(2)$ from
\cite[Lemma 6.1]{DAg14}.
Hence
\begin{align*}
\roim\pi(\epsilon(F)^\curlywedge)
&\simeq \roim\pi(\reim {j'} \epb{\gamma} F)^\wedge \\
&\simeq (\epb i \reim {j'} \epb{\gamma} F)^\wedge.\qedhere
\end{align*}
\end{proof}

\begin{prop}\label{pro:Foupervsmash}
Let $F\in\Perv_\Sigma(\field_\V)$ with quiver
\[
Q^{(\alpha,\beta)}_\Sigma(F) = \bigl(\Psi, \Phi_c, u_c,
v_c\bigr)_{c\in\Sigma}.
\]
Then $\smsh (F)^\wedge$ is an object of $\Perv_{\{0\}}(\field_{\W})$ with quiver
\[
Q^{(\beta,-\alpha)}_{\{0\}}(\smsh (F)^\wedge) = \bigl(\Phi_\Sigma,\ \Psi,\
V_\Sigma,\ U_\Sigma \bigr),
\]
where $\Phi_\Sigma \defeq \DSum_{i=1}^n \Phi_{c_i}$, $U_\Sigma \defeq
{}^t(U_1,\dots,U_n)$, $V_\Sigma \defeq (V_1,\dots,V_n)$,
\begin{align*}
U_i &\defeq u_{c_i} T_{c_{i+1}} T_{c_{i+2}} \cdots T_{c_n}, \\
V_i &\defeq v_{c_i},
\end{align*}
and $T_c \defeq 1-v_c u_c$ is the monodromy of $\Psi$ around
$c\in\Sigma$.
\end{prop}

\begin{proof}
This follows from Proposition~\ref{pro:pervsmash} and Lemma~\ref{lem:FouSat}.
\end{proof}

\subsection{Computation of the smash functor}

In this section, we compute the smash $\smsh(F)$ for objects $F\in\BDC(\field_\V)$ which are locally constant in a neighborhood of $\infty$. This is enough for our purposes, since we are interested in the case where $F$ is perverse.
We collect in Appendix~\ref{app:smash} some results on $\smsh(F)$ for general $F$'s, which might be of independent interest (e.g.~perverse sheaves in a higher dimensional affine space no longer satisfy the previous condition.)

Fix $R>0$, and consider the map $c^R \colon \V \to \V$ given by
\[
c^R(x) =
\begin{cases}
0 &\text{if }|x|\leq R, \\
\dfrac{|x|-R}{|x|}\,x &\text{if }|x|\geq R,
\end{cases}
\]
that contracts the ball $|x|\leq R$ to the origin.

\begin{lemma}\label{lem:smash}
If $F\in\BDC(\field_\V)$ is locally constant on $|x|\geq R$, then
\[
\smsh(F)\simeq \roim c^R F\simeq \reimm c^R F.
\]
\end{lemma}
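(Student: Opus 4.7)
The second isomorphism is immediate from the properness of $c^R$: its fibers are the closed disk $\{|x|\leq R\}$ over $0$ and a single point over every $y\neq 0$, all compact, so $c^R$ is proper and hence $\roim{c^R}F\simeq\reim{c^R}F$.

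For the main isomorphism $\smsh(F)\simeq\roim{c^R}F$, the plan is to verify it stalk-wise and then globalize via a canonical morphism. By proper base change, $(\roim{c^R}F)_y=\rsect(c^{R,-1}(y);F)$, which equals $F_{(1+R/|y|)y}$ for $y\neq 0$ and $\rsect(\{|x|\leq R\};F)$ for $y=0$. For the smash side, I will use the change of variables $(u,v)=(-s,x/|s|)$, which is a diffeomorphism $\V\times\R_{<0}\isoto\V\times\R_{>0}$ carrying $\gamma$ to the projection $\pi_v\colon(u,v)\mapsto v$. Under this change of coordinates, a fundamental system of neighborhoods of $(y,0)$ in $\V\times\R$ pulls back to the horns $H_\epsilon(y)=\{(u,v):u\in(0,\epsilon),\,uv\in B_\epsilon(y)\}$, and $\opb\gamma F$ becomes $\pi_v^* F$. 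Since the fibers of $\pi_v|_{H_\epsilon(y)}$ are open intervals, setting $V_\epsilon(y)=\pi_v(H_\epsilon(y))$ I obtain
\[
(\smsh F)_y\simeq\varinjlim_{\epsilon\to0}\rsect\bigl(V_\epsilon(y);F\bigr).
\]

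For $y\neq 0$, a geometric inspection shows that $V_\epsilon(y)$ is a thin angular neighborhood of the ray through $y$, concentrated at radii of order at least $|y|/\epsilon$; by the local constancy of $F$ on $\{|x|\geq R\}$, the limit captures the germ of $F$ at infinity in direction $y/|y|$, which equals $F_{(1+R/|y|)y}$. For $y=0$, one has $V_\epsilon(0)=\V$ for every $\epsilon>0$, so the limit reduces to $\rsect(\V;F)$. To identify this with $\rsect(\{|x|\leq R\};F)$ under the local constancy hypothesis, I will set $U=\{|x|>R\}$, $\bar U=\{|x|\geq R\}$, and use the distinguished triangle $j_!F|_U\to F\to i_*F|_{\{|x|\leq R\}}\to+1$ on $\V$; it then suffices to show that $\rsect(\V;j_!F|_U)=0$. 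For that, the triangle $j_!F|_U\to\bar{\jmath}_*F|_{\bar U}\to(i')_*F|_{\partial U}\to+1$ together with the radial retraction $\bar U\to\partial U=\{|x|=R\}$, which is a homotopy equivalence compatible with the local system $F|_{\bar U}$, gives $\rsect(\bar U;F|_{\bar U})\isoto\rsect(\partial U;F|_{\partial U})$, and hence the desired vanishing.

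Finally, to promote the stalk-wise agreement to an isomorphism of sheaves, my plan is to construct a canonical comparison map by introducing the auxiliary continuous map $\bar c\colon\V\times\R_{\leq 0}\to\V$, $\bar c(x,s)=(|x|-R|s|)_+\cdot(x/|x|)$ (with $\bar c(0,s)=0$), which satisfies $\bar c|_{\V\times\{0\}}=\id_\V$ and $\bar c|_{\V\times\R_{<0}}=|s|\cdot c^R\circ\gamma$. The local constancy of $F$ on $\{|x|\geq R\}$ supplies a canonical identification of $\opb\gamma F$ with $\opb{\bar c}F$ on the region $\{|x|>R|s|\}\subset\V\times\R_{<0}$, because both land on the same radial ray in the locally-constant region, while the complementary cone $\{|x|\leq R|s|\}$ contributes precisely the collapsed content at $y=0$ detected by $\rsect(\{|x|\leq R\};F)$. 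The main obstacle is tracking this globalization precisely across the boundary of the cone, but thanks to the local constancy hypothesis the calculations there reduce to the stalk-wise agreement established above, yielding the desired natural isomorphism.
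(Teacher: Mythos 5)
Your second isomorphism and your stalk computations are essentially fine (modulo one repairable detail: to get $\rsect(H_\epsilon(y);\opb{\pi_v}F)\simeq\rsect(V_\epsilon(y);F)$ you need more than ``the fibers are open intervals''--contractible fibers alone do not suffice for a non-proper map; here it works because the interval endpoints vary continuously, so the projection is a trivial fibration over $V_\epsilon(y)$). The genuine gap is the globalization. A stalkwise agreement of cohomology does not determine an object of $\BDC(\field_\V)$: both $\smsh(F)$ and $\roim{c^R}F$ are locally constant on $\V\setminus\{0\}$ with the same stalks, so all the actual content of the lemma sits in the monodromy on $\V\setminus\{0\}$ and in the attaching data at $0$, which stalks do not see. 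To prove the lemma you must construct a morphism between the two sides and then check it is an isomorphism; your proposal never produces such a morphism. Moreover, the comparison you sketch via $\bar c(x,s)=(|x|-R|s|)_+\,x/|x|$ is incorrect on the region where you claim it: for $R|s|<|x|<R(1+|s|)$ the point $\bar c(x,s)$ has modulus $|x|-R|s|<R$, i.e.\ it lies inside the ball where $F$ is \emph{not} assumed locally constant, so ``both land on the same radial ray in the locally-constant region'' fails there, and there is no canonical identification of $\opb\gamma F$ with $\opb{\bar c}F$ on $\{|x|>R|s|\}$. Your closing sentence concedes that tracking the comparison across the boundary of the cone is ``the main obstacle'' and defers it to the stalkwise agreement--but that is precisely the step the stalks cannot supply.

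For contrast, the paper closes exactly this gap by building the comparison map globally: it replaces $\gamma$ by the map $\eta_R$ (the identification $\opb\gamma F\simeq\opb{\eta_R}F$ uses local constancy of both pullbacks on $\{|x|\geq R|s|\}$, the fact that they agree on $\{|x|\leq R|s|\}$, and that $\{|x|=R|s|\}$ is a deformation retract of $\{|x|\geq R|s|\}$), observes that the fiberwise map $\widetilde\eta_R$ is the inverse of $\widetilde\xi^R|_{\{s<0\}}$, where $\widetilde\xi^R$ extends continuously over $s=0$ with $\xi^R(\cdot,0)=c^R$ and is proper, and then concludes by two formal identities: $\opb i\,G\isoto\opb i\,\roim j\opb j G$ for $G$ locally constant on $\{|x|\geq R|s|\}$, and $\opb i\,\roim{\widetilde\xi^R}\simeq\roim{c^R}\,\opb i$ by proper base change. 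If you want to salvage your route, you should imitate this: produce an actual map of complexes (e.g.\ via such an auxiliary deformation extending over $s=0$) and only then use your stalk computations to verify it is an isomorphism.
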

\begin{proof}
Consider the maps
\[
\xymatrix{
\V & \V\times\R_{<0} \ar@{ >->}[r]^{j} \ar[l]_-\gamma & \V\times\R_{\leq0} & \V
\ar@{ >->}[l]_-i .
}
\]
The isomorphism $\roim c^R F\simeq \reimm c^R F$ is clear since $c^R$ is proper.
Hence, by the definition of $\smsh$, we are left to prove
\[
\opb i \roim j \opb\gamma F \simeq
\roim c^R F.
\]
Consider the maps
\begin{align*}
p,\eta_R &\colon \V\times\R_{<0} \to \V, \\
\overline p,\xi^R &\colon \V\times\R_{\leq 0} \to \V,
\end{align*}
where $p$ and $\overline p$ are the natural projections, and
\begin{align*}
\eta_R(x,s) &=
\begin{cases}
\dfrac{1}{|s|}\,x &\text{if }|x|\leq R|s|, \\[1em]
\dfrac{|x|-R(|s|-1)}{|x|}\,x &\text{if }|x|\geq R|s|,
\end{cases}
\\
\xi^R(x,s) &=
\begin{cases}
|s|x &\text{if }|x|\leq R, \\
\dfrac{|x|+R(|s|-1)}{|x|}\,x &\text{if }|x|\geq R.
\end{cases}
\end{align*}
Note that one has $\eta_R(x,-1)=x=\xi^R(x,-1)$, and
\[
c^R(x) = \xi^R(x,0).
\]
Consider the endomorphisms
\begin{align*}
\widetilde\eta_R &\colon \V\times\R_{<0} \to \V\times\R_{<0},
\quad(x,s) \mapsto (\eta_R(x,s),s),\\
\widetilde\xi^R &\colon \V\times\R_{\leq0} \to \V\times\R_{\leq0},
\quad(x,s) \mapsto (\xi^R(x,s),s).
\end{align*}
Note that
\begin{equation}
\label{eq:xieta}
\widetilde\eta_R = (\widetilde\xi^R|_{\{s<0\}})^{-1}.
\end{equation}

The sheaves $\opb{\gamma} F$ and $\opb{\eta_R} F$ are locally constant on $|x|\geq R |s|$. Moreover, they coincide on $|x|\leq R |s|$, since $\gamma=\eta_R$ on that region.
As $|x|= R |s|$ is a deformation retract of $|x|\geq R |s|$, it follows that $\opb{\gamma} F \simeq \opb{\eta_R} F$ on $\V\times\R_{<0}$.
(See Figure~\ref{fig:GammaEta}.)
\begin{figure}
\includestandalone[scale=.85]{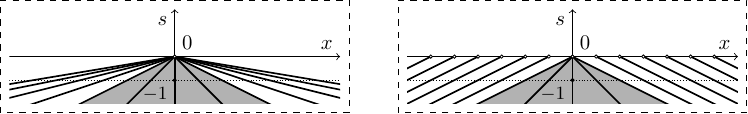}
\caption{Some level sets of $\gamma$ (on the left) and $\eta_R$ (on the right). The region $|x|\leq R|s|$ is pictured in gray.}\label{fig:GammaEta}
\end{figure}
We are then left to show
\[
\opb i \roim j \opb{\eta_R} F \simeq
\roim c^R F.
\]
As a preliminary result, note that if $G\in\BDC(\field_{\V\times\R_{\leq 0}})$ is locally constant on $|x|\geq R|s|$, then
\begin{equation}
\label{eq:ijjG}
\opb i G \isoto \opb i \roim j \opb j G.
\end{equation}
Indeed, one has $(\opb i G)_0 \simeq (\opb i \roim j \opb j G)_0$ by direct computation, and the isomorphism for $x\neq 0$ follows from the fact that $\field_{\V\times\R_{\leq0}} \isoto \roim j \opb j \field_{\V\times\R_{\leq0}}$.
Note also that one has
\begin{equation}
\label{eq:cxit}
\opb i \roim{\widetilde\xi^R} \simeq \roim{c^R} \opb i,
\end{equation}
since $\widetilde\xi^R$ and $c^R$ are proper, and there is a Cartesian square
\[
\xymatrix{
\V\times\R_{\leq0} \ar[r]^{\widetilde\xi^R} & \V\times\R_{\leq0} \\
\V \ar[u]^i \ar[r]^{c^R} & \V \ar[u]^i .
}
\]
To conclude, one has
\begin{align*}
\opb i \roim j \opb{\eta_R} F
&\simeq \opb i \roim j \opb{\widetilde\eta_R} \opb{p} F \\
&\underset{(1)}\simeq \opb i \roim j \roim{(\widetilde\xi^R|_{\{s<0\}})} \opb{p} F \\
&\simeq \opb i \roim j \opb j \roim{\widetilde\xi^R} \opb{\overline p} F \\
&\xleftarrow[(2)]{~\sim~} \opb i \roim{\widetilde\xi^R} \opb{\overline p} F \\
&\underset{(3)}\simeq \roim{c^R} \opb i \opb{\overline p} F \\
&\simeq \roim c^R F,
\end{align*}
where $(1)$ follows from \eqref{eq:xieta}, $(2)$ from \eqref{eq:ijjG}, and $(3)$ from \eqref{eq:cxit}.
\end{proof}

\subsection{Quiver of the smash}

For $R>0$, let $\Delta = \Delta_R \subset \V$ be the closed ball centered at the origin with radius~$R$.
Choose $R$ big enough so that $\Delta\supset\Sigma$, and set
\begin{align*}
&\ell_\Delta(\alpha) \defeq \Delta + \R_{\geq 0}\alpha, \\
&\ell^\times_\Delta(\alpha) \defeq \ell_\Delta(\alpha) \setminus \Delta,\\
&B_\Delta^>(\beta) \defeq \{z\in \V \semicolon \Re\langle z,\beta\rangle < R \}, \\
&B_\Delta^\leq(\beta) \defeq \{z\in \V \semicolon \Re\langle z,\beta\rangle \geq R \}.
\end{align*}
We will write for short $\ell^\times_\Delta$, $B^\leq_\Delta$, etc.\ instead of
$\ell^\times_\Delta(\alpha)$, $B^\leq_\Delta(\beta)$, etc. These sets are pictured in Figure~\ref{fig:Quiver_Delta}.
\begin{figure}
\includestandalone[scale=.9]{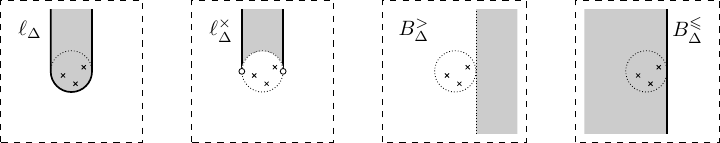}
\caption{The sets $\ell_\Delta$, $\ell^\times_\Delta$, $B_\Delta^>$ and $B_\Delta^\leq$. The crosses represent the points in $\Sigma$}\label{fig:Quiver_Delta}
\end{figure}

Consider the short exact sequences
\begin{equation}
\label{eq:sesuvDelta}
\begin{split}
&0 \to \field_{\ell^\times_\Delta} \to \field_{\ell_\Delta} \to \field_\Delta \to 0,
\\
&0 \to \field_{B_\Delta^>} \to \field_\V \to \field_{B_\Delta^\leq} \to 0.
\end{split}
\end{equation}

\begin{lemma}\label{lem:tildenuglob}
Let $F \in \Perv_\Sigma(\field_\V)$.
\begin{itemize}
\item[(i)]
Denoting $i_0\colon \{0\} \to \V$ the embedding, one has
\begin{align*}
\opb{i_0}\smsh(F) &\simeq \rsect_\rc(\V; \field_\Delta\tens F), \\
\epb{i_0}\smsh(F) &\simeq \rsect_\rc(\V; F).
\end{align*}
\item[(ii)]
There are isomorphism
\begin{align*}
\Psi_0(\smsh(F)) & \simeq \rsect_\rc(\V; \field_{\ell^\times_\Delta} \tens F) \simeq \rsect_\rc(\V; \field_{B_\Delta^>} \tens F)[1], \\
\Phi_0(\smsh(F)) & \simeq \rsect_\rc(\V; \field_{\ell_\Delta} \tens F) \simeq \rsect_\rc(\V; \field_{B_\Delta^\leq} \tens F).
\end{align*}
\item[(iii)]
By the isomorphisms in (i) and (ii), the distinguished triangles
\begin{align*}
&\Psi_0(\smsh(F)) \to[\tilde u_0] \Phi_0(\smsh(F)) \to \opb i_0 \smsh(F) \to[+1], \\
&\epb{i_0} \smsh(F) \to \Phi_0(\smsh(F)) \to[\tilde v_0] \Psi_0(\smsh(F)) \to [+1],
\end{align*}
are identified with the distinguished triangles
\begin{align*}
&\rsect_\rc(\V; \field_{\ell^\times_\Delta} \tens F) \to[\tilde u_\Delta] \rsect_\rc(\V; \field_{\ell_\Delta} \tens F) \to \rsect_\rc(\V; \field_\Delta\tens F) \to[+1], \\
&\rsect_\rc(\V; F) \to \rsect_\rc(\V; \field_{B_\Delta^\leq} \tens F) \to[\tilde v_\Delta] \rsect_\rc(\V; \field_{B_\Delta^>} \tens F)[1] \to [+1],
\end{align*}
induced by \eqref{eq:sesuvDelta}.
\end{itemize}
\end{lemma}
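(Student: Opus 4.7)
The plan is to reduce everything to $F$ on $\V$ by invoking Lemma~\ref{lem:smash}, which yields $\smsh(F)\simeq\roim{c^R}F\simeq\reim{c^R}F$: the hypothesis is met since $F\in\Perv_\Sigma(\field_\V)$ is locally constant on $\{|x|\geq R\}$ (choosing $R$ strictly larger than any point of $\Sigma$). The map $c^R$ is proper, collapses $\Delta$ to $\{0\}$, and restricts to a homeomorphism $\V\setminus\Delta\isoto\V\setminus\{0\}$; in particular $\smsh(F)$ is locally constant on $\V\setminus\{0\}$, so that the formulas of Section~\ref{sec:perverse} apply to it with single singularity $\{0\}$.

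For part~(i), proper base change at the fibre $(c^R)^{-1}(0)=\Delta$ immediately yields $\opb{i_0}\smsh(F)\simeq\rsect_\rc(\Delta;F|_\Delta)=\rsect_\rc(\V;\field_\Delta\tens F)$. For the extraordinary pullback, I would combine the triangle $\epb{i_0}\smsh(F)\to\rsect(\V;\smsh F)\to\rsect(\V\setminus\{0\};\smsh F)\to[+1]$ with the identifications $\rsect(\V;\roim{c^R}F)\simeq\rsect(\V;F)$ and $\rsect(\V\setminus\{0\};\smsh F)\simeq\rsect(\V\setminus\Delta;F)$ (via the homeomorphism $c^R|_{\V\setminus\Delta}$) to obtain $\epb{i_0}\smsh(F)\simeq\rsect_\Delta(\V;F)$. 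The identification with $\rsect_\rc(\V;F)$ then follows because $F|_{\V\setminus\Sigma}$ is a shifted local system and $\Sigma\subset\operatorname{int}\Delta$: for every compact $K$ with $\Delta\subset K$ the restriction $\rsect(\V\setminus\Delta;F)\to\rsect(\V\setminus K;F)$ is an isomorphism (both compute cohomology of a small circle near infinity with coefficients in $L$), so $\rsect_K(\V;F)$ stabilizes to $\rsect_\Delta(\V;F)$ and coincides with $\rsect_\rc(\V;F)$ in the colimit.

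For part~(ii), the projection formula together with the properness of $c^R$ gives $\rsect_\rc(\V;\field_Z\tens\smsh F)\simeq\rsect_\rc(\V;\field_{\opb{c^R}Z}\tens F)$ for every locally closed $Z\subset\V$. Taking $Z$ equal to $\ell^\times_0$, $\ell_0$, $B^>_0$ and $B^\leq_0$ (the latter two linked to the former by the analogues of sequences \eqref{eq:sesiso} for $\smsh F$, which has $\{0\}$ as only singularity) produces four descriptions of $\Psi_0(\smsh F)$ and $\Phi_0(\smsh F)$ as $\rsect_\rc$ of $F$ over the pre-images $\R_{>R/|\alpha|}\alpha$, $\Delta\cup\R_{\geq R/|\alpha|}\alpha$, $\{|x|>R,\,\Re\langle x,\beta\rangle>0\}$ and $\Delta\cup\{|x|>R,\,\Re\langle x,\beta\rangle\leq 0\}$ respectively. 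In each case the pre-image sits as a closed subset in the corresponding target region $\ell^\times_\Delta$, $\ell_\Delta$, $B^>_\Delta$, $B^\leq_\Delta$, and the complementary difference region is disjoint from $\Sigma$; after an adapted change of coordinates along $\partial\Delta$, it decomposes as a disjoint union of two pieces each homeomorphic to a product (open half-line)$\times$(semi-open interval). Applying Lemma~\ref{lem:vanishingsemiopen} on each piece kills the $\rsect_\rc$ of the shifted local system $F$ there, and the short exact sequence $0\to\field_{\text{difference}}\to\field_{\text{target}}\to\field_{\text{pre-image}}\to 0$ then produces the required isomorphism.

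For part~(iii), each of the two distinguished triangles on the left arises by functoriality from a natural short exact sequence on $\smsh F$, namely $0\to\field_{\ell^\times_0}\to\field_{\ell_0}\to\field_{\{0\}}\to 0$ and $0\to\field_{B^>_0}\to\field_\V\to\field_{B^\leq_0}\to 0$. Pulling these back through $c^R$ yields sequences on $\V$ which, after tensoring with $F$ and applying the identifications of parts~(i) and~(ii), become precisely the sequences \eqref{eq:sesuvDelta}. The main obstacle lies in part~(ii): the careful geometric verification that, in each of the four cases, the difference between the pre-image and its target really does decompose as claimed into two products of the form (open half-line)$\times$(semi-open interval), so that Lemma~\ref{lem:vanishingsemiopen} can be invoked. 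Once this structural fact is in place, all remaining steps are routine naturality checks together with standard base change and projection-formula computations.
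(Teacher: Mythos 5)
Your proposal follows essentially the same route as the paper's (very terse) proof: identify $\smsh(F)\simeq\reim{c^R}F$ via Lemma~\ref{lem:smash}, compute $\opb{i_0}$, $\epb{i_0}$, $\Psi_0$, $\Phi_0$ of $\reim{c^R}F$ by proper base change and the projection formula as $\rsect_\rc(\V;\field_{(c^R)^{-1}Z}\tens F)$ for the relevant sets $Z\subset\V$ attached to the single singularity $\{0\}$, and then deform the pre-images to $\ell^\times_\Delta$, $\ell_\Delta$, $B^>_\Delta$, $B^\leq_\Delta$ by showing that each difference region is disjoint from $\Sigma$ and is a union of pieces of the form (open half-line)$\times$(semi-open interval), so that Lemma~\ref{lem:vanishingsemiopen} kills its compactly supported cohomology, compatibly with \eqref{eq:sesuvDelta}. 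Two local repairs are needed. First, in part (ii) your blanket claim that ``in each case the pre-image sits as a closed subset in the corresponding target region'' fails for the $B^>$-comparison: the pre-image $(c^R)^{-1}(B_0^>)=\{|x|>R,\ \Re\langle x,\beta\rangle>0\}$ is not contained in any half-plane of the form $\{\Re\langle z,\beta\rangle>\mathrm{const}\}$, nor in $B^>_\Delta$ as literally printed (the inequalities in the paper's definition of $B^>_\Delta$, $B^\leq_\Delta$ appear to be swapped, as one sees by testing the lemma on a skyscraper $\field_{\{0\}}$); the correct comparison has the half-plane as an \emph{open} subset of the pre-image, so the short exact sequence must be written the other way round, $0\to\field_{\mathrm{target}}\to\field_{\mathrm{pre\text{-}image}}\to\field_{\mathrm{difference}}\to0$, the difference $\{|x|>R,\ 0<\Re\langle x,\beta\rangle\leq \mathrm{const}\}$ still being two pieces of the required product form. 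Second, in part (i) the assertion that $\rsect(\V\setminus\Delta;F)\to\rsect(\V\setminus K;F)$ is an isomorphism for \emph{every} compact $K\supset\Delta$ is false (a $K$ with extra components creates bounded complementary regions carrying extra cohomology of $L$); it suffices, and is true, for the cofinal family of closed balls $\Delta_{R'}$, $R'\geq R$, which is all your colimit argument uses. With these adjustments the argument is complete and coincides in substance with the paper's, whose proof consists of the same direct computation over the pre-image sets followed by the deformation to the stated sets.
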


\begin{proof}
By Lemma~\ref{lem:smash} one has $\smsh(F) \simeq \reim c F$, where we set $c=c^R$.
By direct computation, one obtains a statement analogue to the above, where
$\ell_\Delta$, $\ell^\times_\Delta$, $B_\Delta^<$, and $B_\Delta^\geq$ are replaced with
the sets
$\widetilde\ell_\Delta$, $\widetilde\ell^\times_\Delta$, $\widetilde B_\Delta^>$, and $\widetilde B_\Delta^\leq$, respectively, pictured in Figure~\ref{fig:Quiver_Delta_c}.
\begin{figure}
\includestandalone[scale=.9]{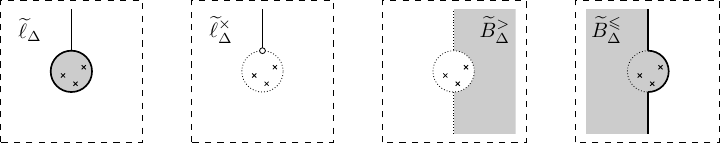}
\caption{The sets $\widetilde \ell_\Delta$, $\widetilde \ell^\times_\Delta$, $\widetilde B_\Delta^>$ and $\widetilde B_\Delta^\leq$. The crosses represent the points in $\Sigma$}\label{fig:Quiver_Delta_c}
\end{figure}
Since $\reim c F$ is locally constant on the closure of $\V\setminus\Delta$, one can deform these sets to the original sets, in a way which is compatible with the morphisms \eqref{eq:sesuvDelta}.
\end{proof}

\begin{prop}\label{pro:pervsmash}
Let $F\in\Perv_\Sigma(\field_\V)$ with quiver
\[
Q^{(\alpha,\beta)}_\Sigma(F) = \bigl(\Psi, \Phi_c, u_c,
v_c\bigr)_{c\in\Sigma}.
\]
Then $\smsh (F)$ is an object of $\Perv_{\{0\}}(\field_\V)$ with quiver
\[
Q^{(\alpha,\beta)}_{\{0\}}(\smsh (F)) = \bigl(\Psi,\ \Phi_\Sigma,\
U'_\Sigma,\ V'_\Sigma \bigr),
\]
where $\Phi_\Sigma \defeq \DSum_{i=1}^n \Phi_{c_i}$, $U_\Sigma \defeq
{}^t(U_1,\dots,U_n)$, $V_\Sigma \defeq (V_1,\dots,V_n)$,
\begin{align*}
U_i &\defeq u_{c_i} T_{c_{i+1}} T_{c_{i+2}} \cdots T_{c_n}, \\
V_i &\defeq v_{c_i},
\end{align*}
and $T_c \defeq 1-v_c u_c$ is the monodromy of $\Psi$ around
$c\in\Sigma$.
\end{prop}

\begin{proof} The statement is natural in $F $ and is certainly true for perverse sheaves supported on $\Sigma $. The general case can then be proved by considering the case of a maximal Beilinson extension. For simplicity, we will consider the localized case. The arguments for a maximal Beilinson extension are the same working on the real oriented blow-up $\widetilde\V_\Sigma $ instead.

Since $B_\Delta^\leq \smallsetminus \ell_\Sigma $ is $\Sigma$-negligible, the short exact sequence
\[
0 \to \field_{B_\Delta^>} \to \field_{\V \smallsetminus \ell_\Sigma} \to \field_{B_\Delta^\leq \smallsetminus \ell_\Sigma}
\to 0
\]
induces an isomorphism
\begin{multline}\label{eq:Psismash}
\Psi_0(\smsh(F)) \simeq
\rsect_\rc(\V; \field_{B^>_\Delta} \tens F)[1]\\
\isoto[\eta]
\rsect_\rc(\V; \field_{\V \smallsetminus \ell_\Sigma} \tens F)[1] = \Psi(F).
\end{multline}
The isomorphism
\begin{equation}\label{eq:Phismash}
\Phi_0(\smsh(F)) \simeq \bigoplus_{c \in \Sigma} \Phi_c(F)=\Phi_\Sigma(F)
\end{equation}
is induced by the closed embedding $\sqcup_{c\in\Sigma} \ell_c \hookrightarrow \ell_\Delta $ whose open complement is again $\Sigma $-negligible.
We then obtain the commutative diagram
\begin{equation}\label{eq:VSigma}
\begin{array}{c}
\xymatrix@C=3mm@R=2mm{
\Phi_0(\smsh(F))\ar@{}[d]|\simeq\\
\rsect_\rc(\V; \field_{\ell_\Delta} \tens F)
\ar[r]^{\simeq} \ar[dd]^{\wr} &
\rsect_\rc(\V; \field_{B^\leq_\Delta} \tens F) \ar[r]^-{\widetilde v_\Delta} &
\rsect_\rc(\V; \field_{B^>_\Delta} \tens F)[1] \ar[dd]^{\wr}_{\eta}\\&&\\
\bigoplus_{c\in\Sigma} \rsect_\rc(\V; \field_{\ell_c} \tens F)\ar[rr]^{V_\Sigma}
&&
\rsect_\rc(\V; \field_{\V \smallsetminus \ell_\Sigma} \tens F)[1]
}
\end{array}
\end{equation}

Since we assumed $F $ to be localized, i.e. $F = \reim{{j_\Sigma}} L[1] $ for a local system $L $, we additionally have
\[
\rsect_\rc(\V; \field_{\ell_c} \tens F) \isoto \rsect_\rc(\V; \field_{\ell^\times_c} \tens F) \isoto[
\text{{\renewcommand\eqc{c}\ref{eq:isoPsicPsi}}}]
\rsect_\rc(\V; \field_{\V \smallsetminus \ell_\Sigma} \tens F)[1]
\]
We pass to the dual Borel-Moore side and are led to consider the induced morphism
\begin{multline}\label{eq:VSigmaBM}
H_2^{BM}(\V \smallsetminus \ell_\Sigma; L^\ast) \to[(V_\Sigma)^\ast] \bigoplus_{c\in \Sigma} H_1^{BM}(\ell_c; L^\ast)\\[-5pt]
\isoto
\bigoplus_{c\in \Sigma} H_2^{BM}(\V \smallsetminus \ell_\Sigma; L^\ast).
\end{multline}
Now, $H_2^{BM}(\V \smallsetminus \ell_\Sigma) $ is generated by the whole space as a Borel-Moore 2-cycle, denoted by $\Gamma $, with the orientation induced by the pair $(\alpha, \beta) $.
Following $\Gamma \tens w $ for a section $w $ of $L^\ast $ along the dual of the diagram \eqref{eq:VSigma}, we observe that it is mapped to the element
\[
\gamma \tens w \in H_1^{BM}(\ell_\Delta; L^\ast)
\]
given by the boundary curve of $\ell_\Delta $ in clockwise direction. The open subset $\ell_\Delta \smallsetminus ( \gamma \cup \ell_\Sigma ) $ with the obvious orientation gives a Borel-Moore 2-chain $\Upsilon \tens w $ whose boundary induces the relation
\begin{equation}\label{eq:Vrelation}
\gamma \tens w = \sum_{c\in\Sigma} \fra_c \tens (1- (T_{cc}^\ast)^{-1}) w
\end{equation}
where $\fra_c \tens w $ is the Borel-Moore homology class represented by the 1-cycle given by $\ell_c^\times $ in the orientation towards $c $ (see Figure \ref{fig:Quiver_DeltanewBM}). Here, we use the same convention we used for \eqref{eq:BMjrelation}, so that $w $ simply denotes the extension of the section $w $ to $\ell^\times_c $ coming from $B^\geq_c $ -- i.e. from the right.

It follows that \eqref{eq:VSigmaBM} maps
\[
( \Gamma \tens w) \Mto{\text{\eqref{eq:VSigmaBM}}}\sum_{c\in\Sigma} \fra_c \tens (1-(T_c^\ast)^{-1}) w
\]
which proves the assertion on $V_\Sigma $ since $v_c=1-T_c $ in the localized case and $(T_c^\ast)^{-1} $ is the transpose of $T_c $ (see Remark \ref{rem:dualmonodromy}).

\begin{figure}
\includestandalone{Quiver_DeltanewBM}
\caption{Left: The Borel-Moore 2-chain $\Upsilon $ inducing the relation \eqref{eq:Vrelation}. Right: Following $\fra_j \tens v $ around the transpose of \eqref{eq:USigma} results in this figure inducing \eqref{eq:USigmaformula}.}\label{fig:Quiver_DeltanewBM}
\end{figure}

Let us now consider the morphism $U_\Sigma $.
\begin{equation}\label{eq:USigma}
\begin{array}{c}
\xymatrix@C=3mm{
\Psi_0(\smsh(F)) \simeq \rsect_\rc(\V; \field_{\ell^\times_\Delta} \tens F) \ar[r]^(.6){\widetilde u_\Delta} \ar[d]^{\wr}_{\text{\eqref{eq:Psismash}}} &
\rsect_\rc(\V; \field_{\ell_\Delta} \tens F) \ar[d]^{\wr}\\
\rsect_\rc(\V; \field_{\V\smallsetminus \ell_\Sigma} \tens F)[1] \ar[r]^{U_\Sigma}
&
\bigoplus_{c\in\Sigma} \rsect_\rc(\V; \field_{\ell_c} \tens F)\\
\rsect_\rc(\V; \field_{\ell^\times_j} \tens F) \ar[u]_-{\wr}^-{\text{{\renewcommand\eqc{j}\ref{eq:isoPsicPsi}}}}&
}
\end{array}
\end{equation}
It is important to recall that \text{{\renewcommand\eqc{j}\ref{eq:isoPsicPsi}}} is defined as a composition
\[
\rsect_\rc(\V; \field_{\ell^\times_j} \tens F) \isoto
\rsect_\rc(\V; \field_{B_j \smallsetminus \ell_j} \tens F)[1] \isoto
\rsect_\rc(\V; \field_{\V\smallsetminus \ell_\Sigma} \tens F)[1]
\]
via the open embedding $B_j \smallsetminus \ell_j \hookrightarrow \V \smallsetminus \ell_\Sigma $.

Again, with $F= \reim{{j_\Sigma}} L[1] $ being localized, we switch to the dual Borel-Moore homology. The transpose of \eqref{eq:USigma} reads as
\begin{equation}\label{eq:USigmaBM}
\begin{array}{c}
\xymatrix{
& H_2^{BM}(\ell^\times_\Delta; L^\ast)
&
H_1^{BM}(\ell_\Delta;L^\ast) \ar[l]_(.5){\widetilde u_\Delta^\ast}\\
H_1^{BM}(\ell_j^\times;L^\ast) &
H_2^{BM}(\V\smallsetminus \ell_\Sigma;L^\ast) \ar[l]_{\rouge\sim} \ar[u]^{\simeq}_{\text{\eqref{eq:Psismash}}^\ast} &
\bigoplus_{c\in\Sigma} H_1^{BM}(\ell^\times_c;L^\ast) \ar[l]_{\rouge{U_\Sigma^*}} \ar[u]^{\simeq}
}
\end{array}
\end{equation}

Following the element $\fra_j\tens v $ for a local section $v $ of $L^\ast $ around the diagram \eqref{eq:USigmaBM}, recalling that $\widetilde{u}_\Delta $ involves the cohomology of $B^>_\Delta $ -- see \eqref{eq:Psismash} -- we obtain that
\begin{equation}\label{eq:USigmaformula}
\fra_j\tens v \Mto{\text{\eqref{eq:USigmaBM}}} \fra_j \tens \big( (T_{n}^\ast)^{-1} (T_{n-1}^\ast)^{-1} \cdots (T_{j+1}^\ast)^{-1} \big) v
\end{equation}
see Figure \ref{fig:Quiver_DeltanewBM}. This proves the statement on $U_\Sigma $.
\end{proof}

\subsection{Fourier-Sato transform}

Let us pair the real vector spaces $\V_\R$ and $(\W)_\R$ using the scalar
product $(z,w) \mapsto \Re\langle z,w \rangle$. Then the Fourier-Sato
transform has kernel $\field_{\{(z,w)\semicolon \Re\langle z,w \rangle \leq 0
\}}$.

Note that $\Perv_{\{0\}}(\field_\V)$ is a full subcategory of
$\BDC_{\R^+}(\field_{\V_\R})$.
As shown for example in \cite[Proposition 10.3.18]{KS90}, the Fourier-Sato
transform induces an equivalence of abelian categories
\[
\Perv_{\{0\}}(\field_\V) \isoto \Perv_{\{0\}}(\field_{\W}),
\quad F \mapsto \widehat F.
\]
The following fact is well known. We give here an easy proof in terms of our
definition of nearby and vanishing cycles.

\begin{lemma}\label{lem:FouSat}
Let $F\in\Perv_{\{0\}}(\field_\V)$, and consider its quiver
\[
Q_{\{0\}}^{(\alpha,\beta)}(F) = (\Psi,\Phi,u,v).
\]
Then the quiver of $\widehat F\in\Perv_{\{0\}}(\field_{\W})$ is given by
\[
Q_{\{0\}}^{(\beta,-\alpha)}(\widehat F) = (\Phi,\Psi,v,u).
\]
\end{lemma}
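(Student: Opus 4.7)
The plan is to compute $\Psi_0^{(\beta,-\alpha)}(\widehat F)$, $\Phi_0^{(\beta,-\alpha)}(\widehat F)$, and the structural morphisms $\tilde u_0, \tilde v_0$ directly from Definition~\ref{def:vannearcycles}. Unwinding $\widehat F$ as an appropriate shift of $\reim{q_2}(\opb{q_1}F \tens \field_P)$ with kernel $P = \{(z,w):\Re\langle z,w\rangle\leq 0\}$, proper base change together with the K\"unneth identification $\rsect_\rc(\R_{>0};\field) \simeq \field[-1]$ gives
\[
\Psi_0^{(\beta,-\alpha)}(\widehat F) \simeq \rsect_\rc(H;F), \qquad \Phi_0^{(\beta,-\alpha)}(\widehat F) \simeq \rsect_\rc(\V\setminus H;F)[1],
\]
where $H = \{z:\Re\langle z,\beta\rangle\leq 0\}$. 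The second identification rests on the pushforward computation $\reim{q_1}\field_{(\V\times\R_{\geq 0}\beta)\cap P} \simeq \field_{\V\setminus H}$, obtained by analysing the fibres of $q_1$: a closed ray $\R_{\geq 0}\beta$ (with vanishing $\rsect_\rc$) over $z\in H$, and a single point over $z\notin H$.

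To match these with $\Phi$ and $\Psi$, I would exploit $\{0\}$-negligibility. Since $\Re\langle\alpha,\beta\rangle = 0$ places $\ell_0(\alpha) = \R_{\geq 0}\alpha$ on the boundary $\partial H$, the set $H\setminus\ell_0(\alpha)$ is locally closed, simply connected, disjoint from $\{0\}$, and (by K\"unneth in polar coordinates, writing it as $(0,\infty)$ times a half-open arc) has vanishing $\rsect_\rc$ of the constant sheaf, hence is $\{0\}$-negligible. So Lemma~\ref{lem:pervbasic} applied to $0\to\field_{H\setminus\ell_0(\alpha)}\to\field_H\to\field_{\ell_0(\alpha)}\to 0$ yields $\rsect_\rc(H;F)\simeq\Phi$, while applied to $0\to\field_{\V\setminus H}\to\field_{\V\setminus\ell_0(\alpha)}\to\field_{H\setminus\ell_0(\alpha)}\to 0$ it yields $\rsect_\rc(\V\setminus H;F)\simeq \Psi[-1]$; after the shift one obtains $\Phi_0^{(\beta,-\alpha)}(\widehat F)\simeq\Psi$.

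For the morphism $\tilde u_0$, I would observe that the short exact sequence $0\to\field_{\R_{>0}\beta}\to\field_{\R_{\geq 0}\beta}\to\field_{\{0\}}\to 0$ on $\W$, pulled back along $q_2$, tensored with $\field_P$, and pushed forward along $\reim{q_1}$, yields exactly the short exact sequence $0\to\field_{\V\setminus H}\to\field_\V\to\field_H\to 0$ on $\V$. Tensoring with $F$, this produces the distinguished triangle $\Psi[-1]\to\rsect_\rc(\V;F)\to\Phi\to[v]\Psi$ whose connecting morphism is $v$ by the very construction in Section~\ref{sec:perverse}. Hence $\tilde u_0 = v$, and the analogous dual computation with $\epb{i_0}\widehat F$ in place of $\opb{i_0}\widehat F$ yields $\tilde v_0 = u$. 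The main obstacle will be verifying that the two distinguished triangles truly coincide (not merely have isomorphic terms) under the identifications above; this reduces to tracing the $q_1$-pushforward of the Fourier-kernel sequence back to the $v$-defining sequence on $\V$ and confirming that the $\{0\}$-negligibility corrections commute with the connecting morphisms.
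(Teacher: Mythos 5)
Your computation of the two underlying spaces and of the first structural map is essentially right, and it amounts to the paper's first commutative square obtained by a different route: you unwind the kernel transform by proper base change (your $H$ is exactly the paper's $B_0^\geq(\beta)$, and your negligibility step replacing $H$ and $\V\setminus H$ by $\ell_0(\alpha)$ and $\V\setminus\ell_0(\alpha)$ is the analogue of \eqref{eq:Phipm}), whereas the paper reaches the same square via the one-dimensional identities \eqref{eq:FSR}, the compatibility of the Fourier--Sato transform with the transpose pair $(p_\beta,i^\beta)$, and an adjunction carrying $\rsect_\rc(\W;\cdot\tens\widehat F)$ back to $\rsect_\rc(\V;\cdot\tens F)$. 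Two remarks on this part: your displayed identifications presuppose the perversity-normalized transform $\widehat F\simeq\reim{q_2}(\opb{q_1}F\tens\field_P)[1]$ (with the bare kernel transform both sides come out shifted by $-1$), so fix that normalization explicitly; and the piece of your ``main obstacle'' concerning the first map is routine, because the map of short exact sequences from $0\to\field_{\V\setminus H}\to\field_\V\to\field_H\to 0$ to $0\to\field_{\V\setminus\ell_0(\alpha)}\to\field_\V\to\field_{\ell_0(\alpha)}\to 0$ identifies the two connecting morphisms by functoriality.

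The genuine gap is the second map. What you prove, $\tilde u_0=v$, is precisely the half of the lemma that does not see the covector $-\alpha$: $u_{00}$ for $\widehat F$ is defined by the ray sequence $0\to\field_{\R_{>0}\beta}\to\field_{\R_{\geq 0}\beta}\to\field_{\{0\}}\to 0$ alone. All dependence on $-\alpha$, hence the precise form of the second map, sits in $v_{00}$ for $\widehat F$, which by the construction of \S\ref{sse:pvqv} is defined through the identification \eqref{eq:isoPsicPsic} of $\Psi_0(\widehat F)$ with $\rsect_\rc(\W;\field_{B_0^>(-\alpha)}\tens\widehat F)[1]$; in particular the triangle $\epb{i_0}\widehat F\to\Phi_0(\widehat F)\to\Psi_0(\widehat F)\to[+1]$ does not by itself pin $\tilde v_0$ down. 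This is not a formality that can be declared ``analogous'': replacing $-\alpha$ by $\alpha$ changes $v_{00}$ by a monodromy factor, and already for a rank-one local system with monodromy $\lambda\neq\pm1$ the two conventions give $v u$ equal to $1-\lambda$, respectively $1-\lambda^{-1}$, which are non-isomorphic quivers. So to conclude $\tilde v_0=u$ on the nose you must transport \eqref{eq:isoPsicPsic} on the $\W$-side through the kernel (for instance $\reim{q_1}(\field_P\tens\opb{q_2}\field_{B_0^>(-\alpha)})\simeq\field_{\ell_0(\alpha)}[-2]$ and $\reim{q_1}\field_P\simeq\field_{\{0\}}[-2]$) and match the outcome, under the same isomorphisms $\Psi_0(\widehat F)\simeq\Phi$ and $\Phi_0(\widehat F)\simeq\Psi$ already used for $\tilde u_0$, with the sequence defining $u$ on $\V$; that is exactly the compatibility you defer, and it is the substantive half of the proof. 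The paper sidesteps this extra computation: it establishes only the square you established, and then applies that same square with $\widehat F$ in place of $F$, using $F^{\wedge\wedge}\simeq F^a$ (the antipode accounting for the sign in $-\alpha$), to obtain the second square; closing your argument in the same way is the most economical fix.
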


\begin{proof}
Considering the Fourier-Sato transform on the real line $\R$,
paired with itself by the product $(u,v)\mapsto uv$, we have
\begin{equation}
\label{eq:FSR}
\FS{\field_{\R_{\leq 0}}} \simeq \field_{\R_{<0}}, \quad
\FS{\field_{\R_{> 0}}} \simeq \field_{\R_{\leq 0}} [-1].
\end{equation}

Note that the $\R$-linear maps
\begin{align*}
p_\beta &\colon\V_\R\to\R, \quad z\mapsto\Re\langle z,\beta \rangle, \\
i^\beta &\colon\R\to(\W)_\R, \quad u\mapsto u\beta,
\end{align*}
are transpose to each other. Moreover, the sets
\begin{align*}
&B_0^<(\beta) = \{z\colon \Re\langle z,w \rangle > 0 \} = \opb
p_\beta(\R_{>0}), \\
&B_0^\geq(\beta) = \{z\colon \Re\langle z,w \rangle \leq 0 \} = \opb
p_\beta(\R_{\leq 0}), \\
&h_\beta = \R_{>0} \beta \subset \W = i^\beta(\R_{>0}) \\
&\ell_0(\beta) = \R_{\geq0} \beta \subset \W = i^\beta(\R_{\geq0}),
\end{align*}
are conic for the radial action of $\R^+$ on $\V$.

We deduce from \eqref{eq:FSR} the isomorphisms
\begin{align*}
\FS{\field_{B_0^\geq(\beta)}}
&\simeq \FS{\opb p_\beta\field_{\R_{\leq 0}}} &
\FS{\field_{B_0^<(\beta)}}
&\simeq \FS{\opb p_\beta\field_{\R_{> 0}}}
\\
&\simeq \reim{i^\beta}\FS{\field_{\R_{\leq 0}}} &
&\simeq \reim{i^\beta}\FS{\field_{\R_{> 0}}} \\
&\simeq \reim{i^\beta}\field_{\R_{<0}} &
&\simeq \reim{i^\beta}\field_{\R_{\leq 0}} [-1] \\
&\simeq (\field_{h_\beta})^a, &
&\simeq (\field_{\ell_0(\beta)})^a [-1],
\end{align*}
and hence we get the commutative diagram
\begin{equation*}
\xymatrix{
\rsect_\rc(\W; \field_{h_\beta} \tens \widehat F) \ar[r]^{\tilde u}
\ar@{-}[d]_\wr^{(1)} &
\rsect_\rc(\W; \field_{\ell_0(\beta)} \tens \widehat F)
\ar@{-}[d]_\wr^{(2)} \\
\rsect_\rc(\W; \FSa{\field_{B_0^\geq(\beta)}} \tens \widehat F) \ar[r]
\ar@{-}[d]_\wr^{(3)} &
\rsect_\rc(\W; \FSa{\field_{B_0^<(\beta)}} \tens \widehat F)[1]
\ar@{-}[d]_\wr^{(4)} \\
\rsect_\rc(\V; \field_{B_0^\geq(\beta)} \tens F) \ar[r]^{\tilde v'} &
\rsect_\rc(\V; \field_{B_0^<(\beta)} \tens F)[1].
}
\end{equation*}
Recalling that $F^{\wedge\wedge}\simeq F^a$ (with the canonical orientation
$\ori_\V\simeq\field_\V$), we
deduce the commutative diagram
\begin{equation*}
\xymatrix{
\rsect_\rc(\V; \field_{h(\alpha)} \tens F) \ar[r]^{\tilde u} \ar@{-}[d]_\wr
&
\rsect_\rc(\V; \field_{\ell_0(\alpha)} \tens F) \ar@{-}[d]_\wr \\
\rsect_\rc(\W; \field_{B_0^\geq(-\alpha)} \tens \widehat F) \ar[r]^{\tilde v'} &
\rsect_\rc(\W; \field_{B_0^<(-\alpha)} \tens \widehat F)[1].
}
\end{equation*}
Summarizing, we have shown that there are commutative diagrams
\[
\xymatrix{
\Psi_0^{(\beta,-\alpha)}(\widehat F) \ar[r]^{\tilde u} \ar@{-}[d]_\wr &
\Phi_0^{(\beta,-\alpha)}(\widehat F) \ar@{-}[d]_\wr \\
\Phi_0'^{(\alpha,\beta)}(F) \ar[r]^{\tilde v'} &
\Psi_0'^{(\alpha,\beta)}(F),
}
\qquad	
\xymatrix{
\Psi_0^{(\alpha,\beta)}(F) \ar[r]^{\tilde u} \ar@{-}[d]_\wr &
\Phi_0^{(\alpha,\beta)}(F) \ar@{-}[d]_\wr \\
\Phi_0'^{(\beta,-\alpha)}(\widehat F) \ar[r]^{\tilde v'} &
\Psi_0'^{(\beta,-\alpha)}(\widehat F).
}
\]
This implies the statement, since $\Psi_0'\simeq\Psi_0$ and $\Phi_0'\simeq\Phi_0$.
\end{proof}

\section{More Stokes phenomena}\label{sec:moreStoles}

Theorem~\ref{thm:Stokesmultipliers} describes the Stokes phenomenon at infinity for $\widehat \shm$, where $\shm$ is a holonomic $\D_\V$-module regular everywhere, including at infinity. In this section we will consider some special cases where $\shm$ is not regular, reducing to the regular case after some geometric manipulations.

\subsection{Airy equation}

The Stokes phenomenon was first analyzed in \cite{Sto57},
in relation with the Airy function $\operatorname{Ai}(y)$.
This is an entire solution of the Airy equation
\[
Qv=0, \quad\text{where } Q=\partial_y^2-y.
\]
The corresponding $\D$-module $\sha = \D_{\C_y}/\D_{\C_y} Q$ has $0$ and $\infty$ as its only singularities, with $0$ regular and $\infty$ irregular.
In this section we will compute the exponential components and the Stokes multipliers of $\sha$ at $\infty$ by analyzing its enhanced solutions
\[
A \defeq \solE_{(\C_y)_\infty}(\sha).
\]
Denote by $x$ the dual coordinate to $y$. Since $Q=\widehat P$ for $P=\partial_x-x^2$,
and $\D_{\C_x}/\D_{\C_x} P \simeq \she^{x^3/3}$, we have
\[
\sha \simeq (\she^{x^3/3})^\wedge.
\]
We know a priori (e.g.\ from \cite{Sab08}) that the formal structure of $(\she^{x^3/3})^\wedge$ at $\infty$ is ramified. We therefore consider the ramification
\[
r\colon \C_v \to \C_y, \quad v \mapsto v^2,
\]
and restrict ourselves to analyze the pull-back
$\Eopb r A$.

\begin{notation}
For $j=1,\dots,6$, consider the closed sectors in $\C_v^\times$
\[
H_j = \{ v=re^{\imm\theta}\semicolon r>0,\ \theta\in (j-1)\tfrac\pi 3 +[-\tfrac\pi 6,\tfrac\pi 6]\},
\]
and the open half-lines
\[
\ell_j = (H_j \cap H_{j+1})
= \R_{>0}\,e^{\imm\,j\tfrac\pi 3}
\]
(with $H_7 \defeq H_1$), as in the picture below.
\[
\begin{tikzpicture}
\foreach \j in {1,...,6}
	{
	\draw[thick] (0,0) -- (60*\j-30:1.5) ++(60*\j-30:2ex) node{$\ell_\j$} ;
	\node[at={(60*\j-60:1)}]{$H_\j$} ;
	}
\filldraw[fill=white, draw=black] (0,0) circle (2pt);
\end{tikzpicture}
\]
\end{notation}

Note that $\ell_j$ are the anti-Stokes lines for the exponentials $E^{\pm\frac23 \imm\,v^3}$, and that there are identifications
$$
e_j \colon \Endo\left(\Erest{\ell_j}{\bigl(E^{-\frac23 \imm\,v^3} \dsum E^{\frac23 \imm\,v^3} \bigr)} \right) \isoto \Endo^{(-1)^j}(\C^2),
$$
where $\Endo^{\pm 1}(\C^2)\subset\Endo(\C^2)$ denotes the vector
subspace of lower/upper triangular matrices.

\begin{prop}\label{pro:Airy}
With the above notations, let $j=1,\dots,6$.
\begin{itemize}
\item[(i)] The exponential components of $\Eopb r A$ at $\infty$ are $E^{\pm\frac23 \imm\,v^3}$. More precisely, there are isomorphisms
$$
s_j \colon \Erest{H_j}{\Eopb r A}
\isoto
\Erest{H_j}{\bigl(E^{-\frac23 \imm\,v^3} \dsum E^{\frac23 \imm\,v^3} \bigr)}.
$$
\item[(ii)]
The Stokes multipliers of $\Eopb r A$ at $\infty$, defined by
\[
S_j \defeq e_j\bigl((\Erest{\ell_j}{s_j}) \circ (\Erest{\ell_j}{s_{j+1}^{-1}}) \bigr),
\]
are given by
\[
S_{2k} =
\begin{pmatrix}
-1 & 0 \\
-1 & -1
\end{pmatrix}, \quad
S_{2k-1} =
\begin{pmatrix}
1 & -1 \\
0 & 1
\end{pmatrix},
\]
for $k=1,2,3$.
\end{itemize}
\end{prop}

Before entering the proof, note that one has
\begin{align}
\label{eq:AiryPrelim}
\Eopb r A
&\simeq \Eopb r \solE_{(\C_y)_\infty}((\she^{x^3/3})^\wedge) \\ \notag
&\simeq \Eopb r (E^{x^3/3})^\curlywedge[1] \\ \notag
&\simeq \Eopb r \Eeeim q E^{xy+x^3/3}[1] \\ \notag
&\simeq \Eeeim q E^{xv^2+x^3/3}[1].
\end{align}
As it is customary in the study of the Airy equation, consider the change of variables $\C_u\times\C^\times_v \isoto \C_x\times\C^\times_v$ given by
\begin{equation}
\label{eq:AiryCoord}
\begin{cases}
x = \imm\,u v,\\
v = v,
\end{cases}
\end{equation}
and consider the maps
\begin{align*}
f&\colon\C_u \to \C_z=\V, \quad u \mapsto u^3 - 3u,\\
g&\colon\C_v \to \C_w=\W, \quad v \mapsto \imm\,v^3/3,
\end{align*}
so that $xy+x^3/3 = x v^2+x^3/3 = f(u)g(v) = zw$. Set
\[
F\defeq \reim{f} \field_{\C_{ u}}[1] \in \BDC(\field_\V).
\]
The following two lemmas will be proved at the end of this section.

\begin{lemma}\label{lem:AiryReduct}
One has
\[
\Erest{\C^\times_v}{\Eopb r A} \simeq
\Erest{\C^\times_v}{\Eopb g ((e F)^\curlywedge)}.
\]
\end{lemma}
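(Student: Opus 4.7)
My plan is to unfold the right-hand side using the definition of the enhanced Fourier-Sato transform together with repeated base change, and then match it with the explicit expression for $\Eopb r A$ from \eqref{eq:AiryPrelim} via the change of variables \eqref{eq:AiryCoord}. By \eqref{eq:AiryPrelim} the left-hand side equals $\Eeeim q E^{xv^2+x^3/3}[1]$ with $q\colon\C_x\times\C_v\to\C_v$ the projection, so the task reduces to producing the analogous formula on the Fourier side.

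First I would apply Lemma~\ref{lem:F'TamFou} to $F\in\BDC_\Rc(\field_\V)$ to write $(eF)^\curlywedge \simeq \field^\enh_{\W_\infty}\ctens\quot\reim{\tilde q}(L\tens\opb{\overline p}F)$, where $\overline p\colon\V\times\W\times\R\to\V$ and $\tilde q\colon\V\times\W\times\R\to\W\times\R$ are the projections and $L=E^{\langle z,w\rangle}$. Applying $\Eopb g$, Lemma~\ref{lem:ops} moves the pullback inside the $\ctens$, and base change along the Cartesian square obtained by pulling $\tilde q$ back via $\tilde g=g\times\id_\R$ reduces the kernel to $\opb{(\id_\V\times\tilde g)}L=E^{zg(v)}$, yielding
\[
\Eopb g((eF)^\curlywedge) \simeq \field^\enh_{(\C_v)_\infty}\ctens\quot\reim{\tilde q'}(E^{zg(v)}\tens\opb{\overline p'}F),
\]
with $\overline p'\colon\V\times\C_v\times\R\to\V$ and $\tilde q'\colon\V\times\C_v\times\R\to\C_v\times\R$ the projections.

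Next I would substitute $F=\reim f\field_{\C_u}[1]$. Proper base change rewrites $\opb{\overline p'}\reim f\field_{\C_u}[1]$ as $\reim{(f\times\id\times\id)}\field_{\C_u\times\C_v\times\R}[1]$, and the projection formula applied to $f\times\id\times\id\colon\C_u\times\C_v\times\R\to\V\times\C_v\times\R$ transforms $E^{zg(v)}\tens\reim{(f\times\id\times\id)}\field$ into $\reim{(f\times\id\times\id)}E^{f(u)g(v)}$, since the pullback of $E^{zg(v)}$ substitutes $z=f(u)$. Composing with $\reim{\tilde q'}$ and invoking Lemma~\ref{lem:ops} once more gives
\[
\Eopb g((eF)^\curlywedge) \simeq \Eeeim{q_u}E^{f(u)g(v)}[1],
\]
where $q_u\colon\C_u\times\C_v\to\C_v$ is the projection.

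Finally, the change of variables \eqref{eq:AiryCoord} defines a real-analytic isomorphism $\Phi\colon\C_u\times\C^\times_v\isoto\C_x\times\C^\times_v$ which intertwines $q$ with $q_u$ over $\C^\times_v$ and, through the identity $xv^2+x^3/3=f(u)g(v)$ recorded just before the lemma, transports $E^{xv^2+x^3/3}$ to $E^{f(u)g(v)}$. Pulling back by $\Phi$ therefore identifies $\Eeeim q E^{xv^2+x^3/3}[1]|_{\C^\times_v}$ with $\Eeeim{q_u}E^{f(u)g(v)}[1]|_{\C^\times_v}$, concluding the proof. The main obstacle I foresee is the careful bookkeeping of the base change and projection formula at the enhanced ind-sheaf level, and in particular ensuring that the non-properness of $q$ and $q_u$ does not obstruct the applications of Lemma~\ref{lem:ops}; the restriction to $\C^\times_v$ is essential, since $\Phi$ degenerates at $v=0$.
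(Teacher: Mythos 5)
Your overall route---unwinding $\Eopb g((e F)^\curlywedge)$ via Lemma~\ref{lem:F'TamFou} and \eqref{eq:defK}, base change along $g$, base change and the projection formula along $f$, and then matching with \eqref{eq:AiryPrelim} through the coordinate change \eqref{eq:AiryCoord} over $\C^\times_v$---is the same chain of identifications as the paper's proof, just run from the other end and carried out at the level of sheaves on $\V\times\C_v\times\R$ instead of at the level of enhanced ind-sheaves. All of the purely sheaf-theoretic steps (the Cartesian squares, $\opb{(\id\times\tilde g)}L\simeq E^{zg(v)}$, the projection formula for the proper map $f\times\id\times\id$, and the identification of the two kernels under \eqref{eq:AiryCoord}) are correct.

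The step that is not justified as written is the final repackaging: you invoke Lemma~\ref{lem:ops} to pass from $\field^\enh_{(\C_v)_\infty}\ctens\quot\reim{\widetilde{q_u}}E^{f(u)g(v)}[1]$ to $\Eeeim{q_u}E^{f(u)g(v)}[1]$, but the third isomorphism of Lemma~\ref{lem:ops} requires the map to be proper, and $q_u\colon\C_u\times\C_v\to\C_v$ is not proper (you flag exactly this worry at the end, but leave it unresolved, and the comparison with the left-hand side $\Eeeim q E^{xv^2+x^3/3}[1]$ does require it). What you actually need is the compatibility of $\Eeeim{}$ with $\reim{}$ for a morphism of \emph{bordered} spaces whose compactification $\PP\times\PP\to\PP$ is proper; this is true and is part of the formalism of \cite{DK16,KS16} (it is the same compatibility that underlies \eqref{eq:defK} and Definition~\ref{def:FouSato}), but it is not what Lemma~\ref{lem:ops} states. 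Two clean repairs: (a) do as the paper does and stay at the enhanced level throughout, using only the projection formula for the proper map $f'=f\times\id$ together with the base change $\Eeeim q\,\Eopb{g''}\simeq\Eopb g\,\Eeeim q$, so that no non-proper direct image ever has to be compared with a sheaf-level $\reim{}$; or (b) run the entire comparison at the sheaf level, expressing the left-hand side as well via Lemma~\ref{lem:FouML} and Theorem~\ref{thm:SolEExp} as $\field^\enh\ctens\quot$ of $\opb{\tilde r}\reim{\tilde q}E^{xy+x^3/3}[1]\simeq\reim{\tilde q}E^{xv^2+x^3/3}[1]$, performing the coordinate change over $\C^\times_v$ there, and applying $\field^\enh\ctens\quot(\cdot)$ (and Lemma~\ref{lem:ops} only for pullbacks, which need no properness) at the very end. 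With either repair your argument goes through and is essentially equivalent to the paper's.
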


\begin{lemma}\label{lem:AiryQuiv}
One has $F\in\Perv_\Sigma(\V)$ for $\Sigma = \{-2,2\}$. Moreover, the quiver of $F$ is given by
\[
\vcenter{\vbox{\xymatrix{
\Phi_2(F) \ar@<.5ex>[d]^{v_2} \\
\Psi(F) \ar@<.5ex>[u]^{u_2} \ar@<-.5ex>[d]_{u_{-2}} \\
\Phi_{-2}(F) \ar@<-.5ex>[u]_{v_{-2}}
}}}
\simeq
\vcenter{\vbox{\xymatrix{
\field \ar@<.5ex>[d]^{\left(\begin{smallmatrix}0\\1\\-1\end{smallmatrix} \right)} \\
\field^3 \ar@<.5ex>[u]^{\left(\begin{smallmatrix}0&1&-1\end{smallmatrix} \right)} \ar@<-.5ex>[d]_{\left(\begin{smallmatrix}1&0&-1\end{smallmatrix} \right)} &.\\
\field \ar@<-.5ex>[u]_{\left(\begin{smallmatrix}1\\0\\-1\end{smallmatrix} \right)}.
}}}
\]
\end{lemma}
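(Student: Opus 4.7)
The strategy is to exploit the explicit description $F=\reim{f}\field_{\C_u}[1]$ with $f(u)=u^3-3u$ and to reduce all computations to the geometry of preimages under $f$ via proper base change.

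I would first dispose of perversity and the ranks. The polynomial $f$ is proper with finite fibres, so $\reim{f}=\roim{f}$ is $t$-exact for the middle perversity, and $F\in\Perv(\field_\V)$. The critical points are $u=\pm1$ with critical values $f(\pm1)=\mp2$, both simple (since $f''(\pm1)=\pm6\neq0$), so $\Sigma=\{-2,+2\}$ and $L\defeq F|_{\V\setminus\Sigma}[-1]$ is a local system of rank~$3$. As $\V\setminus\ell_\Sigma$ is simply connected, one gets $\Psi(F)=H^2_\rc(\V\setminus\ell_\Sigma;L)\simeq\field^3$, with a distinguished ordered basis given by the three sheets of the covering on this region. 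For $\Phi_c(F)$, proper base change gives $\Phi_c(F)=H^1_\rc(f^{-1}(\ell_c);\field)$; the factorisations $u^3-3u\mp2=(u\pm1)^2(u\mp2)$ together with the local shape of $f$ near $u=\pm1$ show that $f^{-1}(\ell_c)$ consists of a closed ray issuing from the unramified preimage (homeomorphic to $[0,\infty)$, contributing $0$ to $H^1_\rc$) together with a copy of $\R$ formed by the two local branches joined at the ramification point and extended to infinity (contributing $\field$). Hence $\Phi_{\pm2}(F)\simeq\field$.

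For the maps $u_c$ and $v_c$, I would apply $\reim{f}$ to the defining short exact sequences of Section~\ref{sse:pvqv}, pull them back to $\C_u$, and read off the induced morphisms on compactly supported cohomology. The source $\Psi_c(F)=H^1_\rc(f^{-1}(\ell^\times_c);\field)$ has a canonical basis with one generator per open arc---one arc issuing from the unramified preimage of $c$ and two from the branch point---and under $u_{cc}$, which is induced by extension by zero into $H^1_\rc(f^{-1}(\ell_c);\field)$, the unramified arc maps to $0$ while the two branch arcs, being the two halves of the $\R$-component, map to $+1$ and $-1$ respectively. Transporting this to the global $\Psi(F)$ by the natural isomorphism of Section~\ref{sse:pvqv} and matching the three-sheet labellings yields $u_2=(0,1,-1)$ and $u_{-2}=(1,0,-1)$. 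The vectors $v_c$ are then obtained analogously from the second chain of exact sequences \eqref{eq:sesiso}, producing the claimed transposed columns.

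The principal obstacle is purely combinatorial: the three sheets of $L$ near $c=2$, near $c=-2$, and on the global region $\V\setminus\ell_\Sigma$ must be identified through the isomorphisms of Section~\ref{sse:pvqv} so that a single ordered basis of $\Psi(F)=\field^3$ produces the two stated formulas simultaneously, with consistent signs. I would verify this by tracing sheets through their asymptotic directions at infinity (the three cube roots of $z$ for $|z|\to\infty$), and would cross-check the outcome against the fact that $T_{\pm2}=1-v_{\pm2}u_{\pm2}$ then acts as the transpositions exchanging the pairs of sheets $(2,3)$ and $(1,3)$; these generate $S_3$, the expected generic monodromy of a degree-$3$ polynomial covering simply branched over two points.
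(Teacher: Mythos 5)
Your route is correct, but it is genuinely different from the one in the paper. You compute the quiver ``from the inside'': $\Phi_{\pm2}$, $\Psi_{\pm2}$ and $u_{cc}$ are read off from $H^\bullet_\rc$ of the preimages $f^{-1}(\ell_c)$, $f^{-1}(\ell^\times_c)$ via proper base change (your description of $f^{-1}(\ell_c)$ as a closed ray plus a properly embedded copy of $\R$ through the ramification point, giving $\Phi_{\pm2}\simeq\field$ and the $(0,1,-1)$--pattern for $u_{cc}$, is accurate), and you then have to match the local bases with a global basis of $\Psi(F)\simeq\field^3$ and redo the (more delicate, degree-shifted) computation for $v_{cc}$. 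The paper instead never computes $u_{cc},v_{cc}$ directly: it applies the exact quiver functor to the localization sequence $0\to F\tens\field_\Sigma[-1]\to F\tens\field_{\V\setminus\Sigma}\to F\to 0$, so that everything reduces to the monodromies $T_{\pm2}$ and the boundary-value maps $b_{\pm2}$ of the rank-$3$ local system, computed by lifting explicit paths along $f$ (the content of \eqref{eq:AiryQuiverSES} and the accompanying figures); the quiver of $F$ is then a cokernel of quiver maps. What the paper's route buys is precisely the elimination of the step you flag as the ``principal obstacle'': all sign and sheet bookkeeping is concentrated in the path-lifting pictures, and $v_{\pm2}$ come out for free from the cokernel. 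Two remarks that would tighten your plan: (a) your computation of $v_c$ ``analogously'' can be bypassed, since $\dim\Phi_c=1$ and the quiver formalism forces $v_cu_c=1-T_c$, which determines $v_c$ up to the scaling ambiguity absorbed by a quiver isomorphism; (b) your $S_3$ ``cross-check'' can be upgraded to an argument: the monodromy at infinity of $f$ is a $3$-cycle and equals the product of the two local monodromies, so $T_2\neq T_{-2}$ are necessarily distinct transpositions, and together with $\dim\Phi_{\pm2}=1$ this already determines the quiver up to isomorphism, which is all the statement claims.
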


\begin{proof}[Proof of Proposition~\ref{pro:Airy}]
By Lemma~\ref{lem:AiryReduct}, it is enough to compute the exponential components of $\Eopb g ((e F)^\curlywedge)$ at $\infty$, and its Stokes multipliers.

Let $\alpha=\imm\in\C_z$, $\beta=1\in\C_w$. By Lemma~\ref{lem:AiryQuiv} and Proposition~\ref{prop:expsectors}, the exponential components of $(e F)^\curlywedge$ at $\infty$ are $E^{\pm 2w}$, and its Stokes multipliers are
\[
S_\beta =
\begin{pmatrix}
-1 & 0 \\
-1 & -1
\end{pmatrix}, \quad
S_{-\beta} =
\begin{pmatrix}
1 & 1 \\
0 & 1
\end{pmatrix}.
\]

To conclude, note that $\Eopb g E^{\pm 2w} \simeq E^{\pm\frac23 \imm\,v^3}$,
\[
\opb{g}H_\alpha = \Union_k H_{2k- 1},
\quad \opb{g}H_{-\alpha} = \Union_k H_{2k},
\]
and hence
\[
S_{2k} = S_\beta, \quad S_{2k-1} = S_{-\beta}^{-1}.\qedhere
\]
\end{proof}

\begin{proof}[Proof of Lemma~\ref{lem:AiryReduct}]
By \eqref{eq:AiryPrelim} and \eqref{eq:AiryCoord}, one has
\begin{align*}
\Erest{\C^\times_v}{\Eopb r A}
&\simeq \Erest{\C^\times_v}{\Eeeim q E^{xv^2+x^3/3}[1]} \\
&\simeq \Erest{\C^\times_v}{\Eeeim q E^{f(u)g(v)}[1]}.
\end{align*}
Considering the commutative diagram with Cartesian squares
\[
\xymatrix{
\C_u \ar[d]_f & \C_u\times\C_v \ar[d]_{f' \defeq f\times\id} \ar[dr]^{q} \ar[l]_-p \\
\C_z & \C_z\times\C_v \ar[d]^{g''\defeq \id\times g} \ar[r]^q \ar[l]_-p & \C_v \ar[d]^g \\
& \C_z\times\C_w \ar[r]^q \ar[ul]_-p & \C_w,
}
\]
one has
\begin{align*}
\Eeeim q E^{f(u)g(v)}[1]
&\simeq \Eeeim q \Eopb{f'}E^{zg(v)}[1] \\
&\simeq \Eeeim q \Eeeim{f'}(\Eopb{f'}E^{zg(v)}\tens e\field_{\C_u\times\C_v}[1]) \\
&\simeq \Eeeim q (E^{zg(v)}\tens e\reim{f'}\field_{\C_u\times\C_v}[1]) \\
&\simeq \Eeeim q (\Eopb{g''}E^{zw}\tens \Eopb p e\reim f\field_{\C_u}[1]) \\
&\simeq \Eeeim q \Eopb{g''}(E^{zw}\tens\Eopb p e\reim f\field_{\C_u}[1]) \\
&\simeq \Eopb g \Eeeim q (E^{zw}\tens\Eopb p e\reim f\field_{\C_u}[1]) \\
&= \Eopb g ((e F)^\curlywedge).\qedhere
\end{align*}
\end{proof}

\begin{proof}[Proof of Lemma~\ref{lem:AiryQuiv}] The map $f $ is a 3:1 cover branched at $z\in\Sigma =
\{-2,2\}$. One has $f^{-1}(-2)=\{ 1, -2 \} $ with $u=-2$ a single point, and $u=1$ a double point.
Analogously, $f^{-1}(2)=\{-1,2 \} $ with $u=-1 $ of degree two and $u=2 $ of degree one. This is pictured
in Figure~\ref{fig:Airy_g}. (Write $z=z_1+\imm\, z_2$ and $u=u_1+\imm\, u_2$. To draw the picture we used the
fact that the pull-back of the real line $z_2=0$ by $f$ is given by
$0=\Im(u^3/3-u)=u_2(u_1^2-u_2^2/3-1)$.)

\begin{figure}
\includestandalone{Airy1}
\caption{The map $f:u \mapsto z=u^3-3 u $.}\label{fig:Airy_g}
\end{figure}

It follows that the restriction of $\reim{f} \field_{\C_{ u}}$ to $\V \smallsetminus \Sigma $ is a local system, and hence $F = \reim{f} \field_{\C_{ u}}[1]\in\Perv_\Sigma(\V)$.

Since $F$ is concentrated in degree $-1$,
$F\tens\field_\Sigma[-1]$ is perverse. Hence the distinguished triangle
\[
F\tens\field_{\V\setminus\Sigma} \to F \to F\tens\field_\Sigma \to[+1]
\]
induces the short exact sequence of perverse sheaves
\[
0 \to F\tens\field_\Sigma[-1]
\to F\tens\field_{\V\setminus\Sigma} \to F \to 0.
\]
In terms of quivers, this reads
\begin{equation}
\label{eq:AiryQuiverSES}
\begin{array}{c}
\xymatrix{
&\opb{i_2} F[-1] \ar@<.5ex>[d] \ar[r]^{b_2}
& \Psi(F) \ar@<.5ex>[d]^{1-T_2} \ar[r]^{u_2}
& \Phi_2(F) \ar@<.5ex>[d]^{v_2} \\
0\ar[r]
&0 \ar@<.5ex>[u] \ar@<-.5ex>[d] \ar[r]
& \Psi(F) \ar@<.5ex>[u]^{1} \ar@<-.5ex>[d]_{1} \ar[r]^{1}
& \Psi(F) \ar@<.5ex>[u]^{u_2} \ar@<-.5ex>[d]_{u_{-2}} \ar[r]
& 0. \\
&\opb{i_{-2}} F[-1] \ar@<-.5ex>[u] \ar[r]^{b_{-2}}
& \Psi(F) \ar@<-.5ex>[u]_{1-T_{-2}} \ar[r]^{u_{-2}}
& \Phi_{-2}(F) \ar@<-.5ex>[u]_{v_{-2}}
}
\end{array}
\end{equation}

\begin{figure}
\includestandalone[scale=.85]{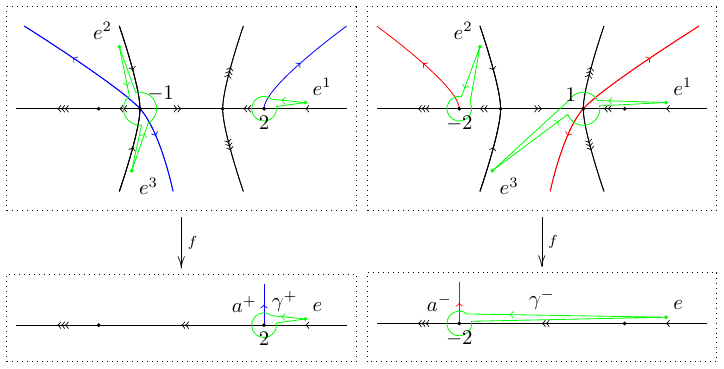}
\caption{The pullback by $f$ of the paths $\gamma^+$ and $\gamma^-$}.\label{fig:Airy_g_paths}
\end{figure}

Let us compute the maps $b_{\pm2}$ and $T_{\pm2}$,
referring to Figure~\ref{fig:Airy_g_paths}. Choose a base-point $e \in \V $ with $\Re z > 2 $,
and choose a numbering $e_1, e_2 $ and $e_3 $ of the preimages of $e$ by~$f$.
These choices induce isomorphisms
\begin{equation}
\label{eq:AiriId1}
\field^3 \simeq \DSum_{j=1}^3 (\field_{\C_{u}})_{e_j} \simeq
F_e[-1] \simeq
\Psi(F)\isofrom[\psi_{\pm2}]
\Psi_{\pm2}(F).
\end{equation}

Choose a path $\gamma^\pm $ starting at the base-point $e $, running parallel to the real axis to the point $\pm 2 $ respectively, circling around it in counter-clockwise orientation and returning to $e $ afterwards.
Denote by $\gamma_j^\pm $ the lift by $f$ of the path $\gamma^\pm $ starting at $e_j $.
(The course of $\gamma_j^\pm $ can be determined by taking into account the various intersections of $\gamma^\pm $ with the curves drawn in black, blue and red.)

As $T_{\pm2}$ is induced by the monodromy around $\gamma^\pm$, with the identification \eqref{eq:AiriId1} one has
\[
T_{2}=
\begin{pmatrix}
1 & 0& 0\\
0 & 0& 1\\
0 & 1& 0
\end{pmatrix}
, \quad
T_{-2}=
\begin{pmatrix}
0 & 0& 1\\
0 & 1& 0\\
1 & 0& 0
\end{pmatrix}
\]

Recall that $f^{-1}(2)=\{-1,2\}$, and fix an isomorphism
\begin{equation}
\label{eq:AiriId2}
\field^2 \simeq (\field_{\C_{u}})_{-1} \dsum (\field_{\C_{u}})_2 \simeq i_2^{-1}F[-1].
\end{equation}

Denote by $a^+$ and $a^-$ the blue and red half-lines in $\V$, and by $a^\pm_j$ the lift of $a^\pm$
in $\C_u$ first encountered by $\gamma^\pm_j $.

The morphism $\psi_2^{-1}\circ b_2$ is induced by the boundary value map from $a^+$ to its origin $z=2$.
Since the origin of $a^+_1$ is $u=2$ and the origin of $a^+_2$ and of $a^+_3$ is $u=-1$, with the identifications \eqref{eq:AiriId1} and \eqref{eq:AiriId2} one has
$b_2 = \left(\begin{smallmatrix}0&1\\1&0\\1&0\end{smallmatrix}\right)$. Similarly,
$b_{-2} = \left(\begin{smallmatrix}1&0\\0&1\\1&0\end{smallmatrix}\right)$.

Then, by \eqref{eq:AiryQuiverSES}, one has
\[
\vcenter{\vbox{\xymatrix{
\Phi_2(F) \ar@<.5ex>[d]^{v_2} \\
\Psi(F) \ar@<.5ex>[u]^{u_2} \ar@<-.5ex>[d]_{u_{-2}} \\
\Phi_{-2}(F) \ar@<-.5ex>[u]_{v_{-2}}
}}}
\simeq
\coker \left(
\vcenter{\vbox{\xymatrix{
\field^2 \ar@<.5ex>[d] \ar[r]^{\left(\begin{smallmatrix}0&1\\1&0\\1&0\end{smallmatrix} \right)}
& \field^3 \ar@<.5ex>[d]^{\left(\begin{smallmatrix}0&0&0\\0&1&-1\\0&-1&1\end{smallmatrix} \right)} \\
0 \ar@<.5ex>[u] \ar@<-.5ex>[d] \ar[r]
& \field^3 \ar@<.5ex>[u]^{\id} \ar@<-.5ex>[d]_{\id} \\
\field^2 \ar@<-.5ex>[u] \ar[r]_{\left(\begin{smallmatrix}1&0\\0&1\\1&0\end{smallmatrix} \right)}
& \field^3 \ar@<-.5ex>[u]_{\left(\begin{smallmatrix}1&0&-1\\0&0&0\\-1&0&1\end{smallmatrix} \right)}
}}}
\right)
\simeq
\vcenter{\vbox{\xymatrix{
\field \ar@<.5ex>[d]^{\left(\begin{smallmatrix}0\\1\\-1\end{smallmatrix} \right)} \\
\field^3 \ar@<.5ex>[u]^{\left(\begin{smallmatrix}0&1&-1\end{smallmatrix} \right)} \ar@<-.5ex>[d]_{\left(\begin{smallmatrix}1&0&-1\end{smallmatrix} \right)} &.\\
\field \ar@<-.5ex>[u]_{\left(\begin{smallmatrix}1\\0\\-1\end{smallmatrix} \right)}
}}}
\]
\end{proof}

\subsection{Elementary meromorphic connection}

Proceeding as in the previous section, we will describe here the Stokes multipliers at infinity of the Fourier-Laplace transform of $\she^{1/x} \defeq \D_{\C_x}e^{1/x}$. A more general situation has been considered in \cite{HS14}, using other methods.

Setting
\[
N \defeq \solE_\bb((\she^{1/x})^\wedge),
\]
we know (e.g. from \cite{Sab08}) that its formal structure is ramified at $\infty $. We therefore consider the
ramification
$$ r:\C_w\to\C_y, \quad w\mapsto w^2, $$
and restrict ourselves to analyze the pull-back $\Eopb r N$.

\begin{notation}
In $\C_w$, consider the closed half spaces in $\C_w^\times$
\[
H_\pm = \{ w\semicolon \pm\Re w \geq 0\},
\]
and the open half-lines
\[
\ell_\pm = \pm \imm\, \R_{>0},
\]
\end{notation}

Note that $\ell_\pm$ are the anti-Stokes lines for the exponentials $E^{\pm2w}$, and that there are identifications
$$
e_{\pm} \colon \Endo\left(\Erest{\ell_\pm}{\bigl(E^{-2 w} \dsum E^{2 w} \bigr)} \right) \isoto \Endo^{\pm 1}(\C^2),
$$
where we recall that $\Endo^{\pm 1}(\C^2)\subset\Endo(\C^2)$ denotes the vector
subspace of lower/upper triangular matrices.

\begin{prop}\label{pro:Elementary}
With the above notations.
\begin{itemize}
\item[(i)] The exponential components of $\Eopb r N$ at $\infty$ are $E^{\pm 2 w}$. More precisely, there are isomorphisms
$$
s_{\pm} \colon \Erest{H_\pm}{\Eopb r N}
\isoto
\Erest{H_\pm}{\bigl(E^{-2 w} \dsum E^{2 w} \bigr)}.
$$
\item[(ii)]
The Stokes multipliers of $\Eopb r N$ at $\infty$, defined by
\[
S_{\pm} \defeq e_{\pm}\bigl((\Erest{\ell_\pm}{s_{\pm}}) \circ (\Erest{\ell_\pm}{s^{-1}_\mp}) \bigr),
\]
are given by
\[
S_{+} =
\begin{pmatrix}
-1 & 0 \\
-2 & -1
\end{pmatrix}, \quad
S_{-} =
\begin{pmatrix}
1 & 2 \\
0 & 1
\end{pmatrix}.
\]
\end{itemize}
\end{prop}

As before, note that one has
\begin{align}
\label{eq:ElementaryPrelim}
\Eopb r N
&\simeq \Eopb r \Eeeim q E^{xy+\tfrac 1x}[1] \\ \notag
&\simeq \Eeeim q E^{xw^2+\tfrac 1x}[1].
\end{align}
Now consider the change of variables $\C_u\times\C^\times_w \isoto \C_x\times\C^\times_w$ given by
\begin{equation}
\label{eq:ElementaryCoord}
\begin{cases}
x = \tfrac u w,\\
w = w,
\end{cases}
\end{equation}
and consider the meromorphic function
\begin{align*}
f&\colon\C_u \to \C_z=\V, \quad u \mapsto u + \tfrac 1 u,
\end{align*}
so that $x w^2+\tfrac 1 x = f(u)w = zw$. Set
\[
F\defeq \reim{f} \field_{\C_{ u}}[1] \in \BDC(\field_\V).
\]

As in the case of the Airy equation the proof of Proposition
\ref{pro:Elementary} is obtained from the following

\begin{lemma}\label{lem:ElementaryReductQuiv}
\begin{itemize}
\item [(i)]
One has
\[
\Erest{\C^\times_w}{\Eopb r N} \simeq
\Erest{\C^\times_w}{(e F)^\curlywedge}.
\]

\item [(ii)]
One has $F\in\Perv_\Sigma(\V)$ for $\Sigma = \{-2,2\}$. Moreover, the quiver of $F$ is given by
\[
\vcenter{\vbox{\xymatrix{
\Phi_2(F) \ar@<.5ex>[d]^{v_2} \\
\Psi(F) \ar@<.5ex>[u]^{u_2} \ar@<-.5ex>[d]_{u_{-2}} \\
\Phi_{-2}(F) \ar@<-.5ex>[u]_{v_{-2}}
}}}
\simeq
\vcenter{\vbox{\xymatrix{
\field \ar@<.5ex>[d]^{\left(\begin{smallmatrix}1\\-1\end{smallmatrix} \right)} \\
\field^2 \ar@<.5ex>[u]^{\left(\begin{smallmatrix}1&-1\end{smallmatrix} \right)} \ar@<-.5ex>[d]_{\left(\begin{smallmatrix}1&-1\end{smallmatrix} \right)} &.\\
\field \ar@<-.5ex>[u]_{\left(\begin{smallmatrix}1\\-1\end{smallmatrix} \right)}.
}}}
\]
\end{itemize}
\end{lemma}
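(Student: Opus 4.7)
The plan is to follow the template of Lemmas \ref{lem:AiryReduct} and \ref{lem:AiryQuiv}, with the simplification that here the $w$-dependence of the phase is already linear, so no analogue of the auxiliary map $g$ is needed.

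For part (i), starting from \eqref{eq:ElementaryPrelim} the change of variables \eqref{eq:ElementaryCoord} rewrites the exponent as $xw^2 + 1/x = wf(u)$, that is, as $zw$ when $z = f(u)$. I would set up the Cartesian diagram
\[
\xymatrix{
\C_u^\times \ar[d]_f & \C_u^\times \times \C_w \ar[d]_{f'=f\times\id} \ar[dr]^q \ar[l]_-p \\
\C_z & \C_z \times \C_w \ar[r]^q \ar[l]_-p & \C_w,
}
\]
and apply base change together with the projection formula in the enhanced setting exactly as in the proof of Lemma \ref{lem:AiryReduct}, obtaining $\Erest{\C_w^\times}{\Eopb r N} \simeq \Erest{\C_w^\times}{(eF)^\curlywedge}$; the restriction to $\C_w^\times$ arises naturally from the fact that the substitution $x = u/w$ is defined only there.

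For part (ii), I would first extend $f$ to a morphism $\PP_u \to \PP_z$ of degree~$2$. A direct computation shows it is ramified only over $\Sigma = \{-2,2\}$ (at $u = \pm 1$), while $f^{-1}(\infty) = \{0,\infty\}$ consists of two unramified preimages. Therefore $\reim f \field_{\C_u^\times}|_{\V\setminus\Sigma}$ is a rank-$2$ local system, and $F$ is concentrated in degree $-1$, hence lies in $\Perv_\Sigma(\V)$. To compute the quiver I would apply the same strategy as in Lemma \ref{lem:AiryQuiv}: the distinguished triangle
\[
F\tens\field_\Sigma[-1]\to F\tens\field_{\V\setminus\Sigma}\to F\to[+1]
\]
induces a short exact sequence of perverse sheaves and hence of quivers. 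The quiver of $F \tens \field_{\V\setminus\Sigma} \simeq \reim{j_\Sigma} L[1]$ is the standard localized one with $\Psi = \Phi_{\pm 2} = \field^2$, $u_{\pm 2} = \id$ and $v_{\pm 2} = 1 - T_{\pm 2}$, where the monodromy $T_{\pm 2}$ is the sheet-swap $\left(\begin{smallmatrix}0&1\\1&0\end{smallmatrix}\right)$, reflecting the fact that each critical value has a single geometric preimage at which the two sheets meet. The quiver of $F\tens\field_\Sigma[-1]$ is a perverse skyscraper with $\Phi_{\pm 2} = F_{\pm 2}[-1] = \field$ and $\Psi = 0$.

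The only nontrivial remaining step, and the main obstacle, is the identification of the boundary map $b_{\pm 2}\colon F_{\pm 2}[-1] \to \Psi(F)$ entering the SES on $\Phi_{\pm 2}$. Following the Airy template, I would fix a base-point $e \in \V\setminus\Sigma$, label the two sheets of $f$ above a neighborhood of $e$, and track along a straight path from $e$ to $\pm 2$ which sheet tends to which preimage. Since both roots of $u^2 - zu + 1 = 0$ coalesce to $u = \pm 1$ as $z\to\pm 2$, both sheets approach the unique preimage and one obtains $b_{\pm 2} = \binom{1}{1}$. Taking cokernels in the induced diagram then gives $\Phi_{\pm 2}(F) = \coker\bigl(\binom{1}{1}\colon \field\to\field^2\bigr) \simeq \field$ via $(a,b)\mapsto a-b$, whence $u_{\pm 2} = (1,-1)$. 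The matrix $1 - T_{\pm 2} = \left(\begin{smallmatrix}1&-1\\-1&1\end{smallmatrix}\right)$ factors through this quotient as $v_{\pm 2} = \binom{1}{-1}$, matching the claimed quiver.
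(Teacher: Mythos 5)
Your proposal is correct and follows essentially the same route as the paper: part (i) is the same base-change/projection-formula reduction as in Lemma \ref{lem:AiryReduct} (with the simplification, which the paper leaves implicit, that no auxiliary ramification $g$ is needed since the phase is already linear in $w$), and part (ii) reproduces the paper's computation via the short exact sequence of perverse sheaves, the sheet-swap monodromies $T_{\pm 2}=\left(\begin{smallmatrix}0&1\\1&0\end{smallmatrix}\right)$, the boundary maps $b_{\pm 2}=\left(\begin{smallmatrix}1\\1\end{smallmatrix}\right)$ coming from the coalescence of the two roots of $u^2-zu+1=0$, and the resulting cokernel quiver. The only cosmetic difference is that the paper determines the monodromies by explicitly lifting the paths $\gamma^{\pm}$, whereas you argue directly from the simple branching of the double cover, which is equally valid here.
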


\begin{proof}[Proof of Lemma~\ref{lem:ElementaryReductQuiv}]

\emph{(i)} The proof follows the steps of the proof of Lemma
\ref{lem:AiryReduct}.

\emph{(ii)} The proof follows the steps of the proof of Lemma
\ref{lem:AiryQuiv}. We are giving the details of the differences in the present case.

The map $f $ is a branched covering of degree two with branch locus
$\{ \pm 2 \} $ and $f^{-1}(\pm 2)= \pm 1 $. The restriction of $F[-1] $ to
$\C_w \smallsetminus \{\pm 2\} $ is a local system of rank two. With
$\Sigma:= \{\pm 2\} $, we denote by $L $ the local system
$L:=F|_{\C_w \smallsetminus \Sigma}[-1] $.

\begin{figure}
\includestandalone[scale=.85]{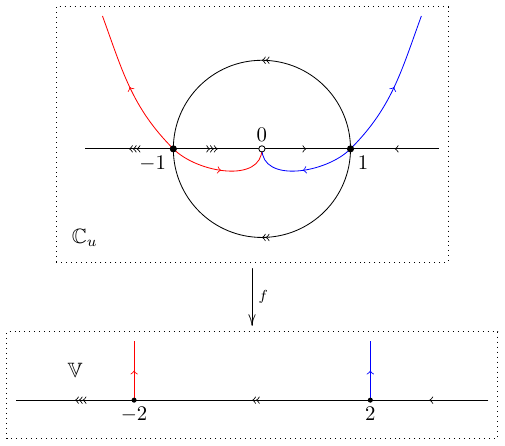}
\caption{The map $f\colon u\mapsto z=u+\frac1u$. Note that $f(0)=\infty$.}\label{fig:expmonodromy}
\end{figure}

We choose the same paths in $\C_w $ as in the case of the
Airy-function. See Figure \ref{fig:expmonodromy}.
(Write $z=z_1+\imm\,z_2$ and $u=u_1+\imm\,u_2$. To draw the picture we used the fact that the pull-back of
the real line $z_2=0$ by $f$ is given by $\Im(u+1/u)=0$, which is equivalent to $u_2(u_1^2+u_2^2-1)=0$.)

Let $e=x+\imm\,\varepsilon \in \C_w $ with
$x \gg 0 $ and small $\varepsilon>0 $ with pre-images $e_1 $ and $e_2
$, and choose paths $\gamma^\pm $ as before.

The choice of the numbering identifies $L_e= \DSum_{j=1}^2
(\field_{\C_{u}})_{e_j} \simeq \field^2 $. For the generic
stalk, with $\alpha=\imm\in\C_z$, $\beta=1\in\C_w$, we get
$$
\Psi_{\pm 2}(F) = \ \Psi^{(\alpha,\beta)}_{\pm 2}(F) =  L_e \simeq \field^2
$$
and the monodromies $T_{\pm 2}$ are determined by following the lifts $\gamma_j^\pm $ of the paths $\gamma^\pm $. We obtain,
\begin{equation}\label{eq:expmu-2}
T_{-2}= T_{2}=
\begin{pmatrix}
0 & 1\\
1 & 0
\end{pmatrix}
.
\end{equation}

\begin{figure}
\includestandalone[scale=.85]{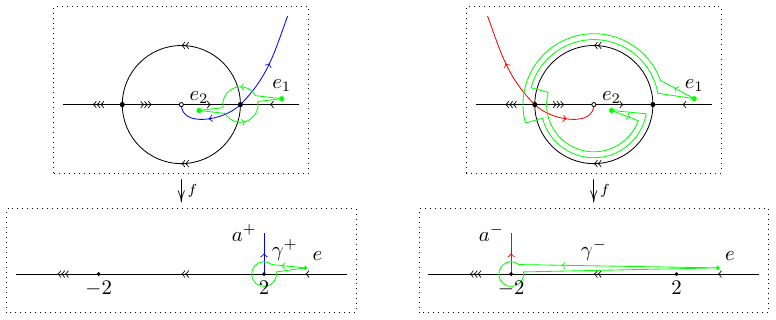}
\caption{The lifts of $\gamma^+ $.}\label{fig:expmonodromy2}
\end{figure}

To conclude, we notice that we have

\[
\vcenter{\vbox{\xymatrix{
\Phi_2(F) \ar@<.5ex>[d]^{v_2} \\
\Psi(F) \ar@<.5ex>[u]^{u_2} \ar@<-.5ex>[d]_{u_{-2}} \\
\Phi_{-2}(F) \ar@<-.5ex>[u]_{v_{-2}}
}}}
\simeq
\coker \left(
\vcenter{\vbox{\xymatrix{
\field \ar@<.5ex>[d] \ar[r]^{\left(\begin{smallmatrix}1\\1\end{smallmatrix} \right)}
& \field^2 \ar@<.5ex>[d]^{\left(\begin{smallmatrix}1&-1\\-1&1\end{smallmatrix} \right)} \\
0 \ar@<.5ex>[u] \ar@<-.5ex>[d] \ar[r]
& \field^2 \ar@<.5ex>[u]^{\id} \ar@<-.5ex>[d]_{\id} \\
\field \ar@<-.5ex>[u] \ar[r]_{\left(\begin{smallmatrix}1\\1\end{smallmatrix} \right)}
& \field^2 \ar@<-.5ex>[u]_{\left(\begin{smallmatrix}1&-1\\-1&1\end{smallmatrix} \right)}
}}}
\right)
\simeq
\vcenter{\vbox{\xymatrix{
\field \ar@<.5ex>[d]^{\left(\begin{smallmatrix}1\\-1\end{smallmatrix} \right)} \\
\field^2 \ar@<.5ex>[u]^{\left(\begin{smallmatrix}1&-1\end{smallmatrix} \right)} \ar@<-.5ex>[d]_{\left(\begin{smallmatrix}1&-1\end{smallmatrix} \right)} &.\\
\field \ar@<-.5ex>[u]_{\left(\begin{smallmatrix}1\\-1\end{smallmatrix} \right)}
}}}
\]
\end{proof}

\appendix

\section{Complements on Borel-Moore homology}\label{app:BM}

Let $X$ be a subanalytic space, and consider the space $\BM_p^X(G)$ of subanalytic Borel-Moore $p$-chains relative to an $\R$-constructible sheaf $G$. The definition of $\BM_p^X(G)$ is parallel to the definition in \cite{Kas84} of the sheaf $TH(G)$ of tempered homomorphisms. Both constructions are best understood in the framework of subanalytic sheaves of \cite{KS01}. There, one defines the subanalytic sheaf $\Db^t_X$ of tempered distributions, which satisfies $TH(G)\simeq\alpha\hom(\iota G,\Db^t_X)$ (with $\iota$ is the embedding of sheaves into subanalytic sheaves, and $\alpha$ its left adjoint). Here, we introduce the subanalytic sheaf of Borel-Moore $p$-chains $\bm^X_p$, which satisfies $\BM_p^X(G)\simeq\Hom(\iota G,\bm^X_p)$.
We then indicate how to adapt the arguments in \cite[\S9.2]{KS90} in order to prove the results we used in \S\ref{sec:Beilmaxext}.

\subsection{Borel-Moore chains}\label{sse:BMchains}
Let $X$ be a subanalytic space, that is, an $\R$-ringed space locally modeled on closed subanalytic subsets of real analytic manifolds.
Recall from \cite[\S9.2]{KS90} that,
for $p\in\Z_{\geq 0}$, the sheaf $\cs^X_p$ of subanalytic $p$-chains is the sheaf associated with the presheaf $CS^X_p$ defined as follows.
For $V\subset X$ an open subset, $CS^X_p(V)$ is the $\field$-vector space generated by the symbols $[S]$, where $S$ ranges through the family of $p$-dimensional oriented subanalytic submanifolds of $V$, with the relations
\begin{itemize}
\item[(a)] $[S_1\union S_2] = [S_1] + [S_2]$ if $S_1\cap S_2=\emptyset$,
\item[(b)] $[S]=[S']$ if $S'$ is an open dense subset of $S$ with the induced orientation,
\item[(c)] $[S^a] = -[S]$ if $S^a$ denoted the manifold $S$ endowed with the opposite orientation.
\end{itemize}

There is a boundary map $\partial\colon\cs^X_p\to\cs^X_{p-1}$, whose construction is detailed in \cite[\S9.2]{KS90},
inducing a complex of sheaves $\cs^X_\bullet$.
The map $\partial$ coincides with the ``na\"\i f'' boundary map in the case of interest to us, which is as follows.
Let $S\subset X$ be an oriented $p$-dimensional subanalytic submanifold
such that the embedding $S\subset \overline S$ is locally isomorphic in the category of subanalytic spaces to the embedding $\{x_1>0\} \subset \{x_1\geq 0\}$ in $\R^p\owns(x_1,\dots,x_p)$. Then $\partial[S] = [\partial S]$, where $\partial S$ is endowed with the induced orientation.

For $G\in\Mod_\Rc(\field_X)$, we define
the space of subanalytic Borel-Moore $p$-chains relative to $G$ as the subspace
\[
\BM^X_p(G) \subset \Hom(G,\cs^X_p)
\]
of morphisms $\varphi\in\Hom(G,\cs^X_p)$ such that for any relatively compact open subanalytic subset $U$ of $X$, and $s\in G(U)$, there exists $\sigma\in\cs^X_p(X)$ with $\sigma|_U=\varphi(s)$. (This last condition is equivalent to asking that $\supp\varphi(s)$, which is a closed subanalytic subset of $U$, is subanalytic in $X$.)

One has a complex $\BM^X_\bullet(G)$ with boundary map induced by $\partial$.

\subsection{Subanalytic sheaves}
For the theory of subanalytic sheaves we refer to \cite{KS01} (see also \cite{Pre08}).
Let us also mention the paper \cite{Pre16}, where subanalytic sheaves are used for analogous purposes. Roughly, a subanalytic sheaf on $X$ is a presheaf defined on the relatively compact subanalytic open subsets of $X$ satisfying the patching conditions on finite covers.
One denotes by $\Mod(\field_X^\sub)$ the category of subanalytic sheaves and by $\BDC(\field_X^\sub)$ its bounded derived category.
There is a natural fully faithful functor
\[
\iota\colon\Mod(\field_X) \to \Mod(\field_X^\sub),
\]
which has an exact left adjoint functor $\alpha$.
The restriction of $\iota$ to $\R$-constructible sheaves is exact, and induces a fully faithful functor
\[
\iota\colon\BDC_\Rc(\field_X) \to \BDC(\field_X^\sub).
\]

One says that $J\in\Mod(\field_X^\sub)$ is quasi-injective if the restriction map $J(U)\to J(V)$ is surjective for every $V\subset U$ relatively compact subanalytic open subsets. Quasi-injective objects are injective for the functors $\Hom(\iota G,\bullet)$ and $\hom(\iota G,\bullet)$, where $G\in\Mod_\Rc(\field_X)$. In particular, the functors $\Hom(\bullet,J)$ and $\hom(\bullet,J)$ are exact on $\Mod_\Rc(\field_X)$.

\subsection{Borel-Moore homology}\label{sse:BMH}
For $p\in\Z_{\geq 0}$ and $V\subset X$ an open subanalytic relatively compact subset, let $\bm^X_p(V)$ be the $\field$-vector space generated by the symbols $[S]$, where $S$ ranges through the family of $p$-dimensional oriented analytic submanifolds of $V$ which are subanalytic in $X$, with the relations (a)-(b)-(c) in section \S\ref{sse:BMchains}.
Then $V\mapsto \bm^X_p(V)$ is a quasi-injective subanalytic sheaf.

Note that, for $G\in\Mod_\Rc(\field_X)$, one has
\begin{equation}
\label{eq:BMbm}
\BM^X_p(G) \simeq \Hom(\iota G,\bm^X_p).
\end{equation}
Note also that $\alpha(\bm^X_p) \simeq \mathcal{CS}^X_p$.

There is a boundary map $\partial\colon\bm^X_p\to\bm^X_{p-1}$, inducing a complex of subanalytic sheaves $\bm^X_\bullet$, whose
construction goes along the same lines as for the boundary map on $\cs^X_\bullet$ explained in \cite[\S9.2]{KS90}.

\begin{prop}\label{pro:CSomega}
There is a natural isomorphism in $\BDC(\field_X^\sub)$
\[
\iota\omega_X\isoto \bm^X_\bullet,
\]
where $\omega_X$ denotes the dualizing complex.
\end{prop}

\begin{proof}
Let us assume for simplicity that $X$ is smooth.
One can follow the lines of the proof of the analogue result in \cite[\S9.2]{KS90}. Then the part of the proof of Theorem 9.2.10 in loc.\ cit.\
where exactness is checked on stalks has to be adapted.
Let $U\subset X$ be a relatively compact subanalytic open subset, and $\sigma\in\sect(U;\bm^X_p)$ with $p<n$ a subanalytic cycle (i.e.\ a chain satisfying $\partial\sigma=0$).
We have to show that $\sigma$ is locally a boundary, i.e.\ that there exist a finite cover $U=\Union\nolimits_i U_i$ and subanalytic chains $\tau_i\in\sect(U_i;\bm^X_{p+1})$ such that $\partial\tau_i=\sigma|_{U_i}$. By the arguments in Theorem 9.2.10, we know that for any $x\in\overline U$ there are an open neighborhood $V_x$ of $x$ and a section $\widetilde\tau_x\in\sect(V_x;\bm^X_{p+1})$ such that $\partial\widetilde\tau_x=\sigma|_{V_x}$. We can assume that $V_x$ is relatively compact and subanalytic. Since $\overline U$ is compact, we
have $\overline U = \Union\nolimits_{i\in I} V_i$
for some $I\subset \overline U$ finite. We then set $U_i=V_i\cap U$ and $\tau_i=\widetilde\tau_i|_{U_i}$.
\end{proof}

Consider the complex $\BM^X_\bullet(G)$ with boundary map induced by $\partial$.
We can now prove
\begin{equation}
\label{eq:BMG}
\BM^X_\bullet(G) \simeq \RHom(G,\omega_X).
\end{equation}

\begin{proof}[Proof of \eqref{eq:BMG}]
One has
\begin{align}
\label{eq:GoBM}
\BM^X_\bullet(G)
&\simeq \Hom(\iota G,\bm^X_\bullet) \\ \notag
&\underset{(*)}\simeq \RHom( \iota G, \iota \omega_X ) \\ \notag
&\underset{(**)}\simeq \RHom( G, \omega_X ),
\end{align}
where $(*)$ follows from the fact that $\iota$ is fully faithful on $\BDC_\Rc(\field_X)$, and $(**)$ follows from Proposition~\ref{pro:CSomega}.
\end{proof}

\begin{remark}
As pointed out in \cite[Exercise 9.1]{KS90}, one has
\[
\rhom(G,\omega_X) \simeq \hom( G, \mathcal{CS}^X_\bullet ).
\]
However, $\RHom(G,\omega_X) \not\simeq \Hom( G, \mathcal{CS}^X_\bullet )$ in general,
since $\mathcal{CS}^X_\bullet$ is a $c$-soft resolution of $\omega_X$, not a flabby resolution. In particular, the previous isomorphism does not hold in one of the cases of interest for us, that is when $G$ is the extension by zero of a local system of finite rank on a closed subanalytic subset of $X$.
\end{remark}

In particular, if $L$ is a local system of finite rank, and $Z\subset X$ is a locally closed subanalytic subset,
one has
\begin{align}
\label{eq:BMZL}
\BM^X_\bullet(\field_Z \tens L)
&\simeq \RHom(\field_Z \tens L,\omega_X) \\ \notag
&\simeq \RHom(L,\rsect_Z\omega_X) \\ \notag
&\simeq \rsect(X; \rsect_Z\omega_X \tens L^*).
\end{align}
It is thus natural to set the

\begin{definition}\label{def:HpBM}
Let $L$ be a local system of finite rank, and $Z\subset X$ a locally closed subanalytic subset.
For $j\in\Z$, the Borel-Moore homology of $Z$, relative to $X$, with coefficients in $L^*$ is given by
\[
H^{BM}_j(Z; L^*) \defeq H_j \BM^X_\bullet(\field_Z \tens L).
\]
\end{definition}

\subsection{Long exact homology sequence}

Let $U \subset X $ be an open subanalytic subset, and set $Y:=X \smallsetminus U $. By \eqref{eq:BMbm}, one has
\begin{align}
\label{eq:HBMZL}
H^{BM}_p(Y; L^*)
&\simeq H_p \Hom(\iota (\field_Y\tens L),\bm^X_\bullet) \\ \notag
&\simeq H_p \Hom(\iota L, \sect_Y\bm^X_\bullet).
\end{align}

\begin{lemma}\label{lem:BMrel}
With the above notations, one has
\[
H_j^{BM}(U ; L^*) \simeq H_j \frac{\BM^X_\bullet(L)}{\BM^X_\bullet(\field_Y \tens L)}.
\]
\end{lemma}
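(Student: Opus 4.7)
The plan is to show that the three Borel-Moore chain complexes sit in a short exact sequence of complexes of $\field$-vector spaces
\[
0 \to \BM^X_\bullet(\field_Y\tens L) \to \BM^X_\bullet(L) \to \BM^X_\bullet(\field_U\tens L) \to 0,
\]
from which the identification follows by taking homology of the quotient complex.

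First I would start from the short exact sequence $0 \to \field_U \to \field_X \to \field_Y \to 0$ of sheaves on $X$. Tensoring with the locally constant sheaf $L$ (which is flat, $L$ being a local system of finite rank over a field) yields the short exact sequence $0 \to \field_U\tens L \to L \to \field_Y\tens L \to 0$ in $\Mod_\Rc(\field_X)$. Since $\iota$ is exact on $\R$-constructible sheaves, we obtain a short exact sequence
\[
0 \to \iota(\field_U\tens L) \to \iota L \to \iota(\field_Y\tens L) \to 0
\]
in $\Mod(\field_X^\sub)$.

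Next I would apply, for each $p\in\Z_{\geq 0}$, the contravariant functor $\Hom(-,\bm^X_p)$. The subanalytic sheaf $\bm^X_p$ is quasi-injective: indeed, any subanalytic $p$-chain on a relatively compact open $V \subset U$ extends (trivially by zero, or more precisely by taking its closure in $X$) to a section on $U$, so the restriction map $\bm^X_p(U) \to \bm^X_p(V)$ is surjective. By the property of quasi-injective objects recalled in the paper, the functor $\Hom(\iota(-),\bm^X_p)$ is exact on $\Mod_\Rc(\field_X)$. Combined with the identification \eqref{eq:BMbm}, this produces the short exact sequence
\[
0 \to \BM^X_p(\field_Y\tens L) \to \BM^X_p(L) \to \BM^X_p(\field_U\tens L) \to 0
\]
for every $p$, and these splice into a short exact sequence of complexes compatible with the boundary operator $\partial$ (since the maps are simply induced by the morphisms of sheaves in the input).

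Finally I would conclude: the injection $\BM^X_\bullet(\field_Y\tens L) \hookrightarrow \BM^X_\bullet(L)$ realizes the first complex as a subcomplex, with quotient complex canonically isomorphic to $\BM^X_\bullet(\field_U\tens L)$. Taking $H_j$ of both sides and using Definition~\ref{def:HpBM} gives the desired isomorphism
\[
H_j^{BM}(U;L^*) = H_j\BM^X_\bullet(\field_U\tens L) \simeq H_j\frac{\BM^X_\bullet(L)}{\BM^X_\bullet(\field_Y\tens L)}.
\]
The only delicate point is the verification of quasi-injectivity of $\bm^X_p$ and hence the surjectivity of $\Hom(\iota L,\bm^X_p) \to \Hom(\iota(\field_U\tens L),\bm^X_p)$; concretely this amounts to extending a subanalytic Borel-Moore $p$-chain on $U$ (relative to $L$) to a subanalytic Borel-Moore $p$-chain on $X$ (relative to $L$), which is built into the very definition of $\BM^X_p(-)$ since supports are required to be subanalytic in $X$, not merely in $U$.
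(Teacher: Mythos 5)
Your proof is correct and follows essentially the same route as the paper: both apply the functor $\Hom(\iota(L\tens\bullet),\bm^X_\bullet)$, exact because $\bm^X_p$ is quasi-injective, to the sequence $0\to\field_U\to\field_X\to\field_Y\to0$, identify the resulting quotient complex with $\BM^X_\bullet(\field_U\tens L)$, and take homology. The paper merely phrases the outer terms via the adjunction $\Hom(\iota(\field_Z\tens L),\bm^X_\bullet)\simeq\Hom(\iota L,\sect_Z\bm^X_\bullet)$, which is a cosmetic difference.
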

\begin{proof}
By \eqref{eq:HBMZL}, we have to prove
\[
H_p^{BM}(U; L^*) \simeq H_p \frac{\Hom(\iota L, \bm^X_\bullet)}{\Hom(\iota L, \sect_Y\bm^X_\bullet)}.
\]
Since the subanalytic sheaves $\bm^X_p$ are quasi-injective,
the functor
\[
\Hom(\iota(L\tens\bullet),\bm^X_\bullet) \colon \Mod_\Rc(\field_X) \to \mathsf{C}^b(k),
\]
with values in the category of bounded complexes of $\field$-vector spaces,
is exact. Applying it to the exact sequence
\[
0 \to \field_U\to \field_X\to \field_Y\to 0,
\]
we get the exact sequence of complexes
\[
0 \to
\Hom(\iota L, \sect_Y\bm^X_\bullet) \to
\Hom(\iota L, \bm^X_\bullet)
\to \Hom(\iota L, \sect_U\bm^X_\bullet) \to
0.
\]
Then
\[
\Hom(\iota L, \sect_U\bm^X_\bullet) \simeq
\frac{\Hom(\iota L, \bm^X_\bullet)}{\Hom(\iota L, \sect_Y\bm^X_\bullet)},
\]
and the statement follows by taking homology groups.
\end{proof}

\begin{remark}
If $Z\subset X$ is a locally closed subanalytic subset, there is an isomorphism
\begin{equation}
\label{eq:HBMZXZ}
H^{BM}_p(Z; L^*)\simeq H^{BM}_p(Z; L^*|_Z).
\end{equation}
In fact, by \eqref{eq:BMZL} there is an isomorphism in $\BDC(\field)$
\begin{align*}
\BM^X_\bullet(\field_Z \tens L)
&\simeq \RHom(L,\rsect_Z\omega_X) \\
&\simeq \RHom(L|_Z,\omega_Z) \\
&\simeq \BM^Z_\bullet(L|_Z).
\end{align*}
If $Z=Y$ is closed, one also has $\BM^X_p(\field_Y \tens L)\simeq \BM^Y_p(L|_Y)$.
However, since $\BM^X_p(\field_Z \tens L)\not\simeq \BM^Z_p(L|_Z)$ in general, the description of the two homology groups in \eqref{eq:HBMZXZ} is different.
For example, if $Z=U$ is open and $S\subset U$ is a $p$-dimensional oriented subanalytic submanifolds of $U$, then
$[S]\in \BM^U_p(L|_U)$ even if $S$ is not subanalytic in $X$. This makes the boundary value map $\delta$ less explicit when written as
\[
H_{j+1}^{BM}(U; L^*|_U) \\
\to[\delta] H_j^{BM}(Y; L^*|_Y).
\]
\end{remark}

As we detail below, Lemma~\ref{lem:lemmaFT} follows from the next lemma.
\begin{lemma}\label{lem:BMderdual}
For $G\in\Mod_\Rc(\field_X)$, there is a functorial isomorphism
\[
\dual\, \rsect_c(X; G) \simeq \Hom( \iota G, \bm^X_\bullet ),
\]
where $\dual(\bullet)=\rhom(\bullet,\field)$ is the dual in $\BDC(\field)$.
\end{lemma}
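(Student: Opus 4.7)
The strategy is to identify $\Hom(\iota G,\bm^X_\bullet)$ with $\RHom(G,\omega_X)$ and then invoke Poincar\'e--Verdier duality. All three identifications have been prepared by the material in \S\ref{sse:BMchains}--\ref{sse:BMH}.

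First, I would observe that, by the discussion immediately following the definition of quasi-injectivity, each term $\bm^X_p$ is quasi-injective and hence acyclic for the functor $\Hom(\iota G,\dummy)$ on $\R$-constructible $G$. Combined with Proposition~\ref{pro:CSomega}, which exhibits $\bm^X_\bullet$ as a resolution of $\iota\omega_X$ in $\BDC(\field_X^\sub)$, this gives a functorial isomorphism
\[
\Hom(\iota G,\bm^X_\bullet) \simeq \RHom[\field_X^\sub](\iota G, \iota\omega_X).
\]
Next, since $\iota$ is fully faithful on $\BDC_\Rc(\field_X)$ (and $\omega_X \in \BDC_\Rc(\field_X)$ because $X$ is subanalytic, so we can work locally on an ambient real analytic manifold), we obtain
\[
\RHom[\field_X^\sub](\iota G,\iota\omega_X) \simeq \RHom[\field_X](G,\omega_X).
\]

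Finally, letting $a_X\colon X\to\{\mathrm{pt}\}$ be the canonical morphism, one has $\omega_X = \epb{a_X}\field$, and so Poincar\'e--Verdier duality yields
\[
\RHom[\field_X](G,\omega_X) \simeq \RHom[\field_X](G,\epb{a_X}\field) \simeq \RHom[\field](\reim{a_X}G,\field) = \dual\rsect_c(X;G).
\]
Concatenating the three isomorphisms delivers the statement. The only non-routine point is the first step, namely that quasi-injectivity of each $\bm^X_p$ is genuinely enough to compute $\RHom[\field_X^\sub](\iota G,\dummy)$ on an $\R$-constructible argument; this rests on the fact, recalled in \S\ref{sse:BMH}, that $\Hom(\iota G,\dummy)$ is exact on quasi-injectives for $G\in\Mod_\Rc(\field_X)$, so a quasi-injective resolution suffices. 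Everything else is a direct application of already-invoked results.
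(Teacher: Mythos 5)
Your argument is correct and essentially identical to the paper's: the paper also passes through $\RHom(G,\omega_X)$, using the adjunction $\reim{a_X}\dashv\epb{a_X}$ on one side and, on the other, the isomorphism \eqref{eq:GoBM} (which packages exactly your first two steps, namely Proposition~\ref{pro:CSomega} together with quasi-injectivity of the $\bm^X_p$ and full faithfulness of $\iota$ on $\BDC_\Rc(\field_X)$). Your explicit remark that $\omega_X$ is $\R$-constructible, so that full faithfulness applies, is a point the paper leaves implicit but changes nothing in the argument.
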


\begin{proof}
Denoting $a_X\colon X\to\{pt\}$ the map to a singleton, one has
\begin{align*}
\dual\, \rsect_c(X; G)
&= \RHom( \rsect_c(X; G), \field ) \\
&\simeq \RHom( \reim{{a_X}}G, \field ) \\
&\simeq \RHom( G, \omega_X ) \\
&\underset{(*)}\simeq \Hom( \iota G, \bm^X_\bullet ) ,
\end{align*}
where $(*)$ follows from \eqref{eq:GoBM}.
\end{proof}

\begin{proof}[Proof of Lemma~\ref{lem:lemmaFT}]
(i) Lemma~\ref{lem:BMderdual} gives an isomorphism
\[
\dual\, \rsect_c(X; \field_Z\tens F) \simeq \Hom( \iota (\field_Z\tens F), \bm^X_\bullet ),
\]
and the statement follows by taking homologies.

\smallskip\noindent(ii) follows by applying the functorial isomorphism from Lemma~\ref{lem:BMderdual}
\[
\dual\, \rsect_c(X; \bullet\tens F) \simeq \Hom( \iota (\bullet\tens F), \bm^X_\bullet )
\]
to the exact sequence \eqref{eq:UXY}, and taking homologies.
\end{proof}

\section{Complements on the smash functor}\label{app:smash}

In this section, one may take for $\V$ any real vector space of finite dimension.

\subsection{Stalk at the origin}

Recall the maps
\begin{equation*}
\xymatrix{
\V & \V\times\R_{<0} \ar@{ >->}[r]^-{j} \ar[l]_-\gamma & \V\times\R & \V
\ar@{ >->}[l]_-i,
}
\end{equation*}
where $j$ is the open embedding, $i(x) = (x,0)$, and $\gamma(x,s) = x/|s|$.

\begin{lemma}\label{lem:tildenuvariant}
For $F\in\BDC(\field_\V)$ one has
\[
\smsh(F) \simeq
\epb{i} \reim{j} \epb{\gamma} F.
\]
\end{lemma}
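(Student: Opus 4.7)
The plan is to derive the identity from a single closed/open distinguished triangle on $\V\times\R$, after comparing $\epb{\gamma}F$ with $\opb{\gamma}F$ via the relative dualising complex of $\gamma$. Observe that $\gamma$ factors as the diffeomorphism $\V\times\R_{<0}\to\V\times\R_{<0}$, $(x,s)\mapsto(x/|s|,s)$, followed by the projection $p\colon\V\times\R_{<0}\to\V$, whose fibres are orientable $1$-dimensional intervals. Hence $\omega_\gamma\simeq\field[1]$, and there is a canonical isomorphism $\epb{\gamma}F\simeq\opb{\gamma}F[1]$. It is therefore enough to produce a natural isomorphism
\[
\opb{i}\roim{j}\opb{\gamma}F\simeq\epb{i}\reim{j}\opb{\gamma}F\,[1].
\]

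Let $j^\pm\colon\V\times\R^\times\hookrightarrow\V\times\R$ denote the open embedding of the complement of $i(\V)=\V\times\{0\}$. The standard distinguished triangle for the closed/open pair $(i,j^\pm)$,
\[
\roim{i}\,\epb{i}\,A\to A\to\roim{j^\pm}\opb{j^\pm}A\to[+1],
\]
applied to $A:=\reim{j}\opb{\gamma}F$ and pulled back by $\opb{i}$, yields (using $\opb{i}\roim{i}\simeq\id$)
\[
\epb{i}\reim{j}\opb{\gamma}F\to\opb{i}\reim{j}\opb{\gamma}F\to\opb{i}\roim{j^\pm}\opb{j^\pm}\reim{j}\opb{\gamma}F\to[+1].
\]
The middle term vanishes, since $\reim{j}\opb{\gamma}F$ is the extension by zero of $\opb{\gamma}F$ to $\V\times\R$, and is thus zero on $\V\times\R_{\geq 0}\supset i(\V)$. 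Consequently, the triangle produces an isomorphism $\epb{i}\reim{j}\opb{\gamma}F\,[1]\simeq\opb{i}\roim{j^\pm}\opb{j^\pm}\reim{j}\opb{\gamma}F$.

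Finally I would identify the right-hand term with $\smsh(F)$. Since $\V\times\R^\times=(\V\times\R_{<0})\sqcup(\V\times\R_{>0})$ is a disjoint union of open sets, $\opb{j^\pm}\reim{j}\opb{\gamma}F$ is simply $\opb{\gamma}F$ on the first summand and $0$ on the second. Hence for any open $W\subset\V\times\R$, the sections of $\roim{j^\pm}\opb{j^\pm}\reim{j}\opb{\gamma}F$ on $W$ reduce to the sections of $\opb{\gamma}F$ on $W\cap(\V\times\R_{<0})$, showing $\roim{j^\pm}\opb{j^\pm}\reim{j}\opb{\gamma}F\simeq\roim{j}\opb{\gamma}F$. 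Applying $\opb{i}$ produces $\smsh(F)$, which concludes the argument. The only delicate verification is the orientation isomorphism $\omega_\gamma\simeq\field[1]$; once this is granted, the rest is a straightforward unpacking of the standard triangle for the pair $(i,j^\pm)$.
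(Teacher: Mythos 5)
Your proof is correct and takes essentially the same route as the paper's: both rest on the closed/open distinguished triangle around $\V\times\{0\}$ applied to $A=\reim{j}\opb\gamma F$, the vanishing $\opb{i}\reim{j}\opb\gamma F\simeq 0$, and the shift identification $\epb\gamma F\simeq\opb\gamma F[1]$. The paper merely packages the same triangle as $\rsect_{\{s=0\}}(A)\to\rsect_{\{s\leq 0\}}(A)\to\rsect_{\{s<0\}}(A)\to[+1]$, which after the identifications coincides with yours.
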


\begin{proof}
We give a proof analogue to that of \cite[Lemma 4.2.1]{KS90}.
The distinguished triangle
\[
\rsect_{\{s=0\}}(\reim {j} \opb \gamma F) \to \rsect_{\{s\leq 0\}}(\reim {j}
\opb \gamma F)
\to \rsect_{\{s<0\}}(\reim {j} \opb \gamma F) \to[+1]
\]
is isomorphic to the distinguished triangle
\[
\roim i \epb{i} \reim {j} \opb \gamma F \to \reim {j} \opb \gamma F
\to \roim {j} \opb \gamma F \to[+1].
\]
Applying $\opb i$ we get
\begin{align*}
\opb i \roim {j} \opb \gamma F
&\simeq \epb {i} \reim {j} \opb \gamma F[1] \\
&\simeq \epb {i} \reim {j} \epb {\gamma} F.\qedhere
\end{align*}
\end{proof}

\begin{lemma}
$F\in\BDC(\field_\V)$. Then, denoting $i_0\colon \{0\} \to \V$ the embedding, one has
\begin{align*}
\opb{i_0}\smsh(F) &\simeq \rsect(\V; F), \\
\epb{i_0}\smsh(F) &\simeq \rsect_\rc(\V; F).
\end{align*}
\end{lemma}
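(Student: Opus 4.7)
The plan is to establish the first isomorphism by a direct geometric computation of the stalk, and then to deduce the second by Verdier duality.

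For the first isomorphism, the definition $\smsh(F) = \opb i \roim j \opb\gamma F$ gives
\[
\opb{i_0}\smsh(F) \simeq (\roim j \opb \gamma F)_{(0,0)} \simeq \varinjlim_{\epsilon \to 0^+} \rsect\bl B_\epsilon(0) \times (-\epsilon,0);\; \opb \gamma F \br.
\]
The key observation is that $\phi(x,s) \defeq (x/|s|,s)$ is a diffeomorphism of $\V\times\R_{<0}$ that intertwines $\gamma$ with the projection $p\colon \V\times\R_{<0}\to\V$, and sends $B_\epsilon(0)\times(-\epsilon,0)$ to the ``trumpet''
\[
W'_\epsilon \defeq \{(y,s) \in \V \times \R_{<0} \semicolon |s|<\epsilon/\max(1,|y|)\},
\]
which opens unboundedly as $s\to 0^-$. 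I would then exhibit the continuous section $\sigma_\epsilon(y) \defeq (y,-\epsilon/(2\max(1,|y|)))$ of $p|_{W'_\epsilon}$ and the straight-line homotopy $H_t(y,s) \defeq (y,(1-t)s - t\epsilon/(2\max(1,|y|)))$, realizing $W'_\epsilon$ as deformation-retracting onto $\sigma_\epsilon(\V)\cong\V$; the homotopy stays inside $W'_\epsilon$ because each fiber of $p|_{W'_\epsilon}$ is a convex open interval containing both endpoints. Consequently $\opb p\colon \rsect(\V;F)\isoto \rsect(W'_\epsilon;\opb p F)$ is an isomorphism, compatibly with shrinking $\epsilon$, and the colimit yields $\opb{i_0}\smsh(F) \simeq \rsect(\V;F)$.

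For the second isomorphism, I would invoke Verdier duality. Combining the definition of $\smsh$ with the alternative formula $\smsh(F) \simeq \epb i \reim j \epb\gamma F$ of Lemma~\ref{lem:tildenuvariant} and applying the standard exchange rules $\dual\opb i \simeq \epb i\dual$ (closed embedding), $\dual\roim j \simeq \reim j\dual$ (open embedding), and $\dual\opb\gamma \simeq \epb\gamma\dual$ ($\gamma$ smooth), one obtains
\[
\dual\,\smsh(F) \simeq \smsh(\dual F).
\]
Then, using $\dual\opb{i_0} \simeq \epb{i_0}\dual$ at the point embedding $i_0$ together with the first part,
\[
\epb{i_0}\smsh(F) \simeq \dual\opb{i_0}\dual\smsh(F) \simeq \dual\opb{i_0}\smsh(\dual F) \simeq \dual\rsect(\V;\dual F),
\]
and Poincar\'e--Verdier duality on $\V$ identifies the last term with $\rsect_\rc(\V;F)$.

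The main obstacle is the geometric analysis in the first part: verifying that $\sigma_\epsilon$ and $H_t$ stay in $W'_\epsilon$ and that the resulting isomorphisms are compatible with the nested inclusions $W'_{\epsilon'}\subset W'_\epsilon$ for $\epsilon'<\epsilon$ (so that the colimit collapses to a single copy of $\rsect(\V;F)$). Once the trumpet picture and its fibrewise convexity are in hand, these checks are elementary, and the second part is then formal.
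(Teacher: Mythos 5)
Your treatment of the first isomorphism is correct, and it takes a more hands-on route than the paper: the paper notes that $\roim j\opb\gamma F$ is conic for the diagonal $\R^+$-action and quotes \cite[Proposition~3.7.5]{KS90} to identify its stalk at $(0,0)$ with $\rsect(\V\times\R;\roim j\opb\gamma F)\simeq\rsect(\V\times\R_{<0};\opb\gamma F)$, and then concludes with $\roim\gamma\opb\gamma F\simeq F$ since $\gamma$ has contractible fibres; your trumpet computation re-proves the conic-sheaf statement by hand in this special case. One point you should make explicit: since $F$ is an arbitrary object of $\BDC(\field_\V)$, the coefficients $\opb p F$ are not locally constant, so a deformation retraction by itself proves nothing; what saves the argument is that your homotopy $H_t$ commutes with $p$, so the claim reduces to the unit $F\to\roim{p}\opb{p}F$ (for $p$ restricted to $W'_\epsilon$) being an isomorphism, which holds because $W'_\epsilon\to\V$ is fibrewise a convex open interval (homotopy invariance as in \cite[\S2.7]{KS90}); with that in place the compatibility over shrinking $\epsilon$ is indeed automatic.

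The second isomorphism is where there is a genuine gap. The lemma is stated for arbitrary $F\in\BDC(\field_\V)$, and your duality argument (writing $\dual$ for Verdier duality) invokes biduality at three places where it is not available: the exchange $\dual\roim j\simeq\reim j\dual$ is not a general identity (the general ones are $\dual\reim j\simeq\roim j\dual$ and $\epb f\dual\simeq\dual\opb f$; the one you need follows from these only via $\dual\dual\simeq\id$, i.e.\ under constructibility); the step $\epb{i_0}\smsh(F)\simeq\dual\opb{i_0}\dual\smsh(F)$ presupposes that $\smsh(F)$ is reflexive; and the final identification $\dual\rsect(\V;\dual F)\simeq\rsect_\rc(\V;F)$ requires $\rsect_\rc(\V;F)$ to have finite-dimensional cohomology. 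All of these fail in the stated generality (already for $F$ an infinite direct sum of constant sheaves the chain produces a completed, strictly larger answer). The intended argument, which the paper's ``similar'' indicates, is to redo the stalk computation directly starting from the variant $\smsh(F)\simeq\epb i\reim j\epb\gamma F$ of Lemma~\ref{lem:tildenuvariant}: then $\epb{i_0}\smsh(F)\simeq\epb{i_{(0,0)}}\reim j\epb\gamma F$, the conic-sheaf result \cite[Proposition~3.7.5]{KS90} identifies this with $\rsect_\rc(\V\times\R;\reim j\epb\gamma F)\simeq\rsect_\rc(\V\times\R_{<0};\epb\gamma F)$, and one concludes with $\reim\gamma\epb\gamma F\simeq F$, using that $\gamma$ is a fibre bundle with fibre $\R_{<0}$, so $\epb\gamma\simeq\opb\gamma[1]$ and $\reim\gamma\opb\gamma F\simeq F[-1]$. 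In your setup the same repair amounts to running the trumpet argument with compactly supported sections, rather than dualizing the first part.
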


\begin{proof}
Let $i_{(0,0)}\colon \{(0,0)\} \to \V\times\R$ be the embedding.
One has
\begin{align*}
\opb{i_0}\smsh(F)
&= \opb{i_0}\opb i \roim {j} \opb\gamma F \\
&\simeq \opb{i_{(0,0)}}\roim {j} \opb\gamma F \\
&\underset{(1)}\simeq \rsect(\V\times\R; \roim {j} \opb\gamma F) \\
&\simeq \rsect(\V\times\R_{<0}; \opb\gamma F) \\
&\simeq \rsect(\V; \roim\gamma\opb\gamma F) \\
&\underset{(2)}\simeq \rsect(\V; F),
\end{align*}
where $(1)$ follows from \cite[Proposition 3.7.5]{KS90} since $\roim {j}
\opb{\gamma} F$ is conic, and~$(2)$ follows from the fact that $\gamma$ has contractible fibers.

This proves the first isomorphism in the statement.
The proof of the second isomorphism is similar, using
Lemma~\ref{lem:tildenuvariant}.
\end{proof}

\subsection{Smash and specialization functors}

Let $\U = T_\infty\PP$, where $\PP = \V \dunion \{\infty\}$ is the one-point compactification.
In the next lemma, we relate the smash functor to Sato's specialization
\[
\nu_{\{0\}} \colon \BDC(\field_\U) \to \BDC(\field_\U), \quad
G\mapsto \opb {i'} \roim j' \opb\muu G,
\]
where we considered the maps
\begin{equation}
\label{eq:Vnormalcone}
\xymatrix{
\U & \U\times\R_{>0} \ar@{ >->}[r]^-{j'} \ar[l]_-\muu & \U\times\R & \U
\ar@{ >->}[l]_-{i'},
}
\end{equation}
with $j'$ the open embedding, $i'(x) = (x,0)$, and $\muu(x,t) = tx$ the action
of $\R^+$ on $\U$.
More precisely, we will consider the induced functor
\[
\nu_{\{0\}} \colon \BDC(\field_{\U\setminus\{0\}}) \to
\BDC(\field_{\U\setminus\{0\}}),
\]
obtained by replacing $\U$ with $\U\setminus\{0\}$ in \eqref{eq:Vnormalcone}
(cf.\ \cite[Exercise IV.2]{KS90}).

The stereographic projections induce a bijection
\[
u \colon \V\setminus\{0\} \to \U\setminus\{0\}, \quad
x\mapsto x/|x|^2 = 1/\overline x,
\]
as indicated in the picture below, where $\PP$ is represented as a sphere.
\[
\begin{tikzpicture}[scale=2]
\coordinate [label=above:$\infty$] (N) at (0,0);
\coordinate (Nl) at (-1,0);
\coordinate [label=right:$\U$] (Nr) at (2,0);
\coordinate [label=below:$0$] (S) at (0,-1);
\coordinate (Sl) at (-1,-1);
\coordinate [label=right:$\V$] (Sr) at (2,-1);
\coordinate (C) at (0,-1/2);
\coordinate (P) at (1.5,0);
\node [label=left:$\PP$] (ce) at (C) [draw,circle through=(N)] {};
\draw (Sl) -- (Sr) ;
\draw (Nl) -- (Nr);
\draw (S) -- (P);
\coordinate (A) at (intersection of ce and S--P);
\coordinate (Q) at (intersection of N--A and S--Sr);
\draw (N) -- (Q);
\fill (N) circle (.6pt);
\fill (S) circle (.6pt);
\fill (A) circle (.6pt);
\fill (P) circle (.6pt);
\fill (Q) circle (.6pt);
\draw (S) -- (N);
\draw (N) -- (P) node[midway,above] {$|x|^{-1}$} node[above] {$u(x)$};
\draw (S) -- (Q) node[midway,below] {$|x|$} node[below] {$x$};
\end{tikzpicture}
\]

\begin{lemma}\label{lem:nu0smash}
Let $F\in\BDC(\field_\V)$.
Then one has
\[
\smsh(F)|_{\V\setminus\{0\}}
\simeq \opb u \nu_{\{0\}}(\reim u(F|_{\V\setminus\{0\}})).
\]
\end{lemma}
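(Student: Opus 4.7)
The plan is to realize the isomorphism geometrically via a homeomorphism
\[
\tilde u\colon \V^\times\times\R\to\U^\times\times\R,\qquad (x,s)\mapsto(u(x),-s),
\]
where I write $\V^\times\defeq\V\setminus\{0\}$ and $\U^\times\defeq\U\setminus\{0\}$. The key observation is that for $x\neq 0$ and $s<0$ one has
\[
u\bl\gamma(x,s)\br=u(x/|s|)=|s|\,u(x)=\muu\bl u(x),-s\br,
\]
so $\tilde u$ intertwines the contraction $\gamma$ with the dilation $\muu$, once the $\R$-factor is restricted appropriately. In addition, $\tilde u$ carries $\V^\times\times\R_{<0}$ homeomorphically onto $\U^\times\times\R_{>0}$, exchanges the zero sections $i^\times(\V^\times)$ and $i'(\U^\times)$, and intertwines the open embeddings $j^\times$ and $j'$ appearing in the definitions of $\smsh$ and $\nu_{\{0\}}$.

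First I will reduce to the punctured spaces: pulling $\smsh(F)=\opb i\roim j\opb\gamma F$ back along $\V^\times\hookrightarrow\V$ is the same as pulling back along the open immersion $\V^\times\times\R\hookrightarrow\V\times\R$ before applying $\opb{i^\times}$. Since $\gamma$ sends $\V^\times\times\R_{<0}$ into $\V^\times$, base change for open immersions gives
\[
\smsh(F)|_{\V^\times}\simeq \opb{i^\times}\roim{j^\times}\opb{\gamma^\times}(F|_{\V^\times}),
\]
where $\gamma^\times\colon\V^\times\times\R_{<0}\to\V^\times$, $j^\times\colon\V^\times\times\R_{<0}\hookrightarrow\V^\times\times\R$ and $i^\times\colon\V^\times\hookrightarrow\V^\times\times\R$ are the evident restrictions.

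Next, because $u$ and $\tilde u$ are homeomorphisms, both pairs $(\opb u,\roim u)$ and $(\opb{\tilde u},\roim{\tilde u})$ are mutually inverse equivalences. Writing $\tilde u^-$ for the restriction $\V^\times\times\R_{<0}\isoto\U^\times\times\R_{>0}$, the three intertwining identities $\muu\circ\tilde u^-=u\circ\gamma^\times$, $j'\circ\tilde u^-=\tilde u\circ j^\times$ and $i'\circ u=\tilde u\circ i^\times$ translate, via base change on the corresponding Cartesian squares, into
\begin{align*}
\opb{\gamma^\times}(F|_{\V^\times}) &\simeq \opb{\tilde u^-}\opb\muu \reim u(F|_{\V^\times}),\\
\roim{j^\times}\opb{\tilde u^-}&\simeq \opb{\tilde u}\roim{j'},\qquad \opb{i^\times}\opb{\tilde u}\simeq \opb u\opb{i'}.
\end{align*}
Composing these substitutions yields
\[
\smsh(F)|_{\V^\times}\simeq \opb u\opb{i'}\roim{j'}\opb\muu \reim u(F|_{\V^\times})=\opb u\nu_{\{0\}}\bl\reim u(F|_{\V^\times})\br,
\]
which is the asserted isomorphism. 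I do not expect any genuine obstacle: once the intertwining map $\tilde u$ and the identity $u\circ\gamma^\times=\muu\circ\tilde u^-$ are singled out, the rest is a short diagram chase, and the only mildly delicate point is keeping the base changes for the various Cartesian squares (involving open immersions combined with a sign-reversing homeomorphism of the $\R$-factor) straight.
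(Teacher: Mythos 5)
Your proof is correct and follows essentially the same route as the paper: the paper's argument is precisely a diagram chase through the commutative diagram built from the homeomorphism $u\times r$ (your $\tilde u$, with $r(s)=-s$), using the identity $u\circ\gamma=\mu\circ(u\times r)$ on $\V\setminus\{0\}\times\R_{<0}$ and the compatibility of the six operations with isomorphisms. Your explicit first step reducing $\smsh(F)|_{\V\setminus\{0\}}$ to the punctured maps via open base change is left implicit in the paper, but the content is the same.
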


\begin{proof}
Consider the commutative diagram with cartesian squares
\begin{equation*}
\xymatrix@C=.7cm{
\V\setminus\{0\} \ar[d]^\wr_{u} & \V\setminus\{0\}\times\R_{<0}
\ar[d]^\wr_{{u}\times r} \ar@{ >->}[r]^-{j} \ar[l]_-\gamma &
\V\setminus\{0\}\times\R \ar[d]^\wr_{{u}\times r} & \ar@{ >->}[l]_-i
\V\setminus\{0\} \ar[d]^\wr_{u} \\
\W\setminus\{0\} & \W\setminus\{0\}\times\R_{>0} \ar@{ >->}[r]^-{j'}
\ar[l]_-{\muu} & \W\setminus\{0\}\times\R & \ar@{ >->}[l]_-{i'}
\W\setminus\{0\},
}
\end{equation*}
where $r(s)=-s$.
One has
\begin{align*}
\opb u \nu_{\{0\}}(\reim u(F|_{\V\setminus\{0\}}))
&= \opb u \opb {i'} \roim {j'} \opb{\muu} \reim{u} (F|_{\V\setminus\{0\}}) \\
&\simeq \opb {i'} \opb {(u\times r)} \roim {j'} \reim{(u\times r)} \opb{\gamma}
(F|_{\V\setminus\{0\}}) \\
&\simeq \opb {i'} \opb {(u\times r)} \roim {j'} \roim{(u\times r)} \opb{\gamma}
(F|_{\V\setminus\{0\}}) \\
&\simeq \opb {i'} \opb {(u\times r)} \roim{(u\times r)} \roim {j} \opb{\gamma}
(F|_{\V\setminus\{0\}}) \\
&\simeq \opb i \roim {j}\opb\gamma (F|_{\V\setminus\{0\}}) \\
&= \smsh(F)|_{\V\setminus\{0\}}. \qedhere
\end{align*}
\end{proof}

\providecommand{\eprint}[1]{\href{http://arxiv.org/abs/#1}{\texttt{arXiv\string:\allowbreak#1}}}\providecommand{\doi}[1]{\href{http://dx.doi.org/#1}{\texttt{doi\string:\allowbreak#1}}}
\def\bysame{\leavevmode ---------\thinspace}
\gdef\og{``}\gdef\fg{''}
\def\cdrandname{\&}
\providecommand\cdrnumero{no.~}
\providecommand{\cdredsname}{eds.}
\providecommand{\cdredname}{ed.}
\providecommand{\cdrchapname}{chap.}
\providecommand{\cdrmastersthesisname}{Memoir}
\providecommand{\cdrphdthesisname}{PhD Thesis}

\end{document}